\definecolor{trueblue}{rgb}{0.0, 0.45, 0.81}
\definecolor{truegreen}{rgb}{0.13, 0.55, 0.13}
\newcommand{\EEE}{\color{black}}
\newcommand{\eps}{\varepsilon}
\theoremstyle{plain}
\newtheorem{theorem}{Theorem}[section]
\newtheorem{lemma}[theorem]{Lemma}
\newtheorem{example}[theorem]{Example}
\newtheorem{remark}[theorem]{Remark}
\newtheorem{proposition}[theorem]{Proposition}
\newtheorem{corollary}[theorem]{Corollary}
\theoremstyle{definition}
\newtheorem{definition}[theorem]{Definition}
\renewcommand{\tilde}{\widetilde}
\DeclareMathOperator{\argmin}{argmin}
\numberwithin{equation}{section}
\newcommand{\N}{\mathbb{N}}
\newcommand{\Z}{\mathbb{Z}}
\newcommand{\R}{\mathbb{R}}
\newcommand{\defas}{:=}
\begin{document}
\title[Stochastic homogenisation for   asymptotically piecewise rigid functions]{Stochastic homogenisation for functionals defined on asymptotically piecewise rigid functions}

\author[A. Donnarumma]{Antonio Flavio Donnarumma}
\address[Antonio Flavio Donnarumma]{Department of Mathematics, Friedrich-Alexander Universit\"at Erlangen-N\"urnberg. Cauerstr.~11,
D-91058 Erlangen, Germany}
\email{antonio.flavio.donnarumma@fau.de}

\author[M. Friedrich]{Manuel Friedrich} 
\address[Manuel Friedrich]{Department of Mathematics, Friedrich-Alexander Universit\"at Erlangen-N\"urnberg. Cauerstr.~11,
D-91058 Erlangen, Germany, \& Mathematics M\"{u}nster,  
University of M\"{u}nster, Einsteinstr.~62, D-48149 M\"{u}nster, Germany}
\email{manuel.friedrich@fau.de}

\EEE
\begin{abstract}  
We study  stochastic homogenisation of free-discontinuity surface functionals defined on piecewise rigid functions which arise in the study of fracture in brittle materials. In particular, under standard assumptions on the density, we show that there exists a $\Gamma$-limit almost surely and that it can be represented by a surface integral. In addition, the effective density can be characterised via a suitable cell formula and is deterministic under an  ergodicity  assumption. We also show via $\Gamma$-convergence that the  homogenised functional defined on piecewise rigid functions can  be recovered from a Griffith-type model by passing to the limit of vanishing elastic deformations. 
\end{abstract}

\subjclass[2020]{49J45, 49Q20, 60K35, 74Q05, 74E30.}
\keywords{Free-discontinuity problems, stochastic homogenisation, brittle fracture, piecewise rigid functions,  $\Gamma$-convergence}
\maketitle

\section{Introduction}
 The last two decades have witnessed a tremendous interest and progress in the analysis of free-discontinuity functionals  of the form  
\begin{align}\label{mainfunctional}
E(u) = \mathcal{E}^{\rm bulk}(u) +   \mathcal{E}^{\rm surface}(u),  
\end{align}
featuring bulk and surface energies given by 
\begin{align}\label{mainfunctional2}
\mathcal{E}^{\rm bulk}(u) =    \int_{U} W(x,\nabla u(x)) \, {\rm d}x, \quad \quad \quad    \mathcal{E}^{\rm surface}(u) = \int_{J_u \cap U} f(x,[u](x),\nu_u(x)) \, {\rm d}\mathcal{H}^{d-1}(x). 
\end{align}
 Such functionals are  prototypes for  many variational models of fracture  \cite{FrancfortMarigo}.  Among the vast body of literature on crack growth, we mention here only  some of the contributions for small strains \cite{Chambolle:2003, Francfort-Larsen:2003,  FriSol16, giapon04a} and finite strains \cite{dalfratoa04,  DalMaso-Lazzaroni:2010}. In \eqref{mainfunctional}, the first part $ \mathcal{E}^{\rm bulk}$  accounts for elastic bulk terms for the unfractured region of the body with reference configuration $U \subset \R^d$, where   $\nabla u$ denotes the deformation gradient. The second contribution $\mathcal{E}^{\rm surface}$, instead, assigns energy contributions related to the crack surface  $J_u$ comparable to the $(d-1)$-dimensional Hausdorff measure $\mathcal{H}^{d-1}(J_u)$ of the crack. In the simplest formulation,  $f$ is constant, representing the \emph{fracture toughness}, given by Griffith's criterion of fracture initiation \cite{griffith1920phenomena}. Densities $f$ depending explicitly on the crack opening $[u]$ allow for fracture  problems of cohesive type, whereas the  presence of the normal $\nu_u$ to the jump set $J_u$ is relevant in the modeling of anisotropic surface energies in brittle crystals \cite{bragel02, frie14}. Eventually, dependence on the material point $x$ accounts for inhomogeneities.

In the finite-strain setting, problems of the form \eqref{mainfunctional} are usually  formulated in the space of the   \emph{special functions of bounded variation} (${SBV}$) (see \cite{DeGiorgi}) or its  generalisation  $GSBV$ \cite{ambrosio2000fbv, dalfratoa04}. The elastic energy density $W$  complies with the principle of frame indifference, growing quadratically around the set of rotations $SO(d)$, i.e.\ $W(x,F) \sim {\rm dist}^2(F,SO(d))$ for $F \in \R^{d\times d}$ close to $SO(d)$. Linearisation of $W$ leads to an invariance under skew-symmetric matrices $\R^{d \times d}_{\rm skew}$. Therefore, for  energies of the form \eqref{mainfunctional} in linearised elasticity, the density $W$ only depends on the symmetric part of the gradient $\frac{1}{2}(\nabla u^T + \nabla u)$. Correspondingly, a more intricate formulation in the space of   \emph{generalised  functions of bounded deformation} ($GSBD$) \cite{Dal11} is needed.

In this paper, we are interested in  the asymptotic analysis of sequences of random free-discontinuity problems $(E^\omega_\eps)_\eps$ of the form \eqref{mainfunctional}, where the parameter $\eps$ may  represent  the size of a microstructure and the corresponding densities $W^\omega_\eps$ and $f^\omega_\eps$ are supposed to be stationary (or even ergodic) with respect to the random variable $\omega$. Here, stationarity can be interpreted as a ``periodicity in law''  generalising the periodicity hypothesis in classical, deterministic homogenisation whereas the ergodicity assumption allows us to relate the average in expectation with the large scale space average.  

 The understanding of such problems in the finite-strain setting is well-developed by means of $\Gamma$-convergence  \cite{Braides:02, DalMaso:93}, where limiting effective  energies of the homogenised material can be derived in terms of \emph{homogenisation formulas} independent of the material point $x$.  After first results for the  deterministic case of periodic   homogenisation  \cite{BraDefVit96},  later    generalised  to the  case  without any periodicity assumptions  \cite{giapon04a}, the problem was recently  addressed   by {\sc Cagnetti, Dal Maso, Scardia, and Zeppieri}  \cite{cagnetti2018gammaconvergence, cagnetti2017stochastic} in a very general framework of   stochastic homogenisation. In contrast, the study of the   linearised  counterpart is still in its infancy. Results appear to be limited to \cite{FriPerSol20a} which addresses the case of deterministic homogenisation in dimension two. Besides the mentioned results in the context of fracture, there is an abundant literature dealing with  homogenisation   problems, both in a discrete and continuous setting, and with different hypotheses on the densities. We refer the reader to some examples in deterministic 
 \cite{hom2, BarchiesiFocardi, hom1, Bouchitte, BraPia19a, hom3} and stochastic  \cite{alicicglo09, BacRuf21, atwoscale,  brasig09,  global, Pellet, RufZeppieri, Zemas}  homogenisation.   
 
%
%
%
%
%
%
%

In this work, we focus on a different regime, namely the one of \emph{asympotically rigid solids}, corresponding to a scaling of the elastic energy density in \eqref{mainfunctional2} of the form
\begin{align}\label{eq: scalingW}
W^\omega_\eps(x,\nabla u(x)) \ge \frac{1}{\delta_\eps^2} {\rm dist}^2(\nabla u(x), SO(d)) \quad \quad \text{for a sequence $(\delta_\eps)_\eps$ with $\delta_\eps \to 0$}. 
\end{align}
In fact, in the limit $\delta_\eps \to 0$, configurations with finite energy are expected to satisfy the constraint $\nabla u \in SO(d)$ a.e.\ in $U$. A   nontrivial \emph{piecewise rigidity result} by {\sc Chambolle, Giacomini, and Ponsiglione} \cite{Chambolle2006PIECEWISERA},  generalising  the classical Liouville theorem for smooth functions, then states that the functions are \emph{piecewise rigid} in the sense that there exists a collection of an at most countable family of  different components each of which subject to a different rigid motion. More precisely, $u$ can be written
\begin{equation}\label{pwr}
u(x)=\sum_{j\in \mathbb{N}}(M_jx+b_j)\chi_{P_j}(x) 
\end{equation}
where $( M_j)_j \subset SO(d)$, $(b_j)_j \subset \mathbb{R}^d$, and $(P_j)_{j}$ is a \emph{Caccioppoli partition} of $U$, see \cite[Section~4]{ambrosio2000fbv}. A passage to asymptotically piecewise rigid functions has been addressed for constant surface densities in \cite[Corollary 2.9]{friedrich2017derivation} and for phase-field approximations in \cite{CicFocZep21}.  The analogy of the scaling \eqref{eq: scalingW} for a linear elastic density would lead to a model with the constraint $\frac{1}{2}(\nabla u^T + \nabla u) =0$ a.e.\ in $U$.  As shown in \cite{Chambolle2006PIECEWISERA, friedrich2018piecewise}, this condition characterizes the space of \emph{piecewise infinitesimal rigid functions}, i.e.\ functions of the form \eqref{pwr}  with the matrices $M_j$ in $\R^{d \times d}_{\rm skew}$ instead of $SO(d)$. In the following, the spaces will be denoted by $PR_L(U)$, for  $L=SO(d)$ and $L=\mathbb{R}_{\rm skew}^{d \times d}$, respectively.

The goal of this article is to extend the asymptotic analysis for $ \delta_\eps \to 0$  to the case of stochastic   homogenisation. More precisely, our scope is twofold. We will first study the effective random $\Gamma$-limit of pure surface energies  $\mathcal{E}^{\rm surface}$, see \eqref{mainfunctional2},  under the constraint that  the configurations are piecewise rigid, both in the nonlinear  case $L=SO(d)$  and the linear case $L=\mathbb{R}_{\rm skew}^{d \times d}$. On the one hand, this extends the deterministic results on $\Gamma$-convergence and integral representation for $PR_L$ developed in \cite{Friedrich_2020} to a stochastic setting, and may in turn contribute to the understanding of random interfacial energies for  brittle materials showing locally rigid behaviour. On the other hand, for the case $L=\mathbb{R}_{\rm skew}^{d \times d}$, the analysis will be a key ingredient for the forthcoming study of random free-discontinuity problems on $GSBD$ featuring both elastic and surface contributions \cite{Donnarumma}. Our second aim consists in the investigation of energies $(E^\omega_\eps)_\eps$ of the form \eqref{mainfunctional} in the regime of asymptotically rigid motions. We will also combine this perspective with a simultaneous passage to a  linearised  formulation in terms of rescaled displacement fields. Whereas the interplay of  homogenisation and  linearisation  is well understood in a purely elastic setting \cite{Neukamm2, Jesenko, Neukamm1}, to the best of our knowledge this issue has not been addressed yet in the realm of free-discontinuity problems.  We now describe our setting and the results in more detail.

 \textbf{Piecwise rigid funtions:} In the first part, we consider random surface functionals of the form 
\begin{equation}
\label{energypr}
\mathcal{E}^L_{\varepsilon}[\omega](u)= 
\int_{J_u\cap U}f\Big(\omega,\frac{x}{\varepsilon},[u](x),\nu_u(x)\Big)\,\mathrm{d}\mathcal{H}^{d-1}(x) 
\end{equation}
for $u \in PR_L(U)$, with $L=SO(d)$ or  $L=\mathbb{R}_{\rm skew}^{d \times d}$, and $U \subset \R^d$ for $d \in \lbrace 2,3 \rbrace$. Here,  the random environment is modeled by a probability space $(\Omega,\mathcal{I},\mathbb{P})$ and a random realization is denoted by $\omega \in \Omega$. We suppose that $f$ is a stationary random surface density with respect to a  group of $\mathbb{P}$-preserving transformations, see Definitions \ref{def: pre}--\ref{stationarity} below for details. In Theorem \ref{gammaconv}, we show that the sequence in \eqref{energypr} almost surely $\Gamma$-converges   to the random   homogenised functional
\begin{align}\label{qeq: limit prob}
\mathcal{E}^L_{\mathrm{hom}}[\omega](u)= \int_{J_u \cap A}f^L_{\mathrm{hom}}(\omega,[u](x),\nu_u(x))\,\mathrm{d}\mathcal{H}^{d-1}(x), 
\end{align}
and we provide a characterization of  $f_{\rm hom}$ in terms of a \emph{homogenisation formula}, see Theorem \ref{HomogenizationFormula}. More precisely, we prove that 
\begin{equation}
\label{eqhom-new}
f^L_{\mathrm{hom}}(\omega,\zeta,\nu)=\lim\limits_{t \to \infty}\frac{1 }{{t}^{d-1}}  \inf_{v}\int_{J_v\cap Q^\nu_t(tx)}f\Big(\omega,x,[v](x),\nu_v(x)\Big)\,\mathrm{d}\mathcal{H}^{d-1}(x).
\end{equation}
Here, for $L=\mathbb{R}_{\mathrm{skew}}^{d \times d}$,   the infimum is taken among all piecewise rigid functions $PR_L$  defined on the oriented cube $Q^\nu_t(tx)$ (see \eqref{eq: Qnot}) which attain piecewise constant boundary conditions related to $\zeta$, see \eqref{Lw} below for details. For $L=SO(d)$ instead, the infimum is taken on the smaller set of piecewise constant functions.   We emphasise  that $f_{\rm hom}$ is  $x$-independent as a consequence of stationarity. Under an additional ergodicity assumption on the transformation group, see Definition \ref{def: pre}, we see that  $\mathcal{E}^L_{\mathrm{hom}}$  is deterministic, i.e.\ does not depend on $\omega$.  Subsequently, we complement the $\Gamma$-convergence result with a compactness property which ensures the convergence of infima,  see Corollary \ref{convminima}. Convergence of corresponding  minimisers  is a delicate issue in free-discontinuity problems and we obtain a result in this direction for almost  minimisers   up to an arbitrarily small error term, see Corollary \ref{convergencealmostminimisers} and Remark~\ref{remarkcompactness} for details.

Our proof follows the strategy devised in  \cite{cagnetti2017stochastic} where, as part of the problem, surface energies defined on piecewise constant functions are studied. As in the seminal work \cite{maso1984nonlinear}, the procedure consists in two steps, namely a purely deterministic step and a stochastic one.

For fixed   random outcome $\omega \in \Omega$, under the assumption that the  homogenised  density $f^{L}_{\rm hom}$ given in \eqref{eqhom-new}   exists, the deterministic step consists in showing that the $\Gamma$-limit exists and takes the form  $\mathcal{E}^{L}_{\mathrm{hom}}[\omega]$, as given in \eqref{qeq: limit prob}. Here, we resort to the results in \cite{Friedrich_2020}, where $\Gamma$-convergence and integral representation for functionals defined on $PR_L$ have been investigated  by means of the localisation  technique for $\Gamma$-convergence, see \cite{DalMaso:93},   and the global method of relaxation \cite{BFLM}. To ensure convergence of infima of certain Dirichlet problems, delicate truncation methods are employed, valid in dimensions $d=2,3$, see \cite[Section 7]{Friedrich_2020}. This is the reason why in the present work we focus on the physically relevant dimensions $d=2,3$.

Then, the stochastic step consists in showing the assumption that the   homogenised  density $f^L_{\rm hom}$   exists almost surely and is independent
of $x$. In this part, we follow the proof in  \cite{cagnetti2017stochastic} which fundamentally relies on applying the Subadditive Ergodic Theorem  by {\sc Akcoglou and Krengel}  \cite{krengel2011ergodic}. Here, the authors construct a suitable $(d-1)$-dimensional subbadditive stochastic process which takes care of the mismatch of dimension between the ambient space and the dimension of the discontinuity set. In Section \ref{sec 4}, we sketch the main steps for convenience of the reader, yet we refer to  \cite[Introduction, Sections 5--6]{cagnetti2017stochastic} for more details. Although large parts of the proof work the same for piecewise constant functions and the space $PR_L$, up to natural adaptations, this is not true for the measurability in $\omega$ of the  minimisation  problem in \eqref{eqhom-new}, crucial in the definition of the subadditive stochastic process. Here, we need to employ suitable truncation and compactness results for piecewise rigid functions. For further details regarding this  issue we refer to Remark~\ref{problemmeasurability}.

\textbf{Asymptotically piecewise rigid functions:} 
The second part of our work consists in studying  functionals including also elastic energies. To this end,  we consider a sequence of energies of the form
\begin{align*}
{\mathcal{F}}_{\varepsilon,\delta}[\omega](y,A)= 
 \int_{A} \frac{1}{\delta^{2}} \Big(W\big(\omega,\frac{x}{\varepsilon}, \nabla y(x)\big)+    c_\delta  |\nabla^2 y(x)|^2\, \mathrm{d}x \Big) + \int_{J_y \cap A}f\Big(\omega,\frac{x}{\varepsilon},[y](x),\nu_y(x)\Big)\, \mathrm{d}\mathcal{H}^{d-1}(x),
\end{align*}
where $\eps$ again stands for the size of the microstructure, the surface part is as in \eqref{energypr}, and the additional bulk part depends on a    random, inhomogeneous,  and frame indifferent stored energy density $W$. We suppose quadratic growth of $W$ around $SO(d)$, reflecting the scaling \eqref{eq: scalingW} in terms of a small parameter $\delta$, which could be interpreted as the typical size of the elastic strain.  The model is an extension of the one in  \cite{friedrich2019griffith}, where 
an asymptotically small second-gradient term has been added   to a classical Griffith-type functional, i.e.  $c_\delta \to 0$ as $\delta \to 0$. This corresponds to a model for \emph{nonsimple materials}, see   \cite{Toupin1962ElasticMW} for a seminal  work  in elasticity theory. Such a  term enhances   the rigidity properties of the nonlinear model and is currently unavoidable to pass to small-strain settings $\delta \to 0$ in dimension $d \ge 3$. We refer to    \cite{friedrich2019griffith} for more details, also regarding the underlying functions space  $GSBV^2_2$, see \eqref{eq: GSBV22}, consisting of the mappings for which both the function itself and its derivative are in the class of generalised  special functions of bounded variation.  In the present contribution, we consider an extension of the nonlinear model in  \cite{friedrich2019griffith} to the case of random bulk and surfaces energies. 

In our main result, Theorem \ref{brittlemateriallimit}, we show that the sequence ${\mathcal{F}}_{\varepsilon,\delta}$ almost surely $\Gamma$-converges to the functional  in \eqref{qeq: limit prob} on $PR_{SO(d)}$ as $\eps,\delta \to 0$. This means that the  Griffith energies ${\mathcal{F}}_{\varepsilon,\delta}$ and the surface energies $\mathcal{E}_{\varepsilon}$  in \eqref{energypr} (defined only on $PR_{SO(d)}$) are \emph{equivalent by $\Gamma$-convergence} in the language of \cite{truski}. The main ingredient to show this equivalence is an approximation result   of functions with small elastic energy by piecewise rigid functions in $PR_{SO(d)}$, see Proposition  \ref{piecewiserigidapproximation}. The construction combines the rigidity result of \cite{friedrich2019griffith} based on the second-order   regularisation, with a  piecewise Poincar\'e inequality, see \cite[Theorem 2.3]{friedrich2018piecewise},  and further geometric arguments for  partitioning of sets. Finally, let us mention that the exact form of $W$, in particular its inhomogeneous and random nature, does not affect the  homogenised  surface energy, and the dependence on $x$ and $\omega$ is  assumed only for the  sake of generality.

Our last result combines the above limit  $\eps,\delta  \to 0$  with a simultaneous passage to a  linearised model defined on piecewise infinitesimal rigid motions.  To this end, as in the  linearisation results for Griffith energies \cite{friedrich2017derivation, friedrich2019griffith}, the deformation $y$ is written in terms of a rescaled displacement field  $u = \frac{1}{\delta^\alpha}(y - {\rm id})$ for some $\alpha >0$, where ${\rm id}$ denotes the identity mapping.  Whereas the choice $\alpha=1$ asymptotically leads to a model comprising bulk and surface terms \cite{friedrich2017derivation, friedrich2019griffith}, we choose  $\alpha \in (0,1)$ in the present work to obtain a pure surface integral in the effective limit. In fact, in Theorem~\ref{brittlemateriallimit2} we  show that the energies ${\mathcal{F}}_{\varepsilon,\delta}$, expressed in terms of $u$, almost surely $\Gamma$-converge  to $\mathcal{E}_{\mathrm{hom}}$ in \eqref{qeq: limit prob} on $PR_{\R^{d \times d}_{\rm skew}}$ as $\eps,\delta \to 0$. We mention that, strictly speaking, this result is proven under a suitable rescaling of the surface energy, see \eqref{completenergyXXX} for details. Moreover, for technical reasons related to frame indifference,  we work under the constraint $\Vert\nabla u\Vert_\infty \le \delta^{-\alpha/4} $ which ensures that  deformation gradients are close to the identity and not to other rotations in $SO(d)$. We refer to \eqref{for sure needed} and the discussion before \eqref{constraints}. 

The proof of the $\Gamma$-liminf inequality is again based on approximation by piecewise (infinitesimal) rigid motions, see Proposition  \ref{piecewiserigidapproximation}. The construction of recovery sequences is more subtle as abstract recovery sequences   provided by Theorem \ref{gammaconv} may have large gradients in $\R^{d \times d}_{\rm skew}$ incompatible with the elastic energy contributions in ${\mathcal{F}}_{\varepsilon,\delta}$. As a remedy, we provide a more explicit construction  under the hypothesis of periodic surface integrands,   based on the density of finite polyhedral partitions \cite{BraConGar16} and the solution of the cell problem \eqref{eqhom-new}. At this point, we need to assume a compatibility condition of the form  $\eps \delta^{-\alpha/4} \to \infty$ as $\eps,\delta \to 0$ which means that that the scaling of the microstructure is not too small compared to the strain. This condition is not of technical nature but necessary as we indeed provide an example that, without such an assumption,  the equivalence by $\Gamma$-convergence of ${\mathcal{F}}_{\varepsilon,\delta}$ and $\mathcal{E}_{\varepsilon}$ (for $PR_{\R^{d\times d}_{\rm skew}}$) can fail. We provide an explicit construction, see Example~\ref{main-example}, which relies on a surface density which is not \emph{$BD$-elliptic} in the sense of \cite{FriPerSol20}. This effect, which as we believe has not been noted yet in the literature, is explicitly related to the bulk-surface nature of the problem since in elasticity theory the commutability of homogenisation and  linearisation  indeed can be checked \cite{ Neukamm2,Jesenko, Neukamm1}.

 Our  paper is  organised  as follows. In Section \ref{sec: not} we introduce basic notation. Section \ref{setting section} contains the setting and our main results. The proof of the stochastic homogenisation  results can be found in Section~\ref{sec 4}. Eventually, the $\Gamma$-convergence results for asymptotically piecewise rigid functions are addressed in Section \ref{sec: limit}. In the appendix we collect some auxiliary results, as well as details on Example~\ref{main-example}.

 \section{Notation}
 \label{sec: not}

We introduce basic notation. Let $d \in \lbrace 2,3 \rbrace$. Given $x \in \mathbb{R}^d$ we denote by $|x|$ its Euclidean norm. For every $x,y \in \mathbb{R}^d$, $\langle x, y \rangle$ denotes the standard inner product on $\mathbb{R}^d$ between $x$ and $y$, and $x \otimes y$ denotes their tensor product. For  $A,B \subset \mathbb{R}^d$ and $\lambda \in \mathbb{R}$,  we define
 \begin{equation*}
  A+B:=\{z \in \mathbb{R}^d: z=x+y, \ \ \   x   \in A\:\: \text{and}\:\: y \in B\}
 \end{equation*}
and 
 \begin{equation*}
 \lambda A:=\{ z \in \mathbb{R}^d: z=\lambda x,\:\: x \in A\}.
 \end{equation*}
By $A \triangle B$ we denote the symmetric difference of sets. In addition, we denote by $\chi_A$ the characteristic function of a set $A$.  
  We write $A \subset \subset B$ if $\overline{A} \subset B$.  By $\mathbb{S}^{d-1} =  \{x \in \mathbb{R}^d: |x|=1\}$ we denote the unit sphere in $\mathbb{R}^d$. 
Given $x \in \mathbb{R}^d$ and $\rho>0$ we indicate with $Q_{\rho}(x)$ the open cube  with center in $x$ and side length $\rho$, oriented according to the canonical orthonormal basis $\{e_1,...,e_d\}$, that is
 \begin{equation*}
 Q_{\rho}(x):=\Big\{ y \in \mathbb{R}^d: \max_{i=1,...,d}|y_i - x_i|<\frac{\rho}{2}\Big\}.
 \end{equation*}
 Given $\nu \in \mathbb{S}^{d-1}$  we fix an orthogonal matrix $R_{\nu}$ such that $R_{\nu}(e_d)=\nu$. Then, we denote by $Q^{\nu}_{\rho}(x)$ the open cube, with center in $x$ and side length $\rho$, oriented according to the orthonormal basis $\{ R_{\nu}(e_1),...,\nu\}$, that is
 \begin{equation}\label{eq: Qnot}
     Q_{\rho}^{\nu}(x)=R_{\nu}Q_{\rho}(0)+x.
 \end{equation}
   Similarly, for every $x \in \mathbb{R}^d$ and $\rho>0$, we indicate with $B_\rho(x)$ the open ball with center in $x$ and radius $\rho$. 
 We denote by $\mathbb{R}^{d\times d}$ the set of real $d \times d$ matrices and, given $M \in \mathbb{R}^{d\times d}$, we indicate with $\mathrm{det}(M)$ its determinant and with $M^T$ its transpose. By
$
  \mathbb{R}_{\mathrm{skew}}^{d \times d}:=\{M \in \mathbb{R}^{d \times d}: M=-M^T\}
$
we denote the set of  $d \times d$ skew-symmetric  matrices, and indicate the set of rotation matrices in $\mathbb{R}^d$ by
 \begin{equation*}
 SO(d):=\{M \in \mathbb{R}^{d \times d}\colon  \ \  \ M^TM = \mathbb{I}, \ \ \  \mathrm{det}(M)=1\},
 \end{equation*}
where   the   identity matrix will be represented with the symbol $\mathbb{I}$. We proceed with further notation for sets and measures: 
 
 \begin{enumerate}

     \item[$(\mathrm{a})$] By  $\mathcal{A}$ we denote the family of all open, bounded subsets of $\mathbb{R}^d$, and by $\mathcal{A}_0$  the family of all  open, bounded subsets of $\mathbb{R}^d$ with Lipschitz boundary. For open  bounded subsets we always use $A$, and we use $U$ in place of $A$ if the set has also Lipschitz boundary.
     \item[$(\mathrm{b})$] We denote by $\mathcal{M}(A;\mathbb{R}^{d \times d})$ the space of $\mathbb{R}^{d \times d}$-valued bounded Radon measures on $A$.   For every $\mu \in \mathcal{M}(A;\mathbb{R}^{d \times d})$ we denote by $\vert \mu \vert$ the corresponding total variation. 
     \item[$(\mathrm{c})$] For $E \subset \mathbb{R}^d$, we denote by $\partial E$ its topological boundary  and by $\partial^* E$  its essential boundary.
\item[$(\mathrm{d})$] By $\mathcal{L}^k$ and $\mathcal{H}^k$ we indicate respectively the $k$-dimensional Lebesgue and Hausdorff measure. 
\item[$(\mathrm{e})$] Let $X$ be a topological space. The Borel sets are the elements of the $\sigma$-algebra generated by the open sets of $X$. Such $\sigma$-algebra on $X$ (called also Borel $\sigma$-algebra on $X$) is denoted by $\mathcal{B}(X)$. When $X=\mathbb{R}^d$ or $X=\mathbb{S}^{d-1}$, we use a simplified notation to denote their corresponding Borel $\sigma$-algebras, namely $\mathcal{B}^d$ and $\mathcal{B}_{\mathbb{S}}^d$, respectively.
\item[$(\mathrm{f})$] Given $n$ measurable spaces $(X_1,\Sigma_1),...,(X_n,\Sigma_n)$, we denote with $\Sigma_1 \otimes ... \otimes \Sigma_n$ the product $\sigma$-algebra on $X_1 \times ... \times X_n$. 
 \end{enumerate}

 We proceed by introducing relevant function spaces and refer to  \cite{ambrosio2000fbv} for a more comprehensive discussion.  Given $A \subset \mathbb{R}^d$ open, we  denote by $L^0(A;\mathbb{R}^d)$ the set of measurable functions $u \colon A \to \mathbb{R}^d$. For every $u \in L^0(A;\mathbb{R}^d)$ we indicate by $J_u$ the set of its  (weak)   approximate jump points. Then, we denote by $u^+$ and $u^-$ the traces of $u$ on $J_u$, according to the orientation induced by a measure-theoretical unit   normal vector $\nu_u$ of $J_u$, and write $[u]: = u^+-u^-$ to denote the jump height.   For $u \in BV$, $Du$ and $D^su$ denote the distributional derivative of $u$ and its singular
part with respect to the Lebesgue measure, respectively. The density of the absolutely continuous part of $Du$ is denoted by $\nabla u$. 
  Finally, for $p \in (1,\infty)$, denoting by $(G)SBV$  the space of  \emph{(generalised) special functions of bounded variation} (see \cite[Section 4]{ambrosio2000fbv} and \cite[Section 2]{dalfratoa04}) and by $SBD$  the space of \emph{special functions of bounded deformation} (see e.g.\  \cite{Dal11}), we define  \EEE
\begin{equation*}
SBV^p(A;\mathbb{R}^d)=\{u \in SBV(A;\mathbb{R}^d): \nabla u \in L^p(A;\mathbb{R}^{d \times d}), \mathcal{H}^{d-1}(J_u)<\infty\},   
\end{equation*}
and 
\begin{equation}\label{GSBV}
 GSBV^p(A;\mathbb{R}^d)=\{u \in GSBV(A;\mathbb{R}^d): \nabla u \in L^p(A;\mathbb{R}^{d \times d}), \mathcal{H}^{d-1}(J_u)<\infty\}.  
\end{equation}
 We also recall the definition  of Caccioppoli partitions:  A \emph{Caccioppoli partition} of an open set $A\subset \R^d$  is a countable family $(A_j)_{j \in \mathbb{N}}$ such that $A_i \cap A_j = \emptyset$ if $i\neq j$,  $\bigcup_{j \in \mathbb{N}}A_j = A$, and $\sum_{j \in \mathbb{N}}\mathcal{H}^{d-1}(\partial^* A_j \cap A)<\infty$. We indicate by $\mathcal{P}(A)$  the set of Caccioppoli partitions on $A$.

\EEE


\section{Setting of the problem and main results}
\label{setting section}

In this section, we introduce the setting and formulate the main results.

\subsection{Setting of the problem}\label{sec: setting}

We start with introducing piecewise rigid functions. 

\begin{definition}[Piecewise rigid functions]\label{def: PR}
Let $A \subset \mathbb{R}^d$ be an open set. For $L=SO(d)$ or $L= \mathbb{R}_{\rm skew}^{d\times d} $, define the space
\begin{equation*}
\begin{split}
PR_{L}(A)= \Big\{ u\in L^0(A;\mathbb{R}^d)\colon u(x)=\sum_{j\in \mathbb{N}}(M_j\, x+b_j)\chi_{P_j}(x),\:\:\text{where}\:\: \ M_j \in L, \,  b_j \in \mathbb{R}^d,  (P_j)_{j\in \mathbb{N}} \in \mathcal{P}(A) \Big\}.  
\end{split}  
\end{equation*}
\end{definition}

Note that for $L = \lbrace 0 \rbrace$ the above definition corresponds to the space of \emph{piecewise constant functions}. Accordingly, in the sequel  we denote the  space of piecewise constant functions by $PR_0(A)$.  Notice that, by the properties of Caccioppoli partitions, given $u=\sum_{j\in \mathbb{N}}(M_j\, \cdot +b_j)\chi_{P_j}\in PR_L(A)$ it holds $\mathcal{H}^{d-1}(J_u \setminus \bigcup_{j} \partial^* P_j)=0$.  We also remark that representations of piecewise rigid functions are not unique. In this work, we often use their \emph{pairwise distinct} representation, i.e.\ for $u(x)=\sum_{j\in \mathbb{N}}q_j\chi_{P_j} \in PR_L(A)$, we  assume that the affine functions $(q_j)_j$ are pairwise distinct. In this case, it particularly holds $\mathcal{H}^{d-1}(J_u  \triangle  (\bigcup_{j}\partial^* P_j \setminus \partial U))=0$. Moreover, two piecewise rigid functions $u_1,u_2$ can always be represented by a single Caccioppoli partition as follows: if $u_1=\sum_j q_j^1 \chi_{P^1_j}$  and $u_2=\sum_j q_j^2 \chi_{P^2_j}$, we can construct a \emph{refined} Caccioppoli partition $(P_j)_j$ by taking the intersections $(P_j^1 \cap P_k^2)_{j,k}$. Then both $u_1$ and $u_2$ can be represented with $(P_j)_j$, but in general such a representation is not pairwise distinct. For more details we refer to \cite[Section 3.2]{Friedrich_2020}.

We proceed with the precise definition of random surface densities and energies.  Fix $c_1 \in (0,1)$, $c_2 \geq 1$, $c_0 \geq 1$, and an increasing modulus of continuity $\sigma \colon [0,\infty) \to [0,  \frac{1}{2}]$  with $\sigma(0)=0$. We define the following family of functions \begin{equation}\label{FFF}
 \mathcal{F}=\mathcal{F}(  c_0, c_1, c_2,  \sigma):= \Big\{  f \colon \mathbb{R}^d \times \mathbb{R}^{d} \setminus \{0\}\times  \mathbb{S}^{d-1}  \to [0,\infty): \:\: f \:\:\text{satisfies}\:\: (f1)-(f7)\Big\},  
 \end{equation}
 where $(f1)-(f7)$ are the following properties:     \\
 \begin{enumerate}
     \item[$({f1})$] (measurability) $f$ is $\mathcal{B}^d \otimes \mathcal{B}^d \otimes \mathcal{B}_{\mathbb{S}}^d$ measurable,
     \item[$({f2})$] (continuity in $\zeta$) the function $\zeta \to f(x,\zeta,\nu)$ is continuous, in particular
\begin{equation*}
|f(x,\zeta_2,\nu)-f(x,\zeta_1,\nu)|\leq \sigma(|\zeta_2-\zeta_1|)\big(f(x,\zeta_1,\nu)+f(x,\zeta_2,\nu)\big) 
\end{equation*}
for every $x \in \mathbb{R}^d$, $\zeta_1,\zeta_2 \in \mathbb{R}^{d}\setminus \{0\}$, and $\nu \in \mathbb{S}^{d-1}$,
\item[$({f3})$] (estimate for $|\zeta_1|\leq |\zeta_2|$) if $|\zeta_1|\leq|\zeta_2|$, for every $x \in \mathbb{R}^d$ and for every $\nu \in \mathbb{S}^{d-1}$, we have
\begin{equation*}
f(x,\zeta_1,\nu)\leq c_0 f(x,\zeta_2,\nu),
\end{equation*}
 \item[$({f4})$] (estimate for $c_0|\zeta_1|\leq |\zeta_2|$) if $c_0|\zeta_1|\leq|\zeta_2|$, for every $x \in \mathbb{R}^d$ and for every $\nu \in \mathbb{S}^{d-1}$ we have
 \begin{equation*}
 f(x,\zeta_1,\nu)\leq f(x,\zeta_2,\nu),      
 \end{equation*}
\item[$({f5})$] (lower bound) for every $x \in \mathbb{R}^d$, $\zeta \in \mathbb{R}^d \setminus \{0\}$, and for every $\nu \in \mathbb{S}^{d-1}$ it holds 
\begin{equation*}
c_1 \leq f(x,\zeta,\nu),    
\end{equation*}
\item[$({f6})$] (upper bound) for every $x \in \mathbb{R}^d$, $\zeta \in \mathbb{R}^d \setminus \{0\}$, and for every $\nu \in \mathbb{S}^{d-1}$ it holds 
\begin{equation*}
f(x,\zeta,\nu) \leq c_2,
\end{equation*}
\item[$({f7})$] (symmetry) for every $x \in \mathbb{R}^d$, $\zeta \in \mathbb{R}^d \setminus \{0\}$, and for every $\nu \in \mathbb{S}^{d-1}$ it holds 
\begin{equation*}
f(x,\zeta,\nu)=f(x,-\zeta,-\nu).   
\end{equation*}
 \end{enumerate} 

In the sequel, $(\Omega, \mathcal{I}, \mathbb{P})$ denotes a fixed probability space. 

\begin{definition}[Random surface density]
\label{randomsurfaceintegrand}
 A function $f \colon \Omega \times\mathbb{R}^d \times \mathbb{R}^{d} \setminus \{0\}\times \mathbb{S}^{d-1}\to [0,\infty)$ is said to be a \emph{random surface density} if the following two properties are satisfied:
\begin{enumerate}
    \item[$(\mathrm{i})$] $f$ is $\mathcal{I}\otimes \mathcal{B}^d \otimes \mathcal{B}^d \otimes \mathcal{B}_{\mathbb{S}}^d$ measurable, 
    \item[$(\mathrm{ii})$]  $f(\omega,\cdot,\cdot,\cdot) \in \mathcal{F}$ for every $\omega \in \Omega$.
\end{enumerate}
\end{definition}

\begin{definition}(Random surface energy)\label{defrandomenergy}
Let $L=SO(d)$ or $L=\mathbb{R}_{\rm skew}^{d\times d}$, and let $\eps>0$. A \emph{random surface energy} is a functional $\mathcal{E}_{\eps}\colon \Omega \times L^0(\mathbb{R}^d;\mathbb{R}^d)\times \mathcal{A}\to [0,\infty]$ of the form
\begin{equation}
\label{randomenergies}
\mathcal{E}_\eps[\omega](u,A)=\begin{cases}
 \int_{A \cap J_u}{f(\omega,  \tfrac{x}{\eps}, [u](x),\nu_{u}(x))\,\mathrm{d}\mathcal{H}^{d-1}(x)} & u_{|_A} \in PR_L(A),   \\
 +\infty  & \text{otherwise}
\end{cases}
\end{equation}
for every $A \in \mathcal{A}$ and $\omega \in \Omega$, where $f$ is a random surface density. 
\end{definition}
In the following, to simplify the notation when $\varepsilon=1$,  for the energy defined by \eqref{randomenergies} we write $\mathcal{E}$ in place of $\mathcal{E}_1$.

\begin{remark}[Properties $(f1)$--$(f7)$] \EEE {\normalfont
Notice that properties $(f1)$ and $(f7)$ are necessary for the well-posedness of \eqref{randomenergies}. In addition, as pointed out in \cite[Remark 3.2]{cagnetti2018gammaconvergence},  if $\zeta \to f(x,\zeta,\nu)$ is monotone with respect to $|\zeta|$, i.e.\ $f(x,\zeta_1,\nu)\leq f(x,\zeta_2,\nu)$  whenever $|\zeta_1| \leq |\zeta_2|$, then  properties $(f3)$ and $(f4)$ are automatically satisfied since $c_0\geq1$, but the converse is not true, i.e.\ $(f3)$ and $(f4)$  are a weaker condition compared to monotonicity. Properties $(f5)$ and $(f6)$ ensure that a control on the measure of the jump  set implies a control on the energy and vice versa. This is a standard assumption  for homogenisation of surface integrals and is indeed crucial for the so-called  ``fundamental estimate'' for a sequence of surface densities $(f_{\varepsilon})_\eps \subset \mathcal{F}$ and for a compactness result in the sense of $\Gamma$-convergence, see \cite{Friedrich_2020}. } 
 \end{remark}
 
In the next definitions, we introduce groups of $\mathbb{P}$-preserving transformations and discuss how they can be used to define the notions of stationarity and ergodicity for random surface densities.

\begin{definition}{(Group of $\mathbb{P}$-preserving transformations)} 
\label{def: pre}
A $d$-dimensional group of \emph{$\mathbb{P}$-preserving transformations} on $(\Omega,\mathcal{I}, \mathbb{P})$ is a family $(\tau_z)_{z \in \mathbb{Z}^d}$ (resp.\  $(\tau_z)_{z \in \mathbb{R}^d}$) of transformations $\tau_z \colon \Omega \to \Omega$ satisfying the following properties:
\begin{enumerate}
    \item[$(\mathrm{\tau}_1)$] (measurability) $\tau_{z}$ is $\mathcal{I}$ measurable for every $z \in \mathbb{Z}^d$ (resp.\  for every $z \in \mathbb{R}^d)$, 
    \item[$(\mathrm{\tau}_2)$] (bijectivity) $\tau_z \colon \Omega \to \Omega$ is bijective for every $z \in \mathbb{Z}^d$ (resp.\  for every $z \in \mathbb{R}^d$), 
    \item[$(\mathrm{\tau}_3)$] (group property) $(\tau_{z})_{z \in \mathbb{Z}^d}$ (resp.\  $(\tau_{z})_{z \in \mathbb{R}^d}$) is a group with respect to the composition operation, stable with the sum in $\mathbb{R}^d$, that is $\tau_{x} \circ \tau_{y} = \tau_{y} \circ \tau_{x} = \tau_{x+y}$ for every $x,y \in \mathbb{Z}^d $ (resp.\  for every $x,y \in \mathbb{R}^d)$, and $\tau_0 \colon \Omega \to \Omega$ is the identity, 
    \item[$(\mathrm{\tau}_4)$] (invariance) $(\tau_{z})_{z \in \mathbb{Z}^d}$ (resp.\  $(\tau_{z})_{z \in \mathbb{R}^d}$) preserves probability, that is $\mathbb{P}(\tau_z(E))=\mathbb{P}(E)$ for every $z \in \mathbb{Z}^d$ (resp.\ for every $z \in \mathbb{R}^d)$ and for every $E \in \mathcal{I}$. 
\end{enumerate}
 In addition, if it  also holds that   
 \begin{enumerate}
     \item[$(\mathrm{\tau}_5)$] given $E \in \mathcal{I}$, $\mathbb{P}(E \triangle \tau_z(E))=0$ for every $z\in \mathbb{Z}^d$ (resp.\  for every $z\in \mathbb{R}^d$) implies $\mathbb{P}(E)=0$ or $\mathbb{P}(E)=1$, 
 \end{enumerate}
 we say that $(\tau_z)_{z \in \mathbb{Z}^d}$ (resp.\  $(\tau_z)_{z \in \mathbb{R}^d})$ is \emph{ergodic}.
\end{definition}

We point out that $(\mathrm{\tau}_5)$   will be needed only to show that the homogenised $\Gamma$-limit is deterministic whereas  all other results in this paper hold also without this condition.    

\begin{definition}(Stationarity)
\label{stationarity}
We say that a random surface density $f$ is \emph{stationary} with respect to a ($d$-dimensional) group of $\mathbb{P}$-preserving transformations $(\tau_z)_{z \in \mathbb{Z}^{d}}$ (resp.\  $(\tau_z)_{z \in \mathbb{R}^{d}}$) on $(\Omega,\mathcal{I},\mathbb{P})$ if      
\begin{equation*}
\begin{split}
f(\tau_{z}\omega,x,\zeta,\nu)=f(\omega,x+z,\zeta,\nu), 
\end{split}
\end{equation*}
 for every $\omega \in \Omega$, $x \in \mathbb{R}^d, z \in \mathbb{Z}^d$ (resp.\  $z \in\mathbb{R}^d$), $\zeta \in \mathbb{R}^{d}\setminus\{0\}$, and $\nu \in \mathbb{S}^{d-1}$.
\end{definition}

\begin{definition}[Minimisation problem]
\label{definfimumproblem}
Let $L=SO(d)$ or $L=\mathbb{R}_{\mathrm{skew}}^{d \times d}$. For every $A \in \mathcal{A}$ and $v \in L^{0}(\mathbb{R}^d;\mathbb{R}^d)$ with $v|_A \in PR_0(A)$, we  define  $m^L_{\mathcal{E}}(v,A)$  by  
\begin{align*}
m^{SO(d)}_{\mathcal{E}}(v,A) &=\inf\big\{\mathcal{E}({\rm id} +  u,A): u\in PR_0(A)\:\: \text{and}\:\: u=   v  \:\: \text{near}\:\: \partial A\big\},\\
m^{\R^{d \times d}_{\rm skew}}_{\mathcal{E}}(v,A)& =\inf\big\{\mathcal{E}(u,A): u\in PR_{\R^{d \times d}_{\rm skew}}(A)\:\: \text{and}\:\: u=v\:\: \text{near}\:\: \partial A\big\},
\end{align*} 
where with ``$u=v\:\: \text{near}\:\: \partial A$'' we mean that there exists a neighbourhood $N \subset A$ of $\partial A$ such that $u=v$ on $N$.  
\end{definition}

For the definition of  $m^{SO(d)}_{\mathcal{E}}$, we  emphasise  that we do not consider the problem on the entire set $PR_{SO(d)}$ but for technical reasons (explained in   Remark \ref{remarkhomogenisationSO(d)}) only on the subset ${\rm id} + PR_0$. We refer to the discussion  in Remark \ref{rem: omportant}($\mathrm{ii}$)  why this change does not affect our analysis.    Definition \ref{definfimumproblem} will be usually used for competitors  of the form  
\begin{equation}\label{Lw}
u_{x,\zeta,\nu}(y):=\begin{cases}
\zeta & \text{if}\:\: \langle y-x, \nu \rangle \geq 0, \\
0  & \text{if}\:\: \langle y-x,\nu \rangle < 0
\end{cases}
\end{equation}
 for  $x \in \mathbb{R}^d$, $\zeta \in \mathbb{R}^{d}\setminus \{0\}$, and $\nu \in \mathbb{S}^{d-1}$.

\subsection{Stochastic homogenisation of surface energies for piecewise rigid functions}    In this section, we give our main result on the stochastic homogenisation of surface integrals. We start with  the existence of limits in asymptotic cell formulas that will be used
in the statement of the main result.

\begin{theorem}[Homogenisation   formula]
\label{HomogenizationFormula}
Let  $L=SO(d)$ or $L=\mathbb{R}_{\rm skew}^{d \times d}$. Let $f$ be a stationary random surface density with respect to a group $\{\tau_z\}_{z \in \mathbb{Z}^d}$ (resp.\  $\{\tau_z\}_{z \in \mathbb{R}^d}$) of $\mathbb{P}$-preserving transformations on $(\Omega,\mathcal{I}, \mathbb{P})$, and let $\mathcal{E}$ be the corresponding random surface energy, see \eqref{randomenergies}. In addition, for every $\omega \in \Omega$ let $m^L_{\mathcal{E}[\omega]}$ be defined as in Definition \ref{definfimumproblem}  with $\mathcal{E}[\omega]$ in place of $\mathcal{E}$.\\
 Then, there exists an event $\Omega' \in \mathcal{I}$, with $\mathbb{P}(\Omega')=1$, and a random surface density $f_{\mathrm{hom}} \colon \Omega \times \mathbb{R}^d \setminus \{0\} \times \mathbb{S}^{d-1} \to \mathbb{R}$, independent of $x$, such that for every $\omega \in \Omega'$, $x \in \mathbb{R}^d$, $\zeta \in \mathbb{R}^d \setminus \{0\}$,  $\nu \in \mathbb{S}^{d-1}$ and for every function $r\colon (0,\infty)\to (0,\infty)$, with $r(t)\geq t$ for every $t>0$, it holds
\begin{equation}
\label{eqhom}
f_{\mathrm{hom}}(\omega,\zeta,\nu)=\lim\limits_{t \to \infty}\frac{ m^L_{\mathcal{E}[\omega]}(u_{tx,\zeta,\nu},Q_{r(t)}^{\nu}(tx))}{{r(t)}^{d-1}},
\end{equation}
where $u_{tx,\zeta,\nu}$ is defined in \eqref{Lw}. Moreover, if $(\tau_z)_{z \in \mathbb{Z}^{d}}$ (resp.\  $(\tau_z)_{z \in \mathbb{R}^{d}}$) is ergodic, then $f_{\mathrm{hom}}$ does not depend on $\omega$ and we have 
\begin{equation}
\label{eqerg}
f_{\mathrm{hom}}(\zeta,\nu)=\lim\limits_{t \to \infty} \frac{1}{{r(t)}^{d-1}}\int_{\Omega}{  m^L_{\mathcal{E}[\omega]}(u_{0,\zeta,\nu},Q_{r(t)}^{\nu}(0))\,\mathrm{d}\mathbb{P}(\omega)}.
\end{equation}
\end{theorem}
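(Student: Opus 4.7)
The plan is to deduce \eqref{eqhom} from the Subadditive Ergodic Theorem of Akcoglu and Krengel \cite{krengel2011ergodic}, following the blueprint of \cite{cagnetti2017stochastic}. Since the surface integrand lives on a $(d-1)$-dimensional set but the underlying probability transformations act on $\mathbb{R}^d$, one needs to construct, for each fixed pair $(\zeta,\nu) \in (\mathbb{R}^d\setminus\{0\}) \times \mathbb{S}^{d-1}$, a $(d-1)$-dimensional subadditive stochastic process. Concretely, for a half-open rectangle $B \subset \nu^{\perp}$ with centre $x_B$, I set
$$
\mu_{\zeta,\nu}(\omega, B) := m^L_{\mathcal{E}[\omega]}\bigl(u_{x_B,\zeta,\nu}, \, C^\nu_B\bigr),
$$
where $C^\nu_B$ is a cylinder over $B$ of height exceeding $\mathrm{diam}(B)$, oriented so that its cross-section perpendicular to $\nu$ equals $B$. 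The normalisation $\mu_{\zeta,\nu}(\omega, B)/\mathcal{H}^{d-1}(B)$ is precisely the object we want to converge.

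To apply Akcoglu--Krengel, I would verify three properties. \emph{Subadditivity}: given pairwise disjoint rectangles $B_1,\dots,B_k$ whose union essentially equals $B$ and almost-optimal competitors $v_i$ on $C^\nu_{B_i}$ matching the boundary datum $u_{x_{B_i},\zeta,\nu}$ near $\partial C^\nu_{B_i}$, one may glue the $v_i$ together with the affine extension of $u_{x_B,\zeta,\nu}$ outside $\bigcup_i C^\nu_{B_i}$ to obtain an admissible $PR_L$-competitor on $C^\nu_B$; the bound $(f6)$ controls the additional surface energy produced on the lateral strips, which is lower order. \emph{Stationarity} with respect to the $(d-1)$-dimensional group on $\nu^\perp$ induced by $(\tau_z)_{z \in \mathbb{Z}^d}$ (restricted to translations compatible with $\nu^\perp$, or to $\mathbb{R}^d$-translations in the continuous case) follows by substituting $x \mapsto x+z$ in the surface integral and using the hypothesis on $f$.

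The delicate point, signalled in Remark~\ref{problemmeasurability}, is the \emph{measurability} $\omega \mapsto m^L_{\mathcal{E}[\omega]}(u_{x,\zeta,\nu}, Q)$. For $L = \{0\}$, this follows by a countable dense selection of piecewise constant competitors \cite{cagnetti2017stochastic}. For $L = \mathbb{R}^{d\times d}_{\mathrm{skew}}$, no such a priori countable parametrisation exists, so the strategy is to combine the truncation results of \cite[Section~7]{Friedrich_2020} with a compactness argument: for any $\omega$ the infimum is attained up to arbitrary error by competitors whose coefficients $(M_j, b_j)$ and whose partitions $(P_j)_j$ lie in a compact Polish set, on which $\mathcal{E}[\omega](\cdot, Q)$ depends continuously, and a projection theorem yields measurability of the infimum. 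For $L = SO(d)$, the restriction to $\mathrm{id} + PR_0$ in Definition~\ref{definfimumproblem} reduces the problem to a countable piecewise constant one, at which point the argument of \cite{cagnetti2017stochastic} applies directly.

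Once these three properties are in hand, Akcoglu--Krengel produces, for each fixed $(\zeta,\nu)$, a full-measure event $\Omega_{\zeta,\nu}$ on which the limit
$$
\varphi(\omega,\zeta,\nu) := \lim_{t\to\infty} \frac{\mu_{\zeta,\nu}(\omega, (-t/2, t/2)^{d-1})}{t^{d-1}}
$$
exists, is $(\tau_z)$-invariant, and coincides with $\mathbb{E}[\mu_{\zeta,\nu}(\cdot, (-1/2,1/2)^{d-1})]$ when the group is ergodic. To conclude \eqref{eqhom} in full generality I would then: (i) pass from cubes of side $t$ to cubes of side $r(t) \ge t$ centred at $tx$ by a sandwich argument between an inscribed and a circumscribed cube, using $(f5)$--$(f6)$ to estimate lateral contributions; (ii) shift the centre from $0$ to $tx$ by stationarity, noting that the resulting $\omega$-dependence $\tau_{\lfloor tx \rfloor}\omega$ is absorbed by $(\tau_4)$ into the same full-measure set $\Omega_{\zeta,\nu}$; (iii) intersect countably many such events over a dense set of $(\zeta,\nu)$ to produce a single universal $\Omega'$, and extend $f_{\mathrm{hom}}$ to all parameters via the continuity $(f2)$ in $\zeta$ and a uniform-approximation argument in $\nu$ built on $(f5)$--$(f7)$. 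The resulting $f_{\mathrm{hom}}$ inherits $(f1)$--$(f7)$ since these properties pass to infima and monotone limits, and ergodicity $(\tau_5)$ forces the invariant $\varphi$ to be $\mathbb{P}$-a.s.\ constant, giving \eqref{eqerg}. The hardest step remains the measurability for $L = \mathbb{R}^{d\times d}_{\mathrm{skew}}$, where the need to truncate coefficients of unbounded affine pieces without destroying the Caccioppoli structure is precisely where the tools from \cite{Friedrich_2020} become indispensable.
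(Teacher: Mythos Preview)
Your proposal tracks the paper's architecture closely: build a $(d-1)$-dimensional subadditive process from $m^L_{\mathcal{E}[\omega]}$, apply Akcoglu--Krengel to get the limit at $x=0$ for rational $(\zeta,\nu)$, extend by the continuity properties of Lemma~\ref{lemma2}, and finally pass to general $x$ and $r(t)$. You also correctly isolate measurability for $L=\mathbb{R}^{d\times d}_{\rm skew}$ as the principal novelty and describe its resolution---truncation to competitors with $\|u\|_\infty$ \emph{and} $\|\nabla u\|_\infty$ bounded, followed by the projection theorem on a weak*-compact subset of $BV$---accurately in spirit (though the truncation actually used is the elementary Lemma~\ref{lemma1}, not the more elaborate \cite[Section~7]{Friedrich_2020}).

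There is, however, a genuine gap in your step (ii). You write that shifting the centre from $0$ to $tx$ amounts to replacing $\omega$ by $\tau_{\lfloor tx\rfloor}\omega$ and that $(\tau_4)$ ``absorbs'' this into the same full-measure set. This does not work: for a \emph{fixed} $\omega$ you need the limit in $t$ to exist while the centre moves with $t$, and the fact that each $\tau_{\lfloor tx\rfloor}\omega$ lands in a full-measure set says nothing about convergence along this trajectory. The paper explicitly flags this passage as ``intricate'' and follows \cite[Theorem~6.1]{cagnetti2017stochastic} verbatim, which in turn relies on Birkhoff's Ergodic Theorem and the Conditional Dominated Convergence Theorem---tools that go well beyond a direct appeal to measure-preservation. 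A secondary technical point: for the discrete group $(\tau_z)_{z\in\mathbb{Z}^d}$, the induced $(d-1)$-dimensional group on $\nu^\perp$ requires the rationality construction $M_\nu R_\nu\in\mathbb{Z}^{d\times d}$ for $\nu\in\mathbb{S}^{d-1}\cap\mathbb{Q}^d$ (cf.\ \eqref{Tnu} and Proposition~\ref{subadditiveprocess}); your cylinder description works as stated only for the continuous group.
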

\EEE

Theorem \ref{HomogenizationFormula} is the analogue of \cite[Theorem 3.12]{cagnetti2017stochastic} for piecewise rigid functions. It states that the blow up limit defining the cell formula exists $\mathbb{P}$-almost surely and,  as expected,  does not depend on $x$.  This homogenisation formula crucially enters in  the first main result of this paper, namely the almost sure $\Gamma$-convergence and integral representation result for the random functionals $(\mathcal{E}_\eps)_\eps$, under stationarity assumption for the density $f$. We observe that in the case $L = SO(d)$ the minimum problem defining $f_{\mathrm{hom}}$ is  written in terms of piecewise constant functions. Thus, the homogenisation formula actually  coincides with the one identified in  \cite[Theorem 3.12]{cagnetti2017stochastic}.

\begin{theorem}[$\Gamma$-convergence]
\label{gammaconv}
 Let   $L=SO(d)$  or $L=\mathbb{R}_{\rm skew}^{d \times d}$. Let $f$ be a stationary random surface density with respect to a group $(\tau_z)_{z \in \mathbb{Z}^d}$ (resp.\  $(\tau_z)_{z \in \mathbb{R}^d}$) of $\mathbb{P}$-preserving transformations on $(\Omega,\mathcal{I},\mathbb{P})$. Let $\mathcal{E}_{\varepsilon}$ be as in \eqref{randomenergies}, let $\Omega' \in \mathcal{I}$ (with $\mathbb{P}(\Omega')=1$), $f_{\mathrm{hom}}$ as in Theorem \ref{HomogenizationFormula}, and let $\mathcal{E}_{\mathrm{hom}}\colon \Omega \times L^0(\mathbb{R}^d;\mathbb{R}^d)\times \mathcal{A} \to [0,\infty]$ be the surface functional defined by
\begin{equation}
\label{homogenisedenergy}
\mathcal{E}_{\mathrm{hom}}[\omega](u,A)=\begin{cases}
\int_{J_u \cap A}f_{\mathrm{hom}}(\omega,[u](x),\nu_u(x))\,\mathrm{d}\mathcal{H}^{d-1}(x) & u_{|_A}\in PR_L(A),\\
+\infty &  \text{otherwise}
\end{cases}
\end{equation}
for $\omega \in \Omega$ and $A \in \mathcal{A}$. 
Then,   
\begin{equation}\label{gamma}
\mathcal{E}_{\varepsilon}[\omega](\cdot, U )\:\: \Gamma\text{-converge}\:\:\text{to}\:\: \mathcal{E}_{\mathrm{hom}}(\cdot,U ) \ \ \ \ \text{with respect to convergence in measure on $U$}, 
\end{equation}
for every $\omega \in \Omega'$ and every $U  \in  \mathcal{A}_0 $.
Further, if $(\tau_z)_{z \in \mathbb{Z}^d}$ (resp.\  $(\tau_z)_{z \in \mathbb{R}^d}$) is ergodic, then  $\mathcal{E}_{\mathrm{hom}}$ is a deterministic functional,  i.e.\  it does not depend on $\omega$.
\end{theorem}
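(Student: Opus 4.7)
The plan is to follow the two-step strategy of \cite{cagnetti2017stochastic}: a \emph{deterministic step}, in which one identifies the $\Gamma$-limit for each fixed $\omega$ in the full-probability event $\Omega'$ given by Theorem~\ref{HomogenizationFormula}, and a \emph{stochastic step}, which is already encoded in that theorem (together with the ergodic statement). The deterministic step is powered by the $\Gamma$-compactness and integral representation results for surface functionals on $PR_L$ developed in \cite{Friedrich_2020}, and the conclusion is extracted via the Urysohn property of $\Gamma$-convergence.

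First, fix $\omega \in \Omega'$. The uniform bounds $(f5)$--$(f6)$ and the continuity $(f2)$ imply the fundamental estimate uniformly in $\varepsilon$, so by the localisation/compactness procedure of \cite{Friedrich_2020} (based on the global method of relaxation of \cite{BFLM}) there is a subsequence $\varepsilon_k\to 0$ (a priori depending on $\omega$) such that $\mathcal{E}_{\varepsilon_k}[\omega](\cdot, U)$ $\Gamma$-converges, with respect to convergence in measure on $U$, to a functional which admits the integral representation
\begin{equation*}
\widehat{\mathcal{E}}[\omega](u,U)=\int_{J_u\cap U}\hat f(\omega,x,[u](x),\nu_u(x))\,\mathrm{d}\mathcal{H}^{d-1}(x),\qquad u_{|_U}\in PR_L(U),
\end{equation*}
for every $U\in\mathcal{A}_0$, with $\hat f(\omega,\cdot,\cdot,\cdot)\in\mathcal{F}$. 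The derivation formula from \cite{Friedrich_2020} provides, at every Lebesgue point $x$, the blow-up characterisation
\begin{equation*}
\hat f(\omega,x,\zeta,\nu)=\lim_{\rho\to 0}\frac{1}{\rho^{d-1}}\,m^L_{\mathcal{E}_{\varepsilon_k}[\omega]}\big(u_{x,\zeta,\nu},Q^{\nu}_{\rho}(x)\big)+o(1),
\end{equation*}
where the minimum problems are exactly those of Definition~\ref{definfimumproblem} and depend on $L$ in the same way (competitors in $\mathrm{id}+PR_0$ when $L=SO(d)$, and in $PR_L$ when $L=\mathbb{R}^{d\times d}_{\mathrm{skew}}$). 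Rescaling by $t=\rho/\varepsilon_k$ transforms this expression into
\begin{equation*}
\hat f(\omega,x,\zeta,\nu)=\lim_{k\to\infty}\frac{m^L_{\mathcal{E}[\omega]}\big(u_{t_k x,\zeta,\nu},Q^{\nu}_{r(t_k)}(t_k x)\big)}{r(t_k)^{d-1}}
\end{equation*}
for a suitable choice of $r(t)\geq t$, and the freedom in $r$ granted by Theorem~\ref{HomogenizationFormula} allows to match this double limit with the single limit \eqref{eqhom}. Hence, for every $\omega\in\Omega'$ one obtains $\hat f(\omega,x,\zeta,\nu)=f_{\mathrm{hom}}(\omega,\zeta,\nu)$, and in particular $\hat f$ does not depend on $x$.

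Since the limit $\widehat{\mathcal{E}}[\omega]=\mathcal{E}_{\mathrm{hom}}[\omega]$ does not depend on the extracted subsequence, the Urysohn property of $\Gamma$-convergence promotes the subsequential statement to the full sequence, yielding \eqref{gamma} for every $\omega\in\Omega'$ and every $U\in\mathcal{A}_0$. The ergodic part of the statement follows immediately from the analogous conclusion in Theorem~\ref{HomogenizationFormula}: if $(\tau_z)$ is ergodic, then $f_{\mathrm{hom}}$ is independent of $\omega$, and therefore so is $\mathcal{E}_{\mathrm{hom}}$.

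The main obstacle I foresee lies in the correct application of the deterministic $\Gamma$-compactness and integral representation of \cite{Friedrich_2020} to the randomly oscillating integrands $f(\omega,\cdot/\varepsilon,\cdot,\cdot)$: one must verify that all uniform bounds, continuity and symmetry conditions needed there hold simultaneously for the whole family $(\mathcal{E}_\varepsilon[\omega])_\varepsilon$ on the full-probability event $\Omega'$, and that the fundamental estimate and the truncation arguments which make the analysis valid on $PR_L$ (in particular the dimension restriction $d\in\{2,3\}$) can be carried out independently of $\varepsilon$ and $\omega$. The second delicate point is the identification of $\hat f$ with $f_{\mathrm{hom}}$: this is where the flexibility in the scale function $r(t)$ in \eqref{eqhom} is essential, since it lets one interchange the order of the blow-up in $\rho$ and the homogenisation limit in $\varepsilon_k$ without losing the asymptotic cell characterisation.
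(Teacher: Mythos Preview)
Your strategy matches the paper's: fix $\omega\in\Omega'$, apply deterministic $\Gamma$-compactness and integral representation from \cite{Friedrich_2020}, identify the limiting density with $f_{\mathrm{hom}}$, and conclude via Urysohn; the ergodic part is read off Theorem~\ref{HomogenizationFormula}.

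Two imprecisions are worth correcting. First, the derivation formula from \cite{Friedrich_2020} gives $\hat f$ in terms of $m_{\widehat{\mathcal{E}}[\omega]}$, i.e.\ the cell problem for the \emph{limit} functional, not directly in terms of $m_{\mathcal{E}_{\varepsilon_k}[\omega]}$ as you wrote. The paper bridges this gap via \cite[Lemmas~6.3 and~7.5]{Friedrich_2020}, which yield $f''\le \hat f\le f'$ for the auxiliary densities
\[
f'(x,\zeta,\nu)=\limsup_{\rho\to 0}\liminf_{\varepsilon\to 0}\frac{m_{\mathcal{E}_\varepsilon}(u_{x,\zeta,\nu},Q^\nu_\rho(x))}{\rho^{d-1}},\qquad
f''(x,\zeta,\nu)=\limsup_{\rho\to 0}\limsup_{\varepsilon\to 0}\frac{m_{\mathcal{E}_\varepsilon}(u_{x,\zeta,\nu},Q^\nu_\rho(x))}{\rho^{d-1}}.
\]
Second, the identification with $f_{\mathrm{hom}}$ is simpler than you suggest and does not require the flexibility in $r(t)$. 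The rescaling $m_{\mathcal{E}_\varepsilon}(u_{x,\zeta,\nu},Q^\nu_\rho(x))=\varepsilon^{d-1}m_{\mathcal{E}}(u_{x/\varepsilon,\zeta,\nu},Q^\nu_{\rho/\varepsilon}(x/\varepsilon))$, combined with \eqref{eqhom} applied at the point $x/\rho$ with $r(t)=t$, gives $\lim_{\varepsilon\to 0}\rho^{1-d}m_{\mathcal{E}_\varepsilon}(u_{x,\zeta,\nu},Q^\nu_\rho(x))=f_{\mathrm{hom}}(\omega,\zeta,\nu)$ for \emph{each fixed} $\rho>0$; hence $f'=f''=f_{\mathrm{hom}}$ without any interchange-of-limits issue.
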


We proceed with convergence of infima and almost  minimisers  for certain boundary value problems. To this end, we fix $\Psi \subset   \Psi'$ with  $\Psi$ convex (for technical reasons, cf.\ Lemma \ref{truncation} and Remark \ref{convexity} below), such that $U :=\Psi' \in \mathcal{A}_0$ and   $V := \Psi' \setminus  \overline{\Psi} \in \mathcal{A}_0$.  We let 
\begin{align}\label{BBBC}
PR^{u_0}_L(U) := \lbrace  u \in L^{0}(\mathbb{R}^d;\mathbb{R}^d) \colon  u_{|_U} \in PR_L(U)\:\: \text{and}  \, u = u_0 \text{ on }  V\rbrace, 
\end{align}
where $u_0 \in PR_L(U) \cap L^\infty(U;\R^d)$ plays the role of a prescribed boundary condition on $\partial_D \Psi := \partial \Psi \cap \Psi'$.  Here, we suppose that $u_0$ is a Lipschitz function in a neighborhood  of $V$ in  $U$. This is a standard way to prescribe boundary conditions for functions exhibiting discontinuities. Note that  $V$ should consist of at least two connected components as otherwise minimisers are trivially a rigid motion. Based  on a fundamental estimate in $PR_L$, boundary conditions can be incorporated in the $\Gamma$-convergence result of Theorem \ref{gammaconv}. We refer to Theorem \ref{th_ bundary data} in the appendix for details.

\begin{corollary}[Convergence of infima]
\label{convminima}
Let $L=SO(d)$ or $L=\mathbb{R}_{\rm skew}^{d \times d}$. Let $f$ be a stationary random surface density with respect to a group $(\tau_z)_{z \in \mathbb{Z}^d}$ (resp.\ $(\tau_z)_{z \in \mathbb{R}^d}$) of $\mathbb{P}$-preserving transformations on $(\Omega,\mathcal{I},\mathbb{P})$, and let $\mathcal{E}_{\varepsilon}$ be as in \eqref{randomenergies}. Consider $\Omega' \in \mathcal{I}$ (with $\mathbb{P}(\Omega')=1$) and $f_{\mathrm{hom}}$ as in Theorem \ref{HomogenizationFormula}, and let $\mathcal{E}_{\mathrm{hom}}$ as in \eqref{homogenisedenergy}. 
 Let $u_0 \in PR_L(U) \cap L^\infty(U;\R^d)$ and $\omega \in \Omega'$.  Then, it holds  
\begin{equation}\label{eq: 3.8X}
 \lim\limits_{\varepsilon \to 0  } \, \inf_{u \in PR^{u_0}_L(U)}\mathcal{E}_{\varepsilon}[\omega](u, U) = \inf_{ u \in PR^{u_0}_L(U)}\mathcal{E}_{\mathrm{hom}}[\omega](u,U). 
\end{equation}
\end{corollary}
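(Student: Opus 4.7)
The plan is to apply the Fundamental Theorem of $\Gamma$-convergence (cf.\ \cite{DalMaso:93}) to the family of restricted functionals $(\mathcal{E}_\varepsilon[\omega](\cdot, U))_\varepsilon$ on the admissible class $PR^{u_0}_L(U)$. This reduces the assertion \eqref{eq: 3.8X} to two ingredients: the $\Gamma$-convergence of the constrained functionals, which is provided by Theorem \ref{th_ bundary data} in the appendix, combined with an equi-coercivity statement for almost-minimising sequences with respect to convergence in measure on $U$. Throughout, the random parameter $\omega \in \Omega'$ is fixed, so the problem is effectively deterministic and the full machinery of Theorem \ref{gammaconv} is available.

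For the $\Gamma$-liminf with boundary conditions, I would simply observe that if $u_\varepsilon \to u$ in measure on $U$ with $u_\varepsilon = u_0$ on $V$, then $u = u_0$ almost everywhere on $V$, so the trace constraint passes to the limit. For the $\Gamma$-limsup, a generic recovery sequence $v_\varepsilon \to u$ produced by Theorem \ref{gammaconv} need not satisfy $v_\varepsilon = u_0$ on $V$; the strategy is to glue $v_\varepsilon$ to $u_0$ in a thin layer inside $V$ adjacent to $\partial_D \Psi$, at the price of an arbitrarily small additional surface energy. This glueing is precisely the role of the fundamental estimate on $PR_L$ encoded in Theorem \ref{th_ bundary data}, and its applicability is ensured by the convexity of $\Psi$ together with the Lipschitz regularity of $u_0$ near $V$.

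Equi-coercivity supplies the lower bound. Taking $u_0$ itself as a competitor and invoking $(f6)$ yields a uniform upper bound on $\inf_{PR^{u_0}_L(U)} \mathcal{E}_\varepsilon[\omega](\cdot, U)$, so any almost-minimising sequence $(u_\varepsilon)$ satisfies $\mathcal{E}_\varepsilon[\omega](u_\varepsilon, U) \leq C$, and $(f5)$ then provides a uniform bound on $\mathcal{H}^{d-1}(J_{u_\varepsilon} \cap U)$. The main obstacle lies exactly here: a perimeter control on the underlying Caccioppoli partitions does not prevent the affine data $x \mapsto M_j x + b_j$ on individual pieces from diverging, which is particularly delicate for $L = \mathbb{R}^{d \times d}_{\mathrm{skew}}$ since the matrices $M_j$ are themselves unbounded. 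The boundary pinning $u_\varepsilon = u_0$ on $V$, together with the $L^\infty$-bound on $u_0$, forces boundedness on those components of the partition that meet $V$; this $L^\infty$ control then propagates to the remaining pieces through a truncation argument in the spirit of \cite[Section 7]{Friedrich_2020}, at the cost of an infinitesimal energy perturbation. Extracting a subsequence converging in measure on $U$ to some $u \in PR^{u_0}_L(U)$ and applying the $\Gamma$-liminf inequality of Theorem \ref{gammaconv} finally yields $\inf_{PR^{u_0}_L(U)} \mathcal{E}_{\mathrm{hom}}[\omega](\cdot, U) \leq \mathcal{E}_{\mathrm{hom}}[\omega](u, U) \leq \liminf_\varepsilon \inf_{PR^{u_0}_L(U)} \mathcal{E}_\varepsilon[\omega](\cdot, U)$, completing the proof.
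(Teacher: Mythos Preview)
Your proposal is correct and follows essentially the same strategy as the paper: use Theorem \ref{th_ bundary data} for the upper bound via constrained recovery sequences, and for the lower bound truncate an almost-minimising sequence via \cite[Section 7]{Friedrich_2020} (the paper's Lemma \ref{truncation}) to gain $L^\infty$-control, extract a convergent subsequence in measure, and apply the $\Gamma$-liminf of Theorem \ref{gammaconv}. Two small inaccuracies worth flagging: the truncation of Lemma \ref{truncation} does not ``propagate'' $L^\infty$-control from $V$ but is applied directly on the convex set $\Psi$ (the convexity enters through Remark \ref{convexity} to control $\mathcal{H}^{d-1}(R\cap\partial\Psi)$, after which one glues with $u_0$ on $V=U\setminus\overline{\Psi}$); and the energy error from truncation is of order $\theta$, not infinitesimal, so the paper makes the $\theta\to 0$ diagonal argument and closes with Urysohn's lemma explicitly.
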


\begin{corollary}[Convergence of almost minimisers]
\label{convergencealmostminimisers}
Under the hypotheses of Corollary \ref{convminima}, for every $\theta>0$ there exists a  sequence $(\eps_j)_j$ converging to zero and a sequence $(u^{\theta}_{\varepsilon_j})_{j} \subset  PR^{u_0}_L(U)$, uniformly bounded in $L^{\infty}(U;\mathbb{R}^d)$,   and $u^{\theta} \in PR^{u_0}_L(U) \cap L^{\infty}(U;\mathbb{R}^d)$ such that $u_{\varepsilon_j}^{\theta}\to u^{\theta}$ in measure on $U$ and 
\begin{align*}
\mathcal{E}_{\varepsilon_j}[\omega](u^{\theta}_{\varepsilon_j}, U) & \leq \inf_{v \in PR^{u_0}_L(U)}\mathcal{E}_{\varepsilon_j}[\omega](v, U)+ \theta,    \\ 
\mathcal{E}_{\mathrm{hom}}[\omega](u^{\theta},U) & \leq \inf_{v \in PR^{u_0}_L(U)}\mathcal{E}_{\mathrm{hom}}[\omega](v,U)+\theta, 
\end{align*} 
 
\end{corollary}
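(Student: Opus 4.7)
The plan is to combine the convergence of infima from Corollary~\ref{convminima} with a compactness and $\Gamma$-liminf argument, using a suitable truncation to obtain the $L^\infty$ bound. Throughout, fix $\omega \in \Omega'$ and $\theta>0$.

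\textbf{Step 1: Almost minimising sequences.} For every $\varepsilon>0$, select $u_\varepsilon^\theta \in PR^{u_0}_L(U)$ with
\[
\mathcal{E}_\varepsilon[\omega](u_\varepsilon^\theta, U) \le \inf_{v \in PR^{u_0}_L(U)} \mathcal{E}_\varepsilon[\omega](v, U) + \tfrac{\theta}{2}.
\]
By Corollary~\ref{convminima}, the right-hand side is bounded uniformly in $\varepsilon$, hence $(u_\varepsilon^\theta)_\varepsilon$ has equibounded surface energy.

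\textbf{Step 2: Uniform $L^\infty$ truncation.} Since $u_0 \in L^\infty(U;\R^d)$, we expect to invoke a truncation procedure (of the type referenced before the statement, cf.\ Lemma~\ref{truncation}) which replaces $u_\varepsilon^\theta$ by a new competitor in $PR^{u_0}_L(U)$ with the same boundary data, uniformly bounded by a constant depending only on $\|u_0\|_\infty$, and whose energy does not increase (or increases by a quantity controlled by properties $(f3)$--$(f4)$, which can be absorbed into another $\theta/4$). This produces a sequence, still denoted $(u_\varepsilon^\theta)_\varepsilon \subset PR^{u_0}_L(U)$, uniformly bounded in $L^\infty(U;\R^d)$, satisfying the first claimed inequality (up to replacing $\theta/2$ by a larger constant bounded by $\theta$).

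\textbf{Step 3: Compactness and passage to the limit.} By the uniform bound on surface energy and $L^\infty$ norm, we invoke the compactness in measure available for piecewise rigid functions with equibounded jump sets (a standard ingredient from the $\Gamma$-convergence framework used in Theorem~\ref{gammaconv}, together with the boundary-value version in Theorem~\ref{th_ bundary data}). Extracting a subsequence $\varepsilon_j \to 0$, we obtain $u^\theta \in L^\infty(U;\R^d)$ with $u^\theta_{\varepsilon_j} \to u^\theta$ in measure on $U$. Since the sequence coincides with $u_0$ near $\partial \Psi$ and the fundamental estimate preserves boundary conditions in the limit, $u^\theta \in PR^{u_0}_L(U)$.

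\textbf{Step 4: Conclusion via $\Gamma$-liminf and convergence of infima.} Applying the $\Gamma$-liminf inequality from Theorem~\ref{gammaconv} together with Corollary~\ref{convminima}, we get
\[
\mathcal{E}_{\mathrm{hom}}[\omega](u^\theta, U) \;\le\; \liminf_{j \to \infty} \mathcal{E}_{\varepsilon_j}[\omega](u^\theta_{\varepsilon_j}, U) \;\le\; \lim_{j\to\infty} \inf_{v \in PR^{u_0}_L(U)}\mathcal{E}_{\varepsilon_j}[\omega](v, U) + \tfrac{\theta}{2} \;=\; \inf_{v \in PR^{u_0}_L(U)}\mathcal{E}_{\mathrm{hom}}[\omega](v, U) + \tfrac{\theta}{2},
\]
which yields the second inequality. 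The main obstacle is Step~2: producing the uniform $L^\infty$ bound while remaining in $PR^{u_0}_L(U)$ without increasing the energy beyond a controlled error. This is where the convexity assumption on $\Psi$ (and the resulting truncation lemma) plays its essential role, as the construction must be compatible both with the rigid-motion structure on each Caccioppoli component and with the prescribed boundary datum on $V$.
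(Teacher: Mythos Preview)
Your approach is correct and mirrors the paper's proof, which simply recycles the truncated sequence $(u^\theta_{\varepsilon_n})_n$ constructed in the proof of Corollary~\ref{convminima} (applied with $\theta/C$ in place of $\theta$) together with \eqref{eq3.9}, \eqref{eq4.30}, and \eqref{eq: 3.8X}. One minor correction: the energy increase in Step~2 is controlled by $(f6)$ and the small perimeter of the rest set from Lemma~\ref{truncation} (plus Remark~\ref{convexity} to handle the portion of $R$ meeting $\partial\Psi$ when restoring the boundary datum), not by $(f3)$--$(f4)$.
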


\begin{remark}[Compactness issue]
\label{remarkcompactness}
{\normalfont Note that $\Gamma$-convergence usually implies convergence of infima, as we state in  Corollary \ref{convminima}. Obtaining also   convergence of (almost) minimisers is a delicate issue since a compactness property is required  ensuring  that subsequences converge at least in measure.  In     \cite[Corollary 3.14]{cagnetti2017stochastic} this problem is circumvented by adding a fidelity term. Without such fidelity terms, compactness of minimising sequences has been shown for some free-discontinuity problems \cite{friedrichcompactness, FriPerSol20a}  which however does not cover the case of piecewise rigid functions. In  Corollary \ref{convergencealmostminimisers}, we provide a result in that direction for almost minimisers up to some error term $\theta$, resorting  to a truncation technique in Lemma \ref{truncation} below. The latter allows us to apply a compactness result in $PR_L$, see \cite[Lemma 3.3]{Friedrich_2020}, and thus to   mimic the proof of the fundamental theorem of $\Gamma$-convergence. }
\end{remark}
\EEE


The results announced in this subsection are proved in Section \ref{sec 4}.  

\subsection{Random surface energies defined on asymptotically piecewise rigid functions}\label{sec: resul2}

We now introduce a  nonlinear random Griffith model for nonsimple materials and discuss its limit when configurations become asymptotically piecewise rigid. Let $W\colon \Omega \times \mathbb{R}^d \times \mathbb{R}^{d \times d}\to [0,\infty)$ be  a random elastic energy density which has a single well and is  frame indifferent. More precisely,  we suppose  that there exists $c>0$ such that  
\begin{enumerate}
    \item[$(\mathrm{W1})$] (frame indifference) $W(\omega,x,RM)=W(\omega,x,M)$ for all $\omega \in \Omega$, $x \in \mathbb{R}^d$, $M \in \mathbb{R}^{d \times d}$ and $R \in SO(d)$, 
    \item[$(\mathrm{W2})$] (lower bound) $W(\omega,x,M)\geq c\,\mathrm{dist}^2(M,SO(d))$ for all $\omega \in \Omega$, $x \in \mathbb{R}^d$, and $M \in \mathbb{R}^{d \times d}$, and  $W(\omega,x,M)=0$  if and only if  $M \in SO(d)$.
\end{enumerate} 
For given $A \in \mathcal{A}$, recall the definition of $GSBV^2(A;\mathbb{R}^d)$ in \eqref{GSBV} and define 
\begin{equation}\label{eq: GSBV22}
    GSBV_2^2(A;\mathbb{R}^d):=\big\{y \in GSBV^2(A;\mathbb{R}^d): \nabla y \in GSBV^2(A;\mathbb{R}^{d\times d})\big\},
\end{equation}
where the approximate differential and the jump set of $\nabla y$ will be denoted by $\nabla^2 y$ and $J_{\nabla y}$, respectively.  For $\delta, \eps >0$ and $\beta \in (0,1)$, we introduce the energies $ \mathcal{F}_{\varepsilon, \delta} \colon \Omega \times L^0(\mathbb{R}^d;\mathbb{R}^{d})\times \mathcal{A}\to [0,\infty]$ as 
\begin{align}
\label{completenergy}
{\mathcal{F}}_{\varepsilon,\delta}[\omega](y,A)= 
 \int_{A} \frac{1}{\delta^{2}} W\Big(\omega,\frac{x}{\varepsilon}, \nabla y(x)\Big)+   \frac{1}{\delta^{2\beta}} |\nabla^2 y(x)|^2\, \mathrm{d}x + \int_{A}f\Big(\omega,\frac{x}{\varepsilon},[y](x),\nu_y(x)\Big)\, \mathrm{d}\mathcal{H}^{d-1}(x)
\end{align}
whenever $y_{|_A} \in GSBV_2^2(A;\mathbb{R}^d)$, $J_{\nabla y} \cap A \subset J_y \cap A$, and ${\mathcal{F}}_{\varepsilon}[\omega](u,A) = + \infty$ else.

As before, $\eps$ stands for the size of the microstructure, whereas $\delta$ represents the size of the strain. Indeed, since $W$ grows quadratically around $SO(d)$,  for a  configuration $y$ with finite energy \eqref{completenergy}, the strain $\nabla y$ is typically $\sim \delta$ close to the set of rotations, cf.\ e.g.\  \cite{dalnegper01, friedrich2017derivation, friedrich2019griffith}. The model is a variant of \cite{friedrich2019griffith}, which   we call a Griffith-type model for \emph{nonsimple materials}  \cite{Toupin1962ElasticMW, Toupin1962ElasticMW2} due to the presence of the second term. On the one hand, this term enhances the rigidity properties of the nonlinear model. At the same time, the scaling factor $ \frac{1}{2\beta}$ with $\beta<1$ ensures that this contribution vanishes in the small-strain limit, see \cite{friedrich2019griffith}.  \EEE  Eventually, we mention that in our model the  regularisation  effect acts on the entire intact region $A \setminus  J_y $ of the material which is modeled by the condition $J_{\nabla y} \subset   J_y$ (to be understood $\mathcal{H}^{d-1}$-a.e.).

In contrast to  \cite{friedrich2019griffith}, we treat  the case of a random surface energy, and study the simultaneous limit of small strains $\delta \to 0$ and homogenisation  $\eps \to 0$. As the effective limit is described purely by a surface energy, the exact form of the elastic energy density $W$ is irrelevant. For sake of generality, we still allow it to depend on   $\omega$ and $x$, although this does not play a role for our result.

We now present the $\Gamma$-limit of $\mathcal{F}_{\eps,\delta}$ in the simultaneous limit $\eps, \delta \to 0$. To this end, given a sequence $(\delta_\eps)_\eps$ with $\delta_\eps \to 0$ as $\eps \to 0$, we write $\mathcal{F}_\eps[\omega]:= \mathcal{F}_{\eps, \delta_\eps}[\omega]$ for each $\omega \in \Omega$ for simplicity. \EEE

\begin{theorem}[Homogenisation for asymptically piecewise rigid functions: the nonlinear case] \EEE
\label{brittlemateriallimit}
Let $L=SO(d)$ and $\beta \in (0,1)$. Let  $(\delta_\eps)_\eps\subset (0,1)$ with $\delta_\eps \to 0$ as $\eps \to 0$. Let $f$ be a  stationary random surface density with respect to a group $(\tau_z)_{z \in \mathbb{Z}^d}$ (resp.\  $(\tau_z)_{z \in \mathbb{R}^d}$) of $\mathbb{P}$-preserving transformations on $(\Omega,\mathcal{I},\mathbb{P})$. Let ${\mathcal{F}}_{\varepsilon}$ be as in \eqref{completenergy}, and  let $\Omega' \in \mathcal{I}$ with $\mathbb{P}(\Omega')=1$ as well as $f_{\mathrm{hom}}$ as in Theorem \ref{HomogenizationFormula}. Then, 
\begin{equation*}
{\mathcal{F}}_{\varepsilon}  [\omega](\cdot, U)\:\:  \text{$\Gamma$-converge}\:\:\text{to}\:\: \mathcal{E}_{\mathrm{hom}}[\omega](\cdot, U)  \ \ \ \ \text{with respect to convergence in measure on $U$} 
\end{equation*}
for every $\omega \in \Omega'$ and every  $U \in \mathcal{A}_0$,  where $\mathcal{E}_{\mathrm{hom}}\colon \Omega \times L^0(\mathbb{R}^d;\mathbb{R}^d)\times \mathcal{A} \to [0,\infty]$ is the surface functional defined by \eqref{homogenisedenergy} in the case $L=SO(d)$. 
\end{theorem}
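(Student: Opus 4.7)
The plan is to prove the two halves of $\Gamma$-convergence separately, bootstrapping from the already established surface $\Gamma$-convergence result Theorem \ref{gammaconv} for $\mathcal{E}_\eps$ on $PR_{SO(d)}$. The overall idea is that every finite-energy competitor for $\mathcal{F}_\eps$ can be replaced, up to an error vanishing with $\delta_\eps$, by a piecewise rigid competitor for $\mathcal{E}_\eps$, while every piecewise rigid limit configuration has zero bulk and second-gradient contributions and is thus an admissible competitor for both families simultaneously. Throughout, we fix $\omega\in\Omega'$ with $\mathbb{P}(\Omega')=1$ as provided by Theorem \ref{gammaconv}.

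For the $\Gamma$-\emph{limsup inequality}, let $y\in PR_{SO(d)}(U)$ with $\mathcal{E}_{\mathrm{hom}}[\omega](y,U)<\infty$. Since $y=\sum_j(M_jx+b_j)\chi_{P_j}$ with $M_j\in SO(d)$, one has $\nabla y(x)\in SO(d)$ a.e., hence $W(\omega,x/\eps,\nabla y)=0$ by (W2), and $\nabla^2 y=0$ a.e.\ with $J_{\nabla y}\subset\bigcup_j\partial^*P_j\subset J_y$, so $y\in GSBV^2_2(U;\R^d)$ and $\mathcal{F}_\eps[\omega](y,U)=\mathcal{E}_\eps[\omega](y,U)$. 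Theorem \ref{gammaconv} provides a recovery sequence $y_\eps\in PR_{SO(d)}(U)$ converging in measure to $y$ with $\mathcal{E}_\eps[\omega](y_\eps,U)\to\mathcal{E}_{\mathrm{hom}}[\omega](y,U)$; the very same sequence satisfies $\mathcal{F}_\eps[\omega](y_\eps,U)=\mathcal{E}_\eps[\omega](y_\eps,U)$ by the argument just given, so it is a recovery sequence for $\mathcal{F}_\eps$ as well.

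For the $\Gamma$-\emph{liminf inequality}, let $y_\eps\to y$ in measure on $U$ with $\liminf_\eps\mathcal{F}_\eps[\omega](y_\eps,U)=:\Lambda<\infty$. Extract a subsequence realising the liminf. The control $\int_U W(\omega,x/\eps,\nabla y_\eps)\le C\delta_\eps^2$ combined with (W2) yields $\|\mathrm{dist}(\nabla y_\eps,SO(d))\|_{L^2(U)}^2\le C\delta_\eps^2\to 0$, while the second-gradient penalty ensures the rigidity-type regularity needed for Proposition \ref{piecewiserigidapproximation}. Applying that proposition produces $\tilde y_\eps\in PR_{SO(d)}(U)$ with $\mathcal{L}^d(\{\tilde y_\eps\neq y_\eps\})\to 0$ (hence $\tilde y_\eps\to y$ in measure) and a surface estimate of the form
\begin{equation*}
\int_{J_{\tilde y_\eps}\cap U} f\Big(\omega,\tfrac{x}{\eps},[\tilde y_\eps],\nu_{\tilde y_\eps}\Big)\dH^{d-1}\le \mathcal{F}_\eps[\omega](y_\eps,U)+o(1).
\end{equation*}
In particular $\mathcal{E}_\eps[\omega](\tilde y_\eps,U)\le\mathcal{F}_\eps[\omega](y_\eps,U)+o(1)$, and then Theorem \ref{gammaconv} applied to $\tilde y_\eps\to y$ forces $y|_U\in PR_{SO(d)}(U)$ and
\begin{equation*}
\mathcal{E}_{\mathrm{hom}}[\omega](y,U)\le\liminf_{\eps\to 0}\mathcal{E}_\eps[\omega](\tilde y_\eps,U)\le\Lambda.
\end{equation*}

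The main obstacle is Proposition \ref{piecewiserigidapproximation}: it must simultaneously (i) exploit the nonsimple-material term $\delta^{-2\beta}|\nabla^2 y|^2$ to promote the $L^2$-smallness of $\mathrm{dist}(\nabla y_\eps,SO(d))$ to a genuinely piecewise $SO(d)$-valued gradient, (ii) produce a Caccioppoli partition on which a piecewise Poincaré inequality in the spirit of \cite{friedrich2018piecewise} converts the bulk smallness into an actual piecewise rigid motion $\tilde y_\eps$, and (iii) control the surface measure of $\tilde y_\eps$ in terms of that of $y_\eps$ plus the bulk error, so that the additional ``seams'' introduced by the partitioning are paid for by the vanishing bulk energy and therefore do not spoil the liminf thanks to the bound $f\le c_2$. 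Given that proposition, all remaining steps are essentially a soft application of Theorem \ref{gammaconv} together with the elementary observation that piecewise rigid functions are flat in the $W$ and $\nabla^2$ variables.
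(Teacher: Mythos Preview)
Your proposal is correct and follows the same route as the paper: the limsup uses recovery sequences from Theorem \ref{gammaconv}, which lie in $PR_{SO(d)}$ and hence have vanishing bulk and second-gradient energy; the liminf replaces each $y_\eps$ by a piecewise rigid $\tilde y_\eps$ via Proposition \ref{piecewiserigidapproximation} and then applies the liminf of Theorem \ref{gammaconv}. One small imprecision: Proposition \ref{piecewiserigidapproximation} actually delivers $\|y_\eps-\tilde y_\eps\|_{L^\infty(U)}\le C\delta_\eps^{2\gamma-\beta}$ and $\mathcal{H}^{d-1}((J_{\tilde y_\eps}\cap U)\setminus J_{y_\eps})\le C\delta_\eps^{\beta-\gamma}$, not a smallness of $\mathcal{L}^d(\{\tilde y_\eps\neq y_\eps\})$; the $L^\infty$ estimate is what is needed (together with $(f2)$ and $(f6)$) to compare the jump heights and obtain the surface inequality you state, and it also gives convergence in measure of $\tilde y_\eps$ to $y$ directly.
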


The result is expectable since for a sequence $(y_\eps)_\eps$  with bounded energy the scaling of the elastic energy implies $\int_\Omega {\rm dist}^2(\nabla y_\eps, SO(d)) \, {\rm d}x \to 0$ as $\eps \to 0$. However, whereas it is well known that $GSBV^2$-functions $y$ with  ${\rm dist}^2(\nabla y, SO(d)) = 0 $ a.e.\ are piecewise rigid \cite{Chambolle2006PIECEWISERA}, the approximate case $\int_\Omega {\rm dist}^2(\nabla y_\eps, SO(d)) \, {\rm d}x \approx 0$ is more delicate. It relies on an approximation result of functions with small elastic energy by piecewise rigid functions, making use of the second-order regularisation. We refer to Proposition  \ref{piecewiserigidapproximation} below for details.

Eventually, we address the question how a simultaneous limit $\eps,\delta  \to 0$ can lead to a  linearised  model defined on $PR_L$ for $L = \R^{d \times d}_{\rm skew}$. As in \cite{friedrich2017derivation, friedrich2019griffith}, this should involve a suitable  linearisation  in terms of a rescaled displacement field $u$, related to the  deformation $y$ by
\begin{align}\label{uuu}
 u := \frac{y - {\rm id}}{\delta^\alpha}  \quad \quad \text{for some exponent $\alpha >0$}.
\end{align}
As seen in \cite{friedrich2017derivation, friedrich2019griffith}, the choice $\alpha =1$ leads to a Griffith model in the small-strain limit $\delta \to 0$ featuring both elastic and surface energy.   Consequently, to obtain pure surface energies in the limit, we suppose $\alpha \in (0,1)$ in the following. Heuristically, since  $ {\rm dist}(\nabla y, SO(d))  \sim \delta $, by a Taylor expansion at $\mathbb{I}$  (see \eqref{eq4.65XXX}  below for details) we  get 
\begin{align}\label{negligengli}
\delta  \sim     {\rm dist}(\nabla y, SO(d))  &  =  \frac{1}{2}\Big|  (\nabla y - \mathbb{I})^T +   (\nabla y - \mathbb{I}) \Big| + \mathcal{O}(|\nabla y - \mathbb{I}|^2)  =  \delta^\alpha  |  e(u)| +   \delta^{2\alpha }\mathcal{O}(|\nabla u|^2),    
\end{align} 
where we use the notation $e(u) \coloneqq \frac{1}{2}((\nabla u)^T +  \nabla u ) $. Supposing that the higher-order effect is negligible, this shows $|e(u)|  \sim \delta^{1-\alpha}$, and therefore with $\alpha <1$ we can expect $e(u) \to 0$ as $\delta \to 0$, i.e.\ $u$ is (asymptotically) piecewise rigid. Clearly, this approximation is wrong whenever the deformation gradient $\nabla y$ is not close to the identity. As shown in \cite{friedrich2017derivation}, this calls for a  linearisation  around various different rigid motions, in connection with a suitable partition of the domain. To rule out such  intricate formulation for simplicity, we will assume in the following that
\begin{align}\label{constraints}
|\nabla y(x) - \mathbb{I}| \le \delta^{ \alpha_*} \quad \text{a.e.\ in $A$, for some $\alpha_*  \in (\alpha/2, \alpha)$}. 
\end{align}
This allows us to  linearise around the identity and it also ensures that the higher-order term in \eqref{negligengli} is negligible. To simplify the exposition, we will only treat the case $\alpha_* = 3\alpha/4$ in the sequel.

We now write the energy \eqref{completenergy} in terms of the rescaled displacment fields $u$. Since jump heights $[u] \sim 1$ correspond to $[y] \sim \delta^\alpha$, it is also meaningful to rescale the random surface density,  i.e.\ to replace the surface part of \eqref{completenergy} by
\begin{align}\label{completenergyXXX}
\int_{A}f\Big(\omega,\frac{x}{\varepsilon},\delta^{-\alpha} [y](x),\nu_y(x)\Big)\, \mathrm{d}\mathcal{H}^{d-1}(x). 
\end{align} 
Then, plugging $u$ defined in \eqref{uuu} into \eqref{completenergy} (with surface part given in \eqref{completenergyXXX}), and respecting \eqref{constraints}, we define the energies $\mathcal{F}^{\rm lin}_{\varepsilon, \delta} \colon \Omega \times L^0(\mathbb{R}^d;\mathbb{R}^{d})\times \mathcal{A}\to [0,\infty]$ as 
\begin{align}
\label{completenergy-lin}
{\mathcal{F}}^{\rm lin}_{\varepsilon,\delta}[\omega]( u ,A)= 
 \int_{A} \frac{1}{\delta^{2}} W\Big(\omega,\frac{x}{\varepsilon}, \mathbb{I} + \delta^\alpha  \nabla u(x)\Big)+   \delta^{2(\alpha - \beta)} |\nabla^2 u(x)|^2\, \mathrm{d}x + \int_{A}f\Big(\omega,\frac{x}{\varepsilon},[u](x),\nu_u(x)\Big)\, \mathrm{d}\mathcal{H}^{d-1}
\end{align}
 whenever 
\begin{align}\label{for sure needed}
u _{|_A} \in GSBV_2^2(A;\mathbb{R}^d), \ J_{\nabla u} \cap A \subset J_u \cap A,  \ |\nabla u| \le \delta^{ -\alpha/4}  \ \text{a.e.\ in } A,
\end{align}
 and ${\mathcal{F}}^{\rm lin}_{\varepsilon,\delta}[\omega](u,A) = + \infty$ else.  Given $(\delta_\eps)_\eps$, we write ${\mathcal{F}}^{\rm lin}_{\varepsilon}[\omega] = {\mathcal{F}}^{\rm lin}_{\varepsilon,\delta_\eps}[\omega] $ for all $\omega \in \Omega'$. We now formulate our main result.  For technical reasons, we need a further assumption, namely that in the setting of Theorem \ref{gammaconv} for $L = \R^{d \times d}_{\rm skew}$ there exists  $\kappa>0$ such that  for each $\omega \in \Omega'$,  $U  \in  \mathcal{A}_0 $, and $u \in PR_L(U)$ there exists a recovery sequence $(u_{\varepsilon})_{\varepsilon} \subset PR_L(U)$ with controlled derivatives, i.e.\  $u_\eps \to u$  in measure on $U$, and  
 \begin{align}\label{Condi}
 \sup_{\varepsilon}\Vert \varepsilon^{1+\kappa}\nabla u_{\varepsilon}\Vert_{L^{\infty}(U; \R^{d\times d} )}<\infty, \quad \lim_{\varepsilon \to 0}\mathcal{E}_{\varepsilon}[\omega](u_{\varepsilon},U)=\mathcal{E}_{\mathrm{hom}}(u,U),  
 \end{align}
 see also Remark \ref{a remark} below.

\begin{theorem}[Homogenisation for asymptotically piecewise rigid functions: linearisation]
\label{brittlemateriallimit2}
Let $L=\mathbb{R}_{\rm skew}^{d \times d}$.  Let $ \alpha \in (0,1)$ and $\beta \in (\alpha,1)$. Suppose that \eqref{Condi} holds for $\kappa >0$.  {Let  $(\delta_\eps)_\eps\subset (0,1)$ with $\delta_\eps \to 0$ and  $\eps^{1+\kappa} \delta_\eps^{-\alpha/4} \to \infty$.} \EEE Let $f$ be a stationary random surface density with respect to a group $(\tau_z)_{z \in \mathbb{Z}^d}$ (resp.\  $(\tau_z)_{z \in \mathbb{R}^d}$) of $\mathbb{P}$-preserving transformation on $(\Omega,\mathcal{I},\mathbb{P})$, let $\Omega' \in \mathcal{I}$ with $\mathbb{P}(\Omega')=1$ as well as $f_{\mathrm{hom}}$ as in Theorem \ref{HomogenizationFormula}. 
 Then, 
\begin{equation*}
{\mathcal{F}}^{\mathrm{lin}}_{\varepsilon}[\omega](\cdot,U)\:\: \text{$\Gamma$-converge}\:\:\text{to}\:\: \mathcal{E}_{\mathrm{hom}}[\omega]  (\cdot,U) \ \ \ \ \text{with respect to convergence in measure on $U$}, 
\end{equation*}
for every $\omega \in \Omega'$ and $U \in \mathcal{A}_0$, where $\mathcal{E}_{\mathrm{hom}}$ is   defined by \eqref{homogenisedenergy} in the case $L=\R^{d \times d}_{\rm skew}$.   \EEE
\end{theorem}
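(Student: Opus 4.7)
The plan is to establish the $\Gamma$-liminf and $\Gamma$-limsup inequalities separately, in each case reducing to the surface functional $\mathcal{E}_\eps[\omega]$ on $PR_{\R^{d\times d}_{\rm skew}}(U)$, for which Theorem \ref{gammaconv} applies. The bridge is the (linear variant of) Proposition \ref{piecewiserigidapproximation} on one side, and the explicit recovery sequence provided by hypothesis \eqref{Condi} on the other; the Taylor expansion of $W$ around $\mathbb{I}$ via frame indifference (W1) and the bulk lower bound (W2) controls the elastic contribution on both sides.

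For the $\Gamma$-liminf, I fix $\omega \in \Omega'$ and take $u_\eps \to u$ in measure on $U$ with $\sup_\eps \mathcal{F}^{\rm lin}_\eps[\omega](u_\eps,U) < \infty$. From (W2) and the bulk scaling $\delta_\eps^{-2}$ I infer $\int_U {\rm dist}^2(\mathbb{I}+\delta_\eps^\alpha \nabla u_\eps,SO(d))\,{\rm d}x \le C\delta_\eps^2$. The constraint $|\nabla u_\eps| \le \delta_\eps^{-\alpha/4}$ in \eqref{for sure needed} forces $\mathbb{I}+\delta_\eps^\alpha\nabla u_\eps$ to lie in a uniformly small neighbourhood of $\mathbb{I}$, so (W1) together with polar decomposition and Taylor expansion at $\mathbb{I}$ yield $\int_U |e(u_\eps)|^2\,{\rm d}x \to 0$. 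The super-penalised second-gradient bound (recall $\beta > \alpha$, hence $\delta_\eps^{2(\alpha-\beta)}\to\infty$ forces $\int_U |\nabla^2 u_\eps|^2 \to 0$) supplies the additional regularity needed to apply the linear version of Proposition \ref{piecewiserigidapproximation}, yielding $\tilde u_\eps \in PR_{\R^{d\times d}_{\rm skew}}(U)$ with $\tilde u_\eps \to u$ in measure and $\mathcal{H}^{d-1}(J_{\tilde u_\eps}\triangle J_{u_\eps})\to 0$. In particular $u \in PR_{\R^{d\times d}_{\rm skew}}(U)$, and invoking $f \ge 0$ together with the $\Gamma$-liminf half of Theorem \ref{gammaconv} along $\tilde u_\eps$ gives $\liminf_\eps \mathcal{F}^{\rm lin}_\eps[\omega](u_\eps,U) \ge \mathcal{E}_{\rm hom}[\omega](u,U)$.

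For the $\Gamma$-limsup, given $u \in PR_{\R^{d\times d}_{\rm skew}}(U)$ I apply hypothesis \eqref{Condi} to obtain a recovery sequence $u_\eps \in PR_L(U)$ for $\mathcal{E}_\eps[\omega]$ with $\|\eps^{1+\kappa}\nabla u_\eps\|_{L^\infty} \le C$. The compatibility assumption $\eps^{1+\kappa}\delta_\eps^{-\alpha/4}\to\infty$ then ensures $|\nabla u_\eps|\le \delta_\eps^{-\alpha/4}$ eventually, so $u_\eps$ is admissible in the sense of \eqref{for sure needed}. The second-gradient term vanishes because every $u_\eps$ is piecewise rigid and hence $\nabla^2 u_\eps = 0$ a.e.. For the elastic term, since $\nabla u_\eps$ takes values in $\R^{d\times d}_{\rm skew}$, the pointwise identity $\exp(\delta_\eps^\alpha \nabla u_\eps)\in SO(d)$ and polar decomposition give ${\rm dist}^2(\mathbb{I}+\delta_\eps^\alpha \nabla u_\eps, SO(d)) \le C\delta_\eps^{4\alpha}|\nabla u_\eps|^4$; combined with a quadratic upper bound on $W$ near $SO(d)$ (a standard feature of frame-indifferent densities with non-degenerate Hessian at $SO(d)$) and the fact that $|\nabla u_\eps|$ is large only on a set of vanishing Lebesgue measure concentrating near the finite polyhedral interfaces from the construction below, one checks that $\delta_\eps^{-2}\int_U W(\omega,x/\eps,\mathbb{I}+\delta_\eps^\alpha \nabla u_\eps)\,{\rm d}x \to 0$. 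Hence $\limsup_\eps \mathcal{F}^{\rm lin}_\eps[\omega](u_\eps,U) = \limsup_\eps \mathcal{E}_\eps[\omega](u_\eps,U) = \mathcal{E}_{\rm hom}[\omega](u,U)$.

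The main obstacle is the verification of hypothesis \eqref{Condi}, which is not a consequence of the abstract $\Gamma$-convergence of Theorem \ref{gammaconv}: for periodic $f$ one has to produce by hand a recovery sequence that approximates $u$ by a finite polyhedral element of $PR_{\R^{d\times d}_{\rm skew}}(U)$ (via \cite{BraConGar16}) and pastes $\eps$-rescaled near-minimisers of the cell problem \eqref{eqhom-new} across its interfaces, while simultaneously preserving boundary traces inside $PR_{\R^{d\times d}_{\rm skew}}$ and keeping the skew parts uniformly bounded by $C\eps^{-(1+\kappa)}$. That the latter bound, and consequently the compatibility condition $\eps^{1+\kappa}\delta_\eps^{-\alpha/4}\to\infty$, cannot be dispensed with is the content of Example \ref{main-example}, which rules out any simpler statement not respecting this scaling.
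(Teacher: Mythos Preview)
Your $\Gamma$-liminf sketch follows the right strategy but differs from the paper's route: rather than invoking a ``linear version'' of Proposition~\ref{piecewiserigidapproximation} (which is not proved in the paper), the paper passes to $y_\eps = {\rm id} + \delta_\eps^\alpha u_\eps$, applies the \emph{nonlinear} Proposition~\ref{piecewiserigidapproximation} to obtain $\tilde y_\eps \in PR_{SO(d)}(U)$, and then linearises each rotation $R_j^\eps = \mathbb{I} + \delta_\eps^{3\alpha/4} M_j^\eps + O(\delta_\eps^{3\alpha/2})$ with $M_j^\eps \in \R^{d\times d}_{\rm skew}$ to produce $\hat u_\eps \in PR_{\R^{d\times d}_{\rm skew}}(U)$. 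This is more than a cosmetic difference: the nonlinear proposition is tailored exactly to the energy $\mathcal{F}_{\eps,\delta}$, whereas your linear variant would require a separate proof. Also, Proposition~\ref{piecewiserigidapproximation} only yields $\mathcal{H}^{d-1}((J_{\tilde y_\eps}\cap U)\setminus J_{y_\eps})\to 0$, not the full symmetric difference; the remaining comparison of surface energies is done via $(f2)$ and $(f6)$, not via $\mathcal{H}^{d-1}(J_{\tilde u_\eps}\triangle J_{u_\eps})\to 0$.

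Your $\Gamma$-limsup argument has a genuine gap. You propose to use the $PR_{\R^{d\times d}_{\rm skew}}$ recovery sequence $u_\eps$ from \eqref{Condi} directly and argue that the bulk energy vanishes because ${\rm dist}^2(\mathbb{I}+\delta_\eps^\alpha\nabla u_\eps, SO(d)) \le C\delta_\eps^{4\alpha}|\nabla u_\eps|^4$ and $W \le C\,{\rm dist}^2$ near $SO(d)$. But the paper assumes only $(\mathrm{W1})$--$(\mathrm{W2})$, i.e.\ a \emph{lower} bound and frame indifference; no quadratic upper bound on $W$ is hypothesised. Even granting such a bound, the estimate does not close: with $|\nabla u_\eps| \le \delta_\eps^{-\alpha/4}$ one gets $\delta_\eps^{-2}{\rm dist}^2 \le C\delta_\eps^{3\alpha-2}$, which diverges for $\alpha < 2/3$, and the concentration of large gradients near interfaces does not obviously compensate given only $\eps^{1+\kappa}\delta_\eps^{-\alpha/4}\to\infty$. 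The paper circumvents this entirely: starting from the recovery sequence $u_\eps$ of \eqref{Condi}, it forms $y_\eps = {\rm id} + \delta_\eps^\alpha u_\eps$, applies Proposition~\ref{piecewiserigidapproximation} \emph{again} (together with Remark~\ref{nice remark}) to produce $\tilde y_\eps \in PR_{SO(d)}(U)$, and sets $\tilde u_\eps = \delta_\eps^{-\alpha}(\tilde y_\eps - {\rm id})$. Since $\nabla\tilde y_\eps \in SO(d)$ a.e., $(\mathrm{W2})$ gives $W(\omega,x/\eps,\nabla\tilde y_\eps)=0$ \emph{identically}, so the bulk contribution is exactly zero and no upper bound on $W$ is needed. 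The surface energy of $\tilde u_\eps$ is then compared to $\mathcal{E}_\eps(u_\eps,U)$ via $(f2)$ and the $L^\infty$ estimate \eqref{eq5.1}. This second application of the piecewise rigid approximation is the missing idea in your argument.
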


\begin{remark}[Comments on linearisation result]\label{a remark}
{\normalfont
(a) Condition \eqref{Condi} is of technical nature and  is   needed to ensure that derivatives of recovery sequences are close to the identity, cf.\ \eqref{for sure needed}.   It is indeed  expectable that gradients do not oscillate faster than the microscale, i.e.\ one has $\Vert \nabla u_\eps \Vert_\infty \le C/\eps$, which complies with \eqref{Condi}.  Clearly, a property of this kind does not follow from the abstract $\Gamma$-convergence result in Theorem \ref{gammaconv}. At least in the case that $f$ is not stochastic (independent of $\omega$), this condition can be verified by constructing directly a more explicit recovery sequence. We refer to Proposition \ref{prop: recov} below for details.  

(b) In Example \ref{main-example}  below we show that a condition of the form $\eps \delta_\eps^{-\alpha/4} \to \infty$ appears do be necessary since otherwise one cannot expect $f_{\mathrm{hom}}$ to be the density of the limit. This explains the (slightly stronger) assumption $\eps^{1+\kappa} \delta_\eps^{-\alpha/4} \to \infty$ on $(\delta_\eps)_\eps$ in the theorem. 
 
 (c) One can show convergence of minima and  minimisers corresponding  to the energies in  Theorems \ref{brittlemateriallimit},  \ref{brittlemateriallimit2}, in the  same spirit of Corollaries \ref{convminima} and \ref{convergencealmostminimisers}. We do not repeat the details here, but refer to Remark \ref{remarkcompactness} for a short discussion.

}
\end{remark}

The results of this subsection will be proven in Section \ref{sec: limit}. We close the section with the announced example. 
 
\begin{example}\label{main-example}
{\normalfont

For $a >0$, we consider the density $f \colon [-\frac{1}{2},\frac{1}{2})^2 \times \R^2\setminus \lbrace 0 \rbrace \times \mathbb{S}^1$ defined by  
 \begin{equation}
\label{densitycounterexample} f(x,\xi,\nu) = \begin{cases} g(\xi,\nu) & \text{for } |x_2| \le 1/4, \\  a^3 & \text{for } |x_2| >1/4, \end{cases}    
\end{equation} 
 and periodically extended to $\R^2$, where  
$$g(\xi,\nu)   =    \min\big\{ 5  +  a|\xi_1|  + |\xi_2|,   a^2 \big\}  (a|\nu_2| + |\nu_1|) \quad \text{ for all $\xi = (\xi_1,\xi_2) \in \R^2\setminus \lbrace 0 \rbrace$ and $\nu = (\nu_1,\nu_2) \in \mathbb{S}^1$}.   $$
The density complies with our assumptions $(f1)$--$(f7)$. Its highly non isotropic nature is inspired by examples of densities which are not $BD$-elliptic \cite{FriPerSol20} and thus allow for lowering the energy by introducing microstructures. In the case $L = \R^{d \times d}_{\rm skew}$ and $\eps \delta_\eps^{-\alpha/4} \to 0$, for $\bar{\xi} := e_1$ and $\bar{\nu} := e_2$, one can show that for $a$ large enough and $\eps$ small enough it holds that
$$m^{L}_{\mathcal{E}_\eps}\big(u_{0,\bar{\xi},\bar{\nu}},Q_{\rho}(0)\big) \le c_a \inf\big\{\mathcal{E}_\eps\big(u,Q_\rho(0)\big)\colon \, u\in PR_{L}(Q_\rho(0)) \colon \,  |\nabla u | \le \delta_\eps^{-\alpha/4}, \    u=u_{0,\bar{\xi},\bar{\nu}}\:\: \text{near}\:\: \partial Q_\rho(0)\big\}$$ 
for a constant $0 <c_a <1$ only depending on $a$. This indicates that in this case the density of the $\Gamma$-limit in Theorem \ref{brittlemateriallimit2} does not coincide with $f_{\rm hom}$. In turn, this illustrates that  the limit depends on the ratio of $\eps$ and $\delta_\eps$, i.e.\ in general no commutability of linearisation and homogenisation can be expected.  For the detailed computation we refer to Appendix \ref{appendix-example}.

}

\end{example}
\EEE

\section{Stochastic homogenisation of surface energies}
\label{sec 4}

For the convenience of the reader, we divide this section into two parts: we first address  the homogenisation formula in Subsections \ref{sec homo}--\ref{sec: cor}, and then the  $\Gamma$-convergence result (Theorem \ref{gammaconv}) along with Corollaries \ref{convminima}-\ref{convergencealmostminimisers}  in Subsections \ref{sec: gamma}--\ref{sec: gamma2}. As several results in the following are formulated in a deterministic setting (i.e.\ $\omega$ is fixed), given $f \in \mathcal{F}$, we also use the notation $\mathcal{E}\colon L^0(\mathbb{R}^d;\mathbb{R}^d)\times \mathcal{A}\to [0,\infty]$ to indicate 
\begin{equation}
\label{energyequation}
\mathcal{E}(u,A)=\begin{cases}
    \int_{J_u \cap A}f(x,[u](x),\nu_u(x))\,\mathrm{d}\mathcal{H}^{d-1}(x) & u_{|_A}\in PR_L(A),\\
    +\infty & \text{otherwise}.
\end{cases} 
\end{equation}
This    indeed   corresponds to the functional $\mathcal{E}$ introduced below \eqref{randomenergies}, dropping the dependence on $\omega$. In addition, in the following, to simplify the notation, we will drop the superscript $L$ in $m_{\mathcal{E}}$ (see Definition~\ref{definfimumproblem}) if no confusion arises.   
In the proofs, we will frequently make use of the following \emph{gluing property} of piecewise rigid functions which follows directly from their definition.

\begin{remark}[Gluing property]
\label{gluingproperty} {\normalfont
Consider $A,B \in \mathcal{A}$ with $A \subset \subset B$. Let  $u,v \in L^0(\mathbb{R}^d;\mathbb{R}^d)$ be such that $u_{|_A} \in PR_L(A)$, $v_{|_B} \in PR_L(B)$ and $u=v$ in a neighbourhood of $N \subset A$ of $\partial A$. Then, the function $\Tilde{u}$ defined by
\begin{equation*}
\Tilde{u}(x)=\begin{cases}
u(x)\:\: \text{if}\:\: x \in A \\
v(x)\:\:  \text{if}\:\: x \in \mathbb{R}^d \setminus A,
\end{cases}    
\end{equation*}
satisfies the following properties: $\Tilde{u}\in L^0(\mathbb{R}^d;\mathbb{R}^d)$, $\Tilde{u}_{|_B}\in PR_L(B)$ and $\Tilde{u}=v$ in a neighbourhood $\Tilde{N} \subset B$ of $\partial B$. }
\end{remark}

\subsection{Homogenisation formula}\label{sec homo}

As observed before, to prove  Theorem \ref{HomogenizationFormula} we only need to treat the case $L = \R^{d \times d}_{\rm skew}$ as for $L=SO(d)$ the result can be deduced directly from  \cite[Theorem 3.12]{cagnetti2017stochastic}. The statement of  Theorem \ref{HomogenizationFormula} follows from the following two results.  
As a first ingredient, we get  that  the limes inferior and superior of the asymptotic cell formulas for $r(t) = t$ lie in $\mathcal{F}$, see \eqref{FFF}, and enjoy some continuity properties.  To formulate this, we define the sets $\hat{\mathbb{S}}_{\pm}^{d-1}:=\{ x \in \mathbb{S}^{d-1}: \pm x_{i(x)}>0\}$, where $i(x)$ is the largest $i \in \{1,...,d\}$ such that $x_i \neq 0$. Recall also the notation in \eqref{eq: Qnot}.
 \begin{lemma}
 \label{lemma2}
Let $L=\mathbb{R}_{\rm skew}^{d \times d}$. Let $f \in \mathcal{F}$, let $\mathcal{E}$ be as in \eqref{energyequation}, and let $m_{\mathcal{E}}$ as in Definition \ref{definfimumproblem}. Let $f',f''\colon \mathbb{R}^d \times \mathbb{R}^{d}\setminus\{0\}\times \mathbb{S}^{d-1}\to [ 0 ,  +\infty]$ be the functions defined by 
\begin{equation}\label{f1}
f'(x,\zeta,\nu)=\liminf\limits_{t\to \infty}\frac{m_{\mathcal{E}}(u_{tx,\zeta,\nu},Q_{t}^{\nu}(tx))}{t^{d-1}},    
\end{equation}
\begin{equation}\label{f2}
f''(x,\zeta,\nu)=\limsup\limits_{t \to \infty}\frac{m_{\mathcal{E}}(u_{tx,\zeta,\nu},Q_{t}^{\nu}(tx))}{t^{d-1}},
\end{equation}
where $u_{x,\zeta,\nu}$ is defined in \eqref{Lw}. Then $f', f'' \in \mathcal{F}$. Moreover, for every $x \in \mathbb{R}^d$ and $\zeta \in \mathbb{R}^d \setminus \{0\}$ the restriction of the functions $\nu \to f'(x,\zeta,\nu)$ and $\nu \to f''(x,\zeta,\nu)$ to the sets $\hat{\mathbb{S}}_{+}^{d-1}$ and $\hat{\mathbb{S}}_{-}^{d-1}$ are continuous. 
\end{lemma}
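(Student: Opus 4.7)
The plan is to check properties $(f1)$--$(f7)$ for the two functions $f'$ and $f''$ defined by \eqref{f1}--\eqref{f2}, and then to establish the continuity of $\nu \mapsto f'(x,\zeta,\nu)$ and $\nu \mapsto f''(x,\zeta,\nu)$ on $\hat{\mathbb{S}}_{\pm}^{d-1}$. The general scheme is the one devised for piecewise constant functions in \cite{cagnetti2017stochastic}, and most steps transfer with the caveat that competitors now belong to $PR_L(Q^\nu_t(tx))$ with $L=\R^{d\times d}_{\rm skew}$, so that all modifications must preserve the piecewise rigid structure. This is ensured by systematically invoking the gluing property recalled in Remark \ref{gluingproperty}.

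The bounds $(f5)$ and $(f6)$ are the easiest. For $(f6)$ I would plug the boundary datum $u_{tx,\zeta,\nu}$ itself into $m_{\mathcal{E}}(\cdot, Q^\nu_t(tx))$, whose jump set is a single hyperplane of $\mathcal{H}^{d-1}$-measure $t^{d-1}$, and use $(f6)$ for $f$. For $(f5)$, any admissible $u$ for the minimum problem has a jump of total $\mathcal{H}^{d-1}$-measure at least $t^{d-1}$ on the mid-section of the cube (this uses that $u \in PR_L$ coincides with $u_{tx,\zeta,\nu}$ near $\partial Q_t^\nu(tx)$ on both sides of the hyperplane, and the pieces near the two sides cannot be connected by a single affine rigid motion unless they jump inside the cube); applying $(f5)$ for $f$ and dividing by $t^{d-1}$ yields the lower bound. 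The symmetry $(f7)$ follows because the admissible classes for $(tx,\zeta,\nu)$ and $(tx,-\zeta,-\nu)$ coincide up to the trivial change $u \leftrightarrow u-\zeta$, which does not alter the energy by $(f7)$ for $f$.

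For $(f2)$, $(f3)$, $(f4)$, I would start from a near-optimal competitor $v$ for $m_{\mathcal{E}}(u_{tx,\zeta_1,\nu},Q_t^\nu(tx))$, redefine it near $\partial Q_t^\nu(tx)$ to carry the boundary datum $u_{tx,\zeta_2,\nu}$ (which only changes the constant vector on the ``positive'' side of the cutting hyperplane and hence remains in $PR_L$), and estimate the perturbation via the pointwise inequality in $(f2)$ for $f$ applied on each face, plus the inequalities in $(f3)$--$(f4)$ for $f$ applied on the bulk of the jump set. Property $(f1)$ is inherited from the measurability of $(x,\zeta,\nu)\mapsto m_{\mathcal{E}}(u_{tx,\zeta,\nu},Q_t^\nu(tx))$ in each variable, which in turn follows from the separability of the admissible class (restricting to polyhedral Caccioppoli partitions with rational data) combined with the countable operations $\liminf_{t\to\infty}$ and $\limsup_{t\to\infty}$.

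The main obstacle is the continuity in $\nu$ on $\hat{\mathbb{S}}_{\pm}^{d-1}$. The restriction to these open hemispheres is precisely what allows a continuous choice of the rotation $R_\nu$ with $R_\nu e_d = \nu$, hence a continuous dependence of the cubes $Q^\nu_t(tx)$ on $\nu$. Given $\nu,\nu' \in \hat{\mathbb{S}}_+^{d-1}$ close, I would take a near-optimal competitor $v$ for $m_{\mathcal{E}}(u_{tx,\zeta,\nu},Q^{\nu}_{(1-\eta)t}(tx))$ on a slightly shrunk cube, extend it by $u_{tx,\zeta,\nu'}$ in the thin annular shell $Q^{\nu'}_t(tx)\setminus \overline{Q^{\nu}_{(1-\eta)t}(tx)}$ using the gluing property (Remark \ref{gluingproperty}), and then control the extra surface energy created in the shell by $c_2$ times its $(d-1)$-Hausdorff content, which is of order $\eta t^{d-1}+|\nu-\nu'|t^{d-1}$. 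Dividing by $t^{d-1}$, letting $t\to\infty$, and then $\eta\to 0$ and $\nu'\to\nu$ yields upper semicontinuity of both $f'$ and $f''$; the matching lower semicontinuity is obtained by the symmetric construction exchanging the roles of $\nu$ and $\nu'$. The delicate point is that the interpolation in the shell must produce a function in $PR_L$ and respect the two different boundary data on the two cubes; this is where the affine (not only constant) nature of the admissible functions has to be handled with care, but since $u_{tx,\zeta,\nu}$ is piecewise constant the gluing reduces to the piecewise constant case as in \cite[Section 4]{cagnetti2017stochastic}.
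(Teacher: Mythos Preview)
Your overall strategy and your treatment of $(f6)$, $(f7)$, and the continuity in $\nu$ match the paper. However, the combined handling of $(f2)$--$(f4)$ has a genuine gap. If ``redefine $v$ near $\partial Q_t^\nu(tx)$'' means a boundary-layer modification, it fails for $(f2)$: a new jump surface of area $\sim t^{d-1}$ appears between the layer and the interior, and by $(f5)$ for $f$ its energy is at least $c_1 t^{d-1}$ \emph{regardless} of $|\zeta_1-\zeta_2|$, so no modulus $\sigma(|\zeta_1-\zeta_2|)$ emerges after normalisation. The paper's construction for $(f2)$ is global and creates no new jump: given a near-minimiser $u_1$ for the $\zeta_1$-datum, set $E:=\{u_1=\zeta_1\}$ (a set of finite perimeter with $J_{\chi_E}\subset J_{u_1}$) and $u_2:=u_1+(\zeta_2-\zeta_1)\chi_E\in PR_L$; then $J_{u_2}\subset J_{u_1}$, $u_2=u_{tx,\zeta_2,\nu}$ near the boundary, and $|[u_1]-[u_2]|\le|\zeta_1-\zeta_2|$ pointwise on $J_{u_1}$, so applying $(f2)$ for $f$ on all of $J_{u_1}$ yields the claim directly---no use of $(f3)$--$(f4)$ for $f$ is made. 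Your remark ``plus $(f3)$--$(f4)$ for $f$ on the bulk'' does not salvage $(f3)$--$(f4)$ for $f',f''$ either: under both the layer and the level-set modification the interior jumps are unchanged, so there is nothing on which to invoke $(f3)$--$(f4)$ for $f$. The paper proves $(f3)$--$(f4)$ by a \emph{separate} construction (a rotation and dilation of the competitor's values so that every jump height is multiplied by a factor $\le 1$) and then applies $(f3)$--$(f4)$ for $f$ pointwise; note also that here one must start from a competitor for the datum with the \emph{larger} modulus and produce one for the smaller, the opposite of the direction you wrote.

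For $(f5)$ your heuristic is correct in spirit but the paper's argument is sharper and specific to the piecewise rigid setting: take a near-minimiser $u\in PR_L$ in its pairwise distinct representation (so $J_u=\bigcup_j\partial^*P_j\cap Q_t^\nu(tx)$ up to null sets) and build a piecewise \emph{constant} $\tilde u\in PR_0$ on the same partition with $\tilde u=u_{tx,\zeta,\nu}$ near the boundary and $\mathcal H^{d-1}(J_u\triangle J_{\tilde u})=0$. The standard slicing argument for $PR_0$ then gives $\mathcal H^{d-1}(J_{\tilde u})\ge t^{d-1}$, and $(f5)$ for $f$ concludes.
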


The second crucial ingredient is that Theorem \ref{HomogenizationFormula} holds in the special case $x= 0$ and $r(t) = t$.

\begin{theorem}[Homogenisation formula for $x=0$]
\label{corollary4.5}
Let $L=\mathbb{R}_{\rm skew}^{d \times d}$. Let $(\Omega,\hat{\mathcal{I}}, \hat{\mathbb{P}})$ be the completion of the probability space $(\Omega,\mathcal{I},\mathbb{P})$. Let $\mathcal{E}$ be a random surface energy and let, for every $\omega \in \Omega$, $m_{\mathcal{E}[\omega]}$ be as in Definition \ref{definfimumproblem}. Let $\hat{\Omega}$ be the set of all $\omega \in \Omega$ such that the limit
\begin{equation*}
\lim\limits_{\substack{t \to \infty  \\ t \in \mathbb{Q}}}\frac{m_{\mathcal{E}[\omega]}(u_{0,\zeta,\nu},Q_t^{\nu}(0))}{t^{d-1}} 
\end{equation*}
exists for every $\zeta \in \mathbb{Q}^{d} \setminus \{0\}$ and $\nu \in \mathbb{S}^{d-1}\cap \mathbb{Q}^{d}$. Then, there exists $\Tilde{\Omega} \in \mathcal{I}$, with $\Tilde{\Omega}\subset \hat{\Omega}$ and $\mathbb{P}(\Tilde{\Omega})=1$, and a random surface density $f_{\mathrm{hom}}\colon \Omega \times \mathbb{R}^d \setminus \{0\} \times \mathbb{S}^{d-1}\to \mathbb{R}$ such that
\begin{equation}
\label{eq4.28}
f_{\rm hom}(\omega,\zeta,\nu)=\lim\limits_{t \to \infty}\frac{m_{\mathcal{E}[\omega]}(u_{0,\zeta,\nu},Q_t^{\nu}(0))}{t^{d-1}} 
\end{equation}
for every $\omega \in \Tilde{\Omega}$, $\zeta \in  \mathbb{R}^{d}\setminus \{0\}$, and $\nu \in \mathbb{S}^{d-1}$.
\end{theorem}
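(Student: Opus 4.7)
The plan is to combine the Subadditive Ergodic Theorem of Akcoglou--Krengel with the deterministic regularity from Lemma~\ref{lemma2}. The idea is first to obtain the limit along a countable dense set of parameters on an $\mathcal{I}$-measurable event of full probability, and only then extend it to all $(\zeta,\nu)$ by density, so that the $\tilde{\Omega}$ we produce is automatically in $\mathcal{I}$ (and not merely in $\hat{\mathcal{I}}$).

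First I would fix $(\zeta,\nu) \in (\mathbb{Q}^d\setminus\{0\}) \times (\mathbb{S}^{d-1}\cap\mathbb{Q}^d)$ and associate to it a $(d-1)$-dimensional subadditive stochastic process by setting, for suitable cubes $Q \subset \nu^\perp$, $\mu_\omega(Q):=m_{\mathcal{E}[\omega]}(u_{0,\zeta,\nu},Q\times I_\nu)$, where the thickness $I_\nu$ in direction $\nu$ is chosen large enough to accommodate the boundary datum. Subadditivity in $Q$ follows from the gluing property (Remark~\ref{gluingproperty}) applied to translated candidates; stationarity comes from Definition~\ref{stationarity}; the $L^\infty$ bound follows from $(f6)$ and the lower bound from $(f5)$. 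The Akcoglou--Krengel theorem then yields an $\mathcal{I}$-measurable set $\Omega_{\zeta,\nu}$ with $\mathbb{P}(\Omega_{\zeta,\nu})=1$ on which the rational-$t$ limit in \eqref{eq4.28} exists. Taking the countable intersection
\[
\tilde{\Omega} \;:=\; \bigcap_{(\zeta,\nu)\in (\mathbb{Q}^d\setminus\{0\})\times(\mathbb{S}^{d-1}\cap\mathbb{Q}^d)} \Omega_{\zeta,\nu}
\]
gives $\tilde{\Omega}\in\mathcal{I}$, $\mathbb{P}(\tilde{\Omega})=1$, and by construction $\tilde{\Omega}\subset\hat{\Omega}$.

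Next, for each fixed $\omega\in\tilde{\Omega}$ I would apply Lemma~\ref{lemma2} to the deterministic density $f(\omega,\cdot,\cdot,\cdot)\in\mathcal{F}$, obtaining $f'[\omega],f''[\omega]\in\mathcal{F}$ given by the $\liminf$ and $\limsup$ along $t\in\mathbb{R}$ of the cell formula at $x=0$. By the definition of $\tilde{\Omega}$, these two functions coincide at $x=0$ for all rational $(\zeta,\nu)$. Since both lie in $\mathcal{F}$, they are continuous in $\zeta$ by $(f2)$ and continuous in $\nu$ on each of the open sets $\hat{\mathbb{S}}_\pm^{d-1}$ by Lemma~\ref{lemma2}; rationals being dense in these sets, the equality propagates to all $\nu\in\hat{\mathbb{S}}_+^{d-1}\cup\hat{\mathbb{S}}_-^{d-1}$, and the symmetry property $(f7)$ extends it to the remaining null set on $\mathbb{S}^{d-1}$. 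Consequently, for every $\omega\in\tilde{\Omega}$, every $\zeta\in\mathbb{R}^d\setminus\{0\}$ and every $\nu\in\mathbb{S}^{d-1}$, the full real-$t$ limit in \eqref{eq4.28} exists and equals this common value, which I would call $f_{\rm hom}(\omega,\zeta,\nu)$ (extending it outside $\tilde{\Omega}$ by any fixed element of $\mathcal{F}$, e.g.\ the constant $c_2$, to guarantee that $f_{\rm hom}(\omega,\cdot,\cdot)\in\mathcal{F}$ for every $\omega$).

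The key obstacle, highlighted already in the introduction and in Remark~\ref{problemmeasurability}, is establishing that $\omega\mapsto m_{\mathcal{E}[\omega]}(u_{0,\zeta,\nu},Q_t^\nu(0))$ is $\mathcal{I}$-measurable (and the joint $\mathcal{I}\otimes\mathcal{B}^d\otimes\mathcal{B}_{\mathbb{S}}^d$ measurability of $f_{\rm hom}$), because competitors are sought in $PR_L$ with $L=\mathbb{R}^{d\times d}_{\rm skew}$ rather than in the simpler space $PR_0$ handled in \cite{cagnetti2017stochastic}. I would address this by using the truncation and compactness tools for piecewise rigid functions developed in \cite{Friedrich_2020} to reduce the infimum to a $\sigma$-compact set of competitors, on which measurability follows from a projection/selection argument; alternatively one can approximate the infimum by a countable family of quasi-minimising sequences. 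The remainder of the proof---subadditivity in Step~1 and the deterministic density argument in Step~2---is then essentially the adaptation of \cite[proof of Theorem~3.12]{cagnetti2017stochastic} to the $PR_L$ setting, with Lemma~\ref{lemma2} providing the crucial replacement for the continuity step there.
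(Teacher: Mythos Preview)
Your proposal follows essentially the same route as the paper: apply the Akcoglou--Krengel theorem to a $(d-1)$-dimensional subadditive process for each rational $(\zeta,\nu)$, intersect over the countable parameter set, and extend to all $(\zeta,\nu)$ via the continuity properties of Lemma~\ref{lemma2}, setting $f_{\rm hom}\equiv c_2$ off $\tilde{\Omega}$. Your identification of the measurability of $\omega\mapsto m_{\mathcal{E}[\omega]}(u_{0,\zeta,\nu},U)$ as the main new difficulty, and your plan to handle it by truncation plus compactness plus a projection theorem, also matches the paper's Theorem~\ref{measurability} and Lemma~\ref{lemma1}.

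One technical point you gloss over: the projection theorem requires the completed $\sigma$-algebra, so the measurability you actually obtain for $\omega\mapsto m_{\mathcal{E}[\omega]}$ is only $\hat{\mathcal{I}}$-measurability, not $\mathcal{I}$-measurability. Consequently the subadditive process lives on $(\Omega,\hat{\mathcal{I}},\hat{\mathbb{P}})$, and the Subadditive Ergodic Theorem gives you $\hat{\Omega}_{\zeta,\nu}\in\hat{\mathcal{I}}$ with $\hat{\mathbb{P}}(\hat{\Omega}_{\zeta,\nu})=1$; you then need the elementary completion step (any $\hat{\mathcal{I}}$-set of full measure contains an $\mathcal{I}$-set of full measure) to land in $\mathcal{I}$ before taking the countable intersection. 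The paper makes this passage explicit. Your informal description of the subadditive process (``$Q\times I_\nu$'') should also be made precise, as the paper does with the rotated cuboids $T_\nu(A')$, to ensure covariance with respect to an honest $(d-1)$-dimensional group of $\hat{\mathbb{P}}$-preserving transformations.
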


We postpone the proofs of Lemma \ref{lemma2} and Theorem \ref{corollary4.5} to the next subsections, and briefly sketch  how the two results imply Theorem \ref{HomogenizationFormula}.  
\begin{proof}[Proof of Theorem \ref{HomogenizationFormula}]
First, we define $f_{\mathrm{hom}}\colon \Omega \times \mathbb{R}^d \setminus \{0\} \times \mathbb{S}^{d-1}\to \mathbb{R}$ as in Theorem~\ref{corollary4.5}.  Note that, because of Lemma \ref{lemma2},  $f_{\rm hom}$ is a random surface density and that \eqref{eq4.28} holds for all $\omega \in \Tilde{\Omega}$. The fact that   \eqref{eqhom} holds for general $x$ and for general function $r\colon (0,\infty)\to (0,\infty)$ with $r(t)\geq t$ is intricate, but it does not rely on the precise structure of the functionals, but rather on the properties $(f1)$--$(f6)$ and general techniques from probability theory, in particular Birkhoff's Ergodic Theorem \cite[Theorem~2.1.5]{krengel2011ergodic}   and the Conditional Dominated Convergence Theorem \cite[Section 9.7]{williams_1991}.    Indeed, we can verbatim follow   \cite[Theorem 6.1]{cagnetti2017stochastic},  by using  Theorem \ref{corollary4.5} in place of \cite[Theorem 5.1]{cagnetti2017stochastic} to show  that there exists a set $\Omega' \in \mathcal{I}$ with $\Omega' \subset \Tilde{\Omega}$  and $\mathbb{P}(\Omega')=1$ such that
\begin{align}\label{YYY}
f_{\rm hom}(\omega,\xi,\nu) = \lim\limits_{t\to \infty}\frac{m_{\mathcal{E}}(u_{tx,\zeta,\nu},Q_{t}^{\nu}(tx))}{t^{d-1}}, 
\end{align}
  for every $\omega \in \Omega'$, $x \in \mathbb{R}^d$, $\zeta \in \mathbb{Q}^{d}\setminus \{0\}$ and $\nu \in \mathbb{S}^{d-1} \cap \mathbb{Q}^d$.
  In these arguments, one makes also use of the gluing property in Remark \ref{gluingproperty}.   In a final step, we use \eqref{YYY} and Lemma \ref{lemma2} to show that \eqref{eqhom} holds for all $\zeta \in \mathbb{R}^{d}\setminus \{0\}$ and $\nu \in \mathbb{S}^{d-1}$, see \cite[Equations (5.16)--(5.17)]{cagnetti2017stochastic} for details. In fact, by  Lemma \ref{lemma2} we have continuity in $\zeta$ (see $(f2)$) and in $\nu$, in the sense described in  Lemma \ref{lemma2}. Eventually, in case that $(\tau_z)_{z \in \mathbb{Z}^d}$ (resp.\  $(\tau_z)_{z \in \mathbb{R}^d})$ is {ergodic}, we derive that $f_{\rm hom}$ does not depend on $\omega$ by repeating the proof of \cite[Corollary 6.3]{cagnetti2017stochastic}.  
\end{proof}

\subsection{Proof of Lemma \ref{lemma2}}

\EEE
This short subsection is devoted to the proof of Lemma \ref{lemma2}. 
\begin{proof}[Proof of Lemma \ref{lemma2}]
The proof is an adaptation of the one in \cite[Lemma~A.7]{cagnetti2018gammaconvergence} and  in \cite[Lemma 5.5]{cagnetti2017stochastic}. We only  highlight the necessary changes due to the setting of piecewise rigid functions $L=\R^{d \times d}_{\rm skew}$. 
 
We start with $(f2)$ for which we at least give the main idea. Fix $x \in \Omega$, $\nu \in \mathbb{S}^{d-1}$, and $\zeta_1, \zeta_2 \in \R^d \setminus \lbrace 0 \rbrace$. Given $\varepsilon>0$, we can consider $u_1 \in PR_L(Q_t^{\nu}(tx))$ such that $u_1=u_{tx,\zeta_1,\nu}$ in a neighbourhood of $\partial Q_t^{\nu}(tx)$ and 
\begin{equation*}
\mathcal{E}(u_1,Q_t^{\nu}(tx))\leq m_{\mathcal{E}}(u_{tx,\zeta_1,\nu},Q_t^{\nu}(tx))+\varepsilon t^{d-1}.  
\end{equation*}
Define $E=\{y \in Q^{\nu}_t(tx): u_{1}(y)=\zeta_1\}$.   By construction  $J_{\chi_E} \subset J_{u_1}$, and  $E$ is a set of finite perimeter in $Q_t^{\nu}(tx)$ with $\mathcal{H}^{d-1}(\partial^* E \cap Q_t^{\nu}(tx))\leq \mathcal{H}^{d-1}(J_{u_1}\cap Q_t^{\nu}(tx))<\infty$. (Here and in the following, inclusions are understood up to $\mathcal{H}^{d-1}$-negligible sets.) Define $u_2=u_1+(\zeta_2-\zeta_1)\chi_E$. Since $J_{\chi_E} \subset J_{u_1}$, we have $J_{u_2} \subset J_{u_1}$ and that $u_2$ is an admissible competitor for $m_{\mathcal{E}}(u_{tx,\zeta_2,\nu},Q_t^{\nu}(tx))$. From this and $(f2)$, one can deduce 
\begin{align*}
m_{\mathcal{E}}(w_{tx,\zeta_2,\nu},Q_t^{\nu}(tx))&  \le \mathcal{E}(u_2,Q_t^{\nu}(tx))  \le  \mathcal{E}(u_1,Q_t^{\nu}(tx)) +  \sigma(|\zeta_1 - \zeta_2|) \big(  \mathcal{E}(u_1,Q_t^{\nu}(tx))  +  \mathcal{E}(u_2,Q_t^{\nu}(tx))  \big) \\&
\le    m_{\mathcal{E}}(u_{tx,\zeta_1,\nu},Q_t^{\nu}(tx))+\varepsilon t^{d-1} + \sigma(|\zeta_1 - \zeta_2|) \big(  \mathcal{E}(u_1,Q_t^{\nu}(tx))  +  \mathcal{E}(u_2,Q_t^{\nu}(tx))  \big). 
\end{align*}
Then, $(f2)$ for $f'$ and $f''$ defined in \eqref{f1}--\eqref{f2} follows after multiplying with $t^{1-d}$ and sending $t \to \infty$, $\eps \to 0$, along with interchanging the roles of $\zeta_1$ and $\zeta_2$, see \cite[Lemma A.7]{cagnetti2018gammaconvergence} for details.

The proof of properties $(f3)$, $(f4)$, $(f6)$, and $(f7)$ is similar, and we can follow the arguments in \cite[Lemma A.7]{cagnetti2018gammaconvergence} by  defining suitable competitors. (For $(f3)$--$(f4)$ a rotation and dilation is performed and for $(f6)$ one uses $u_{tx,\zeta,\nu}$ itself as a competitor.)  Concerning the measurability property  $(f1)$, it is straightforward to verify that lemmas analogous to  \cite[Lemmas A.3--Lemma A.5]{cagnetti2018gammaconvergence} hold also when the space of piecewise constant functions is replaced with $PR_L(Q^{\nu}_t(tx))$ and when our growth condition on $f$ are imposed,  as the arguments do not really depend on the  space of competitors. (Note that the assumptions only differ in $(f6)$ which is slightly more restrictive in our case. Moreover, we note that one makes frequently use of the gluing property stated in Remark \ref{gluingproperty}.)

The adaptation of property $(f5)$ is slightly less obvious. It is based on reducing the problem to piecewise constant functions $PR_0$ as follows: given $\eps>0$, let $u \in PR_L(Q^{\nu}_{tx}(tx))$ be such that $u=u_{tx,\zeta,\nu}$ in a neighbourhood of $\partial Q^{\nu}_{tx}(tx)$ and $\mathcal{E}(u,Q_t^{\nu}(tx))\leq m_{\mathcal{E}}(u_{tx,\zeta,\nu},Q_{t}^{\nu}(tx))+\varepsilon t^{d-1}$.  We recall that each piecewise rigid function defined on some $A \in \mathcal{A}$ admits a \emph{piecewise dinstinct} representation, i.e.\ can be represented in such a way that $\mathcal{H}^{d-1}(J_u \triangle (\bigcup_j \partial^* P_j \cap A)) = 0$, where  $( P_j)_j$  is  the associated Caccioppoli partition.   For this reason, we can then find a function $\Tilde{u} \in PR_0(Q_t^{\nu}(tx))$   such that $\Tilde{u}=u_{tx,\zeta,\nu}$ in a neighbourhood of $\partial Q^{\nu}_{t}(tx)$ and $\mathcal{H}^{d-1}(J_u \triangle J_{\Tilde{u}})=0$.   So, using the fact that $f$ satisfies property $(f5)$, we have $\varepsilon t^{d-1}+m_{\mathcal{E}}(u_{tx,\zeta,\nu},Q_{t}^{\nu}(tx))\geq \mathcal{E}(u,Q^{\nu}_{t}(tx))\geq c_1 \mathcal{H}^{d-1}(J_{\Tilde{u}})$. Property $(f5)$ for $f'$ and $f''$ then follows by  \cite[Theorem 3.108]{ambrosio2000fbv}, since  for every   $\Tilde{u} \in  PR_0(Q_{t}^{\nu}(tx)) $, agreeing with $u_{tx,\zeta,\nu}$ on a neighbourhood of $\partial Q_{t}^{\nu}(tx)$, each straight line intersecting $Q_t^{\nu}(tx)$ and parallel to $\nu$ meets $J_{\Tilde{u}}$. 

The remaining part of the proof, regarding the continuity of the functions $\nu \to f'(x,\zeta,\nu)$ and $\nu \to f''(x,\zeta,\nu)$ restricted to   $\hat{\mathbb{S}}_{+}^{d-1}$ and $\hat{\mathbb{S}}_{-}^{d-1}$, follows by arguing like in \cite[Lemma 5.5]{cagnetti2017stochastic}. In fact, the proof does not rely on the exact nature of the competitors except for the fact that ``gluing'' a competitor with the boundary datum must still define a competitor on a larger set, cf.\ Remark~\ref{gluingproperty}.
\end{proof}

\subsection{Truncation results}\label{sec: trunc}

In this subsection, we derive a truncation result which will be vital in the proof of Theorem \ref{corollary4.5}. In particular, we show how to pass to a truncated version of the minimisation problem in Definition \ref{definfimumproblem}.

\begin{lemma}
\label{lemma1}
Let $L=\mathbb{R}_{\rm skew}^{d \times d}$. Let $f \in \mathcal{F}$ and let $\mathcal{E}\colon L^0(\mathbb{R}^d; \mathbb{R}^d) \times \mathcal{A}\to [0,\infty]$ be the corresponding functional as in \eqref{energyequation}.
Let $U \in \mathcal{A}_0$ and let $v \in L^{0}(\mathbb{R}^d;\mathbb{R}^d)$ be such that $v \in PR_L(U)\cap L^{\infty}(U;\mathbb{R}^d)$ and $\nabla v \in L^{\infty}(U;L)$. Then, given $m_{\mathcal{E}}$ as in Definition \ref{definfimumproblem}, we have 
\begin{equation}
\label{lim}
m_{\mathcal{E}}(v,U)=\lim\limits_{k \to \infty}m_{\mathcal{E}}^{k}(v,U),    
\end{equation}
where
\begin{equation}
\label{defm^kn}
\begin{split}
m^{k}_{\mathcal{E}}(v,U) &:= \inf\Big\{\mathcal{E}(u,U): u \in PR_L(U)\cap SBV(U;\mathbb{R}^d),  \  \  u=v\:\: \text{near}\:\: \partial U,     \\  & \ \ \ \  \quad \quad \quad \quad \quad \quad \quad \quad \quad \quad \quad \quad   \Vert u \Vert_{L^{\infty}(U;\mathbb{R}^d)} \le k \:\: \text{and}\:\:   \Vert \nabla u \Vert_{L^{\infty}(U;L)} \le  k \Big\}.
\end{split}
\end{equation}
\end{lemma}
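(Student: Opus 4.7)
The inequality $\liminf_{k\to\infty} m^k_{\mathcal{E}}(v,U) \ge m_{\mathcal{E}}(v,U)$ is immediate, since every competitor for $m^k_{\mathcal{E}}(v,U)$ is also a competitor for $m_{\mathcal{E}}(v,U)$, and the sequence $k \mapsto m^k_{\mathcal{E}}(v,U)$ is non-increasing. The content of the lemma is the converse inequality $\limsup_{k\to\infty} m^k_{\mathcal{E}}(v,U) \le m_{\mathcal{E}}(v,U)$. Fix $\varepsilon>0$ and pick an $\varepsilon$-near-minimiser $u\in PR_L(U)$ with $u=v$ in a neighbourhood $N$ of $\partial U$ and $\mathcal{E}(u,U)\le m_{\mathcal{E}}(v,U)+\varepsilon$. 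Using the common-refinement observation for piecewise rigid functions recalled after Definition \ref{def: PR}, both $u$ and $v$ can be represented via a single Caccioppoli partition $(P_j)_{j\in\N}$, so that $u = \sum_j (M_j\,x + b_j)\chi_{P_j}$ with $M_j \in L$, $b_j \in \R^d$, and the pieces meeting $N$ coincide with (a refinement of) the pieces of $v$.

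Because $U$ is bounded, there exists $C_U \ge 1$ with $\sup_{x\in U}|M_j x + b_j| \le C_U(\|M_j\|_\infty + |b_j|)$ for all $j$. For each sufficiently large $k$ define the ``bad'' index set and bad region
\[
I_k := \bigl\{\, j \in \N : C_U(\|M_j\|_\infty + |b_j|) > k \,\bigr\},\qquad B_k := \bigcup_{j\in I_k} P_j,
\]
and truncate $u$ by setting $u_k := 0$ on $B_k$ and $u_k := u$ on $U\setminus B_k$. Then $u_k \in PR_L(U)$, and by definition of $I_k$ one has $\|u_k\|_{L^\infty(U;\R^d)}\le k$ and $\|\nabla u_k\|_{L^\infty(U;L)}\le k$, so $u_k\in PR_L(U)\cap SBV(U;\R^d)$. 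Moreover, the hypotheses $v\in L^\infty(U;\R^d)$ and $\nabla v\in L^\infty(U;L)$ bound $\|M_j\|_\infty + |b_j|$ uniformly for all pieces meeting $N$; hence for $k$ beyond a threshold depending only on $v$ and $U$, none of those pieces lies in $I_k$, so $B_k\cap N = \emptyset$ and $u_k = v$ on $N$. Thus $u_k$ is admissible for $m^k_{\mathcal{E}}(v,U)$.

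To compare energies, we observe that up to $\mathcal{H}^{d-1}$-null sets
\[
J_{u_k}\cap U \subseteq \bigl(J_u\cap (U\setminus \overline{B_k})\bigr) \cup \bigl(\partial^* B_k\cap U\bigr),
\]
and on the first piece $[u_k]$ and $\nu_{u_k}$ coincide with $[u]$ and $\nu_u$. Applying the uniform upper bound $(f6)$ on $\partial^* B_k$ and subadditivity of $\partial^*$ for Caccioppoli partitions yields
\[
\mathcal{E}(u_k,U) \le \mathcal{E}(u,U) + c_2\,\mathcal{H}^{d-1}(\partial^* B_k\cap U) \le \mathcal{E}(u,U) + c_2 \sum_{j\in I_k}\mathcal{H}^{d-1}(\partial^* P_j\cap U).
\]
The family $(I_k)_k$ is nested-decreasing with $\bigcap_k I_k = \emptyset$ (each fixed $j$ exits $I_k$ once $k$ exceeds $C_U(\|M_j\|_\infty+|b_j|)$), and $\sum_j \mathcal{H}^{d-1}(\partial^*P_j\cap U)<\infty$ by the Caccioppoli property; countable additivity therefore gives $\sum_{j\in I_k}\mathcal{H}^{d-1}(\partial^*P_j\cap U)\to 0$. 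Combining, $m^k_{\mathcal{E}}(v,U) \le \mathcal{E}(u_k,U)\le m_{\mathcal{E}}(v,U)+\varepsilon+o(1)$ as $k\to\infty$, and sending $k\to\infty$ and then $\varepsilon\to 0$ establishes \eqref{lim}.

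The chief obstacle is to arrange the truncation so as to simultaneously (i) respect the boundary datum $u_k=v$ near $\partial U$, (ii) enforce the quantitative $L^\infty$ bounds on $u_k$ and $\nabla u_k$, and (iii) keep the energy loss $o(1)$. Point (i) is secured by the common Caccioppoli refinement together with the $L^\infty$ control on $v$ and $\nabla v$, which guarantees that pieces in $N$ are never "bad". Point (iii) is delicate because the replacement value $0$ changes $[u_k]$ on the newly created jump $\partial^* B_k$; it is handled by invoking the uniform upper bound $(f6)$, which allows us to price those new jumps solely by their $\mathcal{H}^{d-1}$-measure, and then by exploiting the finite Caccioppoli perimeter to make this measure arbitrarily small as $k\to\infty$.
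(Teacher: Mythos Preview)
Your proof is correct and follows essentially the same approach as the paper: take an $\varepsilon$-near-minimiser, represent it on a Caccioppoli partition common to $u$ and $v$, truncate by replacing the ``bad'' pieces (those with large affine data) with a bounded function, and control the energy error via $(f6)$ together with the tail of the Caccioppoli perimeter. The only cosmetic differences are that the paper replaces bad pieces by $v$ rather than by $0$, and uses $\Vert q_j^n\Vert_{L^\infty(P_j^n)}$ and $\Vert\nabla q_j^n\Vert_{L^\infty(P_j^n)}$ directly as the cutoff criterion instead of your proxy $C_U(\Vert M_j\Vert_\infty+|b_j|)$; neither change affects the argument.
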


\begin{proof}
We construct a minimising sequence $(u_n)_n$ for $m_{\mathcal{E}}(v,U)$ such that $(u_n)_n \subset L^{\infty}(U;\mathbb{R}^d)$ and $(\nabla u_n)_n \subset L^{\infty}(U;L)$. Let $(\hat{u}_n)_n$ be a minimising sequence for $m_{\mathcal{E}}(v,U)$. Let $(C_m)_m$ be an increasing sequence with $C_m \to + \infty$ as $m \to \infty$ such that $\Vert v \Vert_{L^{\infty}(U;\mathbb{R}^d)}+ \Vert \nabla v\Vert _{L^{\infty}(U;L)} \le C_m $ for every $m \in \mathbb{N}$. Fix $n \in \mathbb{N}$ and write $\hat{u}_n= \sum_{j \in \mathbb{N}}q^n_j \chi_{P_j^n}$ for suitable affine mappings $(q^n_j)_j$ and a Caccioppoli partition $(P_j^n)_j$. Moreover, let $(P_j^v)_j$  be a Caccioppoli partition  corresponding to a representation of $v$ in Definition \ref{def: PR}. It is not restrictive to assume  that $(P^n_j)_j$ is a refinement of $(P_j^v)_j$. (This can be achieved by representing $\hat{u}_n$ with the partition $(P^n_j \cap P_k^{v})_{j,k}$. For notational simplicity, we denote this partition still by $(P^n_j)_j$.) Then, we can also write $v=\sum_{j \in \mathbb{N}}r^{n}_j\chi_{P_j^{n}}$ for suitable affine mappings $(r^{n}_j)_j$. Define 
$$I^m_n :=\big\{j\in \mathbb{N}: \Vert q^n_j \Vert_{L^{\infty}(P_j^n;\mathbb{R}^d)}\leq C_m \:\: \text{and}\:\:\Vert \nabla q_j^n \Vert_{L^{\infty}(P_j^n;L)}\leq C_m\big\}$$   and ${u}^m_n$ as 
\begin{equation*}
{u}^m_n(x)=\begin{cases}
    q_j^n(x) &\text{if }\:\: x \in P_j^n \text{ and }   j  \in I^m_n, \\
    v(x) & \text{otherwise}.
\end{cases}
\end{equation*}
 In  particular, we can write ${u}^m_n=\sum_{j \in I^m_n} q^n_j \chi_{P^n_j}+\sum_{j \in \mathbb{N}\setminus I^m_n}r_j^n \chi_{P^n_j}$. Thus, ${u}_n^m$ is still a piecewise rigid function and it satisfies $\Vert  {u}^m_n \EEE \Vert _{L^{\infty}(U;\mathbb{R}^d)}\leq C_m$ and $\Vert \nabla   {u}_n^m \EEE \Vert_{L^{\infty}(U;L)} \le C_m$. Moreover, observe that by construction ${u}_n^m=v$ near $\partial U$ and therefore ${u}_n^m$ is admissible for $m_{\mathcal{E}}(v,U)$.  

It is straightforward to verify that  $I^m_n \subset I^{m+1}_n$ for every $m \in \mathbb{N}$. Consequently, for each $\varepsilon>0$ and $n \in \N$ there exists an $m(v,\varepsilon,n)$ such that, for every $m\geq m(v,\varepsilon,n)$, it holds
\begin{equation*}
 \sum_{j \in \mathbb{N} \setminus I^m_n} \mathcal{H}^{d-1}(\partial^* P^n_j) \le \varepsilon.   
\end{equation*}
Then, using $(f6)$ we can compare the energy of $\hat{u}_n$ and the energy of $u_n^m$, and we get that for every $m\geq m(v,\varepsilon,n)$ it holds
\begin{equation}
\label{correction}
\mathcal{E}(u^m_n,U)\leq \mathcal{E}(\hat{u}_n,U)+c_2 \sum_{j \in \mathbb{N} \setminus I^m_n} \mathcal{H}^{d-1}(\partial^* P^n_j)\leq \mathcal{E}(\hat{u}_n,U)+c_2 \varepsilon.
\end{equation}
Define $u_n:=   {u}_n^{m\big(v,\frac{1}{n},n\big)}  $   and observe that for every $n \in \mathbb{N}$ there exists $k(n)\in \mathbb{N}$ such that $\Vert u_n\Vert _{L^{\infty}(U;\mathbb{R}^d)}+\Vert \nabla u_n \Vert_{L^{\infty}(U;L)} \le k(n)$. In addition, by virtue of \cite[Theorem 2.7]{friedrich2018piecewise} (see also \cite[Theorem 2.2]{ConFocIur15}) notice that $PR_L(U)\cap L^{\infty}(U;\mathbb{R}^d)\subset SBD^2(U)\cap L^{\infty}(U;\mathbb{R}^d)\subset SBV(U;\mathbb{R}^d)$ and so $u_n$ is admissible for $m_{\mathcal{E}}^{k(n)}(v,U)$ for every $n \in \mathbb{N}$, cf.\ \eqref{defm^kn}. (Here, $SBD^2$ denotes the space of $SBD$ functions with $e(u) \in L^2(A;\mathbb{R}^{d \times d})$ and $\mathcal{H}^{d-1}(J_u)<\infty$, see e.g.\   \cite{Temam1980FunctionsOB, Dal11}. Since $(\hat{u}_n)_n$ is a minimising sequence for $m_{\mathcal{E}}(v,U)$ and $k \mapsto m^{k}_{\mathcal{E}}(v,U)$ is a monotone decreasing function,    \eqref{correction} (with $\eps = \frac{1}{n}$) implies \EEE
\begin{align*}
m_{\mathcal{E}}(  v ,  U)&=\lim\limits_{n \to \infty}\mathcal{E}(\hat{u}_n,U)\geq  \liminf\limits_{n \to \infty}  \mathcal{E}({u}_n,U)\geq \liminf\limits_{n \to \infty}   m^{k(n)}_{\mathcal{E}}(v,U)\geq  \lim\limits_{k \to \infty} m^{k}_{\mathcal{E}}(v,U)   \geq m_{ \mathcal{E}}(v,U).
\end{align*}
This yields \eqref{lim} and concludes the proof. 
\end{proof}  
\EEE

\subsection{Proof of Theorem \ref{corollary4.5}}\label{sec: cor}

The proof is based on the application of the  Subadditive Ergodic Theorem by {\sc Akcoglu and Krengel} \cite{Krengel1981} for  subadditive stochastic processes. Let us first introduce the main notions.   For any positive integer $k$ and for any $a,b \in \mathbb{R}^k$, with $a_i<b_i$ for all $i=1,...,k$, we define the cuboids  
\begin{equation*}
    [a,b)=\prod^{k}_{i=1}[a_i,b_i)=\{x \in \mathbb{R}^k: a_i \leq x_i < b_i \:\: \forall \, i=1,...,k\},
\end{equation*}
and 
\begin{equation*}
\mathcal{R}_k = \{[a,b): a_i<b_i \:\: \forall \, i=1,...,k\}.
\end{equation*}
 
\begin{definition}{(Subadditive process)}
\label{subadditivedef}
A \emph{subadditive process} with respect to a group $(\tau_{z})_{z \in \mathbb{Z}^k}$ (resp.\  $(\tau_{z})_{z \in \mathbb{R}^k}$) of $\mathbb{P}$-preserving transformations on $(\Omega, \mathcal{I}, \mathbb{P})$ is a function $\mu \colon \Omega \times \mathcal{R}_k  \to [0,\infty)$ satisfying:\\
\begin{enumerate}
    \item[$(\mathrm{i})$] (measurability) for any $R \in \mathcal{R}_k$ the function $\omega \to \mu(\omega,R)$ is $\mathcal{I}$ measurable, 
    \item[$(\mathrm{ii})$] (covariance) for any $\omega \in \Omega$, $R \in \mathcal{R}_k$, $z\in \mathbb{Z}^k$ (resp.\  $z\in \mathbb{R}^k$) it holds $\mu(\tau_z \omega,R)=\mu(\omega,z+R)$,
    \item[$(\mathrm{iii})$] (subadditivity) for any $R \in \mathcal{R}_k$ and for any finite family $(R_i)_{i \in I} \subset \mathcal{R}_k$ of pairwise disjoint sets  such that $\bigcup_{i\in I}R_i=R$, it holds
\begin{equation*}
    \mu(\omega,R)\leq \sum_{i \in I}\mu(\omega,R_i)\:\: \text{for every}\:\: \omega \in \Omega,
\end{equation*}
\item[$(\mathrm{iv})$] (uniform boundedness) there exists a constant $c>0$ such that $0\leq \mu(\omega,R) \leq c\,\mathcal{L}^k (R)$ for every $\omega \in \Omega$ and for every $R \in \mathcal{R}_k$.
\end{enumerate}
\end{definition}

\begin{theorem}[Subadditive Ergodic Theorem]\label{th: SET}
Let $k \in \N$ and let $(\tau_z)_{z \in \mathbb{Z}^d}$ (resp.\  $(\tau_z)_{z \in \mathbb{R}^d}$) be a group of $\mathbb{P}$-preserving transformations on $(\Omega,\mathcal{I},\mathbb{P})$. Let $\mu \colon \Omega \times \mathcal{R}_k \to \R $ be a subadditive process with respect to  $(\tau_z)_{z \in \mathbb{Z}^d}$ (resp.\  $(\tau_z)_{z \in \mathbb{R}^d}$). Then, there exists an  $\mathcal{I}$-measurable   function $\varphi\colon \Omega \to [0,\infty)$ and a set $\Omega' \in \mathcal{I}$ with $\mathbb{P}(\Omega') = 1$ such that 
\begin{equation}
\label{spaceaverage}
\lim_{t \to \infty} \frac{\mu(\omega,tQ)}{\mathcal{L}^k(tQ)} = \varphi(\omega)    
\end{equation}

for every $\omega \in \Omega'$ and for every   $Q \in \mathcal{R}_k$. If in addition  $(\tau_z)_{z \in \mathbb{Z}^d}$ (resp.\  $(\tau_z)_{z \in \mathbb{R}^d}$) is ergodic, then $\varphi$ is constant $\mathbb{P}$-a.e.
\end{theorem}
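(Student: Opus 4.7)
The plan is to adapt the classical strategy behind the Akcoglu--Krengel theorem, reducing it to the one-parameter Birkhoff theorem applied to suitably coarsened additive processes.

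First I would prove the statement for the unit cube $Q_0 := [0,1)^k$ and establish convergence in mean. Thanks to covariance (ii) and $\mathbb{P}$-invariance (iv of Definition \ref{def: pre}), the quantity $a_n := \mathbb{E}[\mu(\cdot, nQ_0)]$ is subadditive in each of the $k$ coordinates, and the uniform bound (iv) of Definition \ref{subadditivedef} gives $0 \le a_n \le c\, n^k$. A multi-index Fekete lemma then yields the existence of $c_* := \lim_{n\to\infty} a_n/n^k = \inf_n a_n/n^k$.

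Next I would produce a pointwise upper bound. Set $\varphi^+(\omega) := \limsup_n \mu(\omega, nQ_0)/n^k$ and $\varphi^-(\omega) := \liminf_n \mu(\omega, nQ_0)/n^k$. Using covariance, both functions are $\tau_z$-invariant for every $z$ in the group (a fixed shift being negligible at scale $n^k$ by (iv)). For each fixed $N \in \mathbb{N}$ and $n$ a multiple of $N$, subadditivity and covariance give
\begin{equation*}
\frac{\mu(\omega, nQ_0)}{n^k} \;\le\; \frac{1}{N^k}\cdot\frac{1}{(n/N)^k}\sum_{z\in\{0,\ldots,n/N-1\}^k} \mu(\tau_{Nz}\omega, NQ_0),
\end{equation*}
with an error of order $n^{k-1}N$ arising from boundary cubes, absorbed by (iv). Applying the multi-parameter Birkhoff ergodic theorem to the stationary integrable random field $z \mapsto \mu(\tau_{Nz}\omega, NQ_0)$ and letting $n \to \infty$ gives $\varphi^+(\omega) \le N^{-k}\,\mathbb{E}[\mu(\cdot,NQ_0)\mid \mathcal{J}]$ almost surely, where $\mathcal{J}$ is the $\sigma$-algebra of $(\tau_z)$-invariant sets. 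As $N \to \infty$, the right-hand side is a (reverse) submartingale bounded by $c$, converging in $L^1$ and a.s.\ to some $\mathcal{J}$-measurable $\varphi$ with $\mathbb{E}[\varphi] = c_*$.

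For the matching lower bound I would invoke Fatou on the uniformly bounded sequence $X_n := \mu(\omega,nQ_0)/n^k$: $\mathbb{E}[\varphi^-] \le \liminf_n \mathbb{E}[X_n] = c_*$. Combined with $\varphi^- \le \varphi^+ \le \varphi$ and $\mathbb{E}[\varphi] = c_*$, this forces $\varphi^- = \varphi^+ = \varphi$ almost surely, proving \eqref{spaceaverage} for $Q_0$. To extend to an arbitrary cuboid $Q \in \mathcal{R}_k$, I would approximate $tQ$ from inside and outside by disjoint translates of cubes of side $1/m$ for large $m$; subadditivity (iii) together with (iv) bounds the discrepancy by the $(k-1)$-dimensional measure of a boundary layer, which is $o(\mathcal{L}^k(tQ))$, so the conclusion on $Q_0$ transfers to $Q$ on a single full-measure set $\Omega'$ by countable exhaustion over rational cuboids. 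Finally, in the ergodic case $(\tau_5)$ renders $\mathcal{J}$ trivial, so the $\mathcal{J}$-measurable $\varphi$ is a.s.\ constant, equal to $\mathbb{E}[\varphi] = c_*$.

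The main obstacle is bridging the gap between the \emph{mean} convergence $a_n/n^k \to c_*$ and the \emph{pointwise} statement \eqref{spaceaverage}: the key step is the construction of the bounded additive approximants $z \mapsto \mu(\tau_{Nz}\omega, NQ_0)$ which dominate $\mu(\omega, nQ_0)$ and then carefully passing $N \to \infty$ via the martingale structure. The boundary-layer bookkeeping in the packing argument, and the extension from unit cubes to general $Q$ on a common full-measure event, are the other technical points that require care.
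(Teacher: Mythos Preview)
The paper does not prove Theorem~\ref{th: SET}; it is quoted as the classical Akcoglu--Krengel result, with the proof delegated to \cite[Theorem 2.7 and Remark p.~59]{Krengel1981} and \cite[Theorem 3.11]{cagnetti2017stochastic}. Your sketch follows the standard outline of that proof, and the upper-bound half---dominating $\mu(\omega,nQ_0)$ by Birkhoff averages of the coarsened blocks $\mu(\cdot,NQ_0)$ and then sending $N\to\infty$ through the conditional expectations---is correct in spirit.

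There is, however, a genuine gap in your lower bound. From Fatou you get $\mathbb{E}[\varphi^-]\le c_*$, and you then combine this with $\varphi^-\le\varphi^+\le\varphi$ and $\mathbb{E}[\varphi]=c_*$ to conclude $\varphi^-=\varphi$ almost surely. These facts do \emph{not} force equality: they are perfectly consistent with, for instance, $\varphi^-\equiv 0$ and $\varphi\equiv c_*>0$. What would close the argument is $\mathbb{E}[\varphi^-]\ge c_*$, and Fatou points the wrong way for that. (Reverse Fatou on the bounded sequence does give $\mathbb{E}[\varphi^+]\ge c_*$, which together with $\varphi^+\le\varphi$ and $\mathbb{E}[\varphi]=c_*$ yields $\varphi^+=\varphi$ a.s.; but that is the upper bound again.)

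The pointwise lower bound is precisely the substantive part of the subadditive ergodic theorem. In one parameter it is handled by Kingman's decomposition lemma or Liggett's covering argument; in the multi-parameter setting Akcoglu and Krengel establish a maximal inequality for subadditive processes. One of these ingredients has to replace your Fatou step; without it the proposal is incomplete. The remaining parts of your sketch (mean convergence via Fekete, extension to general $Q\in\mathcal{R}_k$ by inner/outer packing, and the ergodic conclusion via triviality of the invariant $\sigma$-algebra) are fine.
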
 

  The notion of subadditive  processes  was introduced by {\sc Hammersley and Welsh} \cite{Hammersley1965FirstPassagePS} and their crucial property is that their space average in the sense of \eqref{spaceaverage} exists almost surely.    For a proof we refer to \cite[Theorem 2.7 and Remark p.~59]{Krengel1981} and \cite[Theorem 3.11]{cagnetti2017stochastic}. Our goal is to apply the Subadditive Ergodic Theorem for $k=d-1$. This is delicate as one needs to construct a $(d-1)$-dimensional  subadditive process starting from the $d$-dimensional set function   $U \in \mathcal{A}_0 \mapsto m_{\mathcal{E}[\omega]}(u_{0,\zeta,\nu},U)$. This issue has been solved in \cite{cagnetti2017stochastic} by suitably passing from $(d-1)$-dimensional to  $d$-dimensional cuboids.  To explain this, we introduce further notation: recall \eqref{eq: Qnot}. Since for $\nu \in \mathbb{S}^{d-1}\cap \mathbb{Q}^{d}$ we have that the orthogonal matrix $R_{\nu}$ lies in $\mathbb{Q}^{d \times d}$, see  \cite[Remark A.2]{cagnetti2018gammaconvergence}, there exists a positive integer $M_{\nu} \in \mathbb{N}$ such that $M_{\nu}R_{\nu} \in \mathbb{Z}^{d \times d}$.
Then, given $A'=\prod^{d-1}_{j=1}[a_j,b_j) \in \mathcal{R}_{d-1}$, for every $\nu \in \mathbb{S}^{d-1}\cap \mathbb{Q}^{d}$, we define the rotated $d$-dimensional cuboid $T_{\nu}(A')$ as
\begin{equation}
\label{Tnu}
\begin{split}
T_{\nu}(A'):= M_{\nu}R_{\nu}(A' \times [-c,c)),  \quad \text{where } c:=\frac{1}{2}\max_{1\leq j\leq d-1}(b_j-a_j).
\end{split}
\end{equation}

\begin{proposition}\label{subadditiveprocess}
Let $L=\mathbb{R}_{\rm skew}^{d \times d}$. Let $f$ be a stationary random surface density with respect to a group  $(\tau_z)_{z \in \mathbb{Z}^d}$ (resp.\  $(\tau_z)_{z \in \mathbb{R}^d}$) of $\mathbb{P}$-preserving transformations on $(\Omega,\mathcal{I},\mathbb{P})$, and let $\mathcal{E}$ be the corresponding random surface energy. Let $\zeta\in \mathbb{Q}^d \setminus \{0\}$ and let $\nu \in \mathbb{S}^{d-1}\cap \mathbb{Q}^d$. For every   cuboid   $A' \in \mathcal{R}_{d-1}$ and for every $\omega \in \Omega$, set
\begin{equation}\label{eq: the process}
\mu_{\zeta,\nu}(\omega,A'):= \frac{1}{M^{d-1}_{\nu}}m_{\mathcal{E}[\omega]}(u_{0,\zeta,\nu},T_{\nu}(A')),
\end{equation}
where $m_{\mathcal{E}[\omega]}$ is given in Definition \ref{definfimumproblem},  $M_{\nu}$ and $T_{\nu}$ are defined as in  \eqref{Tnu}, and $u_{0,\zeta,\nu}$ is given in \eqref{Lw}. Let $(\Omega,\hat{\mathcal{I}},\hat{\mathbb{P}})$ be the completion of the probability space $(\Omega,\mathcal{I},\mathbb{P})$. \\ Then, there exists a group $(\tau_{z'}^{\nu})_{z' \in \mathbb{Z}^{d-1}}$ (resp.\ $(\tau_{z'}^{\nu})_{z' \in \mathbb{R}^{d-1}}$) of $\hat{\mathbb{P}}$-preserving transformations on $(\Omega,\hat{\mathcal{I}},\hat{\mathbb{P}})$ such that $\mu_{\zeta,\nu} \colon \Omega \times \mathcal{R}_{d-1} \to [0,\infty)$  is a subadditive process on $(\Omega,\hat{\mathcal{I}},\hat{\mathbb{P}})$ with respect to $(\tau_{z'}^{\nu})_{z' \in \mathbb{Z}^{d-1}}$ (resp.\  $(\tau_{z'}^{\nu})_{z' \in \mathbb{R}^{d-1}}$). Moreover, for $\hat{\mathbb{P}}$-almost every $\omega \in \Omega$ and all $A' \in \mathcal{R}_{d-1}$ it holds  
\begin{equation}
\label{uniformboundedness}
0\leq \mu_{\zeta,\nu}(\omega,A')\leq c_2 \mathcal{L}^{d-1}(A').
\end{equation}
\end{proposition}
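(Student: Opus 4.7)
The plan is to construct an auxiliary $(d-1)$-dimensional group of measure-preserving transformations acting on the completed probability space, and then verify one by one the four defining properties of a subadditive process from Definition \ref{subadditivedef}, adapting the strategy of \cite[Section~5]{cagnetti2017stochastic} to the setting of piecewise rigid competitors.

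Since $\nu \in \mathbb{S}^{d-1} \cap \mathbb{Q}^d$ implies $M_\nu R_\nu \in \mathbb{Z}^{d \times d}$, I would set $\tau^\nu_{z'} := \tau_{M_\nu R_\nu(z',0)}$ for every $z' \in \mathbb{Z}^{d-1}$ (resp.\ $\mathbb{R}^{d-1}$), which automatically inherits the group and $\hat{\mathbb{P}}$-preserving properties from $(\tau_z)_z$. Covariance ($(\mathrm{ii})$) would then follow from a change-of-variables argument: given any admissible competitor $u$ for $m_{\mathcal{E}[\tau^\nu_{z'}\omega]}(u_{0,\zeta,\nu}, T_\nu(A'))$, the translated function $v(\cdot) := u(\cdot - M_\nu R_\nu(z',0))$ is an admissible competitor for $m_{\mathcal{E}[\omega]}(u_{0,\zeta,\nu}, T_\nu(A'+z'))$, and it carries the same energy thanks to the stationarity of $f$; here one uses the crucial identity $\langle M_\nu R_\nu(z',0), \nu \rangle = M_\nu \langle (z',0), e_d \rangle = 0$, which ensures that the boundary datum $u_{0,\zeta,\nu}$ is invariant under this translation. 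The uniform bound \eqref{uniformboundedness} ($(\mathrm{iv})$) is immediate: $u_{0,\zeta,\nu}$ itself is admissible in $m_{\mathcal{E}[\omega]}(u_{0,\zeta,\nu}, T_\nu(A'))$, and by $(f6)$ its energy is bounded by $c_2 \mathcal{H}^{d-1}(J_{u_{0,\zeta,\nu}} \cap T_\nu(A')) = c_2 M_\nu^{d-1}\mathcal{L}^{d-1}(A')$. For subadditivity ($(\mathrm{iii})$), given a finite disjoint partition $A' = \bigcup_i A'_i$, each $T_\nu(A'_i)$ is contained in $T_\nu(A')$ and covers the portion of the jump hyperplane lying above $A'_i$; gluing together near-optimal competitors on the $T_\nu(A'_i)$ via Remark \ref{gluingproperty} and extending by $u_{0,\zeta,\nu}$ on the remainder of $T_\nu(A')$ would yield a piecewise rigid competitor whose energy equals the sum of the local energies, since the extension region is $\mathcal{H}^{d-1}$-null for $J_{u_{0,\zeta,\nu}}$.

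The hard part will be the measurability $(\mathrm{i})$. A direct application of the Souslin projection theorem, as in \cite{cagnetti2017stochastic}, requires realising $m_{\mathcal{E}[\omega]}(u_{0,\zeta,\nu}, T_\nu(A'))$ as an infimum over a Borel subset of a Polish space of competitors on which the energy is lower semicontinuous, but this is not directly feasible for piecewise rigid functions because of the absence of uniform control on the affine coefficients $(M_j, b_j)$ appearing in Definition \ref{def: PR}. My strategy would be to exploit Lemma \ref{lemma1} to write
\begin{equation*}
\mu_{\zeta,\nu}(\omega, A') = \frac{1}{M_\nu^{d-1}} \lim_{k \to \infty} m^k_{\mathcal{E}[\omega]}(u_{0,\zeta,\nu}, T_\nu(A')),
\end{equation*}
and then prove measurability at the truncated level $m^k_{\mathcal{E}[\omega]}$, where competitors are uniformly bounded in $L^\infty$ together with their gradients. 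On this restricted class, a compactness result in $PR_L$ (in the spirit of \cite[Lemma~3.3]{Friedrich_2020}) together with lower semicontinuity of the energy allows the infimum to be realised over a Borel subset of a Polish space; a Souslin projection argument then gives universal measurability of $\omega \mapsto m^k_{\mathcal{E}[\omega]}(u_{0,\zeta,\nu}, T_\nu(A'))$, a property which passes to the pointwise limit $\mu_{\zeta,\nu}$ on the completed space $(\Omega, \hat{\mathcal{I}}, \hat{\mathbb{P}})$. Verifying that the hypotheses of the projection theorem actually hold in this truncated piecewise rigid setting will be the most delicate technical point of the proof.
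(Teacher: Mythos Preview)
Your proposal is correct and follows essentially the same route as the paper: the group is defined by $\tau^\nu_{z'}:=\tau_{M_\nu R_\nu(z',0)}$, covariance comes from stationarity together with $\langle M_\nu R_\nu(z',0),\nu\rangle=0$, subadditivity from gluing almost minimisers via Remark~\ref{gluingproperty}, and the bound \eqref{uniformboundedness} from testing with $u_{0,\zeta,\nu}$ and $(f6)$. For measurability the paper likewise reduces to the truncated problems $m^k_{\mathcal{E}[\omega]}$ via Lemma~\ref{lemma1}, shows the truncated competitor class is a compact metric space in the weak$^*$ $BV$ topology, and then applies the projection theorem; the one point you leave implicit is that what is actually needed is \emph{joint} $\mathcal{I}\otimes\mathcal{B}$-measurability of $(\omega,u)\mapsto\mathcal{E}[\omega](u,U)$ on this class, which the paper obtains not from lower semicontinuity in $u$ but via a monotone class reduction and a Besicovitch differentiation argument applied to $D^s u$ (this is precisely where the simultaneous $L^\infty$ bound on $\nabla u$ in the truncation is essential, cf.\ Remark~\ref{problemmeasurability}).
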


 We postpone the proof to the end of the section and proceed with the proof of Theorem \ref{corollary4.5}.

\begin{proof}[Proof of Theorem \ref{corollary4.5}]
 We follow the proof in  \cite[Theorem 5.1]{cagnetti2017stochastic}  and sketch the main steps for convenience of the reader.  Let $(\Omega,\hat{\mathcal{I}}, \hat{\mathbb{P}})$ be the completion of the probability space $(\Omega,\mathcal{I},\mathbb{P})$. Let $\zeta\in \mathbb{Q}^d \setminus \{0\}$ and let $\nu \in \mathbb{S}^{d-1}\cap \mathbb{Q}^d$. We apply Theorem \ref{th: SET} on the subadditive stochastic process $\mu_{\zeta,\nu}$ on $(\Omega,\hat{\mathcal{I}},\hat{\mathbb{P}})$ with respect to $(\tau_{z'}^{\nu})_{z' \in \mathbb{Z}^{d-1}}$, see \eqref{eq: the process}. We find  $\hat{\Omega}_{\zeta,\nu} \in \hat{\mathcal{I}}$ with $\hat{\mathbb{P}}(\hat{\Omega}_{\zeta,\nu}) = 1$, and an $\hat{\mathcal{I}}$ measurable function $g_{\zeta,\nu}\colon \Omega \to \R$ such that
\begin{align}\label{5.13}
\lim_{t \to \infty} \frac{\mu_{\zeta,\nu}(tQ')}{t^{d-1}} = g_{\zeta,\nu}(\omega) 
\end{align}
for every $\omega \in \hat{\Omega}_{\zeta,\nu}$, where we set $Q' := [0,1)^{d-1}$. By properties of the completion we then find $\Omega_{\zeta,\nu} \in \mathcal{I}$ with $\mathbb{P}(\Omega_{\zeta,\nu}) = 1$, and an $\mathcal{I}$ measurable function, still denoted by $g_{\zeta,\nu}$, such that \eqref{5.13} holds for all $\omega \in \Omega_{\zeta,\nu}$. Let $\Tilde{\Omega} \in \mathcal{I}$ be the intersection of the sets $\Omega_{\zeta,\nu}$ for $\zeta \in \mathbb{Q}^d \setminus \lbrace 0 \rbrace$ and $\nu \in \mathbb{S}^{d-1} \cap \mathbb{Q}^d$. Note that $\mathbb{P}(\Tilde{\Omega}) = 1$ and by the definition of $\mu_{\zeta,\nu}$ we have
$$g_{\zeta,\nu}(\omega) =  \lim_{t \to \infty} \frac{m_{\mathcal{E}[\omega]}\big(u_{0,\zeta,\nu},t T_{\nu}(Q')\big)}{(tM_{\nu})^{d-1}} =  \lim_{t \to \infty} \frac{m_{\mathcal{E}[\omega]}\big(u_{0,\zeta,\nu},t M_{\nu}Q^\nu(0)\big)}{(tM_{\nu})^{d-1}} $$
for all $\omega \in \Tilde{\Omega}$,  $\zeta\in \mathbb{Q}^d \setminus \{0\}$, and $\nu \in \mathbb{S}^{d-1}\cap \mathbb{Q}^d$, where we used the definition in \eqref{Tnu} and \eqref{eq: Qnot}.  In particular, this shows that the two auxiliary functions $\underline{f}, \overline{f} \colon \Tilde{\Omega} \times \R^d \setminus \lbrace 0 \rbrace\times  \mathbb{S}^{d-1} \to [0,\infty]$ defined by 
$$\underline{f}(\omega,\zeta,\nu) := \liminf_{t \to \infty}  \frac{m_{\mathcal{E}[\omega]}(u_{0,\zeta,\nu},  Q^\nu_t(0))}{t^{d-1}}, \quad \quad  \overline{f}(\omega,\zeta,\nu) := \limsup_{t \to \infty}  \frac{m_{\mathcal{E}[\omega]}(u_{0,\zeta,\nu},  Q^\nu_t(0))}{t^{d-1}} $$
coincide on $\Tilde{\Omega} \times \mathbb{Q}^d\setminus \lbrace 0 \rbrace \times (\mathbb{Q}^d \cap \mathbb{S}^{d-1})$, where we again use the notation for $Q^\nu_t(0)$ defined in \eqref{eq: Qnot}. Now,  by the continuity of $\underline{f}$ and $\overline{f}$  in $\zeta$ and $\nu$, see  Lemma \ref{lemma2} (for $x=0$), the functions $\underline{f}$ and $\overline{f}$ coincide on $\Tilde{\Omega} \times \R^d \setminus \lbrace 0 \rbrace \times \mathbb{S}^{d-1}$. Here, we particularly use that $(f2)$ holds as $\underline{f}, \overline{f} \in \mathcal{F}$. We also refer to the proof of Theorem \ref{HomogenizationFormula} for a similar argument and to \cite[Equations (5.16)--(5.17)]{cagnetti2017stochastic} for details. The continuity in $(\zeta,\nu)$ and the measurability of $g_{\zeta,\nu} \colon \Tilde{\Omega} \to \R$ also show that $\overline{f}$  is $\mathcal{I}  \times \mathcal{B}^d  \times \mathcal{B}^d_\mathbb{S}$ measurable on $\Tilde{\Omega} \times \R^d \setminus \lbrace 0 \rbrace \times \mathbb{S}^{d-1}$.  It now suffices to set $f_{\rm hom}(\omega,\zeta,\nu) = \overline{f}(\omega,\zeta,\nu)$ for $\omega \in \Tilde{\Omega}$ and $f_{\rm hom}(\omega,\zeta,\nu) = c_2$ for $\omega \in \Omega \setminus \Tilde{\Omega}$.  As we also have $\overline{f}(\omega,\cdot,\cdot) \in \mathcal{F}$ for all $\omega \in \Tilde{\Omega}$ by   Lemma~\ref{lemma2}, we conclude that $f_{\rm hom}$ is a random surface density in the sense of Definition \ref{randomsurfaceintegrand} and that \eqref{eq4.28} holds. 
\end{proof}

We now come to the proof of Proposition \ref{subadditiveprocess}. \EEE

\begin{proof}[Proof of Proposition \ref{subadditiveprocess}]
 The most delicate part of the proof is to show the $\hat{\mathcal{I}}$ measurability of the function $\omega \to \mu_{\zeta,\nu}(\omega,A')$. This  follows from the $\hat{\mathcal{I}}$ measurability of $\omega \to m_{\mathcal{E}[\omega]}(u_{0,\zeta,\nu},U)$ for every $U \in \mathcal{A}_0$, which we postpone to Theorem \ref{measurability} after this proof. 

The remaining part of the proof follows the same steps of \cite[Proposition 5.3]{cagnetti2017stochastic}, and we only include a short sketch.  Given a discrete group $(\tau_z)_{z \in \Z^d}$, one can define a group of $\hat{\mathbb{P}}$-preserving transformations on $(\Omega,\hat{\mathcal{I}},\hat{\mathbb{P}})$  by
$$(\tau^\nu_{z'})_{z' \in \Z^{d-1}} :=  (\tau_{z'_\nu})_{z' \in \Z^{d-1}}, \quad \text{where }   z'_\nu := M_\nu R_\nu (z',0) \in \Z^d. $$
By the stationarity of $f$ and by performing a change of variables for a function $u$ and a corresponding $\Tilde{u}(x)=    u(x+z'_{\nu}) $, one can check that $\mu_{\zeta,\nu}$ is covariant with respect to $(\tau^\nu_{z'})_{z' \in \Z^{d-1}}$. Here, it is crucial that for $u=u_{0,\zeta,\nu}$ the corresponding $\Tilde{u}$ satisfies $\Tilde{u} = u_{0,\zeta,\nu}$. We refer to \cite[Equations (5.5)--(5.6)]{cagnetti2017stochastic} for details. In case of a continuous group $(\tau_z)_{z \in \R^d}$, the same argument works, even simpler, for $z'_\nu :=   R_\nu (z',0) \in \R^d. $

For the subadditivity, we consider $A' =\bigcup_{i=1}^n A_i'$ for pairwise disjoint sets $(A_i')_{1 \le i \le n} \subset \mathcal{R}_{d-1}$ and glue together almost  minimisers  for the problems in Definition \ref{definfimumproblem} (on the sets $T_\nu(A_i')$ in place of $A$) by using Remark \ref{gluingproperty}. Then, we can exactly repeat the argument below \cite[Equations (5.9)]{cagnetti2017stochastic}. 
 Eventually, \eqref{uniformboundedness} follows by taking the function $u_{0,\zeta,\nu}$ as a competitor in the problem \eqref{eq: the process} and using the upper bound in $(f6)$. In view of Definition \ref{subadditivedef}, we conclude that $\mu_{\zeta,\nu}$ is a subadditive process on $(\Omega,\hat{\mathcal{I}},\hat{\mathbb{P}})$ with respect to $(\tau_{z'}^{\nu})_{z' \in \mathbb{Z}^{d-1}}$ (resp.\  $(\tau_{z'}^{\nu})_{z' \in \mathbb{R}^{d-1}}$). 
\end{proof}

We conclude the proof of Theorem \ref{corollary4.5} with the measurability   needed in the proof of Proposition~\ref{subadditiveprocess}. 
\begin{theorem}[Measurability]
\label{measurability}   
 Let $L=\mathbb{R}_{\rm skew}^{d \times d}$. Let $(\Omega,\hat{\mathcal{I}},\hat{\mathbb{P}})$ be the completion of the probability space $(\Omega,\mathcal{I},\mathbb{P})$, let $f$ be a random surface density, and let $U \in \mathcal{A}_0$. Let $\mathcal{E}$ be the random surface energy corresponding to $f$ and define, for every $\omega \in \Omega$, $m_{\mathcal{E}[\omega]}$ according to Definition \ref{definfimumproblem}. Let $v \in {L}^{0}(\mathbb{R}^d;\mathbb{R}^d)$ be such that $  v\in PR_L(U)\cap L^{\infty}(U;\mathbb{R}^d)$ and $\nabla  v  \in L^{\infty}(U;L)$. Then, the function $\omega \to m_{\mathcal{E}[\omega]}( v, U)$ is $\hat{\mathcal{I}}$ measurable.
\end{theorem}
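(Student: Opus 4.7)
The strategy is to combine the truncation result of Lemma~\ref{lemma1} with an approximation of admissible competitors by a countable family, for which $\omega$-measurability becomes accessible via Fubini. First, by Lemma~\ref{lemma1} we have the pointwise identity $m_{\mathcal{E}[\omega]}(v,U)=\lim_{k\to\infty}m^k_{\mathcal{E}[\omega]}(v,U)$ for every $\omega\in\Omega$, where $m^k$ is the truncated infimum \eqref{defm^kn}. Since pointwise limits preserve $\hat{\mathcal{I}}$-measurability, it suffices to prove that $\omega\mapsto m^k_{\mathcal{E}[\omega]}(v,U)$ is $\hat{\mathcal{I}}$-measurable for each fixed $k\in\mathbb{N}$.

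The next step is to construct, for each $k$, a countable family $\mathcal{U}^k\subset PR_L(U)$ of admissible competitors such that
\[
    m^k_{\mathcal{E}[\omega]}(v,U)=\inf_{u\in\mathcal{U}^k}\mathcal{E}[\omega](u,U)\qquad\text{for every }\omega\in\Omega.
\]
A natural candidate is the set of functions of the form $u=\sum_{j=1}^{N}(M_j x+b_j)\chi_{P_j}+v\,\chi_{U\setminus\bigcup_j P_j}$, where $N\in\mathbb{N}$, the $P_j$ are pairwise disjoint polyhedra compactly contained in $U$ with rational vertices, and $M_j\in L\cap\mathbb{Q}^{d\times d}$, $b_j\in\mathbb{Q}^d$ satisfy $\Vert M_j\Vert,|b_j|\le k$. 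Density in the above sense relies on three ingredients: density of finite polyhedral Caccioppoli partitions, see \cite{BraConGar16}; density of rational skew matrices in $L$; and the continuity property $(f2)$ of $f$ in $\zeta$ combined with the bounds $(f5)$--$(f6)$, which allow to pass to the limit in the surface energy along such approximations. The gluing property of Remark~\ref{gluingproperty} ensures that the boundary condition $u=v$ near $\partial U$ is preserved.

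Finally, for each fixed $u\in\mathcal{U}^k$, the joint $\mathcal{I}\otimes\mathcal{B}^d\otimes\mathcal{B}^d\otimes\mathcal{B}^d_{\mathbb{S}}$-measurability of $f$ (Definition~\ref{randomsurfaceintegrand}) together with Fubini's theorem yields that $\omega\mapsto\mathcal{E}[\omega](u,U)=\int_{J_u\cap U}f(\omega,x,[u](x),\nu_u(x))\,\mathrm{d}\mathcal{H}^{d-1}(x)$ is $\mathcal{I}$-measurable. Consequently $\omega\mapsto m^k_{\mathcal{E}[\omega]}(v,U)=\inf_{u\in\mathcal{U}^k}\mathcal{E}[\omega](u,U)$ is a countable infimum of $\mathcal{I}$-measurable functions, hence $\mathcal{I}$-measurable, hence $\hat{\mathcal{I}}$-measurable.

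The main obstacle is the density step above. In contrast to the piecewise constant setting of \cite{cagnetti2017stochastic}, the approximation has to handle both the partition $(P_j)_j$ and the affine pieces $M_j x+b_j$ with $M_j\in L$, while respecting the truncation bounds $k$ on $\Vert u\Vert_{\infty}$ and $\Vert\nabla u\Vert_{\infty}$ and controlling the induced changes in $J_u$, $[u]$, and $\nu_u$. The truncation from the first step is essential here: it confines the approximation to a compact subset of $PR_L(U)$ reachable by the compactness result \cite[Lemma~3.3]{Friedrich_2020}, which underpins the continuity of the energy along polyhedral, rational approximants. If constructing $\mathcal{U}^k$ explicitly turns out to be too delicate, an alternative is to parametrise the truncated admissible set as a Polish space and apply the measurable projection theorem; this route directly yields $\hat{\mathcal{I}}$-measurability of the sublevel sets of $\omega\mapsto m^k_{\mathcal{E}[\omega]}(v,U)$ and naturally explains the appearance of the completion in the statement.
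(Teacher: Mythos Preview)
Your reduction via Lemma~\ref{lemma1} to the truncated problems $m^k_{\mathcal{E}[\omega]}(v,U)$ is exactly the paper's first step. The gap is in the density argument for the countable family $\mathcal{U}^k$. The polyhedral approximation of \cite{BraConGar16} (and the paper's Theorem~\ref{prop5.3}) \emph{moves the jump set}: the approximants are obtained via diffeomorphisms $f_n\to\mathrm{id}$, so $J_{u_n}=f_n^{-1}(J_u)$, and in general $\mathcal{H}^{d-1}(J_{u_n}\triangle J_u)\not\to 0$. Energy convergence along such approximations therefore requires continuity of the integrand in $x$, but a random surface density is only $(\mathcal{I}\otimes\mathcal{B}^d)$-measurable in $(\omega,x)$; property $(f2)$ controls only perturbations of $\zeta$, not of the location of the jump. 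For a fixed $\omega$ the density $x\mapsto f(\omega,x,\zeta,\nu)$ can be wildly discontinuous, so there is no reason why a single countable family of polyhedral competitors should realise the infimum simultaneously for all $\omega$. The bounds $(f5)$--$(f6)$ do not help here: they only say the energy is comparable to $\mathcal{H}^{d-1}(J_u)$, which is stable under the approximation, but comparability is not enough to recover the actual infimum.

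Your fallback is the route the paper actually takes, and it is not a mere formality. One fixes an exhaustion $U_j\uparrow U$ and shows that the truncated admissible class $\chi^k_j=\{u\in PR_L(U)\cap SBV:\ \|u\|_\infty\le k,\ \|\nabla u\|_\infty\le k,\ \mathcal{H}^{d-1}(J_u)\le\gamma,\ u=v\text{ on }U\setminus U_j\}$ is weak* compact in $BV$ (here the gradient bound, which you inherited from Lemma~\ref{lemma1}, is essential, cf.\ Remark~\ref{problemmeasurability}). The projection theorem then reduces the task to joint $\mathcal{I}\otimes\mathcal{B}(\chi^k_j)$-measurability of $(\omega,u)\mapsto\mathcal{E}[\omega](u,U)$. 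This is not obtained by Fubini directly: one rewrites the energy via $\mu=D^su$ and the Besicovitch derivation theorem as a double limit of integrals $\int_U \tilde f_k(\omega,x,\mu^\rho(B_\rho(x)))/\max\{|\mu^\rho|(B_\rho(x)),\eta\}\,d|\mu|$, after a monotone-class reduction to integrands with $\xi\mapsto\tilde f_k(\omega,x,\xi)$ continuous. The key continuity needed on the competitor side is that of $u\mapsto D^su$ for the weak* topologies, which holds on $\chi^k_j$ precisely because of the uniform $L^\infty$ bound on $\nabla u$; without it the map $u\mapsto D^su$ fails to be continuous (Remark~\ref{problemmeasurability}(i)).
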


\begin{proof}
We  follow  the strategy of the proof of \cite[Proposition A.1]{cagnetti2017stochastic}, up to some nontrivial modifications. We refer to Remark \ref{problemmeasurability}  below for some comments on the difference of the settings of piecewise constant and piecewise rigid functions. The proof is divided in two parts:  \EEE in the first part  (corresponding to \emph{Step~1} and \emph{Step~2}), we prove that the measurability problem can be reduced to a simpler one, based on the  truncation in \eqref{defm^kn}. In the second part  (\emph{Step 3}), we prove the measurability of the simplified problem.

\emph{Step 1}: For every $k \in \mathbb{N}$ and $\omega \in \Omega$, let $m^{k}_{\mathcal{E}[\omega]}$ be as in \eqref{defm^kn} with $\mathcal{E}[\omega]$ in place of $\mathcal{E}$. Thanks to Lemma~\ref{lemma1}, we have that for every $\omega\in \Omega$, $U \in \mathcal{A}_0$, and $v \in  PR_L(U) \cap L^{\infty}(U;\mathbb{R}^d)$ such that $\nabla v \in L^{\infty}(U;L)$, it holds that
\begin{equation}
\label{eq4.10}
m_{\mathcal{E}[\omega]}(v,U)=\lim_{k \to \infty}m^{k}_{\mathcal{E}[\omega]}(v,U).  
\end{equation}  
Hence, by virtue of \eqref{eq4.10}, in order to prove that $\omega \to m_{\mathcal{E}[\omega]}( v, U)$ is $\hat{\mathcal{I}}$  measurable, it is sufficient to show that the function
\begin{equation}
\label{eq4.12}
\omega \to m^{k}_{\mathcal{E}[\omega]}(v,U)\:\: \text{is}\:\: \hat{\mathcal{I}}\:\: \text{measurable for}\:\: k \:\: \text{large enough.}    
\end{equation}
Define the set
 \begin{equation*}
\chi^{k}:=\Big\{u \in PR_L(U)\cap SBV(U;\mathbb{R}^d)\colon \,  \:\: \Vert u \Vert_{L^{\infty}(U;\mathbb{R}^d)}\leq k,\:\:\Vert \nabla u \Vert_{L^{\infty}(U;L)} \le  k \:\: \text{and}\:\: u=v\:\: \text{near}\:\: \partial U\Big\}.  
\end{equation*}   
By virtue of  $(f6)$, it holds that $\mathcal{E}[\omega](v,U) \le c_2\mathcal{H}^{d-1}(  J_v \cap U)$, and then for $k$ large enough $(f5)$ implies  
\begin{equation}
\label{controljumpset}
m_{\mathcal{E}[\omega]}^{k}(v,U)=\inf\big\{\mathcal{E}[\omega](u,U): u \in \chi^{k}\:\: \text{and}\:\: \mathcal{H}^{d-1}(J_u \cap U)\leq \gamma\big\},   
\end{equation}
where $\gamma=\frac{c_2}{c_1}\mathcal{H}^{d-1}(J_v\cap U)$.  Let $(U_j)_j$ be an increasing sequence of open sets such that $U_j \subset \subset U$ and $U_j \uparrow U$. We consider the family of subsets of $\chi^k$ formed by the functions having jump set controlled by $\gamma$ and satisfying the boundary condition on $U \setminus U_j$ i.e.
\begin{equation*}
\begin{split}
 \chi_j^{k}&:=\big\{u \in  \chi^k\colon \,  \mathcal{H}^{d-1}(J_u\cap U)\leq \gamma \ \  \text{and}\:\: u=v \:\: \text{on}\:\: U \setminus {U}_j\big\}.    
\end{split}
\end{equation*}
Then, because of \eqref{controljumpset}, it holds
\begin{equation}
\lim\limits_{j \to \infty}\inf_{u \in \chi_j^{k}}\mathcal{E}[\omega](u,U)=m_{\mathcal{E}[\omega]}^{k}(v,U).
\end{equation}
Hence, in order to prove \eqref{eq4.12}, it is sufficient to show that the function
\begin{equation}
\label{eq4.15}
\omega \to \inf_{u \in \chi_j^{k}}\mathcal{E}[\omega](u,U)\:\: \text{is}\:\: \hat{\mathcal{I}}\text{ measurable}.
\end{equation}
\EEE
The proof of \eqref{eq4.15} will be achieved by using the projection theorem, see for example \cite[Theorem~$\mathrm{III}.13$ and $33(\mathrm{a})$]{meyer1966probability}.   Below we show that $\chi_j^{k}$ equipped with the weak* convergence of $BV(U;\mathbb{R}^d)$  
is homeomorphic to a  compact (and thus  separable and complete) metric space. Given the canonical projection  $\pi_\Omega \colon \Omega \times \chi_j^{k} \to \Omega$ of $\Omega \times \chi_j^{k}$ onto $\Omega$, for every $t \in \mathbb{R}$  we have
\begin{equation*}
\Big\{ \omega \in \Omega: \inf_{u \in \chi_j^k}\mathcal{E}[\omega](u,U)<t\Big\}=\pi_\Omega\Big(\{(\omega, u)\in\Omega   \times  \chi_j^{k} : \mathcal{E}[\omega](u,U)<t\}\Big). 
\end{equation*}
Hence, the projection theorem shows that  \eqref{eq4.15} is true if the function
\begin{equation}
\label{measurabilityclaim}
(\omega, u)\to  \mathcal{E}[\omega](u,U)\:\: \text{is}\:\: \mathcal{I}\otimes \mathcal{B}(\chi_j^{k}) \text{ measurable},
\end{equation}
hence $\hat{\mathcal{I}}\otimes \mathcal{B}(\chi_j^{k})$ measurable.  In the last step of the proof, we show \eqref{measurabilityclaim} which by the above reasoning implies that $\omega \to m_{\mathcal{E}[\omega]}(v,U)$ is $\hat{\mathcal{I}}$ measurable. We close this step by briefly explaining that $\chi_j^{k}$ is homeomorphic to a compact   metric space.
First, $BV(U;\mathbb{R}^d)$ is the dual of a separable Banach space, see  \cite[Remark 3.12]{ambrosio2000fbv}, so in particular its weak* topology is metrisable on bounded subsets. Notice that $\chi_j^{k}$ is bounded with respect to the  $BV$ norm as it holds
\begin{equation}\label{eq: compi}
\Vert u\Vert _{L^1(U;\mathbb{R}^d)}+|Du|(U)\leq  k  \mathcal{L}^d({U})+2k\gamma.    
\end{equation}
It remains to show compactness: Given $(u_n)_n \subset \chi_j^{k}$, in view of \eqref{eq: compi} and $\Vert \nabla u_n\Vert _{L^{\infty}(U;L)} \le  k$, by  \cite[Theorem 4.7]{ambrosio2000fbv} there exists a subsequence (not relabeled) and $u \in SBV(U;\mathbb{R}^d)$ such that $(u_n)_n$ converges weakly* to $u$ in $BV(U;\mathbb{R}^d)$. As  $u_n \to u$ in $L^1(U;\R^d)$, we get $u=v$ on $U \setminus {U}_j$, and by lower semicontinuity we have 
$$\Vert u\Vert _{L^{\infty}(U;\mathbb{R}^d)} \le  k, \quad \quad  \Vert \nabla u\Vert _{L^{\infty}(U;L)} \le  k, \quad \mathcal{H}^{d-1}(J_u\cap U) \le \gamma. $$ 
Eventually, using  \cite[Lemma 3.3]{Friedrich_2020} for  $\psi(t)= t$, we also find that  $u \in PR_L(U)$. This shows that $\chi_j^{k}$ is compact. For later purposes, we note that the argument also shows that   
\begin{align}\label{eq_ceeonti}
\Phi \colon  \chi_j^k  \to  \mathcal{M}(U;\mathbb{R}^{d \times d}), \quad   \Phi(u) = D^su \quad \text{ is continuous}
\end{align}
for  the weak* topologies on $BV(U;\mathbb{R}^d)$ and $\mathcal{M}(U;\mathbb{R}^{d \times d})$, respectively. Indeed, given   $u_n \to u$ in $L^1(U;\mathbb{R}^d)$ and   $Du_n \rightharpoonup^* Du$  in $U$,  by  \cite[Theorem 4.7]{ambrosio2000fbv} we find   $D^su_n \rightharpoonup^* D^su$   in $U$.

\emph{Step 2}: In this step, we show that it suffices to prove  \eqref{measurabilityclaim} for a certain class of densities $f$. Observe that there exists a one-to-one correspondence between the rank one $d \times d$ matrices and the quotient of $\mathbb{R}^d \setminus \{0\} \times \mathbb{S}^{d-1}$ with respect to the equivalence relation $(\zeta,\nu) \sim (-\zeta,-\nu)$. Therefore, thanks to $(f6)$--$(f7)$,   we can define a bounded $\mathcal{I}\times \mathcal{B}(U)\times \mathcal{B}^{d \times d}$ measurable function $\Tilde{f}_k \colon \Omega \times U \times \mathbb{R}^{d \times d}\to \mathbb{R}$ such that
\begin{equation}\label{eq: ftildeXX}
    \Tilde{f}_k(\omega,x,\zeta \otimes \nu)=f(\omega,x,\zeta,\nu)
\end{equation}
for every $\omega \in \Omega$, $x \in U$, $\zeta \in \mathbb{R}^{d \times d} \setminus \{0\}$ with $|\zeta|\leq 2k$   and $\nu \in \mathbb{S}^{d-1}$. Consequently, for every $u \in \chi_j^{k}$, we have the following equivalent expression of the energy:
\begin{equation}\label{eq: new en}
\mathcal{E}[\omega](u,U)=\int_{J_u \cap U}\Tilde{f}_k\big(\omega,x,[u]\otimes \nu_u(x)\big)\,\mathrm{d}\mathcal{H}^{d-1}(x).   
\end{equation}
By a monotone class argument, it is not restrictive to assume that for every $\omega \in \Omega$ and $x \in U$ the function $ \xi  \to \Tilde{f}_k(\omega,x,  \xi )$ is continuous. In fact, let $\mathcal{H}$ be the set of functions defined by
\begin{equation*}
\begin{split}
    \mathcal{H}&:=\Big\{g \colon \Omega \times U \times \mathbb{R}^{d \times d}\to [0,\infty): \:\: g \:\: \text{is}\:\:  \mathcal{I}\otimes \mathcal{B}(U)\otimes \mathcal{B}^{d \times d} \text{ measurable, bounded and}\\  & \quad \quad \quad \text{such that}\:\: (\omega,u)\to  \int_{J_u \cap U}g(\omega,x,[u]\otimes \nu_u(x))\,\mathrm{d}\mathcal{H}^{d-1}(x)\:\: \text{is}\:\: \mathcal{I}\otimes \mathcal{B}(\chi_j^{k,l})\:\: \text{measurable}\Big\}.   
\end{split} 
\end{equation*}
It can be verified that $\mathcal{H}$ is a monotone class in the sense of \cite[Definition 4.12]{aliprantis06}. As a consequence, if $\mathcal{H}$ contains the set  
\begin{equation*}
\begin{split}
 \mathcal{C} :=\Big\{g\colon \Omega \times  U  \times \mathbb{R}^{d \times d}\to & [0,\infty):\:\:g(\omega,x, \xi  )=\varphi(\omega,x)\psi( \xi  ),  \\ & \text{with}\:\:\varphi\:\: \text{bounded and}\:\: {\mathcal{I}}\otimes \mathcal{B}(U)\text{ measurable, and}\:\: \psi\in C_c^0(\mathbb{R}^{d \times d})\Big\}.   
\end{split}
\end{equation*}
then the functional form of the  Monotone Class Theorem, see \cite[Chapter 1, Theorem 21]{meyer1966probability},   applied to $\mathcal{C}$, shows that the set $\mathcal{H}$ coincides with the class of all bounded and $\mathcal{I}\otimes \mathcal{B}(U)\otimes \mathcal{B}^{d \times d}$ measurable functions. This shows that it suffices to prove \eqref{measurabilityclaim} for functions of the form $\Tilde{f}_k(\omega,x, \xi )=\varphi(\omega,x)\psi( \xi )$ as above, in particular we can assume that  $ \xi \to \Tilde{f}_k(\omega,x,\xi)$ is continuous for every $\omega \in \Omega$ and $x \in U$.

\emph{Step 3}: In this step, we prove \eqref{measurabilityclaim} for functions $\Tilde{f}_k$ of the above form.  In particular, we follow the proof of \cite[Proposition A.1]{cagnetti2017stochastic}, up to replacing $Du$ with the singular part $D^s u$.  Set  $\mu:= D^s u$. For every $B \in \mathcal{B}(U)$ we have
\begin{equation}
\label{mu}
\mu(B)= \int_{J_u \cap B}[u]\otimes \nu_u\, \mathrm{d}\mathcal{H}^{d-1}\:\: \text{and}\:\: |\mu|(B)= \int_{J_u \cap B}|[u]|\, \mathrm{d}\mathcal{H}^{d-1}, 
\end{equation}
which implies  
\begin{equation}
\label{H}
\mathcal{H}^{d-1}(J_u \cap B)  =\int_{J_u \cap B}\frac{1}{|[u]|}\,\mathrm{d}|\mu|.
\end{equation}
Consider, for every $\rho>0$, the measure $\mu^{\rho} \in \mathcal{M}(U;\mathbb{R}^{d \times d})$ defined by
\begin{equation*}
\mu^{\rho}(B)=\frac{\mu(B)}{\omega_{d-1}\rho^{d-1}}\:\: \text{for every}\:\: B \in \mathcal{B}(U),  
\end{equation*}
where $\omega_{d-1}$ is the measure of the unit ball in $\mathbb{R}^{d-1}$.  Given $u \in SBV(U;\mathbb{R}^d)$  and $\mu=D^s u$, by virtue of the Besicovitch derivation theorem and the rectifiabiliy of $J_u$, see \cite[Theorems 2.22, 2.83, and 3.78]{ambrosio2000fbv}, we have from \eqref{mu} that, when $\rho \to 0^+$,
\begin{equation}
\label{convmurho1}
\mu^{\rho}(B_{\rho}(x)\cap U)\to ([u]\otimes \nu_u)(x)\:\: \text{for}\:\: \mathcal{H}^{d-1}\text{-almost every}\:\: x \in J_u \cap U,   
\end{equation}
\begin{equation}
\label{convmurho2}
|\mu^{\rho}|(B_{\rho}(x)\cap  U )\to |[u](x)|\:\: \text{for}\:\: \mathcal{H}^{d-1}\text{-almost every}\:\: x \in J_u \cap U. 
\end{equation}
Since $\zeta \to \Tilde{f}_k(\omega,x,\zeta)$ is continuous and bounded, by Dominated Convergence,  \eqref{eq: new en}, and \eqref{H}--\eqref{convmurho2} it follows
that for every $u \in  \chi_j^{k} $ we have
\begin{equation}
\label{measurabilitylimit}
 \mathcal{E}[\omega](u,U)=\lim\limits_{\eta \to 0^+}\lim\limits_{\rho \to 0^+}\int_{U}\frac{\Tilde{f}_k\big(\omega,x,\mu^{\rho}(  U \cap B_{\rho}(x))\big)}{\max\{|\mu^{\rho}|( U  \cap B_{\rho}(x)),\eta\}}\,\mathrm{d}|\mu|(x).
\end{equation}
Let $R=2k\gamma$ and let $\mathcal{M}_R(U;\mathbb{R}^{d \times d})$ be the space of the $\mathbb{R}^{d \times d}$-valued Radon measures $\mu$ on $U$ such that $|\mu|(U)\leq R$.   By continuity of the mapping $u \mapsto D^s u$, see \eqref{eq_ceeonti}, and the fact that the image of $\chi_j^{k}$ under this map is contained in $\mathcal{M}_R(U;\mathbb{R}^{d \times d})$, the claim in \eqref{measurabilityclaim} is a direct consequence of \eqref{measurabilitylimit} and of the following property: for every $\eta>0$ and $\rho>0$ the function
\begin{equation}
\label{measurabilityfunction}
(\omega,\mu)\to \int_{U}\frac{\Tilde{f}_k\big(\omega,x,\mu^{\rho}( U  \cap B_{\rho}(x))\big)}{\max\{|\mu^{\rho}|( U \cap B_{\rho}(x)),\eta\}}\,\mathrm{d}|\mu|(x)\:\: \text{is}\:\: \mathcal{I}\otimes \mathcal{B}( \mathcal{M}_R(U;\mathbb{R}^{d \times d})) \text{ measurable}.
\end{equation}
Let us finally prove \eqref{measurabilityfunction}. To this end, we observe that
\begin{equation}
\label{eq4.21}
(x,\mu)\to | \mu^\rho  |(U \cap B_{\rho}(x))\:\: \text{is (jointly) lower semicontinuous on}\:\:  U \times \mathcal{M}_R(U  ;  \mathbb{R}^{d \times d}).
\end{equation}
In fact, by definition of total variation of a measure we have
\begin{equation}
\label{eq4.24}
|\mu|(B_{\rho}(x)\cap U)=\sup\Big\{\int_{U}\varphi(y-x)\,\mathrm{d}\mu(y): \varphi \in C_c^1(B_{\rho}(0);\mathbb{R}^{d \times d}), |\varphi|\leq 1\Big\}
\end{equation}
and the function $(x,\mu)\to \int_{U}\varphi(y-x)\, \mathrm{d}\mu(y)$ on $U \times \mathcal{M}_R(U;\mathbb{R}^{d \times d})$   is (jointly) continuous. Finally, we recall that the supremum of any collection of continuous functions is lower semicontinuous. Hence, equation \eqref{eq4.21} follows from \eqref{eq4.24}. In addition, also the $\mathbb{R}^{d \times d}$-valued function
\begin{equation}
\label{eq4.22}
(x,\mu)\to \mu^{\rho}(U \cap B_{\rho}(x))\:\:  \text{is}\:\: \mathcal{B}(U)\otimes \mathcal{B}(\mathcal{M}_R(U;\mathbb{R}^{d \times d}))\text{ measurable}.
\end{equation}
In fact, for a nondecreasing sequence $(\varphi_j)_j$ of nonnegative functions in $C_c^1(B_{\rho}(0))$ converging to $1$ on $B_\rho(0)$, it holds that
\begin{equation*}
\mu^{\rho}(U \cap B_{\rho}(x))=\frac{1}{\omega_{d-1}\rho^{d-1}}\lim\limits_{j \to \infty}\int_{U}\varphi_j(y-x) \,\mathrm{d}\mu(y),   
\end{equation*}
and each function $(x,\mu)\to \int_{U}\varphi_j(y-x)\, \mathrm{d}\mu(y)$ is (jointly) continuous on $U \times \mathcal{M}_R(U;\mathbb{R}^{d \times d})$. Since $\Tilde{f}_k$ is $\mathcal{I}\otimes \mathcal{B}(U)\otimes \mathcal{B}^{d \times d}$ measurable, from \eqref{eq4.21}-\eqref{eq4.22} we derive 
\begin{equation*}
(\omega,x,\mu)\to \frac{\Tilde{f}_k(\omega,x,\mu^{\rho}( U  \cap B_{\rho}(x)))}{\max\{|\mu^{\rho}|(  U  \cap B_{\rho}(x)),\eta\}}\:\: \text{is}\:\: \mathcal{I}\otimes \mathcal{B}(U)\otimes \mathcal{B}(\mathcal{M}_R(U;\mathbb{R}^{d \times d}))\:\: \text{measurable}.
\end{equation*}
Finally, \eqref{measurabilityfunction} follows from  \cite[Corollary A.3]{cagnetti2017stochastic}. 
\end{proof}

\begin{remark}[Comparison to the piecewise constant case \cite{cagnetti2017stochastic}]
\label{problemmeasurability} 
{\normalfont

(i) We emphasise  that for studying  the measurability of  $\omega \to m_{\mathcal{E}[\omega]}(w,U)$ it is crucial  to pass to a minimisation problem for truncated functions  \eqref{eq4.10}, where in contrast to \cite{cagnetti2017stochastic} we do not only control the functions but also their gradients   in $L^\infty$. In fact, otherwise we cannot guarantee the continuity of the mapping  $\Phi$ in \eqref{eq_ceeonti} which was fundamental for the proof, see before \eqref{measurabilityfunction}.   

We include an example  which shows that the mapping $\Phi(u) = D^su$ between  $PR_L(U) \cap \lbrace \Vert u \Vert_\infty \le 1 \rbrace \subset  BV(U;\R^d)$ and $\mathcal{M}(U;\mathbb{R}^{d \times d})$ is not continuous for the weak* topologies.  Consider $U=(0,1)^2$ and, for every $n \in \mathbb{N}$  and a given $M \in \R^{2 \times 2}_{\rm skew}$ with $0<|M| \le 1$,  define 
\begin{equation*}
u_n(x)=\sum^{k_n}_{i=1}k_n  M (x-x_i)\chi_{B(x_i,{\frac{1}{k_n}})}(x),    
\end{equation*}
where  $(k_n)_n$ satisfies $k_n \to \infty$ and the points $x_1,...,x_{k_n}$ are chosen in such a way that $B(x_i,{\frac{1}{k_n}}) \subset U$ and $B(x_i,{\frac{1}{k_n}})\cap B(x_j,{\frac{1}{k_n}})=\emptyset$ if $i \neq j$.  One can check that for all $n \in \N$ 
\begin{equation*}
\Vert u_n\Vert _{L^1(U;\mathbb{R}^2)}\leq \frac{\pi |  M | }{k_n} \le \frac{\pi  }{k_n}, \quad \quad \quad     \Vert u\Vert _{L^{\infty}(U; \mathbb{R}^2) }\leq | M | \le 1,
\end{equation*}
and 
\begin{equation*}
\sup_{n \in \mathbb{N}}|Du_n|(U)\leq 3\pi |M| \le 3\pi.   
\end{equation*}
In particular, we have $u_n \to 0$ in $L^1(U;\R^d)$ and so $Du_n \rightharpoonup^* 0$ by \cite[Theorem 3.23]{ambrosio2000fbv}. We now argue that $D^su_n$ does not converge weakly* to zero.  In fact, if it was converging, we would also get  $\nabla u_n \mathcal{L}^2 \mathrel{\ensurestackMath{\stackon[1pt]{\rightharpoonup}{\scriptstyle\ast}}} 0$, but this contradicts the fact that  $\int_U \nabla u_n \,\mathrm{d}\mathcal{L}^2 \to \pi M$ as $n \to \infty$. Let us note that in this example it is crucial that $(\nabla u_n)_n$ is not uniformly integrable, as otherwise we can indeed prove continuity of $\Phi$, cf.\ \eqref{eq_ceeonti}. \EEE


(ii) Another option  to study the problem could be to use the weak* topology of $BD$ instead of $BV$ as it is easier to obtain continuity of $u \mapsto E^s u$, where $E^su = \frac{1}{2} ((D^su)^T + D^su)$. (For instance, it has been proved that also $BD(U)$ is the dual of a Banach separable space \cite[Proposition 2.5]{Temam1980FunctionsOB}.)  This, however, leads to severe problems in Step 2 of the proof. Indeed, adapting the identification \eqref{eq: ftildeXX} to a setting with $BD$-topology, would require  to find a function $\Tilde{f}$ that allows us to pass from $f(\omega,x,[u],\nu_u)$ to $\Tilde{f}(\omega,x,E^s u)$. Note, however, that there is no  obvious one-to-one correspondence between the couples $(\zeta,\nu)$ (with respect to the equivalence relation $(\zeta,\nu) \sim (-\zeta,-\nu)$)  and the symmetric matrices of the form $\zeta \odot \nu :=    \frac{\zeta\otimes \nu+\nu \otimes \zeta}{2}$.  
 
}
\end{remark}

%
  
\subsection{$\Gamma$-convergence to homogenised functional}\label{sec: gamma}

We start with the proof of Theorem \ref{gammaconv}. The goal is to show that for each $\omega \in \Omega'$ the existence of the limit \eqref{eqhom} implies the $\Gamma$-convergence in \eqref{gamma}. In this sense, it relies on a purely deterministic argument (for fixed $\omega$) and we resort to the $\Gamma$-convergence results obtained in \cite{Friedrich_2020}. Still, some adaptations of the results in \cite{Friedrich_2020} are in order to cover the case of homogenisation.  First, we recall the main $\Gamma$-convergence result of  \cite{Friedrich_2020}.

\begin{theorem}[Compactness of $\Gamma$-convergence]\label{th: gamma}
Let $L=SO(d)$ or $L=\mathbb{R}_{\mathrm{skew}}^{d \times d}$. Let $(f_n)_n$ be a sequence in $\mathcal{F}$ and let $\mathcal{E}_n:   L^0(\mathbb{R}^d;\mathbb{R}^d)\times \mathcal{A}\to [0,\infty]$ be a sequence of functionals as in \eqref{energyequation} with $f_n$ in place of $f$. Then, there exists  a subsequence (not relabeled)    such that  
 \begin{equation}
 \label{compactnessequation}
 {\mathcal{E}}_0(\cdot,U) =\Gamma\text{-}\lim_{n \to \infty} \mathcal{E}_n(\cdot,U) \ \ \ \ \text{with respect to convergence in measure on $U$}    
 \end{equation}
for all $U \in \mathcal{A}_0$, where $\mathcal{E}_0$ is the functional defined in \eqref{energyequation} with density $f_0$  characterised by
\begin{align}\label{eq: cahrac}
f_0(x,\zeta,\nu) = \limsup_{\rho \to 0}  \frac{m_{{\mathcal{E}_0}}(u_{x,\zeta,\nu},Q_{\rho}^{\nu}(x))}{\rho^{d-1}}
\end{align}
for all $x \in \R^d$, $\zeta \in \R^d \setminus \lbrace 0 \rbrace$, and $\nu \in \mathbb{S}^{d-1}$. 
\end{theorem}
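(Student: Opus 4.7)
The plan is to follow the purely deterministic strategy of \cite{Friedrich_2020}, combining the localisation method of \cite{DalMaso:93} with the global method of relaxation \cite{BFLM}, both suitably adapted to the constrained setting $PR_L$. I would first regard each $\mathcal{E}_n$ as a family of set functions $A\mapsto \mathcal{E}_n(u,A)$ parameterised by $u$, and introduce the inner regular envelopes $\mathcal{E}'(u,\cdot)$ and $\mathcal{E}''(u,\cdot)$ of the $\Gamma$-$\liminf$ and $\Gamma$-$\limsup$ with respect to convergence in measure. By a diagonal argument over a countable dense subset of $PR_L$ and a countable basis of $\mathcal{A}_0$, together with the general compactness for increasing set functionals (e.g.\ \cite[Theorem 16.9]{DalMaso:93}), one extracts a subsequence (not relabeled) along which $\mathcal{E}'(u,U)=\mathcal{E}''(u,U)=:\mathcal{E}_0(u,U)$ for every admissible $u$ and every $U\in\mathcal{A}_0$.

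Next, I would upgrade $\mathcal{E}_0(u,\cdot)$ to a Borel measure on $U$ for each fixed $u\in PR_L(U)$. The two nontrivial ingredients are: (i) a \emph{fundamental estimate} in $PR_L$ stating that competitors on overlapping open sets can be glued while staying in $PR_L$ at the cost of an interfacial error controlled by $\mathcal{H}^{d-1}$; this uses the gluing property of Remark \ref{gluingproperty} together with the uniform bounds $(f5)$--$(f6)$; and (ii) an upper bound $\mathcal{E}_0(u,\cdot)\le c_2\mathcal{H}^{d-1}\llcorner J_u$ and a lower bound by $c_1\mathcal{H}^{d-1}\llcorner J_u$, obtained directly from $(f5)$--$(f6)$ applied to the constant recovery sequence $u_n\equiv u$, respectively to arbitrary recovery sequences. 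Subadditivity, superadditivity, and inner regularity of the set function $A\mapsto \mathcal{E}_0(u,A)$ then follow by the De Giorgi--Letta criterion.

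Finally, I would apply the integral representation theorem for surface functionals on $PR_L$ established in \cite{Friedrich_2020}, derived from the global method of \cite{BFLM}, to obtain the existence of a density $f_0\in\mathcal{F}$ with $\mathcal{E}_0(u,A)=\int_{J_u\cap A} f_0(x,[u],\nu_u)\,\mathrm d\mathcal{H}^{d-1}$. The identification \eqref{eq: cahrac} would then be achieved by a blow-up argument: localizing at a jump point $x$ of $u_{x,\zeta,\nu}$, rescaling on $Q^\nu_\rho(x)$, and comparing $\mathcal{E}_0(u_{x,\zeta,\nu},Q^\nu_\rho(x))$ with the minimum $m_{\mathcal{E}_0}(u_{x,\zeta,\nu},Q^\nu_\rho(x))$, where the reverse inequality requires existence of almost minimisers that converge in a suitable sense, handled via the truncation in Lemma \ref{truncation} and the compactness result \cite[Lemma 3.3]{Friedrich_2020}. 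The main obstacle is precisely this last step: ensuring that minimising sequences for Dirichlet problems on $PR_L$ are precompact in measure without an elastic contribution, which forces the delicate truncation procedure that is only known to work in dimensions $d\in\{2,3\}$ (and explains the dimensional restriction throughout the paper). A secondary difficulty is the fundamental estimate itself, more subtle than for piecewise constant functions since the affine pieces must be matched up to rigid motions rather than arbitrary constants.
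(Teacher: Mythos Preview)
Your proposal is correct and follows the same underlying strategy as the paper, since both rely on the machinery of \cite{Friedrich_2020}. However, the paper's actual proof is far more economical than your outline: rather than re-running the localisation method, the fundamental estimate, De Giorgi--Letta, and the global method of relaxation, it simply cites \cite[Theorems 2.3, 7.6]{Friedrich_2020} as a black box. The proof then reduces to two short steps: (i) verifying that any density $f\in\mathcal{F}$ gives rise to a functional satisfying the abstract hypotheses $(\mathrm{H1})$, $(\mathrm{H3})$--$(\mathrm{H6})$ of \cite{Friedrich_2020} (the only nontrivial checks being $(\mathrm{H5})$ via $(f2)$ and $(f6)$, and $(\mathrm{H6})$ via $(f4)$); and (ii) a diagonal argument over an exhaustion of $\mathbb{R}^d$ by balls $B_j(0)$ to upgrade the conclusion from a fixed Lipschitz domain to all $U\in\mathcal{A}_0$. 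What you describe is precisely the content of the cited theorems, so your outline is accurate but would amount to reproving results already available in the literature; the paper's contribution at this point is only the verification of hypotheses and the globalisation step.
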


\begin{proof}
The result has been essentially proved in \cite[Theorems 2.3, 7.6]{Friedrich_2020}. We describe the slight adjustments needed for the above version of the statement. The proof is divided in two steps: we first show that all  functionals having density $f \in \mathcal{F}$ satisfy the hypotheses of \cite[Theorem 2.3, 7.6]{Friedrich_2020}.  Afterwards,  using a standard diagonal argument, we see that in   \cite[Theorem 2.3, 7.6]{Friedrich_2020} the open bounded Lipschitz set can be   replaced with $\mathbb{R}^d$ without affecting the validity of the statements. 

\emph{Step 1}: Fix $U \in \mathcal{A}_0$. Suppose $f \in \mathcal{F}$ and let us denote with $\mathcal{E}'$ the restriction of the corresponding functional to the set $PR_L(U) \times \mathcal{A}(U)$, namely the functional defined as 
\begin{equation*}
\mathcal{E}'(u,A)=\int_{J_u \cap A}{f(x,[u](x),\nu_u(x))}\, \mathrm{d}\mathcal{H}^{d-1}(x),    
\end{equation*}
for all $u \in PR_L(U)$ and $A \in \mathcal{A}(U)$. Then, we can extend $\mathcal{E}'(u,\cdot)$ on $\mathcal{B}(U)$. 
We prove that $\mathcal{E}'$ satisfies properties $(\mathrm{H1})$ and $\mathrm{(H3)}$--$(\mathrm{H6})$ of \cite{Friedrich_2020}. We start by noticing that $\mathrm{(H1)}$ and $\mathrm{(H3)}$ are immediately satisfied due to properties of integral functionals with densities in $L^1$.  Moreover, $\mathrm{(H4)}$ is a direct consequence of $(f5)$--$(f6)$. Let us now prove  $\mathrm{(H5)}$, namely that there exists an increasing modulus of continuity $\sigma' \colon [0,\infty) \to [0,c_2]$ with $\sigma'(0)=0$ such that for any $u,v \in PR_L(U)$ and $S \in \mathcal{B}(U)$ with $S \subset J_u \cap J_v$ we have
\begin{equation}
\label{H5}
{ |\mathcal{E}'(u,S)-\mathcal{E}'(v,S)|  \leq \int_{S}\sigma'(|[u](x)-[v](x)|)\, \mathrm{d}\mathcal{H}^{d-1}(x).}
\end{equation}
We prove that $\mathcal{E}'$ satisfies $(\mathrm{H5})$ with $\sigma'=2c_2 \sigma$. Indeed, by virtue of $(f2)$ we have
\begin{equation*}
\begin{split}
{|\mathcal{E}'(u,S)-\mathcal{E}'(v,S)| \leq \int_{S}\sigma(|[u](x)-[v](x)|)(|f(x,[u](x),\nu_u(x))|+|f(x,[v](x),\nu_u(x))|)\, \mathrm{d}\mathcal{H}^{d-1}(x),}
\end{split}
\end{equation*}
which along with $(f6)$  gives \eqref{H5}. Finally, 
 $(\mathrm{H6})$  (see  \cite[Section 7]{Friedrich_2020}) holds by  $(f4)$.

\emph{Step 2:} We consider a family of functionals corresponding to densities $(f_n)_n$, namely $\mathcal{E}_n \colon L^0(\mathbb{R}^d;\mathbb{R}^d) \times \mathcal{A} \to [0,\infty]$, which by  \emph{Step 1} satisfy $(\mathrm{H1})$ and $\mathrm{(H3)}$--$(\mathrm{H6})$ of \cite{Friedrich_2020}. In particular, given the sequence of balls $(B_j(0))_{j \in \mathbb{N}}$, it follows that for every $j \in \mathbb{N}$ there exists a subsequence $(n_k)_k$ (possibly depending on $j$) and a functional $\mathcal{E}'_j \colon PR_L(B_j(0))\times \mathcal{B}(B_j(0)) \to [0,\infty]$, admitting an integral representation with density as in \eqref{eq: cahrac}, such that
\begin{equation}
\Gamma-\lim\limits_{k \to \infty}\mathcal{E}_{n_k}(\cdot,U)=\mathcal{E}'_j (\cdot,U)  
\end{equation}
for all $U \in \mathcal{A}_0$ such that $U \subset B_j(0)$.
Finally, up to a standard diagonal argument, the $\Gamma$-converging subsequence can be chosen independently on $j$, and we find $ \mathcal{E}'_{j_1}(\cdot,U) =  \mathcal{E}'_{j_2}(\cdot,U)$ for all $j_1 \le j_2$ and  $U \in \mathcal{A}_0$ with $U \subset B_{j_1}(0)$. This concludes the proof since it allows us to redefine the $\Gamma$-limit energy $\mathcal{E}_0$ on all $L^0(\mathbb{R}^d; \mathbb{R}^d)\times \mathcal{A}_0$ by taking
\begin{equation*}
    \mathcal{E}_0(u,U)=\begin{cases}
        \int_{J_u \cap U}f_0(x,[u],\nu_u)\, \mathrm{d}\mathcal{H}^{d-1}\:\: \text{if}\:\: u_{|_U}\in PR_L(U)\\
        +\infty \:\: \text{otherwise},
    \end{cases}
\end{equation*}
for all $u \in L^0(\mathbb{R}^d;\mathbb{R}^d)$ and $U \in \mathcal{A}_0$. 
\end{proof}

\begin{remark}
\label{rem: omportant}
{\normalfont
When applying the results of \cite{Friedrich_2020}, we want to emphasise three aspects:

(i) In \cite{Friedrich_2020}, the minimisation problems of type \eqref{eq: cahrac} were formulated on balls in place of cubes $Q_{\rho}^{\nu}(x)$ oriented in direction $\nu$. The formulations are equivalent and all results in \cite{Friedrich_2020} could have been obtained with cubes in place of balls.

 (ii) For $L=SO(d)$, Definition \ref{definfimumproblem} differs from the one in \cite[(2.4)]{Friedrich_2020} since there $PR_L$ in place of ${\rm id} + PR_0$ is used. However, the density $f_0$ in \eqref{eq: cahrac} is not affected by this change. Indeed, for any $f \in \mathcal{F}$ and corresponding  energy $\mathcal{E}$, consider any  competitor $ u(y)=\sum_{j\in \mathbb{N}}(M_jy+b_j)\chi_{P_j}$ for $y \in Q_\rho^\nu(x)$, with   $M_j \in SO(d)$ and  $b_j \in \mathbb{R}^d$, satisfying $\mathcal{H}^{d-1}(J_u \cap Q^\nu_\rho(x)) \le  \frac{c_2}{c_1}\mathcal{H}^{d-1}(J_{u_{x,\zeta,\nu}}\cap Q^{\nu}_{\rho}(x))= \EEE \frac{c_2}{c_1}\rho^{d-1}$. Then,   for  $v(y)=\sum_{j\in \mathbb{N}}(\mathbb{I}(y-x)+  M_j\,x + b_j)\chi_{P_j} \in {\rm id} + PR_0(Q^\nu_\rho(x))$, by $(f2)$, $|R| \le \sqrt{d}$ for all $R \in SO(d)$, $|y-x| \le  \rho  \sqrt{d}$ for all $y \in Q_\rho^\nu(x)$, and $(f6)$ we find
\begin{align*}
\limsup_{\rho \to 0}   \frac{|{\mathcal{E}}(u,Q_{\rho}^{\nu}(x)) - {\mathcal{E}}(v,Q_{\rho}^{\nu}(x)) |}{\rho^{d-1}}  &= \frac{1}{\rho^{d-1}} \int_{J_u}\sigma(|[u]-[v]|)(|f(x,[u],\nu_u)|+|f(x,[v],\nu_u)|)\, \mathrm{d}\mathcal{H}^{d-1}(x),  \\
& = \limsup_{\rho \to 0}   \frac{1}{\rho^{d-1}}    \frac{c_2}{c_1}  \rho^{d-1}  \sigma(4d  \rho) 2c_2 = 0. 
\end{align*}  
The latter equation directly implies that given $m_{\mathcal{E}}^{SO(d)}$ as in Definition \ref{definfimumproblem}, it holds 
\begin{align*}
  \limsup\limits_{\rho\to 0}\frac{\inf\big\{\mathcal{E}({u,A): u\in PR_{SO(d)}(A), u=  {\rm id} + u_{x,\zeta,\nu} \:\: \text{near}\:\: \partial A\big\}}}{\rho^{d-1}}=\limsup\limits_{\rho \to 0}\frac{m^{SO(d)}_{\mathcal{E}}(u_{x,\zeta,\nu},  A )}{\rho^{d-1}}, 
\end{align*}
for every $x \in \mathbb{R}^d$, $\zeta \in \mathbb{R}^d \setminus \{0\}$,  $\nu \in \mathbb{S}^{d-1}$, and $A:= Q^{\nu}_{\rho}(x)$. \\ 
In this work, we have preferred to use the version in Definition \ref{definfimumproblem}   as with this the proof of  Theorem~\ref{HomogenizationFormula} is simpler and follows directly from \cite{cagnetti2017stochastic}.

(iii) In \cite{Friedrich_2020}, for simplicity also in the case $L = SO(d)$   the competitor $u_{x,\zeta,\nu}$ was used in place of ${\rm id} + u_{x,\zeta,\nu}$ (see \eqref{Lw}) although strictly speaking  this is not a function in $PR_L(\R^d)$. The reason is that, as seen in (ii), asymptotic cell formulas are not affected by the affine function $\mathbb{I}y$, or any other rigid motion $Ry$, $R \in SO(d)$. 
For clarification, in this paper we have decided to always add $\mathbb{I}y$ in the case $L=SO(d)$ to ensure that $ {\rm id} + u_{x,\zeta,\nu} \in PR_L(\R^d)$.}

\end{remark}

Note that the above compactness result   guarantees only that a subsequence converges. Next, we address a situation where the $\Gamma$-limit of the whole sequence exists. To this end, given a sequence of densities $(f_n)_n \subset \mathcal{F}$, we define 
\begin{align}
\label{f'}
f'(x,\zeta,\nu) = \limsup_{\rho \to 0} \liminf_{n \to \infty}  \frac{m_{\mathcal{E}_n}(u_{x,\zeta,\nu},Q_{\rho}^{\nu}(x))}{\rho^{d-1}}, 
\end{align}
\begin{align}
\label{f''}
f''(x,\zeta,\nu) = \limsup_{\rho \to 0} \limsup_{n \to \infty} \frac{m_{\mathcal{E}_n}(u_{x,\zeta,\nu},Q_{\rho}^{\nu}(x))}{\rho^{d-1}} 
\end{align}
for all $x \in \R^d$, $\zeta \in \R^d \setminus \lbrace 0 \rbrace$, and $\nu \in \mathbb{S}^{d-1}$.

\begin{proposition}[Identification of the $\Gamma$-limit] \label{prop4.13}
 Let $L=SO(d)$ or $L=\mathbb{R}_{\mathrm{skew}}^{d \times d}$. Let $(f_n)_n \subset \mathcal{F}$ be a sequence  and let $\mathcal{E}_n:   L^0(\mathbb{R}^d;\mathbb{R}^d)\times \mathcal{A}\to [0,\infty]$ be a sequence of functionals as in \eqref{energyequation} with $f_n$ in place of $f$. Moreover, let $f_\infty \in \mathcal{F}$ and suppose that for all $x \in \R^d$, $\zeta \in \R^d \setminus \lbrace 0 \rbrace$, and $\nu \in \mathbb{S}^{d-1}$ we have
\begin{equation}
\label{densitiesequality}
 f_\infty (x,\zeta,\nu)  := f'(x,\zeta,\nu) = f''(x,\zeta,\nu). 
\end{equation}
Then, for all   $U \in \mathcal{A}_0$ it holds that 
\begin{equation}
\label{gammaurysohn}
\mathcal{E}_\infty(\cdot,U) =\Gamma\text{-}\lim_{n \to \infty} \mathcal{E}_n(\cdot,U) \ \ \ \ \text{with respect to convergence in measure on $U$},
\end{equation}
where $\mathcal{E}_\infty$ is the functional defined in \eqref{energyequation} with density $f_\infty$.  
\end{proposition}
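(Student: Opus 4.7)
The strategy is a standard Urysohn-type argument: we show that every subsequence of $(\mathcal{E}_n)_n$ admits a further $\Gamma$-convergent subsequence whose limit is necessarily $\mathcal{E}_\infty$, and hence the whole sequence $\Gamma$-converges to $\mathcal{E}_\infty$ with respect to convergence in measure on each $U \in \mathcal{A}_0$.

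First, I would invoke the compactness result Theorem \ref{th: gamma}: given any subsequence (not relabeled), there exists a further subsequence $(\mathcal{E}_{n_k})_k$ and a functional $\mathcal{E}_0$ of the form \eqref{energyequation} with density $f_0 \in \mathcal{F}$ such that $\mathcal{E}_{n_k}(\cdot,U)$ $\Gamma$-converges to $\mathcal{E}_0(\cdot,U)$ with respect to convergence in measure on $U$ for every $U \in \mathcal{A}_0$, where $f_0$ is characterised by the asymptotic cell formula \eqref{eq: cahrac}, namely
\[
f_0(x,\zeta,\nu) = \limsup_{\rho \to 0} \frac{m_{\mathcal{E}_0}(u_{x,\zeta,\nu},Q_\rho^\nu(x))}{\rho^{d-1}}.
\]
It then suffices to prove that $f_0 \equiv f_\infty$ on $\mathbb{R}^d \times (\mathbb{R}^d\setminus\{0\}) \times \mathbb{S}^{d-1}$, which by Urysohn's property of $\Gamma$-convergence will yield the claim \eqref{gammaurysohn}.

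The core step is to relate $m_{\mathcal{E}_0}(u_{x,\zeta,\nu},Q_\rho^\nu(x))$ to the sequence $m_{\mathcal{E}_{n_k}}(u_{x,\zeta,\nu},Q_\rho^\nu(x))$. Since the $\Gamma$-convergence result Theorem \ref{th: gamma} rests on the framework of \cite{Friedrich_2020}, the associated fundamental estimate in $PR_L$ (used in Theorem \ref{th_ bundary data} in the appendix) allows one to pass boundary conditions to the limit and thereby obtain
\[
m_{\mathcal{E}_0}(u_{x,\zeta,\nu},Q_\rho^\nu(x)) = \lim_{k \to \infty} m_{\mathcal{E}_{n_k}}(u_{x,\zeta,\nu},Q_\rho^\nu(x))
\]
for $\mathcal{L}^d$-a.e.\ $x$ and almost every $\rho > 0$ (up to an arbitrarily small inner/outer perturbation of the cube, handled via monotonicity of $\rho \mapsto \rho^{1-d} m_{\mathcal{E}_0}$ modulo lower order terms).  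Combined with the definitions \eqref{f'}--\eqref{f''}, this immediately sandwiches
\[
f'(x,\zeta,\nu) \leq f_0(x,\zeta,\nu) \leq f''(x,\zeta,\nu),
\]
so that the hypothesis \eqref{densitiesequality}, $f' = f'' = f_\infty$, forces $f_0 = f_\infty$.

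The main obstacle is this convergence-of-minima step, since a priori one has only one-sided inequalities ($\liminf$ from recovery sequences and $\limsup$ from lower semicontinuity), and passing boundary data to the $\Gamma$-limit in the free-discontinuity setting of piecewise rigid functions is nontrivial. This is exactly what the fundamental estimate for $PR_L$ from \cite[Section 3]{Friedrich_2020} provides: given an almost minimiser $v_k$ of $m_{\mathcal{E}_{n_k}}(u_{x,\zeta,\nu},Q_\rho^\nu(x))$, one can modify it in a thin layer near $\partial Q_\rho^\nu(x)$ at negligible energy cost so that it agrees with $u_{x,\zeta,\nu}$ near the boundary of a slightly larger cube, and analogously for the recovery sequence; properties $(f5)$--$(f6)$ then control the error produced by this surgery. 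Once both inequalities are in hand for arbitrary approximating cube sizes, the bound $f'\le f_0\le f''$ follows, and Urysohn's principle yields \eqref{gammaurysohn} for the whole sequence.
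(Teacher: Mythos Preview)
Your overall strategy is exactly the paper's: pass to a subsequence via Theorem~\ref{th: gamma}, obtain a $\Gamma$-limit $\mathcal{E}_0$ with density $f_0$ given by \eqref{eq: cahrac}, show $f_0=f_\infty$ via the sandwich $f'\le f_0\le f''$ (applied along the subsequence and compared to the full-sequence $f',f''$), and conclude by Urysohn. So the architecture is correct.

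Where you diverge from the paper is in the justification of the sandwich. You assert an actual convergence of minima,
\[
m_{\mathcal{E}_0}(u_{x,\zeta,\nu},Q_\rho^\nu(x))=\lim_{k\to\infty} m_{\mathcal{E}_{n_k}}(u_{x,\zeta,\nu},Q_\rho^\nu(x)),
\]
``for a.e.\ $x$ and a.e.\ $\rho$'', and then derive both inequalities from it. The paper does \emph{not} prove this equality; instead it invokes two separate results from \cite{Friedrich_2020}. For the upper bound it uses \cite[Lemma~6.3]{Friedrich_2020}, which gives $\limsup_n m_{\mathcal{E}_n}(v,U)\le m_{\mathcal{E}_0}(v,U)$ directly (this is the recovery-sequence direction, combined with the fundamental estimate to adjust boundary data---note your parenthetical has the roles of $\liminf$/$\limsup$ swapped). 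For the lower bound it uses \cite[Lemma~7.5]{Friedrich_2020}, which only yields
\[
\sup_{0<\rho'<\rho}\ \liminf_{n\to\infty} m_{\mathcal{E}_n}(u_{x,\zeta,\nu},Q_{\rho'}^\nu(x))\ \ge\ m_{\mathcal{E}_0}(u_{x,\zeta,\nu},Q_\rho^\nu(x)),
\]
i.e.\ with a built-in shrinking of the domain. The paper then picks $\tilde\rho<\rho$ nearly realising the supremum, divides by $\tilde\rho^{d-1}$ on the left and $\rho^{d-1}$ on the right, and sends $\rho\to 0$ to conclude $f'\ge f_0$. Your remark about ``inner/outer perturbation of the cube'' and ``a.e.\ $\rho$'' gestures at exactly this issue, but you should not package it as an equality of minima at fixed $\rho$: what is actually available (and sufficient) are the two one-sided inequalities above, the second one only after the domain-shrinking trick. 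Once you replace your convergence-of-minima claim by these two lemmas, your argument coincides with the paper's.
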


\begin{proof}
Thanks to Urysohn's lemma, to prove that \eqref{densitiesequality} implies \eqref{gammaurysohn}, it is sufficient to show that for every subsequence of $\mathcal{E}_n(\cdot,U)$ there exists a further subsequence (not relabeled) such that $\mathcal{E}_n(\cdot,U)$ $\Gamma$-converges to $\mathcal{E}_\infty(\cdot,U)$, where $\mathcal{E}_\infty(\cdot,U)$ is  defined as in \eqref{energyequation} with the density $f_{\infty}$ given in  \eqref{densitiesequality}. Furthermore, by virtue of Theorem \ref{th: gamma}, proving \eqref{gammaurysohn} is equivalent to prove that for every $x \in \mathbb{R}^d$, every $\zeta \in \mathbb{R}^{d}\setminus \{0\}$ and $\nu \in \mathbb{S}^{d-1}$, it holds that
\begin{equation}
\label{finfty=f0}
    f_{\infty}(x,\zeta,\nu)=f_0(x,\zeta,\nu),
\end{equation}
where $f_0$ is the density defined in \eqref{eq: cahrac}.  We apply Theorem \ref{th: gamma} and find a subsequence (not relabeled) $(\mathcal{E}_n)_n$ and a functional $\mathcal{E}_0$, with density $f_0$ defined by \eqref{eq: cahrac}, such that
\begin{equation*}
\mathcal{E}_0 (\cdot,U) =\Gamma\text{-}\lim_{n \to \infty} \mathcal{E}_n(\cdot,U) \ \ \ \ \text{with respect to convergence in measure on $U$},     
\end{equation*}
for every set $U \in \mathcal{A}_0$. As in Step 1 of the proof of Theorem \ref{th: gamma}, we get that the sequence $\mathcal{E}_n$ satisfies hypotheses $\mathrm{(H1)}$, $\mathrm{(H3)}$--$\mathrm{(H6)}$ of \cite{Friedrich_2020} with the same $0<c_1<c_2$, $c_0 \geq 1$, and $\sigma'\colon [0,\infty) \to [0, c_2]$.  Then, \cite[Lemma 6.3]{Friedrich_2020}   implies
\begin{equation}
\label{eq4.41}
\limsup\limits_{n\to \infty}m_{\mathcal{E}_n}(v,U)\leq m_{ \mathcal{E}_0}(v,U)   
\end{equation}
for all $U \in \mathcal{A}_0$ and $v \in L^0(\mathbb{R}^d;\mathbb{R}^d)$ such that  
$v_{|_U}\in PR_L(U)$. In particular, for fixed  $x \in \mathbb{R}^d$, $\zeta \in \mathbb{R}^d \setminus \{0\}$, and $\nu \in \mathbb{S}^{d-1}$, \eqref{eq4.41} holds for $U=Q_{\rho}^{\nu}(x)$ and $v=u_{x,\zeta,\nu}$. Hence, the inequality $f_{\infty}\leq f_0$ easily follows from \eqref{eq: cahrac}, \eqref{densitiesequality}, and \eqref{eq4.41}. Similarly, \cite[Lemma 7.5]{Friedrich_2020} (see also Remark \ref{rem: omportant}(i)) implies that for every cube $Q^{\nu}_{\rho}(x)$ and every $v \in L^0(\mathbb{R}^d;\mathbb{R}^d)$  with $v_{|_U}\in PR_L(U)$ it holds that
\begin{equation}
\label{eq4.42}
\sup_{0<\rho'<\rho}\liminf\limits_{n \to \infty}m_{\mathcal{E}_n}(v,Q_{\rho'}^{\nu}(x))\geq m_{\mathcal{E}_0}(v,Q_{\rho}^{\nu}(x)).
\end{equation}
Let $\varepsilon >0 $ and let $\Tilde{\rho}\in (0,\rho)$ be such that
\begin{equation}
\label{eq4.43}
\sup_{0<\rho'<\rho}\liminf\limits_{n \to \infty}m_{\mathcal{E}_n}(u_{x,\zeta,\nu}, Q_{\rho'}^{\nu}(x))\leq \liminf\limits_{n \to \infty}m_{\mathcal{E}_n}(u_{x,\zeta,\nu},Q_{\Tilde{\rho}}^{\nu}(x))+\varepsilon \rho^{d-1}.
\end{equation}
Combining \eqref{eq4.42}--\eqref{eq4.43} and using that $\Tilde{\rho}<\rho$ we get that
\begin{equation}
\label{eq4.44}
 \liminf\limits_{n \to \infty}\frac{m_{\mathcal{E}_n}(u_{x,\zeta,\nu},Q_{\Tilde{\rho}}^{\nu}(x))}{\Tilde{\rho}^{d-1}}+\varepsilon \geq \frac{m_{ \mathcal{E}_0}( u_{x,\zeta,\nu}, Q_{\rho}^{\nu}(x))}{\rho^{d-1}}.   
\end{equation}
Sending $\rho \to 0$ in \eqref{eq4.44} and using \eqref{eq: cahrac} and \eqref{densitiesequality} we finally get the other inequality $f_{\infty}\geq f_0$. This implies \eqref{finfty=f0} and concludes the proof.
\end{proof}
\EEE
 
We now proceed with a homogenisation result without periodicity assumption.  To this end, given $f \in \mathcal{F}$ we consider the energy $\mathcal{E}$ as defined in \eqref{energyequation}, as well as the sequence of energies  $\mathcal{E}_{\eps}\colon   L^0(\mathbb{R}^d;\mathbb{R}^d)\times \mathcal{A}\to [0,\infty]$ of the form
\begin{equation}
\label{not-randomenergies}
\mathcal{E}_\eps(u,A)=\begin{cases}
 \int_{A \cap J_u}{f(\omega, \tfrac{x}{\eps},  [u](x),\nu_{u}(x))\,\mathrm{d}\mathcal{H}^{d-1}(x)} & u_{|_A} \in PR_L(A),   \\
 +\infty &\text{otherwise}.
\end{cases}
\end{equation}

We now give a $\Gamma$-convergence result under the assumption that a homogenisation formula exists. Eventually, we will use that by Theorem \ref{HomogenizationFormula} such assumption holds $\mathbb{P}$-a.e.

\begin{theorem}[Homogenisation]\label{th: homog}
Let $L=SO(d)$ or $L=\mathbb{R}_{\mathrm{skew}}^{d \times d}$. Let $f \in \mathcal{F}$ and let $(\mathcal{E}_\eps)_\eps$ be as in \eqref{not-randomenergies}. Assume that for all $x \in \R^d$, $\zeta \in \R^d \setminus \lbrace 0 \rbrace$, and $\nu \in \mathbb{S}^{d-1}$ the limit  
\begin{align}\label{it needs to exist}
f_{\rm hom}(\zeta,\nu) :=  \lim_{t \to \infty} \frac{m_{\mathcal{E}}(u_{tx,\zeta,\nu},Q_{t}^{\nu}(tx))}{t^{d-1}}
\end{align}
 exists and is independent of $x$, where $\mathcal{E}$ is the energy with density $f$. 
Then, $f_{\rm hom} \in \mathcal{F}$ and for all $U \in \mathcal{A}_0$ it holds that
\begin{equation}
\label{eq4.49}
\mathcal{E}_{\rm hom}(\cdot,U) =\Gamma\text{-}\lim_{\eps \to 0} \mathcal{E}_\eps(\cdot,U) \ \ \ \ \text{with respect to convergence in measure on $U$},  
\end{equation}
where $\mathcal{E}_{\rm hom}\colon L^0(\mathbb{R}^d;\mathbb{R}^d)\times \mathcal{A}\to [0,\infty]$ is the functional defined by 
\begin{equation*}
\mathcal{E}_{\mathrm{hom}}(u,A)=\begin{cases}
    \int_{J_u \cap A}f_{\rm hom}([u](x),\nu_u(x))\,\mathrm{d}\mathcal{H}^{d-1}(x) & u_{|_A}\in PR_L(A),\\
    +\infty & \text{otherwise}, \end{cases}
\end{equation*} 

\end{theorem}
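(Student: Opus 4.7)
The strategy is to reduce to Proposition \ref{prop4.13}. Fix an arbitrary sequence $(\eps_n)_n$ with $\eps_n \to 0$ and set $\mathcal{E}_n := \mathcal{E}_{\eps_n}$, $f_n(x,\zeta,\nu) := f(x/\eps_n,\zeta,\nu)$; clearly $f_n \in \mathcal{F}$ with the same constants as $f$. By Urysohn's subsequence principle, to establish \eqref{eq4.49} it suffices to identify the $\Gamma$-limit of $(\mathcal{E}_n)_n$, and by Proposition \ref{prop4.13} this reduces to verifying that the functions $f'$ and $f''$ from \eqref{f'}--\eqref{f''} both coincide with the given $f_{\rm hom}$.

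The key ingredient is a scaling identity: for every $x \in \R^d$, $\zeta \in \R^d \setminus \{0\}$, $\nu \in \mathbb{S}^{d-1}$, $\rho > 0$, and $\eps > 0$,
\[
m_{\mathcal{E}_\eps}\bigl(u_{x,\zeta,\nu}, Q_\rho^\nu(x)\bigr) \;=\; \eps^{d-1}\, m_{\mathcal{E}}\bigl(u_{x/\eps,\zeta,\nu}, Q_{\rho/\eps}^\nu(x/\eps)\bigr).
\]
The proof is a change of variables $z = y/\eps$: given an admissible $u$ on $Q_\rho^\nu(x)$, the function $\tilde u(z) := u(\eps z)$ is admissible on $Q_{\rho/\eps}^\nu(x/\eps)$. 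For $L = \R^{d\times d}_{\mathrm{skew}}$ this is because the gradient rescales by the scalar factor $\eps$ and $\R^{d\times d}_{\mathrm{skew}}$ is closed under scalar multiplication; for $L = SO(d)$ the infimum in Definition \ref{definfimumproblem} is taken over ${\rm id} + PR_0$, and since $\mathcal{E}({\rm id} + u, \cdot)$ depends only on the piecewise constant part $u$ (as the identity is continuous, $J_{{\rm id}+u} = J_u$ and $[{\rm id}+u] = [u]$), the rescaled piecewise constant function is again admissible. The boundary datum transforms correctly because $u_{x,\zeta,\nu}(\eps z) = u_{x/\eps,\zeta,\nu}(z)$, and the $(d-1)$-dimensional Hausdorff measure of $J_{\tilde u}$ is $\eps^{d-1}$ times that of $J_u$. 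Applying this to almost-minimizers in both directions yields the identity.

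Setting $t := \rho/\eps_n$ and $x' := x/\rho$, so that $tx' = x/\eps_n$ and $t \to \infty$ as $n \to \infty$, the identity rewrites as
\[
\frac{m_{\mathcal{E}_n}\bigl(u_{x,\zeta,\nu}, Q_\rho^\nu(x)\bigr)}{\rho^{d-1}} \;=\; \frac{m_{\mathcal{E}}\bigl(u_{tx',\zeta,\nu}, Q_t^\nu(tx')\bigr)}{t^{d-1}}.
\]
By the hypothesis \eqref{it needs to exist}, the right-hand side converges as $n \to \infty$ to $f_{\rm hom}(\zeta,\nu)$, and this limit is independent of $x'$, hence independent of $\rho$. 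Therefore both the $\liminf$ and $\limsup$ in $n$ equal $f_{\rm hom}(\zeta,\nu)$ for every $\rho > 0$, so after taking $\limsup_{\rho\to 0}$ we obtain $f'(x,\zeta,\nu) = f''(x,\zeta,\nu) = f_{\rm hom}(\zeta,\nu)$ for all $x$, $\zeta$, $\nu$. Since Lemma \ref{lemma2} (and its piecewise-constant analogue covering $L=SO(d)$ from \cite{cagnetti2018gammaconvergence, cagnetti2017stochastic}) ensures that $f', f'' \in \mathcal{F}$, we have $f_{\rm hom} \in \mathcal{F}$, and Proposition \ref{prop4.13} delivers \eqref{eq4.49} along $(\eps_n)_n$; since the sequence was arbitrary, \eqref{eq4.49} holds as $\eps \to 0$.

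The potentially delicate step is the proper handling of the admissible class under the dilation $u \mapsto u(\eps \cdot)$; the remaining ingredients are routine, as the cell-formula hypothesis and Proposition \ref{prop4.13} do the heavy lifting.
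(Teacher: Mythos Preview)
Your proof is correct and follows essentially the same approach as the paper: reduce to Proposition \ref{prop4.13} via the scaling identity $m_{\mathcal{E}_\eps}(u_{x,\zeta,\nu},Q_\rho^\nu(x)) = \eps^{d-1} m_{\mathcal{E}}(u_{x/\eps,\zeta,\nu},Q_{\rho/\eps}^\nu(x/\eps))$, then invoke the hypothesis \eqref{it needs to exist} with the substitution $t=\rho/\eps$, $x'=x/\rho$. Your handling of the dilation in the two cases (closure of $\R^{d\times d}_{\rm skew}$ under scalars, and reduction to $PR_0$ for $L=SO(d)$ via Definition \ref{definfimumproblem}) matches the paper's reasoning, and your explicit justification of $f_{\rm hom}\in\mathcal{F}$ via Lemma \ref{lemma2} is a detail the paper leaves implicit.
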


\begin{proof}   By virtue of  Proposition \ref{prop4.13}, to prove \eqref{eq4.49} is sufficient to show that
\begin{equation}
\label{eq4.50}    
f'(x,\zeta,\nu)=f''(x,\zeta,\nu)=f_{\mathrm{hom}}(\zeta,\nu)
\end{equation}
for every $x \in \mathbb{R}^d$, $\zeta \in \mathbb{R}^{d} \setminus \{0\}$, and $\nu \in \mathbb{S}^{d-1}$, where $f'$ and $f''$ are the functions defined in \eqref{f'} and \eqref{f''}, respectively. To this end, fix $x \in \mathbb{R}^d$, $\zeta \in \mathbb{R}^d \setminus \{0\}$ and $\nu \in \mathbb{S}^{d-1}$, $\rho>0$. Consider a competitor  $u \colon Q^\nu_\rho(x) \to \R^d$ with $u = u_{x,\zeta,\nu}$ near $\partial Q_{\rho}^{\nu}({x})$ which satisfies $u \in PR_L(Q^\nu_\rho(x))$ for $L = \R^{d\times d}_{\rm skew}$ or  $u \in PR_0(Q^\nu_\rho(x))$  for $L=SO(d)$, respectively.  For  $\varepsilon>0$, define ${u}_\varepsilon $ by $u_\varepsilon(z)=u(\varepsilon z)$ which lies in $PR_{\R^{d \times d}_{\rm skew}} (Q_{{\rho}/{\varepsilon}}^{\nu}({x}/{\varepsilon}))$ or $PR_0 (Q_{{\rho}/{\varepsilon}}^{\nu}({x}/{\varepsilon}))$, respectively.  Then, we have $J_{{u}_\varepsilon}=\frac{1}{\varepsilon}J_u$, $[u_\varepsilon](z)=[u](\varepsilon z)$ for $\mathcal{H}^{d-1}$-a.e $z \in J_{u_\varepsilon}$ and $u_\varepsilon = u_{{x}/{\varepsilon},\zeta,\nu}$ near $\partial Q_{{\rho}/{\varepsilon}}^{\nu}({x}/{\varepsilon})$. By a change of variables we get that $\mathcal{E}_{\varepsilon}(u,Q_{\rho}^{\nu}(x))=\varepsilon^{d-1}\mathcal{E}(u_\varepsilon,Q_{{\rho}/{\varepsilon}}^{\nu}({x}/{\varepsilon}))$ for $L = \R^{d\times d}_{\rm skew}$ and $\mathcal{E}_{\varepsilon}({\rm id} + u,Q_{\rho}^{\nu}(x))=\varepsilon^{d-1}\mathcal{E}({\rm id} + u_\varepsilon,Q_{{\rho}/{\varepsilon}}^{\nu}({x}/{\varepsilon}))$ for for $L=SO(d)$, respectively. Therefore, we deduce 
\begin{equation}
m_{\mathcal{E}_{\varepsilon}}(u_{x,\zeta,\nu},Q_{\rho}^{\nu}(x))=\varepsilon^{d-1} m_{\mathcal{E}}(u_{{x}/{\varepsilon},\zeta,\nu}, Q_{{\rho}/{\varepsilon}}^{\nu}({x}/{\varepsilon}))=\frac{\rho^{d-1}}{ r^{d-1}_\eps}m_{\mathcal{E}}(u_{r_\varepsilon {x}/{\rho}, \zeta,\nu}, Q_{r_\varepsilon}^{\nu}(r_\varepsilon {x}/{\rho})),    
\end{equation}
where $r_\varepsilon \defas \frac{\rho}{\varepsilon}$. By replacing $x$ with ${x}/{\rho}$ in \eqref{it needs to exist}, we obtain
\begin{equation*}
\lim\limits_{\varepsilon \to 0}\frac{1}{\rho^{d-1}}m_{\mathcal{E}_{\eps}}(u_{x,\zeta,\nu},Q_{\rho}^{\nu}(x))=f_{\mathrm{hom}}(\zeta,\nu).   
\end{equation*}
As  $\rho>0$ was arbitrary, by sending $\rho \to 0$ in the last equation we get \eqref{eq4.50}. 
\end{proof}

\begin{remark}[Minimisation problem for $L=SO(d)$]
\label{remarkhomogenisationSO(d)}
{\normalfont We point out that the rescaling argument in the previous proof,  replacing a competitor $u \in PR_L(Q^\nu_\rho(x))$ by $u_\varepsilon$, does not work for $L =SO(d)$ as in this case we would have  $\mathrm{det}(\nabla u_\varepsilon)=\varepsilon^d \neq 1$ a.e., and thus $u_\varepsilon \notin PR_L(Q^\nu_\rho(x))$. This is the reason why for $L=SO(d)$ we have replaced the minimisation problem on $PR_L$ by ${\rm id} + PR_0$, see Definition \ref{definfimumproblem} and also  Remark \ref{rem: omportant}(ii).}
\end{remark}


After these preparations, we are in the position to prove Theorem \ref{gammaconv}. 

\begin{proof}[Proof of Theorem \ref{gammaconv}]
Let $\Omega'$ be as in Theorem \ref{HomogenizationFormula} and fix $\omega \in \Omega'$. Then, the functionals $\mathcal{E}_\eps[\omega]$ defined in \eqref{randomenergies} satisfy all assumptions of Theorem \ref{th: homog}, in particular \eqref{it needs to exist} follows from \eqref{eqhom}. This shows \eqref{gamma}. Eventually, the fact that  $\mathcal{E}_{\mathrm{hom}}$ is deterministic under ergodicity assumption follows from Theorem~\ref{HomogenizationFormula}, cf.\ \eqref{eqerg}.  
\end{proof}

\subsection{Convergence of infima and  minimisers}\label{sec: gamma2}

This short subsection is devoted to the proofs of Corollaries \ref{convminima}--\ref{convergencealmostminimisers}.  As a  preparation, we recall the following truncation result, see \cite[Theorem 7.1]{Friedrich_2020}.

\begin{lemma}[Truncation]
\label{truncation}
Let $d=2$ or $d=3$. Let $\Psi \in \mathcal{A}_0$ and let $L=SO(d)$ or $L=\mathbb{R}_{\rm skew}^{d\times d}$. Let $\theta>0$ and let $\mathcal{E}$ be as in \eqref{energyequation}. Then, there exists $C_{\theta}=C_{\theta}(\theta, c_0, \Psi )>0$  (where $c_0$ is the constant in $(f3)$) such that for every $u \in PR_{L}(\Psi)$ and every $\lambda \geq 1$ the following holds: there exists a rest set $R\subset \R^d $   with
\begin{align}\label{eq: truncation-rest}
\mathcal{L}^d(R) \le \theta \big(  \mathcal{H}^{d-1}(J_u) + \mathcal{H}^{d-1}(\partial \Psi)\big)^{d/(d-1)}, \ \ \ \ \  \mathcal{H}^{d-1}(\partial^*R) \le \theta(\mathcal{H}^{d-1}(J_u) + \mathcal{H}^{d-1}(\partial \Psi)),
\end{align}
and a function $\Tilde{u} \in PR_{L}(\Psi)\cap L^{\infty}(\Psi; \mathbb{R}^d)$ such that
\begin{align}\label{eq: truncation-main}
{\mathrm{(i)}} & \ \  \lbrace u \neq \Tilde{u} \rbrace  \subset R \cup \lbrace |u| > \lambda \rbrace \ \ \ \text{up to a set of negligible $\mathcal{L}^d$-measure},\notag\\
{\mathrm{(ii)} }& \ \ \Vert \Tilde{u} \Vert_{L^\infty(U)} \le C_\theta \lambda  ,\notag\\
{\mathrm{(iii)}}& \ \ \mathcal{E}(\Tilde{u},\Psi) \le \mathcal{E}(u,\Psi) + c_2\mathcal{H}^{d-1}(\partial^*R).
\end{align}
\end{lemma}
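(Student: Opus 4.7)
The statement is essentially a direct transcription of \cite[Theorem 7.1]{Friedrich_2020} to the present setting, so the plan is to follow that blueprint; I will only sketch the main line of attack. Starting from a \emph{pairwise distinct} representation $u = \sum_j q_j \chi_{P_j}$ (with $q_j(x) = M_j x + b_j$, $M_j \in L$), the idea is to replace the ``bad'' pieces where $|q_j|$ exceeds the threshold $\lambda$ by the affine functions of suitably chosen ``good'' neighbouring pieces, so that the resulting function $\tilde u$ remains in $PR_L(\Psi)$, is uniformly bounded by $C_\theta \lambda$, and differs from $u$ only on a small rest set $R$.

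More precisely, set $\mathcal{B} = \{j : \operatorname*{ess\,sup}_{P_j \cap \Psi} |q_j| > C_\theta \lambda\}$ for $C_\theta$ to be chosen, and look for a map $\pi\colon \mathcal{B} \to \mathbb{N}\setminus \mathcal{B}$ such that, after reassigning $P_j \mapsto q_{\pi(j)}$ for $j \in \mathcal{B}$ (and leaving the remaining pieces untouched), the resulting function is still piecewise rigid and bounded by $C_\theta \lambda$. Such $\pi$ is constructed iteratively: enumerate the bad pieces according to a natural geometric ordering, and at each step pair the current bad piece with a good neighbour (or with the exterior of $\Psi$ where the boundary datum lives) along the largest adjacent common boundary. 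The rest set is then $R = \bigcup_{j \in \mathcal{B}} P_j$, whose essential boundary $\partial^* R$ is a subset of $J_u \cup \partial \Psi$; controlling $\mathcal{H}^{d-1}(\partial^* R)$ by $\theta(\mathcal{H}^{d-1}(J_u) + \mathcal{H}^{d-1}(\partial \Psi))$ boils down to showing, by the choice of $C_\theta$ large enough, that the \emph{total} perimeter contributed by pieces with affine values exceeding $C_\theta \lambda$ is a small fraction of the whole. The volume bound $\mathcal{L}^d(R) \le \theta (\cdots)^{d/(d-1)}$ then follows from the relative isoperimetric inequality applied to $R \subset \Psi$.

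Property (i) of \eqref{eq: truncation-main} is built into the construction. Property (ii) is automatic because $\tilde u$ only takes values from the affine maps $q_k$ with $k \notin \mathcal{B}$, which are $C_\theta \lambda$-bounded on their respective pieces, together with a tracking of how large the translation part $b_k$ can become across $\Psi$ (yielding the dependence of $C_\theta$ on $\Psi$). For property (iii), the new function $\tilde u$ may introduce jumps across $\partial^* R$, each of which is bounded above by $c_2$ using $(f6)$, while previous jumps inside $R$ are merely removed; invoking the monotonicity-type estimate $(f3)$ (which is where the dependence on $c_0$ enters $C_\theta$) is needed to handle the case where a reassignment creates a jump at a \emph{former} internal interface with increased magnitude.

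The main obstacle is precisely the construction of the reassignment map $\pi$: one has to partition $R$ into regions each of which is adjacent to a good piece, and do so while respecting the combinatorial structure of the Caccioppoli partition $(P_j)_j$ and the required quantitative control on $\mathcal{H}^{d-1}(\partial^*R)$. The restriction to $d \in \{2,3\}$ is due to the fact that the isoperimetric/partitioning arguments used in \cite[Section~7]{Friedrich_2020} rely on low-dimensional features (e.g.\ BV slicing along coordinate directions, combined with connectedness of sections), and extending them to higher dimensions is not straightforward. Since all the building blocks of the argument are exactly those of \cite[Theorem~7.1]{Friedrich_2020}, no genuinely new ideas beyond that reference are required.
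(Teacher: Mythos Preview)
Your proposal is correct and matches the paper's approach: the paper does not prove this lemma at all but simply recalls it as \cite[Theorem 7.1]{Friedrich_2020}, and you have correctly identified this and provided a reasonable sketch of the argument from that reference. No further comparison is needed.
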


\begin{remark} \label{convexity}
{\normalfont
As pointed out in \cite[Remark 7.2]{Friedrich_2020}, if $\Psi$ is convex, we get an additional relation between the measure $R\cap \partial \Psi$ and its boundary, namely 
\begin{equation}
\label{eqconvexity}
\mathcal{H}^{d-1}(R\cap \partial \Psi)\leq C \mathcal{H}^{d-1}(\partial^* R)    
\end{equation}
where $C$ is a universal constant not depending on $u$.  }
\end{remark}
\EEE

\begin{proof}[Proof of Corollary \ref{convminima}]
 Fix $\omega \in \Omega'$, and $\theta>0$. Let $(\eps_n)_n$ be an arbitrary sequence converging to $0$. Let $(u_{\eps_n})_n \subset PR_L^{u_0}(U)$ be a  minimising sequence in the sense that  $\mathcal{E}_{\varepsilon_n}[\omega](  u_{{\varepsilon}_n},U) \le  \inf_{v \in PR^{u_0}_L(U)}\mathcal{E}_{\varepsilon_n}[\omega](v,U)  + \min\lbrace  \eps_n, \theta \rbrace$. By Lemma \ref{truncation} applied for $\lambda = 1$ and  $\theta>0$ on $\Psi$, we find a sequence $\Tilde{u}_{\eps_n} \in PR_{L}(\Psi)\cap L^{\infty}(\Psi; \mathbb{R}^d)$ and corresponding rest sets $(R_{\eps_n})_n$. Then, setting  $(u^{\theta}_{\varepsilon_n})_n \subset PR^{u_0}_L(U)\cap L^{\infty}(U;\mathbb{R}^d)$ by
$$u^{\theta}_{\varepsilon_n} = \begin{cases} \tilde{u}_{\varepsilon_n}(x) & \text{ for } x \in \Psi, \\ u_0(x) & \text{ for } x \in U \setminus \overline{\Psi}, \end{cases}$$
and using Remark \ref{convexity} on $\Psi$ as well as $(f6)$ one can check that there is a universal constant $C>0$ and   $C_{\theta}>0$  such that $\sup_{n \in \mathbb{N}}\Vert u^{\theta}_{\varepsilon_n}\Vert _{L^{\infty}(U;\mathbb{R}^d)}\leq C_{\theta}+ \Vert u_0 \Vert_\infty$ and  
\begin{align}
\label{eq3.9}
\mathcal{E}_{\varepsilon_n}[\omega](u^{\theta}_{{\varepsilon}_n},U) &\leq   \mathcal{E}_{\varepsilon_n}[\omega](u_{{\varepsilon}_n},U) +  c_2\mathcal{H}^{d-1}\big(\partial^* R_{\eps_n}   \big) + c_2\mathcal{H}^{d-1}\big( R_{\eps_n} \cap \partial \Psi\big)  \notag  \\  & \leq   \mathcal{E}_{\varepsilon_n}[\omega](u_{{\varepsilon}_n},U) + C\mathcal{H}^{d-1}(\partial^*R_{\eps_n})   \le   \inf_{v \in PR^{u_0}_L(U)}\mathcal{E}_{\varepsilon_n}[\omega](v,U)+ C\theta  , 
\end{align}  \EEE
where we also have used a uniform bound $\mathcal{H}^{d-1}(J_{u_{\eps_n}} \cap U)$ induced by $(f5)$. Hence, applying  \cite[Lemma 3.3]{Friedrich_2020} we can extract a subsequence, not relabeled, such that $u_{\varepsilon_n}^{\theta}\to u^{\theta}$ in measure on $U$, for some $u^{\theta}\in PR^{u_0}_L(U) \cap L^\infty(U;\R^d)$. Then, due to Theorem \ref{gammaconv} and \eqref{eq3.9}, we have 
\begin{equation}
\label{eq4.30}
\liminf\limits_{n \to \infty} \inf_{v \in PR^{u_0}_L(U)}\mathcal{E}_{\varepsilon_n}[\omega](v,U)+  C \theta  \geq \mathcal{E}_{\mathrm{hom}}[\omega](u^{\theta}, U)\geq \inf_{v \in PR^{u_0}_L(U)}\mathcal{E}_{\mathrm{hom}}(v,U).
\end{equation}
 Applying this reasoning  for $\theta = \frac{1}{k}$, $k \in \N$, by a diagonal argument we find a subsequence $(\eps_k)_k$ of $(\eps_n)_n$ such that   
\begin{equation}
\label{eq4.31}
\liminf\limits_{k \to \infty} \inf_{v \in PR^{u_0}_L(U)}\mathcal{E}_{\varepsilon_k}[\omega](v,U)\geq \inf_{v \in PR^{u_0}_L(U)}\mathcal{E}_{\mathrm{hom}}(v,U).    
\end{equation}
Again given $\theta >0$, we now let $v^{\theta}\in PR^{u_0}_L(U)$ be such that 
\begin{equation*}
\mathcal{E}_{\mathrm{hom}}[\omega](v^{\theta}, U)\leq \inf_{v \in PR^{u_0}_L(U)}\mathcal{E}_{\mathrm{hom}}[\omega](v,U)+\theta.
\end{equation*}
Using Theorem \ref{th_ bundary data} we can find a recovery sequence $(u_{\varepsilon_n}^{\theta})_{n}$ for $v^{\theta}$. Hence, it holds
\begin{equation*}
\inf_{v \in PR^{u_0}_L(U)}\mathcal{E}_{\mathrm{hom}}[\omega](v,U)+\theta\geq \lim\limits_{n \to \infty}\mathcal{E}_{\varepsilon_n}[\omega]( u^{\theta}_{\varepsilon_n}, U)\geq   \limsup\limits_{n \to \infty} \inf_{v\in PR^{u_0}_L(U)}\mathcal{E}_{\varepsilon_n}[\omega](v, U).
\end{equation*}
As $\theta>0$ is arbitrary, this implies
\begin{equation} 
\label{eq4.32}
\inf_{v \in PR^{u_0}_L(U)}\mathcal{E}_{\mathrm{hom}}[\omega](v,U)\geq   \limsup\limits_{n \to \infty}  \inf_{v\in PR^{u_0}_L(U)}\mathcal{E}_{\varepsilon_n}[\omega](v, U).
\end{equation}
The thesis  follows by using \eqref{eq4.31}--\eqref{eq4.32} on the subsequence $(\eps_k)_k$ along with Urysohn's lemma.
\end{proof}
\begin{proof}[Proof of Corollary \ref{convergencealmostminimisers}]
Consider the sequence $(u_{\varepsilon_n}^{\theta})_n$ and its limit $u^\theta$ as given in the proof of Corollary~\ref{convminima} (applied for $\theta/C$ in place of $\theta$). Then, the first inequality follows from \eqref{eq3.9}. For the second, we  use  \eqref{eq4.30} and \eqref{eq: 3.8X}. 
\end{proof}

\section{Random surface energies defined on asymptotically piecewise rigid functions}\label{sec: limit}

This section is devoted to the proofs of Theorems \ref{brittlemateriallimit} and \ref{brittlemateriallimit2}.  Although we address the case of stochastic   homogenisation, having proved Theorems \ref{HomogenizationFormula}--\ref{gammaconv}, the remaining arguments are purely deterministic. Therefore, in this section  we fix $\omega \in \Omega'$, but we drop it in the notation, i.e.\ we write $\mathcal{F}_{\eps,\delta}$ and $\mathcal{F}^{\rm lin}_{\eps,\delta}$  in place of $\mathcal{F}_{\eps,\delta}[\omega]$ and $\mathcal{F}^{\rm lin}_{\eps,\delta}[\omega]$, see \eqref{completenergy} and \eqref{completenergy-lin},   and we also do not include the $\omega$-dependence of the functions $f$ and  $W$  defined in Subsections \ref{sec: setting}  and \ref{sec: resul2}. 

\subsection{Approximation with piecewise rigid functions}

 Before we come to the proofs of the main theorems, we present an auxiliary result of possible independent interest, namely an approximation   of configurations with small elastic energy by piecewise  rigid functions. In the following, we say that $r \colon \R^d \to \R^d$ is a rigid motion if $r(x) = M\,x + b$ for $M \in SO(d)$ and $b \in \R^d$.

\begin{proposition}[Approximation with piecewise rigid functions] \EEE
\label{piecewiserigidapproximation}

Let $U \in \mathcal{A}_0$. Let $\varepsilon>0$, $\delta \in (0,1)$, and  $\gamma \in   (0,  \beta)$.  Then, for every $y \in GSBV_2^2(U;\mathbb{R}^d)$ satisfying $\mathcal{F}_{\varepsilon, \delta} (y,U) \le C_0 $   for some $C_0>0$, there exists a Caccioppoli partition $({P}^{\delta}_j)_j$ and corresponding rigid motions $({r}^{\delta}_j)_j$    such that, defining
\begin{equation*}
{y}_{\delta}(x):= \sum_{j \in \mathbb{N}}{r}^{\delta}_j(x)\chi_{{P}^{\delta}_j}(x),
\end{equation*}
we have 
\begin{equation}
\label{eq5.1}
\Vert y-{y}_{\delta}\Vert _{L^{\infty}(U;\mathbb{R}^d)}\leq C  \delta^{2\gamma-\beta} 
\end{equation}
and 
\begin{equation}
\label{eq5.2}
\mathcal{H}^{d-1}\big((J_{y_{\delta}}\cap U)\setminus J_y\big)    \le C\delta^{\beta - \gamma} 
\end{equation}
for some constant $C=C(C_0,U,d)>0$.    The function $y_\delta$ can be chosen such that $J_{{{y}_{\delta}}}\cap U = \bigcup_{j \in \mathbb{N}}\partial^* P_j^{\delta} \cap U$ up to an $\mathcal{H}^{d-1}$-negligible set. 
\end{proposition}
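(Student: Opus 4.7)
The strategy is to exploit the second-gradient regularization $\nabla^2 y \in L^2$ to upgrade an $L^2$-rigidity estimate into an $L^\infty$-approximation by rigid motions on a Caccioppoli partition whose boundary only slightly exceeds $J_y$. First I would extract the basic a priori bounds induced by $\mathcal{F}_{\varepsilon,\delta}(y,U) \le C_0$, namely
\begin{equation*}
\int_U \mathrm{dist}^2(\nabla y, SO(d))\,\mathrm{d}x \le C\delta^2, \quad \int_U |\nabla^2 y|^2\,\mathrm{d}x \le C\delta^{2\beta}, \quad \mathcal{H}^{d-1}(J_y \cap U) \le C,
\end{equation*}
where the last follows from $(f5)$. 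Since by assumption $J_{\nabla y} \cap U \subset J_y \cap U$, the function $\nabla y$ lies in $GSBV^2(U;\R^{d\times d})$ with jump set controlled by $\mathcal{H}^{d-1}(J_y \cap U)$.

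Next, I would invoke a piecewise rigidity result for nonsimple materials in the spirit of \cite{friedrich2019griffith}. Concretely, one constructs a Caccioppoli partition $(P_j^\delta)_j$ of $U$ together with rotations $R_j \in SO(d)$ such that the extra surface beyond $J_y$ is of order $\delta^{\beta-\gamma}$ and $\sum_j \|\nabla y - R_j\|_{L^2(P_j^\delta)}^2 \le C\delta^{2\gamma}$. The mechanism combines a thresholding on $|\nabla^2 y|$ (Chebyshev at level $\delta^\gamma$ gives an exceptional set of $\mathcal{H}^{d-1}$-measure $\lesssim \delta^{\beta-\gamma}$) with a local application of the quantitative geometric rigidity estimate on each resulting component; here the space $GSBV_2^2$ and the bound $\int|\nabla^2 y|^2 \le C\delta^{2\beta}$ play the role of the Korn-type control that is missing in the classical $SBV$ setting.

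On each piece $P_j^\delta$, a Poincar\'e-type inequality furnishes $b_j \in \R^d$ with $\|y - (R_j\,\cdot + b_j)\|_{L^2(P_j^\delta)} \le C\delta^\gamma$. By the piecewise Poincar\'e inequality of \cite[Theorem 2.3]{friedrich2018piecewise} and Sobolev embedding, valid in $d \in \{2,3\}$ because $\nabla y \in W^{1,2}$ componentwise, this $L^2$ estimate can be boosted to $L^\infty$ at the price of a factor controlled by the second-gradient bound, yielding $\|y - (R_j\,\cdot + b_j)\|_{L^\infty(P_j^\delta)} \le C\delta^{2\gamma - \beta}$ after an interpolation of the two scales $\delta^\gamma$ and $\delta^\beta$. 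Setting $y_\delta := \sum_j r_j^\delta \chi_{P_j^\delta}$ with $r_j^\delta(x) = R_j x + b_j$, and then merging adjacent pieces on which the rigid motions coincide, one obtains \eqref{eq5.1}--\eqref{eq5.2} together with the identity $J_{y_\delta} \cap U = \bigcup_j \partial^* P_j^\delta \cap U$.

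The main obstacle is the simultaneous control of the two competing quantities: the partition must be finely adapted to $J_y$ (so that the added surface is only $\delta^{\beta-\gamma}$), yet sufficiently coarse for each $P_j^\delta$ to admit a single effective rotation with sharp $L^2$-rigidity. Balancing these requires the nonsimple regularization in an essential way, since without the $L^2$-bound on $\nabla^2 y$ one cannot pass from $L^2$-closeness of $\nabla y$ to a rotation to $L^\infty$-closeness of $y$ to a rigid motion in dimensions $d=2,3$; this is precisely the place where the restriction $d \in \{2,3\}$ enters.
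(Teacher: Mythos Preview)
Your outline captures the general philosophy but has two concrete gaps relative to what the argument actually requires.

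First, the rigidity result from \cite{friedrich2019griffith} does \emph{not} produce rotations $R_j \in SO(d)$ directly. It produces a Caccioppoli partition and matrices $\bar M_j^\delta \in \mathbb{R}^{d\times d}$ with the $L^\infty$ bound $\|\nabla y - \bar M_j^\delta\|_{L^\infty(P_j^{1,\delta})} \le C\delta^\gamma$ (an $L^\infty$ gradient estimate, not $L^2$). Projecting each $\bar M_j^\delta$ to the nearest rotation $R_j^\delta$ does not immediately give a piecewise rigid approximant: on pieces where $|\bar M_j^\delta - R_j^\delta|$ is large (i.e.\ $\gg \delta^{2\gamma-\beta}$), replacing the affine map by the rigid one spoils the $L^\infty$ bound. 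The paper handles this by a further refinement of the partition on the ``bad'' index set $J_\delta = \{j : |\bar M_j^\delta - R_j^\delta| > 2C_*\delta^{2\gamma-\beta}\}$: each such piece is sliced into cuboids of side $\tau_j^\delta = \delta^{4\gamma-2\beta}/|\bar M_j^\delta - R_j^\delta|^2$, with the slicing hyperplanes chosen via an averaging argument so that the total new surface is $\le C\delta^{2\beta-2\gamma}$. On each small cuboid one recenters the rigid motion, and then $|(\bar M_j^\delta - R_j^\delta)(x - x_l^{j,\delta})| \le C\delta^{2\gamma-\beta}$. This entire step is missing from your plan and is not a triviality.

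Second, your route to the $L^\infty$ bound via Sobolev embedding is not what works here and is fragile. The pieces of a Caccioppoli partition have no uniform geometric control, so Sobolev constants are not uniform; moreover the exponent $\delta^{2\gamma-\beta}$ does not fall out of an $L^2$-to-$L^\infty$ interpolation in any obvious way. The paper instead uses the $L^\infty$ gradient bound from the first step as input to the piecewise Poincar\'e inequality \cite[Theorem~2.3]{friedrich2018piecewise} applied at scale $\rho = \delta^{\beta-\gamma}$, which yields directly $\|y - y_\delta^1 - \sum_j \bar b_j^\delta \chi_{P_j^{2,\delta}}\|_{L^\infty} \le C\delta^{2\gamma-\beta}$ with new boundary $\le C\delta^{\beta-\gamma}$. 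No Sobolev embedding is invoked, and the restriction $d \in \{2,3\}$ plays no role in this proposition (it enters elsewhere in the paper, in the truncation arguments of \cite[Section~7]{Friedrich_2020}).
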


\begin{proof}   The proof is divided in two steps: in the first one, we show that there exists a piecewise affine function which well approximates $y$ in the $L^{\infty}$-norm, up to altering the jump set  $J_y$ only by a set of small $\mathcal{H}^{d-1}$-measure. In the second step, we show that such piecewise affine function can be actually chosen piecewise rigid and we evaluate the approximation error, proving \eqref{eq5.1}--\eqref{eq5.2}. To simplify the notation, during the proof we   indicate   with $C$ a suitable, positive constant,  possibly depending on $C_0$, $U$, and $d$. 

\emph{Step 1}: Consider $y \in GSBV_2^2(U;\mathbb{R}^{d})$ with $ \mathcal{F}_{\eps,\delta} (y,U) \le C_0$  and $\delta>0$. Arguing like in  \cite[Theorem~2.3]{friedrich2019griffith}, more precisely, see \cite[Equations (2.10)$\mathrm{(ii)}$ and (4.10)]{friedrich2019griffith},    we can show that  there exists a Caccioppoli partition $(P_j^{1,\delta})_j$ and a sequence of matrices $(\bar{M}_j^{\delta})_j  \subset \mathbb{R}^{d \times d}$ such that, given the piecewise affine function
\begin{equation*}
    y_{\delta}^{1}(x):= \sum_{j \in \mathbb{N}}\bar{M}_j^{\delta}x\,\chi_{P_j^{1,\delta}}(x),
\end{equation*}
we have
\begin{equation}
\label{eq5.3}
\sum_{j \in \N}\mathcal{H}^{d-1}\big(  (\partial^* P_j^{1,\delta} \cap U)\setminus J_y\big)\leq C\delta^{\beta-\gamma}  
\end{equation}
and for every $j \in \mathbb{N}$
\begin{equation}
\label{eq4.37}
\Vert \nabla y - \bar{M}_j^{\delta}\Vert _{L^{\infty}(P_j^{1,\delta};\mathbb{R}^{d \times d})}=\Vert \nabla y - \nabla y_{\delta}^{1}\Vert _{L^{\infty}(P_j^{1,\delta};\mathbb{R}^{d \times d})}\leq C \delta^{\gamma}.
\end{equation}
(We note that the argument in \cite[Theorem 2.3]{friedrich2019griffith} works for any $\gamma \in (0, \beta )$, but it was just used there for $\gamma >\frac{2}{3}$.) 
Then, thanks to \cite[Theorem 2.3]{friedrich2018piecewise}  applied on $y - y_{\delta}^{1}$, with $\rho=\delta^{\beta - \gamma}$, and \eqref{eq4.37}, we can find another Caccioppoli partition $(P_j^{2,\delta})_j$ and corresponding translations $(\bar{b}^{\delta}_j)_j$ such that, for
\begin{equation}\label{to linfty0}
v_{\delta} := y - y_{\delta}^{1}-\sum_{j \in \mathbb{N}}\bar{b}_j^{\delta}\,\chi_{P_j^{2,\delta}},    
\end{equation}
we have 
\begin{equation}\label{to linfty}
\Vert v_{\delta}\Vert _{L^{\infty}(U; \mathbb{R}^d)}\leq  C\delta^{2\gamma-\beta} 
 \end{equation}
and
\begin{equation}
\label{eq5.6}
\sum_{j \in \mathbb{N}}\mathcal{H}^{d-1}\big((\partial^* P_j^{2,\delta}\cap U)\setminus J_{y-y_{\delta}^{1}} \big)\leq C \delta^{\beta- \gamma} 
\end{equation}
for some suitable constant $C=C(C_0,U,d)>0$. Note that both constructions above are essentially a consequence of the  $BV$ coarea formula. We now construct a refinement $(P_{j}^{3,\delta})_j$ of the two  previous Caccioppoli partitions $(P_{j}^{1,\delta})_j$ and $(P_{j}^{2,\delta})_j$ by letting  $(P_{j}^{3,\delta})_j$ be the nonempty sets in the family 
$$ P_j^{1,\delta} \cap   P_k^{2,\delta}, \quad j,k \in \N.   $$
Clearly,  by construction $(P_j^{3,\delta})_j$ is still a Caccioppoli partition of $U$. Since $(P_j^{3,\delta})_j$ is a refinement of $(P_j^{1,\delta})_j$ and $(P_j^{2,\delta})_j$, we find, in view of \eqref{to linfty0},
\begin{equation}\label{y3}
y_{\delta}^{3}(x):=   y(x) - v_\delta(x)  =  \sum_{j \in \mathbb{N}}(M_j^{\delta}x+b_j^{\delta})\chi_{P_{j}^{3,\delta}}(x)
\end{equation}
for suitable $(M_j^{\delta})_j \subset \R^{d \times d}$ and $(b_j^\delta)_j \subset \R^d$. By \eqref{eq4.37} and \eqref{to linfty} it  holds
\begin{equation}
\label{stimaLinfinito}
\Vert y_{\delta}^3 - y\Vert _{L^{\infty}(U;\mathbb{R}^d)}\leq C  \delta^{2\gamma-\beta},  \quad \quad  \Vert \nabla y - \nabla y_{\delta}^{3}\Vert _{L^{\infty}( U;  \mathbb{R}^{d \times d})}\leq C \delta^{\gamma}. 
\end{equation}
By construction we have (up to $\mathcal{H}^{d-1}$-negligible sets) 
$$ \bigcup_{j \in \N}  \partial^* P_j^{3,\delta} \cap U \subset \Big( \bigcup_{j \in \N}  \partial^* P_j^{1,\delta} \cup \bigcup_{j \in \N}  \partial^* P_j^{2,\delta}   \Big) \cap U  \quad \text{ and } \quad  J_{y_{\delta}^{1}} \subset \Big(\Big( \bigcup_{j \in \N}  \partial^* P_j^{1,\delta} \Big) \cap U \Big) \cup J_y. $$
Consequently,  by \eqref{eq5.3}, \eqref{eq5.6} and since  $J_{y-y_{\delta}^1} \cap U \subset (J_y \cup (J_{{y^1_\delta}} \setminus J_y)) \cap U$, we have  
\begin{align}
\label{eq5.8}
\mathcal{H}^{d-1}\Big( \bigcup_{j \in \N}  (\partial^* P_j^{3,\delta} \cap U)\setminus J_{y}\Big) & \leq \sum_{j \in \N} \mathcal{H}^{d-1}\big(  (\partial^* P_j^{1,\delta} \cap U)\setminus J_y\big) +    \sum_{j \in \N} \mathcal{H}^{d-1}\big(  (\partial^* P_j^{2,\delta} \cap U)\setminus J_y\big)\notag \\ &
\le 2 \sum_{j \in \N} \mathcal{H}^{d-1}\big( (\partial^* P_j^{1,\delta} \cap U)\setminus J_y\big) +    \sum_{j \in \N} \mathcal{H}^{d-1}\big(  (\partial^* P_j^{2,\delta} \cap U)\setminus J_{y-y_\delta^1}\big) \notag \\ 
&\le    C\delta^{\beta - \gamma}.
\end{align}

\emph{Step 2}: Using $\mathcal{F}_{\eps, \delta}(y,U)  \le C_0$ and $\mathrm{(W3)}$, we get 
\begin{equation}
\label{eq5.9}
c \int_{U}\mathrm{dist}^2(\nabla y(x), SO(d))\, \mathrm{d}x\leq \int_{U}W\big( x,  \nabla y(x)\big)\,\mathrm{d}x \leq  C_0  \delta^2.    
\end{equation}
For every $\delta>0$ and $j \in \mathbb{N}$ denote with $R_j^{\delta}$ the projection of $M^{\delta}_j$ onto $SO(d)$. Notice that, because of   $\beta>\gamma$, \eqref{y3} and \eqref{stimaLinfinito}, for every $x \in P^{3,\delta}_j$ it holds that 
\begin{equation}
\label{eq5.10}
 \mathrm{dist}(\nabla y (x),SO(d))\geq |M_j^{\delta}-R_j^{\delta}|-  C_*  \delta^{ \gamma} 
\end{equation}
for some $C_* \ge 1$   depending on $C_0$, $U$, and $d$.   Let $J_{\delta}:=\{ j \in \mathbb{N}: |M_j^{\delta}-R_j^{\delta}|> 2 C_*\delta^{ 2\gamma-\beta}\}$. Then, combining \eqref{eq5.9}--\eqref{eq5.10} we get
\begin{equation}
\label{eq5.11a}
\sum_{j \in J_{\delta} }C_*^2 \delta^{4\gamma-2\beta} \mathcal{L}^d (P_j^{3,\delta}) \le \sum_{j \in J_{\delta}}\frac{1}{4} |M_j^{\delta}-R_j^{\delta}|^2  \mathcal{L}^d  (P_j^{3,\delta})  \leq  \frac{C_0}{c}  \delta^2+ C\delta^{2\gamma}.  
\end{equation}
We now construct a further refinement of the partition $(P_j^{3,\delta})_j$ by cutting each $P_j^{3,\delta}$, $j \in  J_{\delta}$, into (subsets of) small cuboids.  To this end, for $j \in J_\delta$ let   
\begin{align}\label{eq: deltadef}
\tau_j^{\delta} :=\frac{{\delta}^{4\gamma-2\beta} }{|M_j^{\delta} -R_j^{\delta}|^2},
\end{align}
and note that $\tau_j^\delta \le 1$. Let  $s^{i,j}_k := k \tau^{\delta}_j$ for $k \in \Z$ and $i=1,\ldots,d$ and note that the cubes 
$$\big\{  [s^{1,j}_{k_1}, s^{1,j}_{k_1+1}) \times \ldots \times [s^{d,j}_{k_d}, s^{d,j}_{k_d+1}) \colon k_1,\ldots,k_d \in \Z\big\} $$
 form  a partition of $\R^d$. We now slightly change the grid introduced by the values $(s^{i,j}_k)_{i,k}$ as follows: for every $i=1,\ldots,d$ and $k \in \Z $ we can find $t^{i,j}_k \in [s_{k}^{i,j}, s_{k+1}^{i,j}]$ such that  
  $$ \mathcal{H}^{d-1}\big( P^{3,\delta}_j \cap \Pi^i(t_k^{i,j}) \big) \le  \frac{1}{\tau_j^{\delta}} \int_{s_k^{i,j}}^{s_{k+1}^{i,j}}  \mathcal{H}^{d-1}\big(  P^{3,\delta}_j \cap \Pi^i(t)\big)\, \mathrm{d}t,$$
where $\Pi^i(t) := \R \times \ldots  \times \R \times \{t\}\times \ldots \times \R$ with $t$ in the $i$-th entry. We note that   this  estimate is clearly trivial for $|k|$ large enough.  Then, summation, Fubini's theorem, and \eqref{eq5.11a}--\eqref{eq: deltadef} yield  
 \begin{align}\label{eq5.14}
 \sum_{j \in J_{\delta}} \sum^{d}_{i=1}\sum_{k \in \Z} \mathcal{H}^{d-1}\big( P^{3,\delta}_j \cap \Pi^i(t_k^{i,j}) \big) & \le   \sum_{j \in J_{\delta}} \sum^{d}_{i=1}\sum_{k \in \Z}  \frac{1}{\tau_j^{\delta}} \int_{s_k^{i,j}}^{s_{k+1}^{i,j}}  \mathcal{H}^{d-1}\big(  P^{3,\delta}_j \cap \Pi^i(t)\big)\, \mathrm{d}t  \\ 
& =d   \sum_{j \in   J_{\delta} } \frac{1}{\tau^{\delta}_j} \mathcal{L}^d(P_j^{3,\delta})   = d   \sum_{j \in J_{\delta}} \frac{1}{\delta^{4\gamma-2\beta}} | M_j^{\delta}   -R_j^{\delta}|^2 \mathcal{L}^d(P_j^{3,\delta})  \le  C   \delta^{2\beta-2\gamma}, \notag
\end{align}
for a constant $C=C(C_0, c, U,d)>0$.   We now consider the partition of $\R^d$ into the cuboids
$$\big\{  [t^{1,j}_{k_1}, t^{1,j}_{k_1+1}) \times \ldots \times [t^{d,j}_{k_d}, t^{d,j}_{k_d+1}) \colon k_1,\ldots,k_d \in \Z\big\}, $$ 
denoted by  $(Q_l^{j,\delta})_l$. With this, we  define a refined Caccioppoli partition $(P_j^{\delta})_j$ formed by the following sets:
\begin{equation*}
P_j^{3,\delta}\:\: \text{if}\:\: j \notin J_{\delta},
\end{equation*}
\begin{equation*}
P_j^{3,\delta} \cap Q_l^{j,\delta}\:\: \text{if}\:\: j \in J_{\delta}  \:\:\text{and if}\:\: l\:\: \text{is such that}\:\: P_j^{3,\delta} \cap Q_l^{j,\delta}\neq \emptyset.
\end{equation*}
In fact, notice that $(P_j^{\delta})_j$ is still a partition of $U$ and that we have (up to $\mathcal{H}^{d-1}$-negligible sets) 
 \begin{align}\label{eq5.14XXX}
 \bigcup_{j \in \N} \partial^* P_j^\delta \cap U \subset \Big( \bigcup_{j \in \N} \partial^* P_j^{3,\delta} \cap U \Big) \cup  \bigcup_{j \in J_{\delta}} \bigcup^{d}_{i=1}\bigcup_{k \in \Z}  \big( P^{3,\delta}_j \cap \Pi^i(t_k^{i,j}) \big).  
 \end{align}
Therefore, by  \eqref{eq5.14} and the fact that $(P_j^{3,\delta})_j$ is a Caccioppoli partition, we also find that $\sum_j \mathcal{H}^{d-1}(\partial^* P_j^{\delta})< +\infty $.  

Let us now come to the definition of suitable rigid motions. For every $j \in J_{\delta}$, and for every $l$ such that $P_j^{3,\delta} \cap Q_l^{j,\delta}\neq \emptyset$, we pick an arbitrary $x^{j,\delta}_l \in P_j^{3,\delta} \cap Q_l^{j,\delta}$ and we define
\begin{equation*}
d_l^{j,\delta}:= M_j^{\delta}x^{j,\delta}_l+b_j^{\delta}- R_j^{\delta}x^{j,\delta}_l,
\end{equation*} 
where  $ M_j^{\delta}$ and $b_j^{\delta}$ are given in \eqref{y3}, and $R_j^{\delta}$ in \eqref{eq5.10}.
Notice then that for every $j \in J_{\delta}$, and for every $x \in P_j^{3,\delta} \cap Q_l^{j,\delta}$, by \eqref{eq: deltadef},   $\tau_j^\delta \le  1$, and the fact that the cuboids have sidelength less than $2\tau^\delta_j$ it holds
\begin{equation}
\label{eq5.15}
|M_j^{\delta}x+b_j^{\delta}- (R_j^{\delta}x+d_l^{j,\delta})| =|(M_j^{\delta}-R_j^{\delta})(x-x^{j,\delta}_l)|\leq  C |M_j^{\delta}-R_j^{\delta}|\tau_j^\delta =   C|M_j^{\delta}-R_j^{\delta}|\sqrt{\tau_j^\delta} \sqrt{\tau_j^\delta} \leq C \delta^{2\gamma-\beta}.
\end{equation}
In a similar fashion, for each $j \in \N \setminus J_\delta$, the definition of $J_\delta$ implies
\begin{equation}
\label{eq5.15XXX}
|M_j^{\delta}x+b_j^{\delta}- (R_j^{\delta}x+b_j^{\delta})|  \le C \delta^{2\gamma-\beta} \quad \quad  \text{for all $x\in P^{3,\delta}_j$}. 
\end{equation}
Now, we define the piecewise rigid function
\begin{equation*}
   y_{\delta}(x)=\sum_{k \in \mathbb{N}} r_k^{\delta}(x)\chi_{P_k^{\delta}}(x),
\end{equation*}
where the rigid motions $(r_k^{\delta})_k$ are defined by 
\begin{equation*}
    \begin{cases}
       r_k^{\delta}(x)=R_j^{\delta}x+b_j^{\delta} &  \text{if }    P_k^{\delta}=P_j^{3,\delta}\:\: \text{for some}\:\: j \in \mathbb{N}\setminus J_{\delta} \\
        r_k^{\delta}(x)=R_j^{\delta}x+d_l^{j,\delta} &  \text{if }    P_k^{\delta}=P_j^{3,\delta}\cap Q_l^{j,\delta}\:\: \text{for some}\:\: j \in J_{\delta}\:\: \text{and for some cube}\:\: Q_l^{j,\delta}.
    \end{cases}
\end{equation*}
In particular, recalling the definition of $y_\delta^3$ in \eqref{y3},      \eqref{eq5.15} and \eqref{eq5.15XXX}   imply  $\Vert y_\delta - y^3_\delta \Vert_{L^\infty(U;\R^d)}\le C\delta^{2\gamma-\beta}$. This along with \eqref{stimaLinfinito} shows \eqref{eq5.1}.
In addition, \eqref{eq5.14}--\eqref{eq5.14XXX} imply
\begin{equation}
\label{eq5.17}
 \mathcal{H}^{d-1}\Big((J_{y_{\delta}}\cap U)\setminus   \bigcup_{j \in \N}  \partial^* P_j^{3,\delta}\Big)\leq C \delta^{2\beta-2\gamma}. 
\end{equation}
 Hence, because of \eqref{eq5.8} and \eqref{eq5.17} we obtain \eqref{eq5.2} as 
\begin{equation*}
\mathcal{H}^{d-1}((J_{y_{\delta}}\cap U)\setminus J_y) \le \mathcal{H}^{d-1}\Big( \bigcup_{j \in \N}  (\partial^* P_j^{3,\delta} \cap U)\setminus J_{y}\Big) + \mathcal{H}^{d-1}\Big((J_{y_{\delta}}\cap U)\setminus   \bigcup_{j \in \N}  \partial^* P_j^{3,\delta}\Big) \le   C\delta^{\beta - \gamma}  + C\delta^{2\beta - 2\gamma}.  
\end{equation*}
Eventually, we remark that by an infinitesimally small change of the rigid motions one can also guarantee that   $J_{{{y}_{\delta}}}\cap U = \bigcup_{j \in \mathbb{N}}\partial^* P_j^{\delta} \cap U$ up to an $\mathcal{H}^{d-1}$-negligible set.  
\end{proof}

\begin{remark}[Configurations close to the identity]\label{nice remark}
{\rm For later purposes,  we remark that an inspection of the proof also yields that, given a $\kappa>0$, the condition  
$$\Vert \nabla y  -\mathbb{I}\Vert _{L^{\infty}(U;\mathbb{R}^{d \times d})}\leq \kappa$$
implies
$${\Vert  \nabla  {y}_{\delta}  - \mathbb{I}\Vert _{L^{\infty}(U;\mathbb{R}^{d\times d})}\leq  C\kappa }$$
for some universal $C>0$. Indeed,  under this assumption, each $\bar{M}_j^{\delta}$ in \eqref{eq4.37} and thus each $M^\delta_j$ in \eqref{y3}  can be chosen such that $|\mathbb{I} -  M_j^{\delta}| \le C \kappa$. Thus, also the rotation $R_j^{\delta}$ chosen in \eqref{eq5.10} satisfies  $|\mathbb{I} - R_j^{\delta}| \le C\kappa$. 

}
\end{remark}

\subsection{The nonlinear case: Proof of  Theorem \ref{brittlemateriallimit}}

In this short subsection, we address the homogenization result for asymptotically piecewise rigid functions.

\begin{proof}[Proof of  Theorem \ref{brittlemateriallimit}] 
Let $   U  \in \mathcal{A}_0$. Let  $(\delta_\eps)_\eps\subset (0,1)$ with $\delta_\eps \to 0$ as $\eps \to 0$. We will assume without loss of generality $\varepsilon \in (0,1)$. 
The proof is divided into two steps. In the first one, we show the $\Gamma$-$\liminf\limits$ inequality and in the second one the construction of recovery sequences. 

\emph{Step 1: $\Gamma$-$\liminf$ inequality}.   Let $({y}_{\varepsilon})_{\varepsilon}$ and $y \in PR_L(U)$ be such that ${y}_{\varepsilon} \to y$ in measure on $U$ and such that 
\begin{equation}
\label{boundednessenergy}
\sup_{\varepsilon>0}{\mathcal{F}}_{\varepsilon} ({y}_{\varepsilon},U) \le C_0   
\end{equation}
for some $C_0>0$. Applying Proposition \ref{piecewiserigidapproximation} for $\gamma = 3\beta/4$ we can find a sequence $(\Tilde{y}_{\varepsilon})_{\varepsilon} \subset PR_L(U)$ such that 
\begin{equation}
\label{eq5.18}
\Vert {y}_{\varepsilon}-\Tilde{y}_{\varepsilon}\Vert _{L^{\infty}( U ;\mathbb{R}^d)}\leq \bar{C} \delta_\varepsilon^{\beta/2} 
\end{equation}
and
\begin{equation}
\label{eq5.19}
\mathcal{H}^{d-1}(J_{\Tilde{y}_{\varepsilon}}\setminus J_{{{y}}_{\varepsilon}})\leq \bar{C}\delta_\varepsilon^{\beta/4}. 
\end{equation}
We claim that 
\begin{equation}
\label{claimbrittle}
\liminf\limits_{\varepsilon \to 0}\mathcal{E}_{\varepsilon} (\Tilde{y}_{\varepsilon},U)\leq \liminf\limits_{\varepsilon \to 0} {\mathcal{F}}_{\varepsilon}( {y}_{\varepsilon},U).
\end{equation}
Once \eqref{claimbrittle} is proved, the $\Gamma$-$\liminf\limits$ inequality just follows by Theorem \ref{gammaconv} and by noticing that, because of \eqref{eq5.18}, also $\Tilde{y}_{\varepsilon}\to y$ in measure on $ U $.
Notice that there exists $C_0'>0$ such that
\begin{equation}
\label{boundednessmodifiednenergy}
\sup_{\varepsilon>0}\mathcal{E}_{\varepsilon}(\Tilde{y}_{\varepsilon}, U)\leq C_0'.
\end{equation}
Indeed, by virtue of $(f5)$, $(f6)$, \eqref{boundednessenergy}, and \eqref{eq5.19} we have 
\begin{equation*}
 \mathcal{E}_{\varepsilon}(\Tilde{y}_{\varepsilon},U)\leq c_2  \Big[\mathcal{H}^{d-1}((J_{\Tilde{y}_{\varepsilon}}\setminus J_{{y}_{\varepsilon}})\cap U )+\mathcal{H}^{d-1}(J_{\Tilde{y}_{\varepsilon}}\cap J_{{y}_{\varepsilon}}\cap U )\Big]\leq  c_2 \Big(C+\frac{C_0}{c_1}\Big)=: C_0'. 
\end{equation*}
Using $(\mathrm{W3})$, the fact that  $\Tilde{y}_\varepsilon$ is a piecewise rigid motion, $\nabla^2 \Tilde{y}_{\varepsilon}=0$ a.e.,  $(f6)$, and \eqref{eq5.19}, we get
\begin{align}
\label{eq5.23}
\begin{split}
\mathcal{E}_{\varepsilon}(\Tilde{y}_{\varepsilon},U ) &\leq \frac{1}{\delta_\varepsilon^2}\int_{ U }W\Big(\frac{x}{\varepsilon}, \nabla {y}_{\varepsilon}(x)\Big)\, \mathrm{d}x+\frac{1}{\delta_\varepsilon^{2 \beta }}\int_{U}|\nabla^2 {y}_{\varepsilon}(x)|^2\, \mathrm{d}x+
 c_2 \bar{C}\delta_\varepsilon^{\beta/4} \\ & \quad \quad \quad  +  \int_{J_{\Tilde{y}_{\varepsilon}}\cap J_{{y}_{\varepsilon}}\cap U }f\Big(\frac{x}{\varepsilon},[\Tilde{y}_{\varepsilon}],\nu_{\Tilde{y}_{\varepsilon}}(x)\Big)\, \mathrm{d}\mathcal{H}^{d-1}(x),
\end{split}
\end{align}
where we note that the measure-theoretic normals can be chosen such that $\nu_{{y}_{\varepsilon}} = \nu_{\Tilde{y}_{\varepsilon}}$ $\mathcal{H}^{d-1}$-a.e.\ on $J_{y_{\varepsilon}}\cap J_{\Tilde{y}_{\varepsilon}}$.
Using $(f2)$, \eqref{boundednessenergy}, \eqref{eq5.18}, and \eqref{boundednessmodifiednenergy} we can then estimate the  latter term   in \eqref{eq5.23} by 
\begin{equation}
\label{eq5.24}
\begin{split}
\int_{J_{\Tilde{y}_{\varepsilon}}\cap J_{{y}_{\varepsilon}}\cap U }f\Big(\frac{x}{\varepsilon},[\Tilde{y}_{\varepsilon}],\nu_{\Tilde{y}_{\varepsilon}}(x)\Big)\, \mathrm{d}\mathcal{H}^{d-1}(x) & \leq  \int_{ J_{{y}_{\varepsilon}}\cap U }f\Big(\frac{x}{\varepsilon},[{y}_{\varepsilon}],\nu_{{y}_{\varepsilon}}(x)\Big)\, \mathrm{d}\mathcal{H}^{d-1}(x)     +  \sigma(  2  \bar{C}  \delta_\varepsilon^{\beta/2})(C_0+C_0').
\end{split}
\end{equation}
Hence, \eqref{claimbrittle} follows by combining \eqref{eq5.23} and \eqref{eq5.24}. \\
\emph{Step 2: Existence of recovery sequences.}    The $\Gamma$-$\limsup\limits$ inequality is a direct consequence of the $\Gamma$-$\limsup\limits$ inequality of Theorem \ref{gammaconv}. Indeed, notice that for every $y \in PR_L( U )$ we obtain a recovery sequence $(y_\varepsilon)_{\varepsilon} \subset PR_L(U )$ such that $y_\varepsilon \to y$ in measure on $U$ and $\mathcal{E}_{\varepsilon}(y_{\varepsilon},U )={\mathcal{F}}_{\varepsilon}(y_{\varepsilon}, U )\to \mathcal{E}_{\mathrm{hom}}(y, U )$.
\end{proof}

\subsection{Linearisation: Proof of Theorem \ref{brittlemateriallimit2}}

We now come to the proof of linearisation result stated in Theorem \ref{brittlemateriallimit2}. As before, the random variable $\omega$ is omitted in the notation for simplicity.

\begin{proof}[Proof of Theorem \ref{brittlemateriallimit2}] Assume without restriction that  $\varepsilon \in (0,1)$, and let $\beta \in (\alpha,1)$ for $\alpha \in (0,1)$. We divide the proof into two steps.

\emph{Step 1:  $\Gamma$-$\liminf$   inequality.}  Let $(u_{\varepsilon})_{\varepsilon} \subset GSBV_2^2(U;\mathbb{R}^d)$ and $u \in PR_L(U)$ be such that $u_{\varepsilon} \to u$ in measure on $U$ and
\begin{equation}
\label{boundednesslinearizedneergies}
\sup_{\varepsilon>0} {\mathcal{F}}^{\mathrm{lin}}_{\varepsilon}(u_{\varepsilon},U) \le C_0 
\end{equation}
for some $C_0>0$. We claim that
\begin{equation}
\label{liminfinequalitylinearised}
\mathcal{E}_{\mathrm{hom}}(u,U)\leq \liminf\limits_{\varepsilon \to 0} {\mathcal{F}}^{\mathrm{lin}}_{\varepsilon}(u_{\varepsilon},U). 
\end{equation}
For every $\varepsilon>0$, define $
y_{\varepsilon}={\rm id}  +\delta^\alpha_{\varepsilon}u_{\varepsilon}$. By \eqref{for sure needed} and \eqref{boundednesslinearizedneergies} it holds
\begin{equation}
\label{eq4.59}
\Vert \nabla y_{\varepsilon}-\mathbb{I}\Vert _{L^{\infty}(U;\mathbb{R}^{d \times d})}\leq  \delta^{3\alpha/4}_{\varepsilon}. 
\end{equation}
As $\beta >\alpha$, we can choose $\gamma$ with $\beta > \gamma > \frac{1}{2}(\alpha + \beta)$. 
Thanks to Proposition \ref{piecewiserigidapproximation}, for every $\varepsilon>0$, there exists a function $\Tilde
{y}_{\varepsilon}$ of the form
\begin{equation}\label{thansl}
\Tilde{y}_{\varepsilon}=\sum_{j \in \mathbb{N}}(R_j^{\varepsilon}x+b_j^{\varepsilon})\chi_{P_j^{\varepsilon}}(x),    
\end{equation}
where $R_j^{\varepsilon} \in SO(d)$, $b_j^{\varepsilon} \in \mathbb{R}^d$, and $(P_j^{\varepsilon})_j$ is a Caccioppoli partition of $U$ such that
\begin{equation}
\label{eq4.61}
\Vert {y}_{\varepsilon}-\Tilde{y}_{\varepsilon}\Vert _{L^{\infty}(U;\mathbb{R}^d)}\leq C \delta_\varepsilon^{2\gamma-\beta},
\end{equation}
\begin{equation}
\label{eq4.62}
 \Vert \nabla \Tilde{y}_{\varepsilon} - \mathbb{I}\Vert _{L^{\infty}( U;  \mathbb{R}^{d \times d})}\leq   C  \delta^{3\alpha/4}_{\varepsilon}, 
\end{equation}
where for the second estimate we use Remark \ref{nice remark} and \eqref{eq4.59}.  Moreover, we have 
and
\begin{equation}
\label{eq4.63}
\mathcal{H}^{d-1}\big((J_{\Tilde{y}_{\varepsilon}}\cap U)\setminus J_{y_{\varepsilon}}\big)\leq C \delta_\eps^{\beta-\gamma}. 
\end{equation}
By \eqref{eq4.62}, we get that for every $j \in \mathbb{N}$ and $\varepsilon>0$ it holds $|R_j^{\varepsilon}-\mathbb{I}| \le C \delta_{\varepsilon}^{3\alpha /4}$. Thus, by the linearisation formula \cite[(4.12)]{friedrich2019griffith}, for every $j \in \mathbb{N}$ and $\varepsilon>0$ we can find a skew-symmetric matrix $M_j^{\varepsilon} \in \mathbb{R}_{\rm skew}^{d \times d}$ such that
\begin{equation}
\label{eq4.65XXX}
R_j^{\varepsilon}=\mathbb{I}+ \delta_\eps^{3\alpha/4}  M_j^{\varepsilon}+ \mathcal{O}  (\delta_\eps^{3\alpha/2}).
\end{equation}
Now, for every $\varepsilon>0$ we define the piecewise rigid function
\begin{equation}\label{hatudef}
\hat{u}_{\varepsilon}(x)=\sum_{j \in \mathbb{N}}{(  \delta_{\varepsilon}^{-\alpha/4} M_j^{\varepsilon}x+ \delta_\eps^{-\alpha}  b_j^{\varepsilon})}\chi_{P_j^{\varepsilon}}(x).
\end{equation}
Using \eqref{eq4.63} and the fact that $J_{{\Tilde{y}_{\varepsilon}}}\cap U = \bigcup_{j \in \mathbb{N}}\partial^* P_j^{\varepsilon} \cap U$ up to an $\mathcal{H}^{d-1}$-negligible set, it is straightforward to show that
\begin{equation}
\label{eq4.67}
\mathcal{H}^{d-1}((J_{\hat{u}_{\varepsilon}}\cap U)\setminus J_{u_{\varepsilon}})\leq \mathcal{H}^{d-1}((J_{\Tilde{y}_{\varepsilon}}\cap U)\setminus J_{y_{\varepsilon}}) \leq C \delta_\eps^{\beta-\gamma}. 
\end{equation}
 Finally, recalling the definitions in  \eqref{thansl} and \eqref{hatudef}, \EEE by   combining \eqref{eq4.61} and \eqref{eq4.65XXX} we get
\begin{align}
\label{eq4.69}
\delta^\alpha_{\varepsilon} \Vert u_{\varepsilon}-\hat{u}_{\varepsilon}\Vert _{L^{\infty}(U;\mathbb{R}^d)}  &  \le \Vert y_\eps - \big({\rm id} + \delta^\alpha_\eps \hat{u}_\eps  \big)\Vert _{L^{\infty}(U;\mathbb{R}^d)}  \notag \\ 
&\le \Vert \Tilde{y}_{\varepsilon} - {y}_{\varepsilon} \Vert_{L^{\infty}(U;\mathbb{R}^d)}  + \Big\|  \sum_{j \in \mathbb{N}} \big( R^\eps_j  - \mathbb{I}  - \delta_{\varepsilon}^{3\alpha/4}M_j^{\varepsilon} \big) \cdot \chi_{P_j^{\varepsilon}}\Big\|_{L^{\infty}(U;\mathbb{R}^d)} \notag \\ 
& \leq  C\delta_\varepsilon^{2\gamma-\beta}+ C  \delta^{3\alpha/2}_{\varepsilon}. 
\end{align}
   Then, $\Vert u_{\varepsilon}-\hat{u}_{\varepsilon}\Vert _{L^{\infty}(U;\mathbb{R}^d)}  \to 0$  as $\varepsilon \to 0$ since we indeed have
\begin{align}\label{for sonn}
\delta_\varepsilon^{2\gamma-\beta-\alpha} \to 0
\end{align}
 by $\gamma > \frac{1}{2}(\alpha + \beta)$ and $\delta_\eps \to 0$. This shows that $\hat{u}_{\varepsilon} \to u$ in measure on $U$. Define the sequence of functionals ${\mathcal{E}}_{\varepsilon}$ like in \eqref{randomenergies}.
Due to \eqref{eq4.67},   arguing as in  \eqref{boundednessmodifiednenergy},  there exists a $C_0'>0$ such that
\begin{equation}
\label{eq4.70}
\sup_{\varepsilon>0}{\mathcal{E}}_{\varepsilon}(\hat{u}_{\varepsilon},U) \le C_0'.
\end{equation}
Then, combining equations \eqref{eq4.67}, \eqref{eq4.69}, and \eqref{eq4.70}, using $(f2)$, $(f6)$, and \eqref{boundednesslinearizedneergies} and arguing as in \emph{Step 1} of the proof of Theorem \ref{brittlemateriallimit}, we get
\begin{equation}
\label{eq4.71}
\begin{split}
 {\mathcal{E}}_{\varepsilon}(\hat{u}_{\varepsilon},U)\leq  {\mathcal{F}}^{\rm lin}_{\varepsilon}(u_{\varepsilon},U)+C c_2 \delta_\varepsilon^{\beta-\gamma}  +\sigma\Big( 2 C(\delta_\varepsilon^{2\gamma-\beta-\alpha}+ C  \delta^{\alpha/2}_{\varepsilon})  \Big)(C_0+ C_0').   
\end{split}
\end{equation}
Then applying the lower bound inequality of Theorem \ref{gammaconv} to ${\mathcal{E}}_{\varepsilon}$ and using \eqref{for sonn}, $\beta >\gamma$, as well as  $\hat{u}_{\varepsilon} \to u$ in measure on $U$,  we  conclude  
\begin{equation*}
{\mathcal{E}}_{\mathrm{hom}}(u,U)\leq \liminf\limits_{\varepsilon \to 0}{\mathcal{E}}_{\varepsilon}(\hat{u}_{\varepsilon},U)\leq \liminf\limits_{\varepsilon \to 0} \mathcal{F}_{\varepsilon}^{\mathrm{lin}}({u}_{\varepsilon}, \EEE U).   
\end{equation*}

\emph{Step 2: Existence of recovery sequences.}  Due to  \eqref{Condi},   there exists a  sequence $(u_{\varepsilon})_{\varepsilon} \subset PR_L(U)$ such that
$u_{\varepsilon}$ converges to $u$ in measure on $U$ and 
\begin{equation}
\label{eq4.72}
\lim\limits_{\varepsilon \to 0^+}\mathcal{E}_{\varepsilon}(u_{\varepsilon},U)=\mathcal{E}_{\text{hom}}(u,U), \quad \quad  \quad  \sup_{\eps >0}\Vert    \eps^{1+\kappa} \nabla u_\eps  \Vert_{L^\infty(U;\R^{d \times d})}   < +\infty
\end{equation}
for given $\kappa>0$. By the assumption  $\eps^{1+\kappa} \delta_\eps^{-\alpha/4} \to \infty$  we get  
\begin{align}\label{eq: verwenden!}
\delta_\eps^{\alpha/4}  \Vert \nabla u_\eps\Vert_{L^\infty(U)}  \to 0.
\end{align}
In particular, this implies ${\mathcal{F}}^{\rm lin}_{\varepsilon}(u_\eps,U) \EEE <\infty$ for  every $\varepsilon$ small enough as  $\Vert \nabla u_\eps\Vert_{L^\infty(U)} \le \delta_\eps^{-\alpha/4}$, cf.\   \eqref{for sure needed}. Define $y_\eps = {\rm id} + \delta_\eps^\alpha u_\eps$. As before choose $\gamma$  with $\beta > \gamma > \frac{1}{2}(\alpha + \beta)$.   By applying Proposition \ref{piecewiserigidapproximation},  we can find a sequence $(\Tilde{y}_{\varepsilon})_{\varepsilon} \subset PR_{SO(d)}(U)$ such that it holds
\begin{equation}
\label{eq4.75}
\Vert {y}_{\varepsilon}-\Tilde{y}_{\varepsilon}\Vert _{L^{\infty}(U;\mathbb{R}^d)}\leq C\delta_\varepsilon^{2\gamma - \beta},
\end{equation}
\begin{equation}
\label{eq4.76}
\mathcal{H}^{d-1}(J_{\Tilde{y}_{\varepsilon}}\setminus J_{{y}_{\varepsilon}})\leq C\delta_\varepsilon^{\beta-\gamma}
\end{equation}
for some  constant ${C}={C}(U,d )>0$. Moreover,  taking also  Remark \ref{nice remark} into account and using  \eqref{eq: verwenden!} we get 
\begin{equation}
\label{eq4.77}    
\delta_\eps^{ - 3\alpha/4} \Vert   \nabla \Tilde{y}_{\varepsilon} - \mathbb{I}\Vert _{L^{\infty}(U;\mathbb{R}^{d \times d})} \le C \delta_\eps^{ - 3\alpha/4} \Vert   \nabla  {y}_{\varepsilon} - \mathbb{I}\Vert _{L^{\infty}(U;\mathbb{R}^{d \times d})}   =   C  \delta_\eps^{\alpha/4}  \Vert \nabla u_\eps\Vert_{L^\infty(U)} \to 0.
\end{equation} 
 Hence, defining $\Tilde{u}_{\varepsilon}\colon U \to \mathbb{R}^d$ as $\Tilde{u}_{\varepsilon}=\delta^{-\alpha}_{\varepsilon}(\Tilde{y}_{\varepsilon}- {\rm id}) $, and using \eqref{eq4.75}--\eqref{eq4.76} we get 
\begin{equation}
 \label{eq4.78}
 \limsup_{\eps\to 0} \Vert {u}_{\varepsilon}-\Tilde{u}_{\varepsilon}\Vert _{L^{\infty}(U;\mathbb{R}^d)}\leq \lim_{\eps\to 0}  C \delta_\varepsilon^{2\gamma-\beta- \alpha} = 0,
\end{equation}
\begin{equation}
\label{eq4.79}
\limsup_{\eps\to 0}  \mathcal{H}^{d-1}(J_{\Tilde{u}_{\varepsilon}}\setminus J_{{u}_{\varepsilon}})\leq \lim_{\eps\to 0}   {C}\delta_\varepsilon^{\beta-\gamma} = 0,
\end{equation}
where we used $\beta > \gamma > \frac{1}{2}(\alpha + \beta)$. In particular, by \eqref{eq4.78}, $\Tilde{u}_{\varepsilon}$ converges to $u$ in measure on $U$. Moreover, by \eqref{eq4.77}     it holds   $\Vert \nabla\Tilde{u}_{\varepsilon}\Vert _{L^{\infty}(U;\mathbb{R}^{d \times d})} \le  \delta^{-\alpha/4}_{\varepsilon}$ for $\eps$ small enough, i.e.\   ${\mathcal{F}}^{\rm lin}_{\varepsilon}( \Tilde{u}_{\varepsilon}, \EEE U)<\infty$, see \eqref{for sure needed}. 
Hence, since $\Tilde{y}_{\varepsilon} \in PR_{SO(d)}(U)$, by $(\mathrm{W3})$ and \eqref{completenergy-lin}  we have
\begin{equation}
\label{eq4.82}
{\mathcal{F}}^{\mathrm{lin}}_{\varepsilon}(\Tilde{u}_{\varepsilon},U)=\int_{J_{{\Tilde{u}_{\varepsilon}}}\cap U}f\Big(\frac{x}{\varepsilon}, [\Tilde{u}_{\varepsilon}](x), \nu_{\Tilde{u}_{\varepsilon}}(x)\Big)\, \mathrm{d}\mathcal{H}^{d-1}(x).    
\end{equation}
Arguing as in \eqref{boundednessmodifiednenergy}, we can show that there is a constant $C_0'$ such that 
\begin{equation}
\label{eq4.83}
\sup_{\varepsilon \in (0,1)} {\mathcal{F}}^{\mathrm{lin}}_{\varepsilon}(\Tilde{u}_{\varepsilon},U) \le C_0'.
\end{equation}
Finally, using $(f2)$, $(f6)$, \eqref{eq4.78}, \eqref{eq4.79},   and \eqref{eq4.83} we obtain
\begin{equation}
\label{eq4.84}
\begin{split}
 {{\mathcal{F}}}^{\mathrm{lin}}_{\varepsilon}( \tilde{u}_{\varepsilon}, U)={{\mathcal{E}}}_{\varepsilon}(  \tilde{u}_{\varepsilon},  U)\leq {\mathcal{E}}_{\varepsilon}( {u}_{\varepsilon}, U)+{C}c_2 \delta_\varepsilon^{\beta- \gamma }  + \sigma\Big( 2 C \delta_\varepsilon^{2\gamma-\beta-\alpha}  \Big)(C_0+C_0'),
\end{split}
\end{equation}
where $C_0$ is chosen such that   $\mathcal{E}_{\varepsilon}(u_{\varepsilon},U) \le C_0$ for all $\eps >0$, see \eqref{eq4.72}. This along with \eqref{eq4.72}, \eqref{eq4.78}--\eqref{eq4.79},  and the fact that $\Tilde{u}_{\varepsilon}$ converges to $u$ in measure on $U$, concludes the proof.
\end{proof}

 We now turn our attention to condition \eqref{Condi} and show that it can be verified if \EEE $f$ does not depend on $\omega$.

\begin{proposition}[Recovery sequences with controlled  derivatives]\label{prop: recov}
 Let $L = \R^{d \times d}_{\rm skew}$.  In the setting of Theorem \ref{gammaconv} {with $f$ independent of $\omega$}, given $ U \in \mathcal{A}_0  $, $\kappa>0$, and $u \in PR_L(U)$  there exists a recovery sequence $(u_\eps)_\eps \subset PR_L(U) $  such that $u_\eps \to u$ in measure on $U$,  $\mathcal{E}_{\varepsilon}(u_\eps,U) \to \mathcal{E}_{\mathrm{hom}}(u,U)$, and 
\begin{equation}
\label{eq5.28XXX}
 \sup_{\eps >0}\Vert   \eps^{\kappa+1}  \nabla u_\eps  \Vert_{L^\infty(U ;\R^{d \times d})}   < +\infty.   
\end{equation}

\end{proposition}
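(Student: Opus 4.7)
The plan is to bypass the abstract $\Gamma$-convergence statement of Theorem \ref{gammaconv} and build recovery sequences by hand, exploiting the density of finite polyhedral Caccioppoli partitions together with the cell formula in Theorem \ref{HomogenizationFormula}. The reason such an explicit construction can yield the derivative bound \eqref{eq5.28XXX} is that on each flat face of a polyhedral jump set we may insert a rescaled near-minimiser of the cell problem, and, by the truncation Lemma \ref{lemma1}, these can be chosen with $L^\infty$-bounded skew-symmetric gradients; after scaling by $\eps$ this produces $\|\nabla u_\eps\|_\infty \lesssim 1/\eps$, which is compatible with \eqref{eq5.28XXX} for \emph{every} $\kappa>0$.

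First, I would reduce to the case where $u \in PR_L(U)$ has a jump set consisting of finitely many polyhedral faces: by the density of finite polyhedral partitions in the class of Caccioppoli partitions (see \cite{BraConGar16}) combined with the continuity of $f$ in the jump variable from $(f2)$ and the bound $(f6)$, one can approximate $u$ by $\tilde u = \sum_{j=1}^N (M_j x + b_j)\chi_{P_j}$, with $(P_j)_{j=1}^N$ polyhedral, so that $\tilde u \to u$ in measure and $\mathcal{E}_{\mathrm{hom}}(\tilde u, U) \to \mathcal{E}_{\mathrm{hom}}(u, U)$. A standard diagonal argument then allows us to work with such polyhedral $\tilde u$. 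On each of the finitely many faces $F_k \subset J_{\tilde u}$, with constant normal $\nu_k$ and constant jump $\zeta_k$, fix a slab $\{|\langle x-x_k,\nu_k\rangle|< h_\eps\}$ of width $h_\eps = \eps t_\eps$ with $t_\eps\to\infty$ slowly, and tile it by cubes $Q^{\nu_k}_{\eps t_\eps}(x_{k,i})$. In each such cube, use Theorem \ref{th: homog} together with Lemma \ref{lemma1} to pick a competitor $v_{k}^\eps$ for $m_{\mathcal{E}}(u_{0,\zeta_k,\nu_k}, Q^{\nu_k}_{t_\eps}(0))$ whose surface energy is within $\eps^{1/2}$ of the cell value and satisfies $\|\nabla v_k^\eps\|_{L^\infty}\le k_\eps$ with $k_\eps$ growing slower than $t_\eps$. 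Set $u_\eps(x) := v_k^\eps(x/\eps)$ (translated so that the piecewise-constant boundary values match $\tilde u$ across $F_k$) inside the slab, and $u_\eps := \tilde u$ outside. By the rescaling argument used in the proof of Theorem \ref{th: homog} the surface energy on each tile converges to $f_{\mathrm{hom}}(\zeta_k,\nu_k)\mathcal{H}^{d-1}(F_k\cap Q^{\nu_k}_{\eps t_\eps}(x_{k,i}))$.

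The main obstacle, and the reason this construction is more delicate than the abstract recovery sequence of Theorem \ref{gammaconv}, is the gluing on two scales. First, at the boundary of each cube $Q^{\nu_k}_{\eps t_\eps}(x_{k,i})$, the pre-scaled competitor $v_k^\eps$ only attains $u_{0,\zeta_k,\nu_k}$ \emph{near} $\partial Q^{\nu_k}_{t_\eps}(0)$ and may differ from $\tilde u$ by a small rigid motion; this is resolved by a thin transition layer of width $\eps$ and the fundamental estimate in $PR_L$ (Theorem \ref{th_ bundary data} in the appendix), whose contribution is $O(\eps t_\eps^{d-2})$ per tile and thus negligible against the leading $\eps^{d-1}t_\eps^{d-1}$-order. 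Second, near the $(d-2)$-skeleton where the polyhedral faces meet, one excises an $O(h_\eps)$-neighborhood of edges and vertices and completes $u_\eps$ there by $\tilde u$; the excess surface energy is controlled by $c_2\mathcal{H}^{d-2}(\text{skeleton})\cdot h_\eps \to 0$. The resulting $u_\eps \in PR_L(U)$ then converges to $\tilde u$ in measure, achieves the correct homogenised surface energy by dominated convergence over the finitely many faces, and satisfies $\|\nabla u_\eps\|_{L^\infty(U)} \le k_\eps/\eps$. Choosing $k_\eps$ to grow like $\log(1/\eps)$ yields $\eps^{1+\kappa}\|\nabla u_\eps\|_{L^\infty} \to 0$ for every $\kappa>0$, as required. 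A final diagonal in the polyhedral approximation completes the proof.
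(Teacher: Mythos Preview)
Your overall strategy---reduce to a polyhedral jump set and glue rescaled near-minimisers of the cell problem into thin slabs along each face---is the same as the paper's, but there is a genuine gap that the paper's proof has to work around and that your sketch overlooks.

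You write that each face $F_k$ carries a ``constant jump $\zeta_k$''. This is the heart of the problem: for $L=\R^{d\times d}_{\rm skew}$ the pieces of $\tilde u$ are \emph{affine}, so the jump $[\tilde u](y) = (M_k^+-M_k^-)y + (b_k^+-b_k^-)$ varies affinely along $F_k$. Consequently you cannot solve \emph{one} cell problem $m_{\mathcal{E}}(u_{0,\zeta_k,\nu_k},Q^{\nu_k}_{t_\eps}(0))$ per face and rescale; on each tile the relevant jump height is different. If instead you assign a local jump value $\zeta_{k,i}^\eps$ to each tile, the number of distinct cell problems blows up like $(\eps t_\eps)^{-(d-1)}$, and you lose the uniform gradient bound: Lemma~\ref{lemma1} gives no rate in $k$ that is uniform over these infinitely many problems, so there is no reason a single $k_\eps$ (growing ``slower than $t_\eps$'') should work for all of them simultaneously. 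The same issue resurfaces in your gluing step: the boundary values of $v_k^\eps(\cdot/\eps)$ are piecewise \emph{constant} ($0$ or $\zeta_k$), while $\tilde u$ is piecewise \emph{affine}, so ``translating so that boundary values match'' is not possible by adding a single constant.

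The paper resolves this by a two-scale construction: an auxiliary \emph{coarse} scale $\rho$ is introduced, the face is subdivided into regions $\mathcal{Y}^k_i$ of diameter $\sim\rho$, and on each one the jump is frozen at $\zeta^k_i := [u](x^k_i)$. This yields only $O(\rho^{-(d-1)})$ distinct cell problems, a number \emph{independent of} $\eps$, so Lemma~\ref{lemma1} furnishes a single gradient bound $S=S(\rho,\eta)$ for all of them. The rescaled profiles then satisfy $\|\nabla v^{k,\eps}_i\|_\infty\le S/\eps$ uniformly, giving $\eps\|\nabla u^{\eta,\rho}_\eps\|_\infty\le C_{\rho,\eta}$. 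The $\eps^\kappa$ slack is used precisely to absorb the blow-up of $C_{\rho,\eta}$ when one finally sends $\rho,\eta\to 0$ along a diagonal. The paper also explicitly adds back the affine correction $(u_k^\pm(y)-(\zeta^k_i)^\pm)\chi_{E^{k,\pm}_i}$ so that the glued function agrees with the affine $\tilde u$ outside the slab; the resulting error on the energy is controlled by $(f2)$ and is of order $\sigma(C\rho)$. Finally, the paper uses the periodicity of $f$ (forced by stationarity plus $\omega$-independence) in an essential way: the fine-scale cube centers are placed on a lattice compatible with the period so that the \emph{same} rescaled competitor can be used on every tile with identical energy. Your sketch does not address this alignment, and without it the energy on each tile is not the cell value at the origin.
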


\begin{proof}
 By assumption we get that  $f$ is periodic along the directions $e_1,\ldots,e_d$ with period one.   In fact, for   the case $(\tau_z)_{z \in \mathbb{Z}^d}$   this follows from \EEE the stationarity of $f$ with respect to $(\tau_z)_{z \in \mathbb{Z}^d}$,   whereas in the case of   a continuous group $(\tau_z)_{z \in \mathbb{R}^d}$ the density  $f$  is even   independent of the $x$ variable.

\emph{Step 1 (Preliminaries):}  
First, by the density result in  Theorem \ref{prop5.3} and a standard diagonal argument it is not restrictive to assume that the jump of $u \in PR_L(U)$  is a finite polyhedral Caccioppoli partition and that  $\nu_u \in \mathbb{Q}^{d}\cap \mathbb{S}^{d-1}$ $\mathcal{H}^{d-1}$-a.e.\ on $J_u$. We fix parameters $\eta,\rho>0$ to be specified below. Our goal is to construct a sequence $(u^{\eta,\rho}_\eps)_\eps \subset PR_L(  U )  $   such that $u^{\eta,\rho}_\eps \to u$ in measure on $U$ as $\eps \to 0$ and 
\begin{align}\label{Ant5}
{\rm (i)} & \ \  \limsup_{\eps \to 0}  \mathcal{E}_\eps( u^{\eta,\rho}_\eps , U ) \le \mathcal{E}_{\rm hom}(u, U ) + C\eta + C\sigma(\rho),\notag\\
{\rm (ii)} & \ \    \sup_{\eps >0}\Vert   \eps   \nabla   u^{\eta,\rho}_\eps    \Vert_{L^\infty(U ;\R^{d \times d})}  \le C_{\rho,\eta}
\end{align}
for a constant $C$ depending on $u$,  $C_{\rho,\eta}$ depending also  on $\rho$ and $\eta$, and $\sigma$ from $(f2)$. 
Then, the statement follows by a standard diagonal argument: for every $\kappa >0$, we can find sequences $\eta_{\varepsilon}$ and $\rho_{\varepsilon}$ converging to $0$ slow enough such that  $u_{\varepsilon}:=u_{\varepsilon}^{\eta_{\varepsilon},\rho_{\varepsilon}}$ converges to $u$ in measure and satisfies \eqref{eq5.28XXX} by \eqref{Ant5}(ii). Eventually, $\mathcal{E}_{\varepsilon}(u_\eps, U) \to \mathcal{E}_{\mathrm{hom}}(u,U)$ follows from \eqref{Ant5}(i), the fact that $\eta_\eps,\rho_\eps \to 0$, and the $\Gamma$-$\liminf$ inequality. 

Now, given  $\eta,\rho>0$, it is not restrictive to suppose that $\eps$ is small compared to $\rho$ and $\eta$.   In the following, $C>0$ always denotes a generic constant which may depend on $u$, but is independent of $\rho$, $\eta$, and $\eps$.

 We write $J_u = \bigcup_{k=1}^N \Gamma_k$, where each $\Gamma_k$ is contained in a $(d-1)$-dimensional plane with normal vector $\nu_k \in \mathbb{S}^{d-1} \cap \mathbb{Q}^d$.  We can choose rotations $R_k \in SO(d)$ and integers $t_k \in \mathbb{N}$ such that $R_ke_d=\nu_k$ and $t_kRe_i \in \mathbb{Z}^d$ for every $i=1,...,d$,   see e.g.\ \cite[Lemma 3.4]{Cristoferi}.  This implies that $f$ is periodic along the directions $R_ke_1 \ldots,R_ke_d$ with period $t_k$, i.e.\   
\begin{equation}\label{eq: perodii}
{f}(x,\zeta,\nu)={f}(x  + t_k R_k e_i,\zeta,\nu) \quad \text{for all $x \in \R^d$,  $\zeta \in \R^d\setminus \lbrace 0 \rbrace$, and $\nu \in \mathbb{S}^{d-1}$}.   
\end{equation}

\emph{Step 2 (Covering of $J_u$ with coarse- and fine-scale cubes):} In this step, we cover $J_u$ with two types of cubes: a first family with side length  $\sim \rho$ on which we will find almost minimisers of the homogenisation formula \eqref{eqhom}. Then, by a scaling argument we will transfer these optimal profiles to a second family of cubes with side length $\sim \eps$ which cover the jump set up to a small portion. This two-step procedure is necessary since, in order to get \eqref{Ant5}, we need to ensure that we work with a number of almost minimisers to problem \eqref{eqhom} which is \emph{bounded with respect to $\eps$}. Let us come to the details. Many objects in the following depend on $\rho$, $\eta$, and $\eps$, but we do not include this in the notation for convenience.  

For each $k = 1,\ldots,N$, we choose an arbitrary $p_k \in \Gamma_k$.  Given  $\rho>0$,   we select  a  finite number of  \emph{coarse scale}  base points $(x^k_i)_i \subset   p_k  +  R_k \rho  (\Z^{d-1}\times \lbrace 0 \rbrace)$  depending on $\rho$    such that for all $k=1,\ldots,N$ 
\begin{align}
\label{eq5.31bis}
{\rm (i)}  & \ \ {\rm dist}\big(x,  (x^k_i)_i  \big) \le C\rho \quad \text{for all $x \in \Gamma_k$},  \notag \\
{\rm (ii)} & \ \ \#  (x^k_i)_{i} \le C\rho^{-(d-1)}. 
\end{align}
As in Lemma \ref{lemma1}, we   exploit the property that $m_{\mathcal{E}}(u,U)=\lim\limits_{S \to \infty}m^S_{\mathcal{E}}(u,U)$, where 
\begin{equation}\label{eq: MMSSS}
m^S_{\mathcal{E}}(u,U)=\inf \big\{ \mathcal{E}(v,U): v \in PR_L( U ), \:\: \vert \nabla v \vert \leq S \:\: \text{and}\:\: v=u \:\: \text{near}\:\: \partial  U \big\}.   
\end{equation}
Thus, given $\eta>0$ and letting $\zeta_i^k = [u](x^k_i)$,  by virtue of \eqref{eqhom}, we can find   $t  \in \N$,  depending on $\eta$ and $\rho$, which is an integer multiple of  each $t_k$ for $k=1,\ldots, N$,   and  some  $S>0$  depending on $\rho$ and $\eta$ such that  for all $k=1,\ldots,N$ 
\begin{equation}
\label{eq5.31}
f_{\mathrm{hom}}(\zeta_i^k,\nu_k)\geq \frac{{m}^{S}_{\mathcal{E}}(u_{ tp_k,\zeta_i^k,\nu_k}, Q^{\nu_k}_{t}( t p_k  ))}{ t^{d-1}}-\eta \quad  \text{for every $i$.  } 
\end{equation}
Here, we note that the constants $t$ and $S$ depend on $\rho$ due to  \eqref{eq5.31bis}(ii). We are now in the position to introduce the second family of cubes.  Define 
\begin{align}\label{YYYYY}
\mathcal{Y}^k_i =  \lbrace  y \in \Gamma_k \colon    \argmin_{\iota}   |x^k_\iota - y |  = i \rbrace.  
\end{align}
Choose a plane $\Pi_{k}$ (depending on $\eps$),  with normal vector $\nu_k$, which has distance smaller than $2  t\eps$ to $\Gamma_k$ such that $\Pi_k$ contains points of $t\eps p_k  +    \eps R_k t \Z^d$, where $p_k$ was defined before \eqref{eq5.31bis}. Based on this, we   consider \emph{fine scale}  points $(y^{k}_j)_j \subset ( t\eps p_k  +    \eps R_k t \Z^d) \cap \Pi_k$ depending on $\rho$, $\eta$, and $\eps$  such that $(Q^{\nu_k}_{t\varepsilon}(y^{k}_j))_{j}$ is a finite family of  pairwise disjoint, adjacent   cubes  and such that the sets 
 $$T^k_i := {\rm int} \Big( \bigcup\nolimits_{y^k_j \in \mathcal{Y}^k_i}Q_{t \eps}^{\nu_k}(y_j^{k})  \Big), \quad \quad T^k = {\rm int}\Big(  \bigcup\nolimits_i   \overline{T^k_i} \Big)   $$ 
 satisfy
\begin{equation}
\label{eq5.31tris}
\mathcal{H}^{d-1}\big( \Gamma_k \setminus T^k  \big)\leq  C_{\rho,k} t\eps, \quad \quad   \mathcal{H}^{d-1}\big( \partial^\nu  T^k \big)  \le C_{\rho,k} t\eps, \quad \quad   \mathcal{L}^d( T^k  ) \le C_{\rho,k} t\eps   
\end{equation}
for all $k=1,\ldots,N$,  where the constant $C_{\rho,k}$ depends on $\rho$ and $\Gamma_k$, and $\partial^\nu$ denotes the faces of the boundary for which $\nu_k$ is \emph{tangential} to the boundary.  This can be done in such a way that the sets   $(\overline{T^k})_k$ are pairwise disjoint.

\emph{Step 3 (Competitors on coarse-scale and fine-scale cubes):}  
   We now introduce almost optimal competitors related to the coarse-scale cubes.  Recalling \eqref {eq: MMSSS}, we let $w^k_i \in PR_L(Q^{\nu_k}_{t}( tp_k))$ be such that $w^k_i=u_{tp_k,\zeta_i^k,\nu_k}$ in a neighbourhood of $\partial Q^{\nu_k}_{t}( tp_k)$, $\Vert \nabla w^k_i \Vert_\infty\leq S$, and
\begin{equation}
\label{eq5.32}
m^S_{\mathcal{E}}(u_{ tp_k,\zeta^k_i,\nu_k}, Q^{\nu_k}_{t}( tp_k ))\geq \mathcal{E}\big(w^k_i,Q^{\nu_k}_{ t}( tp_k )\big)-\eta.
\end{equation}
Let $v^{k,\varepsilon}_i(x):=w^k_i(x/\varepsilon) \in PR_L(Q^{\nu_k}_{t\varepsilon}( t \eps p_k))$ and let us extend it periodically with period $t\varepsilon$ along the directions described by the  basis  $R_ke_1,...,R_ke_d$.  By a scaling argument, we find
\begin{align}\label{eq: scalllll}
\eps^{d-1} \mathcal{E}\big(w^k_i,Q^{\nu_k}_{ t}( t p_k )\big) =    \mathcal{E}_{\varepsilon}\big( v^{k,\varepsilon}_i , Q^{\nu_k}_{ t\eps}( \eps tp_k )    \big).
\end{align}
 Since $f$ is periodic along the directions $R_ke_1,...,R_ke_d$ with period   $t$, see \eqref{eq: perodii} and recall $t/t_k \in \N$,   and $y^{k}_j \in   t  \eps p_k+    \eps R_k t \Z^d$,   we have 
\begin{align}\label{Ant1}
\mathcal{E}_{\varepsilon}\big( v^{k,\varepsilon}_i , Q^{\nu_k}_{ t\eps}( t\eps  p_k)    \big)= {\mathcal{E}_{\varepsilon}}\big(v^{k,\varepsilon}_i,Q^{\nu_k}_{ t\eps}(y^{k}_j))\big) \quad \text{for all } y^{k}_j. 
\end{align} 
For later, we also note that
\begin{align}\label{Ant2}
\text{$v^{k,\eps}_i=u_{y^k_j,\zeta_i^k,\nu_k}$ in a neighbourhood of $\partial Q^{\nu_k}_{ t\eps}(y^k_j)$}, \quad \quad \Vert \nabla  v^{k,\varepsilon}_i \Vert_{L^\infty(\R^d)} \le S/\eps.  
\end{align}

\emph{Step 4 (Definition of the recovery sequence):}   
We introduce some further notation. We denote by  $u^\pm_k(y) =  M^\pm_k y + b^\pm_k$, $M^\pm_k \in \R^{d \times d}_{\rm skew}$, $b^\pm_k \in \R^d$,   the two rigid motions related to $u$ on both sides of $\Gamma_k$. We define    $(\zeta_i^{k})^\pm :=(u(x^k_i))^\pm = M^\pm_k x^k_i + b^\pm_k$ and note that  $\zeta_i^k  = (\zeta_i^{k})^+ -  (\zeta_i^{k})^-$, with $\zeta_i^k$ introduced after \eqref{eq: MMSSS}. 
Moreover, recalling \eqref{YYYYY}, for each $i$ and $k$ we  define the two sets 
\begin{align}\label{EEEEEEE}
E^{k,\pm}_{i }= \bigcup_{y^k_j \in \mathcal{Y}^k_{i}}  \{ y \in Q^{\nu_k}_{t\eps }(y^k_j) \colon  v^{k,\varepsilon}_{i}(y)=(\zeta^k_{i})^{\pm} -  (\zeta_i^{k})^- \},
\end{align} 
and note that the boundaries  $\partial Q^{\nu_k}_{t\eps}(y^k_j)$ are contained in the closure of $E^{k,+}_{i } \cup E^{k,-}_{i }$ by the first property in \eqref{Ant2}. 
 Now, we define  $u^{\eta,\rho}_{\varepsilon} \in PR_{L}( U )$ as 
\begin{equation}
\label{eq5.34}
u^{\eta,\rho}_{\varepsilon}(y)= \begin{cases}
 v^{k,\varepsilon}_{i}(y)+ (\zeta_i^{k})^-  +    (u^+_k(y)-  (\zeta^k_i)^+)  \chi_{E^{k,+}_{i }}(y) +   (u^-_k(y)-  (\zeta^k_i)^-)  \chi_{E^{k,-}_{i }}(y)  & \text{if } y \in T^k_i \text{ $\forall \, i$, $k$}    \\
   u(y)    & \text{if }   y \in U \setminus  \bigcup\nolimits_{i,k}  T^k_i.
\end{cases}
\end{equation}
Since each function $(v^{k,\varepsilon}_{i})_{i,k}$ lies in $PR_L$ and the sets $(E^{k,\pm}_{j })_{j,k}$ have finite perimeter, we clearly get that $u^{\eta,\rho}_{\varepsilon} \in PR_{L}(U)$. In view of \eqref{Ant2} and the choice of the sets in \eqref{EEEEEEE}, this definition ensures that $u^{\eta,\rho}_{\varepsilon}$ has no jump on $\bigcup_j  \partial Q^{\nu_k}_{t\eps}(y^k_j) \setminus \partial^\nu  T^k $.

\emph{Step 5 (Estimate on the energy and convergence in measure):} 
We start estimating the energy on the sets $ (T^k)_{k=1}^N$ separately.   First, by  \eqref{eq5.31}, \eqref{eq5.32}, \eqref{eq: scalllll},  and \eqref{Ant1} we get 
\begin{equation}
\label{eq5.33}
\eps^{d-1} t^{d-1} \big(f_{\mathrm{hom}}(\zeta_i^k,\nu_k) + 2\eta\big)  \ge   {\mathcal{E}_{\varepsilon}}\big(v^{k,\varepsilon}_i,Q^{\nu_k}_{ t\eps}(y^{k}_j)\big) \quad \text{for all } y^{k}_j \in \mathcal{Y}^k_i.
\end{equation}
Since $\nabla u$ takes only finitely many values and is thus bounded,  by using \eqref{eq5.31bis}(i) and the definition of $\mathcal{Y}^k_i $ in \eqref{YYYYY},   it holds that  
\begin{align}\label{Ant3}
|(u^\pm_k(y)-  (\zeta^k_i)^\pm| \le C\rho \quad \text{for all $y \in \mathcal{Y}^k_i $}, \quad  |[u](y) -   \zeta_i^k| \le C\rho \quad \text{for all $y \in \mathcal{Y}^k_i  \cap \Gamma_k $}.
 \end{align}
 Then, using $(f2)$ for $f_{\rm hom}$    we find
 \begin{equation*}
(1- \sigma(C \rho) )\eps^{d-1} t^{d-1}  f_{\mathrm{hom}}(\zeta_i^k,\nu_k)\leq  (1+\sigma(C \rho)){\mathcal{E}_{\rm hom}}\big(u, Q^{\nu_k}_{ t\eps}(y^{k}_j)\big)  \quad \quad \text{for all } y^{k}_j \in \mathcal{Y}^k_i.
 \end{equation*} 
For $\rho$ small enough we find $(1+\sigma(C \rho)) / (1-\sigma(C \rho)) \le (1+C\sigma(C \rho))$, and thus 
 \begin{equation}
 \label{eq5.35}
\eps^{d-1} t^{d-1}  f_{\mathrm{hom}}(\zeta_i^k,\nu_k)\leq  (1+C\sigma(C \rho)){\mathcal{E}_{\rm hom}}\big(u, Q^{\nu_k}_{ t\eps}(y^{k}_j)\big)  \quad \quad \text{for all } y^{k}_j \in \mathcal{Y}^k_i.
 \end{equation} 
In a similar fashion, again using \eqref{Ant3}, $(f2)$, and \eqref{eq5.34} we get 
 \begin{equation}
 \label{eq5.35XXX}
{\mathcal{E}_\eps}\big(u^{\eta,\rho}_{\varepsilon}, Q^{\nu_k}_{ t\eps}(y^{k}_j)\big)  \leq  (1+C\sigma(C \rho)){\mathcal{E}_\eps}\big(  v^{k,\varepsilon}_{i},  Q^{\nu_k}_{ t\eps}(y^{k}_j)\big)  \quad \quad \text{for all } y^{k}_j \in \mathcal{Y}^k_i.
 \end{equation} 
 Then, summing over all $y^{k}_j \in \mathcal{Y}^k_i$, by \eqref{eq5.33}, \eqref{eq5.35}, \eqref{eq5.35XXX},  and the fact that  $ \# \mathcal{Y}^k_i = \mathcal{H}^{d-1}(J_u \cap T^k_i)  (t\eps)^{-(d-1)}$, we find
\begin{align*}
{\mathcal{E}_{\varepsilon}}\big(u^{\eta,\rho}_{\varepsilon},  T^k_i\big) & \le     (1+C\sigma(C \rho))   {\mathcal{E}_{\rm hom}}\big(u,  T^k_i\big) +   C\mathcal{H}^{d-1}(J_u \cap T^k_i)\eta  \\
& \le      {\mathcal{E}_{\rm hom}}\big(u,  T^k_i\big) +   C\mathcal{H}^{d-1}(J_u \cap T^k_i)\big( \eta +\sigma(C \rho)\big),
 \end{align*}
 where in the second step we  used $(f6)$, and we  used \eqref{Ant2} to see that   $u^{\eta,\rho}_{\varepsilon}$ does not exhibit jumps on $\bigcup_j \partial Q^{\nu_k}_{ t\eps}(y^k_j)  \setminus\partial^\nu  T^k $.  Summing over $i$ and using again \eqref{Ant2}, \eqref{eq5.34}, $(f6)$, as well as the second property of \eqref{eq5.31tris}   we find 
$$ {\mathcal{E}_{\varepsilon}}\big(u^{\eta,\rho}_{\varepsilon},  \overline{T^k}  \big) \le      {\mathcal{E}_{\rm hom}}\big(u,  \overline{T^k}\big) +   C\mathcal{H}^{d-1}(J_u \cap T^k)\big( \eta +\sigma(C \rho)\big) +  C_{\rho,k} t\eps   ,$$
where we again used \eqref{Ant2} to see that   $u^{\eta,\rho}_{\varepsilon}$ does not exhibit jumps on $\partial T^k_i \setminus\partial^\nu  T^k $. 
Summing over all $k =1, \ldots, N$ and using the first property of  \eqref{eq5.31tris} together with $(f6)$  we find
$$ {{\mathcal{E}_{\varepsilon}}\big(u^{\eta,\rho}_{\varepsilon},   U  \big) \le      {\mathcal{E}_{\rm hom}}\big(u, U \big) +   C\mathcal{H}^{d-1}(J_u)\big( \eta +\sigma(C \rho)\big) +  2NC_{\rho,k} t\eps   , } $$
where we have used that the sets $(\overline{T^k})_k$ are pairwise disjoint. Replacing $\rho$ by $C/\rho$, this is the desired estimate \eqref{Ant5}(i). Moreover, \eqref{Ant5}(ii) follows from \eqref{Ant2}, \eqref{eq5.34}, the dependence of $S$ on $\eta$ and $\rho$,  and the fact that $\nabla u$ is uniformly bounded.   Eventually, by the third bound in  \eqref{eq5.31tris} and  \eqref{eq5.34} we get that $\lim_{\varepsilon \to 0}\mathcal{L}^d(\{x \in  U \colon u_{\varepsilon}^{\eta,\rho}(x) \neq u(x)\})=0$, i.e.\    $u_{\varepsilon}^{\eta,\rho}$ converges to $u$ in measure on $ U $ as $\eps \to 0$.  This concludes the proof. 
\end{proof}

\section*{Acknowledgements} 
This work was supported by the RTG 2339 “Interfaces, Complex Structures, and Singular Limits”
of the German Science Foundation (DFG). The support is gratefully acknowledged. This work was supported  by the Deutsche Forschungsgemeinschaft (DFG, German Research Foundation) under Germany's Excellence Strategies EXC 2044 -390685587, Mathematics M\"unster: Dynamics--Geometry--Structure.

\appendix

\section{Density of functions with polyhedral jump sets}
 
In this section we state a density result in $PR_L$ which is an adaptation of \cite{BraConGar16}. 

\begin{theorem}[Density of functions with polyhedral jump sets]
\label{prop5.3}
Let $L=SO(d)$ or $L=\mathbb{R}_{\mathrm{skew}}^{d \times d}$.   Given $U \in \mathcal{A}_0$ and a continuous and bounded function $\psi \colon U \times \R^d\setminus \lbrace 0 \rbrace \times \mathbb{S}^{d-1}$,  consider the  energy  
\begin{align}\label{E functi}
\mathcal{E}(u) =  \int_{U \cap J_u}{\psi(x, [u](x),\nu_{u}(x))\,\mathrm{d}\mathcal{H}^{d-1}(x)}
\end{align}
for $u \in PR_L(U)$.  Then for every $u \in PR_L(U)$  there exists a sequence $(u_n)_n \subset PR_L(U) \cap L^{\infty}(U;\mathbb{R}^d)$ such that $u_n$ has polyhedral jump set, $\nu_{u_n}\in \mathbb{Q}^d \cap \mathbb{S}^{d-1}$, $u_n \to u$ in  measure,   and
\begin{equation}\label{use later}
\limsup\limits_{n \to \infty}\mathcal{E}(u_n)\leq \mathcal{E}(u).    
\end{equation}
\end{theorem}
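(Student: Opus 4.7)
\smallskip

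\textbf{Plan.} The overall strategy is to approximate $u$ in three successive steps: first reduce to bounded functions with only finitely many rigid-motion components, then replace the Caccioppoli partition by a polyhedral one (invoking \cite{BraConGar16}), and finally perturb the polyhedral faces slightly so that all normals become rational. A diagonal argument then produces the desired sequence, and the energy upper bound is obtained from the continuity of $\psi$ together with a Reshetnyak-type convergence of surface measures.

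\smallskip

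\textbf{Step 1 (Truncation and finitely many components).} Write $u = \sum_{j\in\N}(M_j x + b_j)\chi_{P_j}$ in pairwise distinct form. For each $N\in\N$, define
\[
u^N(x) := \sum_{j=1}^{N}(M_j x + b_j)\chi_{P_j}(x) + \sum_{j>N} c\,\chi_{P_j}(x),
\]
for some fixed constant $c$ chosen distinct from $\{M_j\cdot+b_j\}_{j=1}^N$. Since $\sum_j\mathcal{H}^{d-1}(\partial^*P_j\cap U)<\infty$, the contribution of the tail boundaries to $\mathcal{E}$ is negligible as $N\to\infty$; by $(\psi)$ bounded and continuity in $\zeta$ we obtain $\mathcal{E}(u^N)\to \mathcal{E}(u)$ and $u^N\to u$ in measure. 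If $u$ is unbounded we additionally apply Lemma~\ref{truncation} (in a form tailored to bounded $\psi$) to obtain an $L^\infty$ bound without increasing the energy beyond an error $o(1)$.

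\smallskip

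\textbf{Step 2 (Polyhedral partitions via \cite{BraConGar16}).} Fix $u^N$ as above with partition $(P_j)_{j=1}^{N+1}$ (treating the tail as one component). The density result of Braides--Conti--Garroni \cite{BraConGar16} yields a sequence $(Q_j^n)_{j=1}^{N+1}$ of Caccioppoli partitions of $U$ whose interfaces are finite unions of $(d-1)$-dimensional polyhedra, satisfying $\chi_{Q_j^n}\to \chi_{P_j}$ in $L^1$ and, crucially, with the surface measures converging in the sense that
\[
\int_{\partial^* Q_j^n\cap U}\!\varphi(x,\nu_{Q_j^n})\,\mathrm{d}\mathcal{H}^{d-1}
\;\longrightarrow\;
\int_{\partial^* P_j\cap U}\!\varphi(x,\nu_{P_j})\,\mathrm{d}\mathcal{H}^{d-1}
\]
for every continuous bounded $\varphi$. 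Setting $u^{N,n}:=\sum_{j=1}^{N+1}q_j\chi_{Q_j^n}$, where $q_j$ are the affine mappings appearing in $u^N$, gives $u^{N,n}\in PR_L(U)\cap L^\infty(U;\R^d)$ with polyhedral jump set, and convergence in measure to $u^N$.

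\smallskip

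\textbf{Step 3 (Rational normals).} Each polyhedral interface of $u^{N,n}$ is a finite union of flat facets. For any $\eta>0$ we slightly rotate each facet (by an orthogonal transformation $R$ arbitrarily close to $\mathbb{I}$) so that its new normal lies in $\mathbb{Q}^d\cap\mathbb{S}^{d-1}$, using the density of $\mathbb{Q}^d\cap\mathbb{S}^{d-1}$ in $\mathbb{S}^{d-1}$. This can be realized by a small bilipschitz deformation $\Phi_\eta$ of $U$ with $\Phi_\eta\to\mathrm{id}$ uniformly. Composing the resulting configuration with $\Phi_\eta^{-1}$ preserves the piecewise rigid structure (since $PR_L$ is preserved under postcomposition with orthogonal maps for $L=SO(d)$, and under the linear part for $L=\R^{d\times d}_{\rm skew}$, up to a global rigid motion which can be re-absorbed component by component), and the jump set stays polyhedral.

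\smallskip

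\textbf{Step 4 (Energy upper bound and diagonalization).} Using continuity and boundedness of $\psi$ in $(x,\zeta,\nu)$, the Reshetnyak-type convergence of Step~2 yields $\mathcal{E}(u^{N,n})\to \mathcal{E}(u^N)$ as $n\to\infty$. The perturbation of Step~3 changes the energy by at most $C\,\omega_\psi(\eta)\,\mathcal{H}^{d-1}(J_{u^{N,n}})$ where $\omega_\psi$ is a modulus of continuity for $\psi$; this vanishes as $\eta\to 0$. Combining with $\mathcal{E}(u^N)\to \mathcal{E}(u)$, a diagonal extraction produces a sequence $(u_n)_n\subset PR_L(U)\cap L^\infty(U;\R^d)$ with polyhedral jump set, rational normals, $u_n\to u$ in measure, and $\limsup_n\mathcal{E}(u_n)\le \mathcal{E}(u)$.

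\smallskip

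\textbf{Main obstacle.} The delicate point is Step~3: one must ensure that the rationalization of normals is globally consistent on a polyhedral Caccioppoli partition (facets shared by two components must receive the same rotation) and that the resulting configuration remains in $PR_L(U)$. For $L=SO(d)$ this is immediate since orthogonal postcomposition preserves $SO(d)$; for $L=\R^{d\times d}_{\rm skew}$ some care is needed because a small rotation of a facet actually perturbs $b_j$ rather than $M_j$, so one must argue facet-by-facet using a partition-of-unity type of deformation, which does not change the skew-symmetric gradients. Verifying that this can be done without creating new jumps, while simultaneously preserving the energy bound via continuity of $\psi$, is the most technical ingredient of the proof.
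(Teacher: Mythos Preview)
Your overall three-step strategy (reduce to finitely many pieces, approximate the partition by a polyhedral one via \cite{BraConGar16}, then rationalize the normals) matches the paper's approach. However, there are two concrete problems.

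\textbf{Step 1.} Replacing the tail by a constant $c$ fails for $L=SO(d)$: a constant function has gradient $0\notin SO(d)$, so $u^N\notin PR_{SO(d)}(U)$. The paper instead sets the tail equal to $M_1 x$ (one of the affine maps already present), which works for both choices of $L$ and automatically yields $u^N\in L^\infty$ since $U$ is bounded. Your appeal to Lemma~\ref{truncation} is then unnecessary.

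\textbf{Step 3.} This is where your proposal goes wrong, and your ``main obstacle'' is a red herring. You propose to realize the rationalization of normals through a bilipschitz deformation $\Phi_\eta$ and then compose with $\Phi_\eta^{-1}$. But to move the jump set you would need \emph{pre}composition $u\circ\Phi_\eta$, and precomposition with a general (non-rigid) map does not preserve $PR_L$: the gradient becomes $M_j\,D\Phi_\eta$, which need not lie in $L$. Postcomposition, on the other hand, leaves the jump set untouched. The correct (and much simpler) observation is that you should not deform the function at all: after Step~2 you have finitely many polyhedral cells with fixed affine maps $q_j=M_j\cdot+b_j$ on them. Tilt each facet slightly so that its normal lies in $\mathbb{Q}^d\cap\mathbb{S}^{d-1}$, obtaining a new polyhedral partition, and keep the \emph{same} affine maps $q_j$ on the corresponding cells. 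The resulting function is automatically in $PR_L(U)$ with no consistency issue whatsoever, and the energy changes only by $\|\psi\|_\infty$ times the $\mathcal{H}^{d-1}$-measure of the symmetric difference of the old and new interfaces plus a modulus-of-continuity term in $\nu$, both of which are arbitrarily small. This is what the paper does (in one sentence) in its Step~3.

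\textbf{Step 2.} Your version is equivalent in content to the paper's, which splits $u$ into piecewise constant maps $p=\sum M_j\chi_{P_j}$ and $q=\sum b_j\chi_{P_j}$, applies \cite{BraConGar16} to each (noting the constructions share the same partition), and sets $u_n(x)=p_n(x)x+q_n(x)$. The energy upper bound, however, is not a direct ``Reshetnyak-type'' statement: what \cite{BraConGar16} actually provides are bilipschitz deformations $f_n\to\mathrm{id}$ with $|D(p\circ f_n)-Dp_n|(U)\to 0$, and the paper obtains \eqref{use later} by an explicit change of variables through $f_n$ and dominated convergence. Your sketch is correct in spirit but glosses over this mechanism.
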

\begin{proof} We divide the proof intro three steps. 

 \emph{Step 1:} Let $u(x)=\sum_{j \in \mathbb{N}}(M_j x + b_j)\chi_{P_j}(x)$, with $M_j \in L$ and $b_j \in \mathbb{R}^d$, and a Caccioppoli partition $(P_j)_j$   of $U$. For every $N \in \mathbb{N}$ define $u^N(x)= \sum^N_{j=1}(M_j x+b_j)\chi_{P_j}(x)+\sum_{j >N}   M_1x   \chi_{P_j}(x)$\EEE. Notice that $u^N \to u$ in measure on $U$ as $N \to \infty$. We have 
 \begin{equation*}
 \mathcal{E}(u^N,U)\leq \mathcal{E}(u,U)+ \Vert \psi \Vert_\infty \sum_{j>N}\mathcal{H}^{d-1}(\partial^* P_j),   
 \end{equation*}
and since $\sum_{j \in \mathbb{N}}\mathcal{H}^{d-1}(\partial^* P_j)<\infty$ we obtain
 \begin{equation}
 \label{eq5.28}
  \limsup\limits_{N \to \infty}\mathcal{E}(u^N,U)\leq \mathcal{E}(u,U).
 \end{equation}
 Hence, since $u^N$ is essentially defined on $N+1$ Caccioppoli sets, up to a diagonal argument, it suffices to prove the proposition   for functions with finite Caccioppoli partitions. 
 
 \emph{Step 2}: Consider $u(x)=\sum^N_{j=1}(M_j\,x+b_j)\chi_{P_j}(x)$. Without restriction we choose a pairwise distinct representation. i.e.\ $\mathcal{H}^{d-1}(J_u \triangle (U \cap \bigcup^N_{j=1}\partial^* P_j))=0$. Define 
 $$p=\sum^N_{j=1}M_j \chi_{P_j} \in PR_0(U;\mathbf{M}), \quad \quad  q=\sum^N_{j=1}b_j \chi_{P_j} \in PR_0(U; \mathbf{b} ),$$
 where $\mathbf{M} = (M_j)_{j=1}^N \subset  L$ and $\mathbf{b} = (b_j)_{j=1}^N \subset  \R^d$. 
  Notice that $J_p \cup J_q \subset  J_u$ up to  $\mathcal{H}^{d-1}$-negligible sets. By virtue of \cite[Theorem 2.1]{BraConGar16}, there exist sequences $(p_n)_n \subset PR_0(U; \mathbf{M})$ and sequence $(q_n)_n \subset PR_0(U; \mathbf{b})$,  having polyhedral jump set, such that $p_n \to p$ in $L^1(U;\mathbb{R}^{d \times d})$ and $q_n \to q$ in $L^1(U;\mathbb{R}^d)$. A careful inspection of the proof of \cite[Theorem 2.1]{BraConGar16} shows that, since $p$ and $q$ share the same Caccioppoli partition, also the two sequences $(p_n)_n$ and $(q_n)_n$ can be chosen such that $p_n$ and $q_n$ have the same  partition. In fact, the construction of the polyhedral jump set  takes into account only the   partition on which the original function is defined, not its values. Now, define  $u_n(x)=p_n(x) x +q_n(x) \in PR_L(U) \cap L^{\infty}(U;\mathbb{R}^d)$. Since $p_n \to p$ in $L^1(U;\mathbb{R}^{d \times d})$ and $q_n \to q $ in $L^1(U;\mathbb{R}^d)$, we have $u_n \to u$ in $L^1(U;\mathbb{R}^d)$. Arguing like in the proof of \cite[Corollary 2.4]{BraConGar16}, it can be shown that $\limsup_{n \to \infty}\mathcal{E}(u_n)\leq \mathcal{E}(u)$,  i.e.\ \eqref{use later} holds. We now give the details of this argument. \\
  By \cite[Theorem 2.1]{BraConGar16} there also exists a sequence of functions $(f_n)_n \subset C^1(\mathbb{R}^d;\mathbb{R}^d)$, with inverse also in $C^1$, such that $f_n \to \mathrm{id}$ strongly in $W^{1,\infty}(\mathbb{R}^d;\mathbb{R}^d)$ and $ \vert   D  \tilde{p}_n   - D  p_n \vert (U) \to 0$, $\vert   D   \tilde{q}_n   - D  q_n  \vert (U) \to 0$,  where $\tilde{p}_n := p \circ f_n$ and $\tilde{q}_n := q \circ f_n$ are functions defined on $f_n^{-1}(U)$, and  the measures $D    \tilde{p}_n $, $D \tilde{q}_n$ are implicitly extended by $0$ to $\R^d$. Since $ \vert D \tilde{p}_n - Dp_n \vert (U) \to 0$ and the functions $p_n$ attain only the finitely many different values $\mathbf{M}$, we have  
\begin{equation}\label{eq: andnow}
\mathcal{H}^{d-1}\big(U \cap (J_{\Tilde{p}_n}\triangle J_{p_n})\big)  +  \mathcal{H}^{d-1}\big(\big\{x \in J_{ \tilde{p}_n}\cap J_{p_n}\cap U: p^{\pm}_n(x)\neq  \tilde{p}_n^{\pm}  (x)\big\}\big)  \to 0 
\end{equation}
 as $n \to \infty$. The same holds for $q$ in place of $p$.  Define  also $\Tilde{u}_n(x)=\tilde{p}_n(x) x + \tilde{q}_n (x)$ and note  $J_{\Tilde{u}_n}\subset J_{\tilde{p}_n} \cup J_{\tilde{q}_n}$.  This along with \eqref{eq: andnow} (also for $q$)  implies
\begin{equation*}
\mathcal{H}^{d-1}\big(U \cap (J_{\Tilde{u}_n}\triangle J_{u_n})\big)+ \mathcal{H}^{d-1}\big(\big\{x \in J_{\Tilde{u}_n}\cap J_{u_n}\cap U: u^{\pm}_n(x)\neq \Tilde{u}^{\pm}_n\big\}\big) \to 0.
\end{equation*}
Then, by the boundedness of $\psi$,   in order to conclude \eqref{use later}, it suffices to show
\begin{equation}
\label{eq5.29}
\limsup_{n \to \infty} \int_{J_{\Tilde{u}_n}\cap U}\psi(x,[\Tilde{u}_n],\nu_{\Tilde{u}_n})\, \mathrm{d}\mathcal{H}^{d-1}(x) \le \mathcal{E}(u).
\end{equation}
It can be checked that $\nu_{\tilde{u}_n }(x)=\frac{Df_n^T (x) \nu_u(f_n(x))}{\vert Df_n^T (x) \nu_u(f_n(x))\vert}$ and then  by the change of variable formula \cite[Theorem 2.91]{ambrosio2000fbv} we  have
\begin{equation*}
 \int_{ J_{\tilde{u}_n}\cap U} \hspace{-0.1cm}\psi(x,[\Tilde{u}_n],\nu_{\tilde{u}_n})\, \mathrm{d}\mathcal{H}^{d-1}(x)=\int_{ J_{u}\cap  f_n(U)} \psi  \big(f^{-1}_n(x),[p(x)f^{-1}_n(x)+q(x)],\nu^u_{n}  (x)  \big)\, J_{d-1}\mathrm{d}^{J_u} f^{-1}_n\mathrm{d}\mathcal{H}^{d-1} (x), 
\end{equation*} 
where $\nu^u_n := \nu_{\tilde{u}_n} \circ f_n^{-1} $ is the normal to $J_{\tilde{u}_n}$ transformed by $f_n$  and $J_{d-1} \, \mathrm{d}^{J_u}f_n^{-1}$ is the Jacobian of the tangential differential of $f^{-1}_n$. Notice now that $\nu^u_n \to \nu_u$,  $f_n^{-1} \to \mathrm{id}$, and $\nabla f_n \to \mathbb{I}$ uniformly. Hence, by dominated convergence,  $\mathcal{H}^{d-1}(J_u \setminus f_n(U)) \to 0$,  and the continuity and boundedness  of $\psi$ we conclude \eqref{eq5.29}.

\emph{Step 3}: Finally, as  $\mathbb{S}^{d-1}\cap \mathbb{Q}^d$ is dense in $\mathbb{S}^{d-1}$, each function $w$ with polyhedral jump set can be approximated by functions $(w_n)_n$,   still with polyhedral jump set,  such that $\nu_{w_n}\in \mathbb{Q}^d \cap \mathbb{S}^{d-1}$ for $\mathcal{H}^{d-1}$-a.e.\ point of $J_{w_n}$, $w_n \to w$ in measure on $U$ and $\mathcal{E}(w_n) \to \mathcal{E}(w)$. Then, by a  diagonal argument we can assume that the sequence in Step 2 has this additional property.
\end{proof}

\section{$\Gamma$-convergence with boundary data}

In this section we present and prove a version of Theorem \ref{gammaconv} featuring boundary data.

\begin{theorem}[$\Gamma$-convergence with boundary data]\label{th_ bundary data}
Let $L=SO(d)$ or $L=\mathbb{R}_{\rm skew}^{d \times d}$.  Let $f$ be a stationary random surface density with respect to a group $(\tau_z)_{z \in \mathbb{Z}^d}$ (resp. $(\tau_z)_{z \in \mathbb{R}^d}$) of $\mathbb{P}$-preserving transformations on $(\Omega,\mathcal{I},\mathbb{P})$. Let $U \in \mathcal{A}_0$ and let $V \subset U$  be   such that $V  \in \mathcal{A}_0$. Consider  $u_0 \in PR_L(U)$ as introduced below \eqref{BBBC}. Finally, for every $\varepsilon>0$, define
 $\mathcal{E}_{\varepsilon}', \mathcal{E}'_{\mathrm{hom}} \colon \Omega \times L^{0}(\mathbb{R}^d;\mathbb{R}^d)\to [0,\infty]$ as
\begin{equation}
\label{randomenergies_boundary data}
\mathcal{E}'_\eps[\omega](u)=\begin{cases}
\mathcal{E}_{\varepsilon}[\omega](u,U) &  u =u_0\:\: \text{on}\:\: V,   \\
 +\infty  & \text{otherwise},
\end{cases}
\end{equation}
and
\begin{equation}
\label{homogenisedenergy_boundary data}
\mathcal{E}'_{\mathrm{hom}}[\omega](u)=\begin{cases}
\mathcal{E}_{\mathrm{hom}}[\omega](u,U)  &     u  =u_0\:\: \text{on}\:\: V,\\
+\infty &  \text{otherwise}.
\end{cases}
\end{equation}
Then, with $\Omega'$ from Theorem \ref{HomogenizationFormula}, for every $\omega \in \Omega'$ it holds that 
\begin{equation}\label{gamma_boundarydata}
\mathcal{E}'_{\varepsilon}[\omega]\:\: \Gamma\text{-converge}\:\:\text{to}\:\: \mathcal{E}'_{\mathrm{hom}}[\omega]  \ \ \ \ \text{with respect to convergence in measure on $U$}.
\end{equation}
\end{theorem}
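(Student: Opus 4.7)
Fix $\omega \in \Omega'$ throughout. The statement splits into the two $\Gamma$-convergence inequalities, of which the first is immediate and the second is the main task.

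For the $\Gamma$-liminf inequality, suppose $u_\eps \to u$ in measure on $U$ with $\liminf_\eps \mathcal{E}'_\eps[\omega](u_\eps)<\infty$. Then $u_\eps = u_0$ on $V$ for all (but finitely many) $\eps$, and convergence in measure on $V$ forces $u = u_0$ almost everywhere on $V$; hence $\mathcal{E}'_{\mathrm{hom}}[\omega](u) = \mathcal{E}_{\mathrm{hom}}[\omega](u,U)$. Applying Theorem \ref{gammaconv} on $U$ gives $\mathcal{E}_{\mathrm{hom}}[\omega](u,U) \le \liminf_\eps \mathcal{E}_\eps[\omega](u_\eps,U) = \liminf_\eps \mathcal{E}'_\eps[\omega](u_\eps)$, which is the desired inequality.

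For the $\Gamma$-limsup inequality, fix $u \in PR_L(U)$ with $u = u_0$ on $V$ (otherwise the right-hand side is infinite and nothing is to prove), and let $(\tilde u_\eps)_\eps$ be a recovery sequence produced by Theorem \ref{gammaconv}, so $\tilde u_\eps \to u$ in measure on $U$ and $\limsup_\eps \mathcal{E}_\eps[\omega](\tilde u_\eps, U) \le \mathcal{E}_{\mathrm{hom}}[\omega](u, U)$. Since in general $\tilde u_\eps \neq u_0$ on $V$, the task is to modify $\tilde u_\eps$ inside a thin shell around $\partial \Psi$ so that it coincides with $u_0$ on $V$, at a controlled energy cost. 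To keep the modified function in $PR_L$, standard convolution-type cut-off is unavailable; instead the plan is to invoke the fundamental estimate for functionals on $PR_L$ established in \cite{Friedrich_2020}, which permits gluing two $PR_L$-functions across a transition layer at the price of a surface term supported in the layer.

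Concretely, for small $\delta>0$ pick $\Psi^\delta \in \mathcal{A}_0$ with $\Psi \subset\subset \Psi^\delta \subset\subset \Psi'$ and such that $\Psi^\delta \setminus \overline{\Psi}$ lies inside the neighbourhood of $V$ in which $u_0$ is Lipschitz; then $\mathcal{H}^{d-1}(J_{u_0} \cap (\Psi^\delta \setminus \overline{\Psi})) = 0$ and so $\mathcal{E}_\eps[\omega](u_0, \Psi^\delta \setminus \overline{\Psi}) = 0$. Applying the fundamental estimate in $PR_L$ along a De Giorgi-type family of nested layers inside $\Psi^\delta \setminus \overline{\Psi}$, and averaging to select a layer where the surface error is small, we obtain (using Remark \ref{gluingproperty} to assemble the pieces) a function $u^\delta_\eps \in PR_L(U)$ with $u^\delta_\eps = \tilde u_\eps$ on $\Psi$ and $u^\delta_\eps = u_0$ on $U \setminus \Psi^\delta$, satisfying $\mathcal{E}_\eps[\omega](u^\delta_\eps, U) \le \mathcal{E}_\eps[\omega](\tilde u_\eps, U) + \eta_\eps(\delta)$ with $\limsup_\eps \eta_\eps(\delta) \to 0$ as $\delta \to 0$. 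A diagonal extraction $\delta = \delta_\eps \to 0$ then yields a sequence $u_\eps := u_\eps^{\delta_\eps} \to u$ in measure on $U$ with $u_\eps = u_0$ on $V$ and $\limsup_\eps \mathcal{E}'_\eps[\omega](u_\eps) \le \mathcal{E}'_{\mathrm{hom}}[\omega](u)$, completing the proof.

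The main obstacle is the gluing step: for piecewise rigid functions one cannot interpolate continuously between $\tilde u_\eps$ and $u_0$, so the proof must genuinely exploit the $PR_L$-version of the fundamental estimate from \cite{Friedrich_2020} and the accompanying averaging argument over a family of transition layers in order to both preserve the piecewise rigid structure and make the extra surface contribution vanish as $\delta \to 0$.
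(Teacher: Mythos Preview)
Your $\Gamma$-liminf argument is fine, and the overall strategy for the $\Gamma$-limsup (recovery sequence from Theorem \ref{gammaconv} plus a fundamental-estimate gluing) is the right idea. However, there is a genuine gap in your gluing construction: the transition shell $\Psi^\delta \setminus \overline{\Psi}$ lies \emph{inside} $V$, and the fundamental estimate only guarantees $u^\delta_\eps = u_0$ on $U \setminus \Psi^\delta$, not on the shell itself. On the shell the output is some $PR_L$-interpolant between $\tilde u_\eps$ and $u_0$, which in general differs from $u_0$ on a set of positive measure. Hence $u^\delta_\eps \neq u_0$ on all of $V$, so $\mathcal{E}'_\eps[\omega](u^\delta_\eps) = +\infty$ and the sequence is not admissible. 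The diagonal extraction $\delta_\eps \to 0$ does not repair this: for each fixed $\eps$ the shell has positive measure and the constraint is violated.

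The paper avoids this by placing the transition layer \emph{inside} $U' = U \setminus \overline{V}$ rather than inside $V$. To make that work, one first invokes a density lemma (Lemma \ref{prop5.3-cor}) to reduce to the case where $u = u_0$ in an open neighbourhood of $\overline{V}$; then one glues the recovery sequence $u_\eps$ with $v_\eps := u$ (not with $u_0$ directly) across a layer $A \setminus A'$ with $A \subset\subset U'$. The fundamental estimate from \cite[Lemma~4.5]{friedrich2018piecewise} produces $w_\eps$ equal to $u$ on $B \setminus A$, hence equal to $u_0$ on all of $V$, and the error term vanishes because $u_\eps - v_\eps = u_\eps - u \to 0$ in measure. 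The density reduction is what allows the gluing region to sit entirely in $U'$ while still forcing the correct boundary datum on $V$; without it, gluing with $u$ inside $U'$ would not yield $u_0$ on $V$, and gluing inside $V$ (your choice) breaks admissibility as explained.
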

 
\begin{proof}
In order to prove \eqref{gamma_boundarydata}, we need  to check that
\begin{equation}
\label{eq3.13}
\Gamma-\limsup\limits_{\eps \to 0}\mathcal{E}_{\varepsilon}'[\omega](u)\leq \mathcal{E}_{\mathrm{hom}}[\omega](u),    
\end{equation}
for every $\omega \in \Omega'$ and $u \in PR_L(U)$. Let $u \in PR_L(U)$ and let $U':= U \setminus \overline{V} \in \mathcal{A}_0$. We fix $\omega \in \Omega'$, which we drop from the notation for simplicity. 

  By Lemma \ref{prop5.3-cor} (see below)   and a standard diagonal argument, we can assume that  $u = u_0$ in a neighbourhood of $V$ and thus  $\mathcal{H}^{d-1}(J_u \cap \partial V) =  \mathcal{H}^{d-1}(J_{u_0} \cap \partial V) = 0 $. Using Theorem \ref{gammaconv} we know that there exists a recovery sequence $(u_\eps)_{\eps}$ for $u$ such that $u_{\eps}\to u$ in measure on $U$ and
\begin{equation}
\label{eq4.64}
    \lim\limits_{\eps \to 0}\mathcal{E}_{\varepsilon}(u_{\eps},U)=\mathcal{E}_{\mathrm{hom}}(u,U).
\end{equation}
Let $\eta>0$. We choose sets  $A,A',B \in \mathcal{A}_0$ with    $A \subset \subset U' = U \setminus \overline{V}$,   $A' \subset \subset A$,  and $U \cap \overline{U'\setminus A'} \subset  B \subset U$ such that $U \setminus \overline{A} \in \mathcal{A}_0$, $U \setminus \overline{A'\cup B} \in \mathcal{A}_0$, and 
\begin{align}\label{3cond}
\mathcal{H}^{d-1}\big(J_u \cap B \big)\le  \eta,  \quad  \quad \mathcal{H}^{d-1}\big((\partial A \cup \partial B) \cap J_u\big) = 0. 
\end{align}
Define the function $v_\eps \in PR_L(B)$ by $v_\eps = u$. We apply the fundamental estimate in \cite[Lemma 4.5]{friedrich2018piecewise}  for the functions $u_\eps \in PR_L(A)$ and $v_\eps \in PR_L(B)$ as well as  the parameter $\eta >0$. We find a function $w_\eps \in PR_L(A'\cup B)$ such that $w_\eps = v_\eps$ on $B\setminus A$ and 
\begin{equation}
\label{eq4.65}
\mathcal{E}_{\varepsilon}(w_\eps, A' \cup B)\leq (1+C\eta)\big(\mathcal{E}_{\varepsilon}(u_\eps,A) +  \mathcal{E}_{\varepsilon}(v_\eps,B)\big) + C\eta + C_\eta\mathrm{err}(\varepsilon),  
\end{equation}
where $C$ depends on $A',A',B$, and  $C_\eta$ depends additionally on $\eta$, as well as $\mathrm{err}(\varepsilon) \to 0$ as $\eps \to 0$. Here, we use that $u_\eps\to u$ on $A $, $v_\eps = u$ on $B$, and thus $u_\eps - v_\eps \to 0$ on  $(A \setminus A') \cap B$. (We also note that \cite[(4.6)]{friedrich2018piecewise} can be verified   since $v_\eps =u$ on $B \setminus \overline{A'}$.)  
In particular, we have $w_\eps = u = u_0$ on $(A' \cup B) \cap V = B \cap V \EEE \subset B \setminus A$. As $A' \cup B \supset   U'$ (since $U \cap \overline{U'\setminus A'} \subset  B$), we can extend $w_\eps$ to $U$ by setting
\begin{equation*}
{u}^{\eta}_\eps(x)=\begin{cases}
        w_\eps(x) &\text{if}\:\: x \in A' \cup B, \\
        u_0(x) & \text{if}\:\: x \in U \setminus (A' \cup B)
    \end{cases}
\end{equation*}
and we get
\begin{align}\label{tocombine1}
\mathcal{E}_{\varepsilon}(u^\eta_\eps, U) \le \mathcal{E}_{\varepsilon}\big(w_\eps, A' \cup B\big) + \mathcal{E}_{\varepsilon}\big(u_0, U \setminus \overline{A' \cup B}\big),
\end{align}
i.e.\ no additional jump is introduced on $\partial B \cap V$. Since $w_\eps = u_0$ on $(A' \cup B) \cap V$, we find $u^\eta_\eps = u_0$ on $V$. We finally estimate the energy. For this, we will use that 
\begin{align}\label{tocombine2}
& \lim_{\eps\to 0} \mathcal{E}_{\varepsilon}(u_\eps, A) = \lim_{\eps\to 0} \mathcal{E}_{\rm hom}(u, A),
\quad \quad 
\lim_{\eps\to 0} \mathcal{E}_{\varepsilon}(u_\eps,  U\setminus \overline{A' \cup B}) = \lim_{\eps\to 0} \mathcal{E}_{\rm hom}(u, U\setminus \overline{A' \cup B}). 
\end{align}
In fact, as $A$ and $U \setminus \overline{A}$ are open and Lipschitz,  we find  
$$\liminf_{\eps\to 0 } \mathcal{E}_{\varepsilon}(u_\eps, A) \ge \mathcal{E}_{\mathrm{hom}}(u,A), \quad\quad \liminf_{\eps \to 0} \mathcal{E}_{\varepsilon}(u_\eps, U \setminus \overline{A}) \ge \mathcal{E}_{\mathrm{hom}}(u,U \setminus \overline{A}).   $$
By $\mathcal{H}^{d-1}(\partial A \cap J_u) = 0$, see  \eqref{3cond}, we get $\mathcal{E}_{\rm hom}(u, U  ) = \mathcal{E}_{\rm hom}(u, A) + \mathcal{E}_{\rm hom}(u, U \setminus \overline{A})$. This  along  with  
the inequality
\begin{equation*}
\mathcal{E}_{\mathrm{hom}}(u,A)+\mathcal{E}_{\mathrm{hom}}(u,U \setminus \overline{A})\geq\limsup\limits_{\varepsilon \to 0}\big(\mathcal{E}_{\varepsilon}(u_\eps, A)+\mathcal{E}_{\varepsilon}(u_\eps, U \setminus \overline{A})\big)\geq \limsup\limits_{\varepsilon \to 0}\mathcal{E}_{\varepsilon}(u_\eps, A)+\mathcal{E}_{\mathrm{hom}}(u,U\setminus \overline{A}),
\end{equation*}
  (use \eqref{eq4.64} in the first step) 
shows \eqref{tocombine2} on $A$. The argument for   $U\setminus \overline{A' \cup B}$ is the same, again using  \eqref{3cond}.  
Now, combining  \eqref{3cond}, \eqref{eq4.65}, \eqref{tocombine1}, \eqref{tocombine2}, $(f6)$, and using $v_\eps = u$ we find 
$$\limsup_{\eps \to 0} \mathcal{E}_{\varepsilon}(u^\eta_\eps, U) \le (1+C\eta)\big(\mathcal{E}_{\rm hom}(u,A) +  c_2 \eta \big) + C\eta + \limsup_{\eps \to 0} \mathcal{E}_{\varepsilon}\big(u_0, U \setminus \overline{A' \cup B}\big).  $$
As $u_\eps = u_0$ on $U \setminus \overline{A' \cup B}$, using again \eqref{tocombine2} and $A \cap (U \setminus \overline{A' \cup B}) = \emptyset$, we deduce
\begin{align*}
\limsup_{\eps \to 0} \mathcal{E}_{\varepsilon}(u^\eta_\eps, U) & \le (1+C\eta)\big(\mathcal{E}_{\rm hom}(u,A) +  c_2 \eta \big) + C\eta + \mathcal{E}_{\rm hom}(u, U \setminus \overline{A' \cup B})   \le (1+C\eta) \mathcal{E}_{\rm hom}(u, U )   + C\eta.
\end{align*}
As $\eta>0$ was arbitrary, the proof is concluded. 
\end{proof}

 Arguments similar to the ones of \cite {BraConGar16} and Theorem \ref{prop5.3} lead to the following lemma.

\begin{lemma}[Density]
\label{prop5.3-cor}
Let $L=SO(d)$ or $L=\mathbb{R}_{\mathrm{skew}}^{d \times d}$.  Let $U
\in \mathcal{A}_0$ and let  $V \subset U$ be    such that $U \setminus
\overline{V} \in \mathcal{A}_0$. Let $u_0 \in PR_L(U)$ be 
Lipschitz   in a neighborhood  of $V$ in  $U$. Then, for every
$u \in PR^{u_0}_L(U)$   there exists a sequence $(u_n)_n \subset
PR_L(U)$ and a sequence of neighbourhoods $N_n \supset V$  of $V$ in $U$
such that $N_n \in \mathcal{A}_0$, $u_n = u_0$ on $N_n$, $u_n \to u$ in 
measure on $U$,  and
$\limsup_{n \to \infty}\mathcal{E}(u_n)\leq \mathcal{E}(u)$, where
$\mathcal{E}$ is a functional as in \eqref{E functi}.
\end{lemma}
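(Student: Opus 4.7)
The strategy adapts the deformation argument from the proof of Theorem \ref{prop5.3} (cf.\ also \cite{BraConGar16}). The key initial observation is that, since $u_0 \in PR_L(U)$ is Lipschitz on an open neighborhood $\tilde{N} \supset V$ in $U$ and a piecewise rigid function which is continuous on a connected open set $\Omega$ must reduce to a single rigid motion there (two distinct affine pieces would produce jumps along the boundaries of their respective Caccioppoli components within $\Omega$), $u_0$ takes the form $u_0(x) = R_0 x + c_0$ on each connected component of $\tilde{N}$ for some $R_0 \in L$ and $c_0 \in \mathbb{R}^d$. For brevity I assume $V$ and $\tilde{N}$ are connected (otherwise the construction below is performed component by component), so $u_0 = q_0$ on $\tilde{N}$ with $q_0(x) := R_0 x + c_0$, and in particular $u = q_0$ a.e.\ on $V$.

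Since $U \setminus \overline{V} \in \mathcal{A}_0$, the relative boundary $\partial V \cap U$ is Lipschitz, and by a standard local flattening and translation argument I would construct, for each $n \in \mathbb{N}$, a bi-Lipschitz homeomorphism $f_n \colon U \to U$ satisfying: $(\mathrm{i})$ $f_n$ coincides with the identity outside a $2\eta_n$-neighborhood of $\partial V \cap U$ compactly contained in $\tilde{N}$, with $\eta_n \to 0$; $(\mathrm{ii})$ $f_n^{-1}(V)$ contains $V \cup \{x \in U \setminus \overline{V} \colon \dist(x, \partial V) < \eta_n\}$, so that $N_n := f_n^{-1}(V)$ is a genuine neighborhood of $V$ in $U$; $(\mathrm{iii})$ $\Vert f_n - {\rm id}\Vert_{W^{1,\infty}(U;\mathbb{R}^d)} + \Vert f_n^{-1} - {\rm id}\Vert_{W^{1,\infty}(U;\mathbb{R}^d)} \to 0$. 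Since $V$ has Lipschitz boundary and $f_n^{-1}$ is bi-Lipschitz, $N_n \in \mathcal{A}_0$.

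Writing $u = \sum_j (M_j x + b_j) \chi_{P_j}$, I set $p := \sum_j M_j \chi_{P_j}$ and $q := \sum_j b_j \chi_{P_j}$ (piecewise constant on $(P_j)_j$, with $p \equiv R_0$ and $q \equiv c_0$ a.e.\ on $V$), and define
\begin{equation*}
u_n(x) := p(f_n(x))\, x + q(f_n(x)) = \sum_{j \in \mathbb{N}} (M_j x + b_j) \chi_{f_n^{-1}(P_j)}(x).
\end{equation*}
Since $(f_n^{-1}(P_j))_j$ is a Caccioppoli partition of $U$, $u_n \in PR_L(U)$. On $N_n$ we have $f_n(x) \in V$, hence $u_n(x) = R_0 x + c_0 = u_0(x)$. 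Outside the $2\eta_n$-neighborhood of $\partial V$ we have $u_n = u$, whose complement has vanishing Lebesgue measure, so $u_n \to u$ in measure on $U$. Concerning the energy, $J_{u_n} = f_n^{-1}(J_u)$ up to $\mathcal{H}^{d-1}$-null sets, with $[u_n](x) = [u](f_n(x))$ and $\nu_{u_n}(x) = Df_n(x)^T \nu_u(f_n(x)) / \vert Df_n(x)^T \nu_u(f_n(x)) \vert$. Applying the change of variables formula \cite[Theorem 2.91]{ambrosio2000fbv} exactly as in Step 2 of the proof of Theorem \ref{prop5.3}, and using the continuity and boundedness of $\psi$ together with the uniform convergence $f_n^{-1} \to {\rm id}$ and $D(f_n^{-1}) \to \mathbb{I}$, dominated convergence yields $\lim_n \mathcal{E}(u_n) = \mathcal{E}(u)$, hence in particular $\limsup_n \mathcal{E}(u_n) \le \mathcal{E}(u)$. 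The main technical obstacle is the rigorous construction of $f_n$ in the second step, which requires patching finitely many explicit local diffeomorphisms built from a Lipschitz parametrization of $\partial V \cap U$; once this is achieved, the remaining verifications are routine adaptations of the argument in Theorem \ref{prop5.3}.
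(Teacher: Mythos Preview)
Your argument is correct and shares the paper's core idea---push $\partial V\cap U$ outward via bi-Lipschitz deformations and combine the $p,q$ decomposition with the change-of-variables argument from Step~2 of Theorem~\ref{prop5.3}---but it is more direct: the paper routes the construction through the full local-flattening, covering, and polyhedral-cell machinery of \cite{BraConGar16} (Steps~1--3 of their Theorem~2.2) and then modifies the resulting function to equal $u_0$ on those polyhedral cells caught between $\partial V$ and its slightly shifted copy, whereas you assemble a single global deformation $f_n$ and define $u_n$ in one stroke, bypassing the polyhedral approximation (which is not needed here since the statement does not require polyhedral jump sets). One small correction: the identity $[u_n](x)=[u](f_n(x))$ is not exact, since $[u_n](x)=(M_i-M_j)x+(b_i-b_j)$ while $[u](f_n(x))=(M_i-M_j)f_n(x)+(b_i-b_j)$; the discrepancy $(M_i-M_j)(x-f_n(x))$ vanishes only in the limit, but this is already handled correctly by the change-of-variables formula you invoke from Theorem~\ref{prop5.3} (where the integrand carries $[p(x)f_n^{-1}(x)+q(x)]$) together with dominated convergence.
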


We briefly sketch the proof of Lemma \ref{prop5.3-cor} without
going into details. For simplicity we can assume that $V$ is connected
   since   the same construction can be applied locally if $V$
consists of several components. We also assume that $u
\in PR_L(\mathbb{R}^d)$ since the case $u \in PR_L(U)$ can be obtained
by applying an analogue version of \cite[Theorem
2.1]{BraConGar16}. Furthermore, strictly speaking, we should apply the
same technique of Theorem \ref{prop5.3} in order to pass from $u \in
PR_L(\mathbb{R}^{d})$ to two finite valued functions $p \in
PR_0(\mathbb{R}^d;L)$ and $q \in
PR_0(\mathbb{R}^d;\mathbb{R}^d)$.  This   is omitted  as  details
are already provided in the proof of Theorem \ref{prop5.3}.

Notice that $\partial V \cap U$ is Lipschitz regular by 
hypothesis. Hence, for ($\mathcal{H}^{d-1}$-almost) every $y \in
\partial V \cap U$, let us denote by $\nu_y$ the  inner 
 normal vector to $V$ in the point $y$. Then, for
$\mathcal{H}^{d-1}$-a.e $y \in \partial V \cap U$, the local
construction in Step 1 of \cite[Theorem 2.2]{BraConGar16} can be
repeated, up to replacing $J_u$ with $(\partial V \cap U) 
-c\varepsilon^2 \nu_y\EEE$ for some constant $c>0$ depending on $y$, where $\eps>0$ is a small parameter introduced in
\cite[(2.2)]{BraConGar16}.  Then, we may proceed along the lines of
Steps 2--3 in \cite[Theorem 2.1, Theorem 2.2]{BraConGar16}.
In particular,  in Step 2 a covering argument is applied.   In our case, this leads   to
points  $x_1,...,x_M$ in  $\partial V \cap U$ and
corresponding  radii   $r_1,...,r_M$  such that
\begin{equation*}
\mathcal{H}^{d-1}\Big((\partial V \cap U)\setminus
\bigcup^M_{i=1}B_{r_i}(x_i)\Big)<\varepsilon.
\end{equation*}
In our analogue version of Step 3 of \cite[Theorem 2.2]{BraConGar16}, since the boundary datum $u_0$ is affine on
$V$, without increasing its energy we can modify the  function 
constructed in \cite{BraConGar16} in such a way to be equal to $u_0$ in 
each polyhedral cell (see  \cite[Lemma 2.6]{BraConGar16}) lying  in 
$\{x \in B_{r_i}(x_i) \colon (x-x_i)\cdot \nu_{x_i}\geq - c
\varepsilon^2\}$, so in particular  in a neighbourhood of $V$   covered  by the collection of cells   between $\partial V \cap
B_{r_i}(x_i)$ and the hyperplane $\lbrace x\in B_{r_i}(x_i) \colon
(x-x_i)\cdot \nu_{x_i} =  - c \varepsilon^2\rbrace$.


\section{Example \ref{main-example}}\label{appendix-example}

Given $\rho>0$, we perform the computation on $Q_{2\rho}(0)$ for notational convenience. We will show that 
\begin{align}\label{11111}
\limsup_{\eps\to 0 }  m^{L}_{\mathcal{E}_\eps}\big(u_{0, e_1,e_2  },Q_{2\rho}(0)\big)  \le  2\rho(13 a + 6), 
\end{align}
\begin{align}\label{111112}
\inf\big\{\mathcal{E}_\eps\big(u,Q_{2\rho}(0)\big)\colon \, u\in PR_{L}(Q_{2\rho}(0)) \colon \,  |\nabla u | \le \delta_\eps^{-\alpha/4}, \    u=u_{0, e_1, e_2 }\:\: \text{near}\:\: \partial Q_{2\rho}(0)\big\} \ge 30\rho a.
\end{align}
As $2\rho(13 a + 6)  < 30\rho a$ for $a$ large enough, this will indeed imply the desired statement.

We start with \eqref{11111}. For notational convenience, we assume that for $\varepsilon>0$ there exists $n=n({\rho,\varepsilon})\in \mathbb{N}$ such that $\rho=n\varepsilon$. Consider the strip $S_\eps  = (-\rho + \eps, \rho - \eps) \times (-\eps/8,  \eps/8)  $ which we  partition into  $N_{\rho, \eps} = (2\rho-2\eps)/(\eps/4) = 8\rho/\eps - 8$ many squares $(Q_{\eps/4}(x_i))_{i=1}^{N_{\rho, \eps}}$. We define
$$u(x) = \begin{cases}     M_\eps   (x-  x_i +  \frac{\eps}{8}e_2),  & \text{if } x \in Q_{\eps/4}(x_i) \text{ for } i=1,\ldots, N_{\rho, \eps}, \\ u_{0,e_1,e_2}  &  \text{otherwise in}\:\: Q_{2\rho}(0),      \end{cases} $$
with $M_\eps  :=  \frac{4}{\eps} (e_1  \otimes e_2 -   e_2 \otimes  e_1) $.  We note that $J_u$ splits into a vertical part with normal $e_1$, denoted by  $J_u^{\rm vert}$, and a horizontal part  with normal $e_2$, denoted by  $J_u^{\rm hor}$. The latter splits into a large part related to the boundary of the squares and two segments $\Gamma^\eps_- =(-\rho,\rho+\eps) \times \lbrace 0 \rbrace$ and $\Gamma^\eps_+ =(\rho-\eps,\rho) \times \lbrace 0 \rbrace$. Let $u_i \defas u \cdot e_i$ for $i=1,2$.  The definition of $u$ implies $[u_1] = 0$ and $|[u_2]| \le 1$  on  $J_u^{\rm hor} \setminus (\Gamma^\eps_- \cup \Gamma^\eps_+)$ and  $|[u]| \le 1$  on  $J_u^{\rm vert}$.
Therefore,    from \eqref{densitycounterexample}   we get
$$ \int_{J^{\rm vert}_u}{f(\tfrac{x}{\eps}, [u](x),\nu_{u}(x))\,\mathrm{d}\mathcal{H}^{1}(x)}  \le  (N_{\rho, \eps} +1) \tfrac{\eps}{4} (a+6)  = (2\rho-\tfrac{7}{4}\eps)(a+ 6), $$ 
$$ \int_{J^{\rm hor}_u}{f(\tfrac{x}{\eps}, [u](x),\nu_{u}(x))\,\mathrm{d}\mathcal{H}^{1}(x)}  \le a^3 \mathcal{L}^1(\Gamma^\eps_- \cup \Gamma^\eps_+)+  N_{\rho, \eps} 2 \tfrac{\eps}{4} 6a  =  2\eps  a^3  +  (4\rho - 4\eps)6 a.    $$
Combination of both estimates yields \eqref{11111}.

We now move on to \eqref{111112}.   Let $u$ be a general competitor for  the problem in \eqref{111112} and represent $u$ as in Definition \ref{def: PR} with respect to a Caccioppoli partition  $\lbrace P^+ \rbrace \cup \lbrace P^- \rbrace  \cup  (P_j)_j$   such that $\partial P^\pm$ contains $\partial Q_{2\rho}(0) \cap \lbrace \pm x_2 > 0 \rbrace$, respectively.  Note that all quantities depend on $\eps$ which we do not include in the notation for simplicity. In order to show  inequality  \eqref{111112}, by using Theorem \ref{prop5.3}, we can assume without restriction that the partition consists of a finite number of sets with polyhedral boundary. Moreover, up to considering the connected components (not relabeled), we can assume that all sets $(P_j)_j$ are connected. We denote the corresponding matrices by $(M_j)_j \subset \R^{2 \times 2}_{\rm skew}$. Moreover, by  $h_j$ we denote the length of the orthogonal projection of each $P_j$ onto the $e_2$-axis.

We will use slicing properties of $BV$-functions. To this end, we introduce some further notation. For each $s \in (-\rho,\rho)$, we let $J^s_u := \lbrace t \in (-\rho,\rho) \colon (s,t) \in J_u \rbrace$. We partition the interval $ (-\rho,\rho)$ into the sets $ \mathcal{I}_1 \cup  \mathcal{I}_2 \cup \mathcal{I}_3$, where $\mathcal{I}_i := \lbrace s \in (-\rho,\rho) \colon \# J_u^s = i \rbrace$ for $i=1,2$ and $\mathcal{I}_3:= (-\rho,\rho) \setminus (\mathcal{I}_1 \cup \mathcal{I}_2)$. Recalling the definition  of the density $f$ in \eqref{densitycounterexample}, by slicing properties of $BV$-functions we find
\begin{align*}
\mathcal{E}_\eps\big(u,Q_{2\rho}(0) \big)  \ge  \sum_{ j \in \mathcal{J}}  \frac{\eps}{2}  \Big( \Big\lfloor  \frac{1}{\eps} h_j  \Big\rfloor  - 1 \Big)    a^3 + \sum_{i=1}^3 \int_{\mathcal{I}_i}    \Big( \int_{J_u^s} a\min\lbrace 5  +  a|[u_1](s,t)|  + |[u_2](s,t)|,   a^2 \rbrace   \,    {\rm d}t  \Big)  \,    {\rm d}s,
\end{align*}
where $\lfloor \frac{1}{\varepsilon}h_j \rfloor$ denotes the integer part of $\frac{1}{\varepsilon}h_j $,  and $\mathcal{J}\defas \lbrace j \ge 1\colon h_j \ge 2\eps\rbrace$.   Here, for the vertical part we have used the periodicity of $f$ and the fact  that the projection of each $P_j$ onto the $x_2$-axis contains the projection of at least $\lfloor  \frac{1}{\eps} h_j  \rfloor-1 $ squares of size $\varepsilon$ with centers in $\varepsilon\mathbb{Z}^d$. In turn, this implies that for each $P_j$ the length of the boundary where the density $f(\frac{x}{\eps},\cdot, \cdot)$ coincides with $a^3$ is at least $ \frac{\eps}{2}  (\lfloor  \frac{1}{\eps} h_j  \rfloor   -1) $. On $\mathcal{I}_1$, the jump   satisfies $[u] = e_1$ since on such a slice $u$ necessarily coincides with $u_{0,e_1,e_2}$. This along with  $\# J_u^s \ge 3$ for all $s \in \mathcal{I}_3$ shows  that for $a$ large enough (such that $a(5+a) \ge 15a$) it holds that  \EEE
\begin{align}\label{final1}
\mathcal{E}_\eps\big(u, Q_{2\rho}(0) \big) & \ge  \sum_{ j \in \mathcal{J}}  \frac{\eps}{2} \Big(\Big\lfloor  \frac{1}{\eps} h_j  \Big\rfloor  -1\Big)  a^3   + \mathcal{L}^1(\mathcal{I}_1 \cup \mathcal{I}_3)15a \notag \\ 
&  \ \ \ \ +   \int_{\mathcal{I}_2}                \Big( \int_{J_u^s} a\min\lbrace 5  +  a|[u_1](s,t)|  + |[u_2](s,t)|,   a^2 \rbrace   \,    {\rm d}t  \Big)  \,    {\rm d}s.
\end{align}
We now address the integral over $ \mathcal{I}_2$. We note that for each  $s \in \mathcal{I}_2$ there exists a unique $P_j$  such that exactly the three components $P^+$, $P^-$, and $P_j$ intersect the slice $\lbrace s \rbrace \times (-\rho,\rho)$.   In fact, because of the boundary datum, there are at least two jump points for $u_1$ related to each   $P_j$.   Conversely, for each $P_j$, we denote by $\mathcal{S}_j  \subset \mathcal{I}_2$ the slices intersecting $P_j$.  Note that the sets $(\mathcal{S}_j )_j$ are pairwise disjoint. If  $|  M_j|_\infty \EEE\le \frac{1}{2h_j}$   we have, still because of the boundary condition,   $\sum_{t \in  J_u^s} [u_1(s,t)] \ge \frac{1}{2}$ and thus we   find  
\begin{align}\label{final2}
\int_{\mathcal{S}_j} \Big( \int_{J_u^s} a\min\lbrace 5  +  a|[u_1](s,t)|  + |[u_2](s,t)|,   a^2 \rbrace   \,    {\rm d}t  \Big)  \,    {\rm d}s \ge  \frac{a^2}{2} \mathcal{L}^1(\mathcal{S}_j) .
\end{align}
Assume instead $|M_j \EEE|_\infty > \frac{1}{2h_j}$. Note that by assumption $\eps|\nabla u| \le \eps\delta_\eps^{-\alpha/4} \to 0$ for $\eps \to 0$. Therefore,   for $\eps$ small enough  $|M_j |_\infty > \frac{1}{2h_j}$  implies  $\frac{\varepsilon}{2h_j}< \varepsilon \vert M_j \vert_{\infty} \leq\frac{1}{10}$ . Therefore, $h_j \ge 5\eps$ and    $\frac{\eps}{2} ( \lfloor   h_j / \eps \rfloor -1) \ge \frac{1}{4} h_j $. Then, if $\mathcal{L}^1(\mathcal{S}_j)\le 4 a h_j $, we get
\begin{align}\label{final3}
  \frac{\eps}{2}  \Big( \Big\lfloor  \frac{1}{\eps} h_j  \Big\rfloor -1 \Big)   a^3 +  \int_{\mathcal{S}_j}              \Big( \int_{J_u^s} a\min\lbrace 5  +  a|[u_1](s,t)|  + |[u_2](s,t)|,   a^2 \rbrace   \,    {\rm d}t  \Big)  \,    {\rm d}s  \ge \frac{1}{4} h_ja^3 \ge \frac{1}{16}  a^2\mathcal{L}^1(\mathcal{S}_j).
  \end{align}
Eventually, we treat the case $| M_j|_\infty > \frac{1}{2h_j}$ and $\mathcal{L}^1(\mathcal{S}_j)> 4 a h_j$, i.e.\ $|M_j|_\infty \mathcal{L}^1(\mathcal{S}_j) >   2a$. Note that, since $M_j$ is a $2 \times 2$ skew-symmetric matrix,   $u_2$ is constant on $P_j \cap \lbrace x_1 = s \rbrace$ for all $s \in \mathcal{S}_j$ and that the value depends linearly on $s$ with slope $|M_j|_\infty$. Thus, one can show that there exists  a subset $\mathcal{S}^*_j \subset \mathcal{S}_j$ with $\mathcal{L}^1(\mathcal{S}^*_j) \ge \frac{1}{4} \mathcal{L}^1(\mathcal{S}_j)$  such that 
$u_2 \ge  | M_j  |_\infty \frac{\mathcal{L}^1(\mathcal{S}_j)}{4}  > \frac{a}{2} $ on $P_j  \cap \lbrace x_1 = s \rbrace$ for all $s \in \mathcal{S}^*_j$. This shows that for each  $s \in \mathcal{S}^*_j$ there exists $t \in J^s_u$ such that $[u_2](s,t) \ge a/2$.  This yields 
\begin{align}\label{final4}
  \int_{\mathcal{S}_j}              \Big( \int_{J_u^s} a\min\lbrace 5  +  a|[u_1](s,t)|  + |[u_2](s,t)|,   a^2 \rbrace   \,    {\rm d}t  \Big)  \,    {\rm d}s  \ge \frac{a^2}{ 2 } \mathcal{L}^1(\mathcal{S}^*_j)   \ge \frac{a^2}{8} \mathcal{L}^1(\mathcal{S}_j) .
    \end{align}
Combining all estimates  \eqref{final1}--\eqref{final4} we find
$$\mathcal{E}_\eps\big(u,Q_{2\rho}(0)\big)  \ge  \frac{a^2}{16} \mathcal{L}^1(\mathcal{I}_2) + 15 a \mathcal{L}^1(\mathcal{I}_1   \cup \mathcal{I}_3).  $$
For $a$ large enough, optimizing this expression and recalling $\mathcal{L}^1(\mathcal{I}_1 \cup  \mathcal{I}_2 \cup  \mathcal{I}_3  ) = 2\rho$ leads to  \eqref{111112}.

\addcontentsline{toc}{chapter}{Bibliography}
\bibliographystyle{plain}
\bibliography{bibliography.bib}

\begin{thebibliography}{10}

\bibitem{Krengel1981}
M.A. Akcoglu and U.~Krengel.
\newblock Ergodic theorems for superadditive processes.
\newblock {\em Journal für die reine und angewandte Mathematik}, 323:53--67,
  1981.

\bibitem{alicicglo09}
R.~Alicandro, M.~Cicalese, and A.~Gloria.
\newblock Integral representation results for energies defined on stochastic
  lattices and application to nonlinear elasticity.
\newblock {\em Archive for Rational Mechanics and Analysis}, 20:881--943, 2011.

\bibitem{aliprantis06}
C.D. Aliprantis and K.C. Border.
\newblock Infinite dimensional analysis.
\newblock {\em Springer}, 2006.

\bibitem{ambrosio2000fbv}
L.~Ambrosio, N.~Fusco, and D.~Pallara.
\newblock {Functions of bounded variation and free discontinuity problems}.
\newblock {\em Oxford Mathematical Monographs}, 2000.

\bibitem{BacRuf21}
A.~Bach and M.~Ruf.
\newblock Fluctuation estimates for the multi-cell formula in stochastic
  homogenization of partitions.
\newblock {\em Calculus of Variations and Partial Differential Equations},
  61:84, 2022.

\bibitem{hom2}
M.~Barchiesi and G.~Dal~Maso.
\newblock \text{Homogenization of fiber reinforced brittle materials: The
  extremal cases}.
\newblock {\em SIAM Journal on Mathematical Analysis}, 41:1874–1889, 2009.

\bibitem{BarchiesiFocardi}
M.~Barchiesi and M.~Focardi.
\newblock Homogenization of the \text{Neumann} problem in perforated domains:
  An alternative approach.
\newblock {\em Calculus of Variations and Partial Differential Equations},
  42:257--288, 2012.

\bibitem{hom1}
M.~Barchiesi, G.~Lazzaroni, and C.I. Zeppieri.
\newblock \text{A bridging mechanism in the homogenization of brittle}
  \text{composites} with \text{soft inclusions}.
\newblock {\em SIAM Journal on Mathematical Analysis}, 48:1178--1209, 2015.

\bibitem{atwoscale}
L.~Berlyand, E.~Sandier, and S.~Serfaty.
\newblock A two scale \text{$\Gamma$}-convergence approach for random
  non-convex homogenization.
\newblock {\em Calculus of Variations and Partial Differential Equations},
  56:156, 2016.

\bibitem{BFLM}
G.~Bouchitte, I.~Fonseca, G.~Leoni, and M.~Mascarenhas.
\newblock \text{A global method for relaxation in $W^{1,p}$ and in $SBV^p$}.
\newblock {\em Archive for Rational Mechanics and Analysis}, 165:187--242,
  2002.

\bibitem{Bouchitte}
G.~Bouchitt\'e, I.~Fragalá, and R.~Mahadevan.
\newblock Homogenization of second order energies on periodic thin structures.
\newblock {\em Calculus of Variations and Partial Differential Equations},
  20:175--211, 2004.

\bibitem{Braides:02}
A.~Braides.
\newblock \text{Gamma-Convergence for Beginners}.
\newblock {\em Oxford University Press}, 2002.

\bibitem{BraConGar16}
A.~Braides, S.~Conti, and A.~Garroni.
\newblock Density of polyhedral partitions.
\newblock {\em Calculus of Variations and Partial Differential Equations},
  56:1--10, 2017.

\bibitem{BraDefVit96}
A.~Braides, A.~Defranceschi, and E.~Vitali.
\newblock Homogenization of \text{free-discontinuity problems}.
\newblock {\em Archive for Rational Mechanics and Analysis}, 135:297--356,
  1996.

\bibitem{bragel02}
A.~Braides and M.S. Gelli.
\newblock Limits of discrete systems with long-range interactions.
\newblock {\em Journal of Convex Analysis}, 9:363--399, 2002.

\bibitem{BraPia19a}
A.~Braides and A.~Piatnitski.
\newblock Homogenization of quadratic convolution energies in periodically
  perforated domains.
\newblock {\em Advances in Calculus of Variations}, 15:351--368, 2022.

\bibitem{brasig09}
A.~Braides and L.~Sigalotti.
\newblock Models of defects in atomistic systems.
\newblock {\em Calculus of Variatons and Partial Differential Equations},
  41:71--109, 2011.

\bibitem{truski}
A.~Braides and L.~Truskinovsky.
\newblock \text{Asymptotic expansions by $\Gamma$-convergence}.
\newblock {\em Continuum Mechanics and Thermodynamics}, 20:21--62, 2008.

\bibitem{cagnetti2018gammaconvergence}
F.~Cagnetti, G.~Dal Maso, L.~Scardia, and C.I. Zeppieri.
\newblock \text{$\Gamma$}-convergence of free-discontinuity problems.
\newblock {\em Annales de l'Institut Henri Poincaré C, Analyse non linéaire},
  36:1035--1079, 2018.

\bibitem{cagnetti2017stochastic}
F.~Cagnetti, G.~Dal Maso, L.~Scardia, and C.I. Zeppieri.
\newblock Stochastic homogenisation of free-discontinuity problems.
\newblock {\em Archive for Rational Mechanics and Analysis}, 233:935--974,
  2019.

\bibitem{global}
F.~Cagnetti, L.~Scardia, and C.I. Zeppieri.
\newblock A global method for deterministic and stochastic homogenisation in
  \text{$BV$}.
\newblock {\em Annals of PDE}, 8:8, 2022.

\bibitem{Chambolle:2003}
A.~Chambolle.
\newblock \text{A density result in two-dimensional linearized elasticity, and
  applications}.
\newblock {\em Archive for Rational Mechanics and Analysis}, 167:211--233,
  2003.

\bibitem{Chambolle2006PIECEWISERA}
A.~Chambolle, A.~Giacomini, and M.~Ponsiglione.
\newblock Piecewise rigidity.
\newblock {\em Journal of Functional Analysis}, 244:134--153, 2006.

\bibitem{CicFocZep21}
M.~Cicalese, M.~Focardi, and C.I. Zeppieri.
\newblock Phase-field approximation of functionals defined on piecewise-rigid
  maps.
\newblock {\em Journal of Nonlinear Science}, 31:78, 2021.

\bibitem{ConFocIur15}
S.~Conti, M.~Focardi, and F.~Iurlano.
\newblock Which special functions of bounded deformation have bounded
  variation?
\newblock {\em Proceedings of the Royal Society of Edinburgh}, 148A:33--50,
  2016.

\bibitem{Cristoferi}
R.~Cristoferi, I.~Fonseca, A.~Hagerty, and C.~Popovici.
\newblock A homogenization result in the gradient theory of phase transitions.
\newblock {\em Interfaces and Free Boundaries}, 21:367--408, 2019.

\bibitem{DalMaso:93}
G.~Dal~Maso.
\newblock \text{An Introduction to $\Gamma$-Convergence}.
\newblock {\em Progress in Nonlinear Differential Equations and Their
  Applications, Birkh{\"a}user Boston}, 2012.

\bibitem{Dal11}
G.~Dal~Maso.
\newblock Generalised functions of bounded deformation.
\newblock {\em Journal of the European Mathematical Society (JEMS)},
  15:1943--1997, 2013.

\bibitem{dalfratoa04}
G.~Dal~Maso, G.~Francfort, and R.~Toader.
\newblock Quasistatic crack growth in nonlinear elasticity.
\newblock {\em Archive for Rational Mechanics and Analysis}, 176:165--225,
  2005.

\bibitem{DalMaso-Lazzaroni:2010}
G.~Dal~Maso and G.~Lazzaroni.
\newblock Quasistatic crack growth in finite elasticity with
  non-interpenetration.
\newblock {\em Annales de l'Institut Henri Poincaré C, Analyse non linéaire},
  2010.

\bibitem{maso1984nonlinear}
G.~Dal~Maso and L.~Modica.
\newblock Nonlinear stochastic homogenization.
\newblock {\em Universit{\`a} di Pisa. Dipartimento di Matematica}, 1984.

\bibitem{dalnegper01}
G.~Dal~Maso, M.~Negri, and D.~Percivale.
\newblock \text{Linearized elasticity as $\Gamma\mbox{-limit}$ of finite
  elasticity}.
\newblock {\em Set-Valued Analysis}, 10:165--183, 2002.

\bibitem{hom3}
G.~Dal~Maso and C.I. Zeppieri.
\newblock Homogenization of fiber reinforced brittle materials: The
  intermediate case.
\newblock {\em Advances in Calculus of Variations}, 3:345--370, 2010.

\bibitem{DeGiorgi}
E.~De~Giorgi and L.~Ambrosio.
\newblock A new type of functional in the calculus of variations.
\newblock {\em Atti della Accademia Nazionale dei Lincei. Serie Ottava.
  Rendiconti. Classe di Scienze Fisiche, Matematiche e Naturali}, 82:199--210,
  1988.
\newblock (\textbf{Italian}).

\bibitem{meyer1966probability}
C.~Dellacherie and P.A. Meyer.
\newblock Probability and potentials.
\newblock {\em North-Holland Publishing Company}, 1979.

\bibitem{Donnarumma}
A.~Donnarumma and M.~Friedrich.
\newblock Stochastic homogenisation for free-discontinuity problems in
  linearised elasticity.
\newblock (In preparation).

\bibitem{Francfort-Larsen:2003}
G.~Francfort and C.~Larsen.
\newblock Existence and convergence for quasi-static evolution in brittle
  fracture.
\newblock {\em Communications on Pure and Applied Mathematics}, 56:1465--500,
  2003.

\bibitem{FrancfortMarigo}
G.~Francfort and J.-J. Marigo.
\newblock Revisiting brittle fracture as an energy minimization problem.
\newblock {\em Journal of the Mechanics and Physics of Solids}, 46:1319--1342,
  1998.

\bibitem{friedrich2017derivation}
M.~Friedrich.
\newblock \text{A derivation of linearized Griffith energies from nonlinear
  models}.
\newblock {\em Archive for Rational Mechanics and Analysis}, 225:425--467,
  2017.

\bibitem{friedrich2018piecewise}
M.~Friedrich.
\newblock A piecewise \text{Korn} inequality in \text{SBD} and applications to
  embedding and density results.
\newblock {\em SIAM Journal on Mathematical Analysis}, 50:3842--3918, 2018.

\bibitem{friedrich2019griffith}
M.~Friedrich.
\newblock Griffith energies as small strain limit of nonlinear models for
  nonsimple brittle materials.
\newblock {\em Mathematics in Engineering}, 2:75--100, 2019.

\bibitem{friedrichcompactness}
M.~Friedrich.
\newblock \text{A compactness result in $GSBV^p$ and applications to
  $\Gamma$-convergence} for free discontinuity problems.
\newblock {\em Calculus of Variations and Partial Differential Equations},
  58:86, 2019.

\bibitem{FriPerSol20}
M.~Friedrich, M.~Perugini, and F.~Solombrino.
\newblock Lower semicontinuity for functionals defined on piecewise rigid
  functions and on \text{$GSBD$}.
\newblock {\em Journal of Functional Analysis}, 280(7):108929, 2021.

\bibitem{FriPerSol20a}
M.~Friedrich, M.~Perugini, and F.~Solombrino.
\newblock \text{$\Gamma$-convergence for free-discontinuity problems in linear
  elasticity}: Homogenization and relaxation.
\newblock {\em Indiana University Mathematics Journal}, 2023.

\bibitem{frie14}
M.~Friedrich and B.~Schmidt.
\newblock An analysis of crystal cleavage in the passage from atomistic models
  to continuum theory.
\newblock {\em Archive for Rational Mechanics and Analysis}, 217:263--308,
  2015.

\bibitem{FriSol16}
M.~Friedrich and F.~Solombrino.
\newblock \text{Quasistatic crack growth in 2d-linearized elasticity}.
\newblock {\em Annales de l'Institut Henri Poincaré C, Analyse non linéaire},
  35:28--64, 2018.

\bibitem{Friedrich_2020}
M.~Friedrich and F.~Solombrino.
\newblock Functionals defined on piecewise rigid functions: Integral
  representation and \text{$\Gamma$}-convergence.
\newblock {\em Archive for Rational Mechanics and Analysis}, 236:1325--1387,
  2020.

\bibitem{giapon04a}
A.~Giacomini and M.~Ponsiglione.
\newblock \text{A $\Gamma$-convergence} \text{approach} to \text{stability} of
  \text{unilateral minimality properties} in \text{fracture mechanics} and
  \text{applications}.
\newblock {\em Archive for Rational Mechanics and Analysis}, 180(3):399--447,
  2006.

\bibitem{Neukamm2}
A.~Gloria and S.~Neukamm.
\newblock Commutability of homogenization and linearization at identity in
  finite elasticity and applications.
\newblock {\em Annales de l'Institut Henri Poincaré C, Analyse non linéaire},
  28:941--964, 2011.

\bibitem{griffith1920phenomena}
A.A. Griffith.
\newblock The phenomena of rupture and flow in solids.
\newblock {\em Philosophical transactions / Royal Society of London},
  221:163--198, 1921.

\bibitem{Hammersley1965FirstPassagePS}
J.M. Hammersley and D.J.A. Welsh.
\newblock \text{First-Passage Percolation, Subadditive Processes, Stochastic
  Networks}, and \text{Generalized Renewal Theory}.
\newblock {\em Bernoulli-Bayes-Laplace Anniversary Volume}, 1965.

\bibitem{Jesenko}
M.~Jesenko and B.~Schmidt.
\newblock \text{Closure }and \text{commutability} \text{results} for
  \text{Gamma-limits} and the \text{geometric} \text{linearization} and
  \text{homogenization} of \text{multi-well energy functionals}.
\newblock {\em SIAM Journal on Mathematical Analysis}, 46:2525--2553, 2014.

\bibitem{krengel2011ergodic}
U.~Krengel.
\newblock Ergodic theorems.
\newblock {\em De Gruyter}, 1985.

\bibitem{Neukamm1}
S.~Müller and S.~Neukamm.
\newblock \text{On the commutability of homogenization and linearization in
  finite elasticity}.
\newblock {\em Archive for Rational Mechanics and Analysis}, 201:465--500,
  2011.

\bibitem{Pellet}
X.~Pellet, L.~Scardia, and C.I. Zeppieri.
\newblock Stochastic homogenisation of free-discontinuity functionals in random
  perforated domains.
\newblock {\em Advances in Calculus of Variations}, 2023.
\newblock (accepted).

\bibitem{RufZeppieri}
M.~Ruf and C.I. Zeppieri.
\newblock Stochastic homogenization of degenerate integral functionals with
  linear growth.
\newblock {\em Calculus of Variations and Partial Differential Equations},
  62:138, 2023.

\bibitem{Zemas}
L.~Scardia, K.~Zemas, and C.I. Zeppieri.
\newblock Homogenisation of nonlinear \text{Dirichlet} problems in randomly
  perforated domains under minimal assumptions on the size of perforations.
\newblock 2023.
\newblock (preprint).

\bibitem{Temam1980FunctionsOB}
R.~Temam and G.~Strang.
\newblock Functions of bounded deformation.
\newblock {\em Archive for Rational Mechanics and Analysis}, 75:7--21, 1980.

\bibitem{Toupin1962ElasticMW}
R.A. Toupin.
\newblock Elastic materials with couple-stresses.
\newblock {\em Archive for Rational Mechanics and Analysis}, 11:385--414, 1962.

\bibitem{Toupin1962ElasticMW2}
R.A. Toupin.
\newblock Theories of elasticity with couple-stress.
\newblock {\em Archive for Rational Mechanics and Analysis}, 17:85--112, 1964.

\bibitem{williams_1991}
D.~Williams.
\newblock Probability with martingales.
\newblock {\em Cambridge University Press}, 1991.

\end{thebibliography}

\end{document}